\documentclass[english, 11pt]{article}
\usepackage{xcolor}
\usepackage{verbatim}
\usepackage{float}
\usepackage{mathtools}
\usepackage{url}
\usepackage{amsmath}
\usepackage{amsthm}
\usepackage{amssymb}
\usepackage{booktabs}

\makeatletter

\usepackage{graphicx,psfrag,amsfonts,verbatim,fullpage,mathtools}
\usepackage[hidelinks]{hyperref}

\usepackage{xurl}

\usepackage{bbm}

\usepackage{dsfont}

\usepackage[framemethod=default]{mdframed}

\global\mdfdefinestyle{vert_line_shuvo}
{
linecolor=black,
linewidth=2pt,
topline=false,
nobreak=true,
bottomline=false,
rightline=false
}

\global\mdfdefinestyle{algorithm_box_shuvo}
{
linecolor=black,
linewidth=1pt,
innerleftmargin=1pt,
innertopmargin=5pt,
innerbottommargin=5pt,
innerrightmargin=1pt,
nobreak=true,
}

\usepackage{upgreek}

\usepackage[bbgreekl]{mathbbol}
\DeclareSymbolFontAlphabet{\mathbbm}{bbold}
\DeclareSymbolFontAlphabet{\mathbb}{AMSb}
\usepackage[bb=px]{mathalfa}
\usepackage{bm}

\PassOptionsToPackage{obeyFinal}{todonotes}
\usepackage{todonotes}

\usepackage{abstract}

\theoremstyle{plain}
\newtheorem{thm}{\protect\theoremname}
\theoremstyle{plain}
\newtheorem{cor}{\protect\corollaryname}
\theoremstyle{plain}
\newtheorem{lem}{\protect\lemmaname}
\theoremstyle{plain}
\newtheorem{assumption}{Assumption}
\theoremstyle{plain}
\newtheorem{remark}{Remark}
\theoremstyle{definition}

\theoremstyle{plain}

\theoremstyle{remark}

\theoremstyle{plain}

\usepackage{babel}
\providecommand{\lemmaname}{Lemma}
\providecommand{\propositionname}{Proposition}
\providecommand{\theoremname}{Theorem}
\providecommand{\corollaryname}{Corollary}
\providecommand{\definitionname}{Definition}

\usepackage{parskip}
\begingroup
\makeatletter
   \@for\theoremstyle:=definition,remark,plain\do{     \expandafter\g@addto@macro\csname th@\theoremstyle\endcsname{        \addtolength\thm@preskip\parskip
     }   }
\endgroup
\usepackage{parskip}

\usepackage{diagbox}

\definecolor{commentcolor}{RGB}{74,112,35}

\usepackage{titlesec}
\def\sectionfont{\sffamily\Large\bfseries\boldmath}
\def\subsectionfont{\sffamily\large\bfseries\boldmath}
\def\paragraphfont{\sffamily\normalsize\bfseries\boldmath}
\titleformat*{\section}{\sectionfont}
\titleformat*{\subsection}{\subsectionfont}
\titleformat*{\subsubsection}{\paragraphfont}
\titleformat*{\paragraph}{\paragraphfont}
\titleformat*{\subparagraph}{\paragraphfont}
\usepackage[small,labelfont={bf,sf}]{caption}
\usepackage{multirow}
\usepackage{enumitem}
\setlist{nolistsep}

\usepackage{booktabs}
\usepackage{adjustbox}

\usepackage{titling}
\pretitle{\begin{center}\LARGE \bfseries\sffamily}
\posttitle{\par\end{center}\vskip 2em}
\preauthor{\begin{center}
\large \lineskip 2em\begin{tabular}[t]{c}}
\postauthor{\end{tabular}\par\end{center}}
\predate{\begin{center}\large}
\postdate{\par\end{center}}

\usepackage{tikz}
\usetikzlibrary{fit, arrows, arrows.meta, shapes, positioning, shadows, matrix, calc}
\usepackage[]{subfig}

\RequirePackage{luatex85}
\usepackage{pgfplots}
\pgfplotsset{compat=newest}
\usepgfplotslibrary{groupplots}
\usepgfplotslibrary{polar}
\usepgfplotslibrary{smithchart}
\usepgfplotslibrary{statistics}
\usepgfplotslibrary{dateplot}
\allowdisplaybreaks

\usepackage{nicefrac}
\usepackage{cancel}

\definecolor{annotgray}{RGB}{128,128,128}

\newcommand{\norm}[1]{\left\lVert#1\right\rVert} 

\title{\textbf{
A Domain-Specific Harness for\\
End-to-End Automation of Optimization Research}}
\author{
  \small{\textbf{Heechang Kim}}\\
  \small{Seoul National University} \\
  \small{\texttt{heechang.kim@snu.ac.kr}} \\
  \and
  \small{\textbf{Ernest Ryu}} \\
  \small{UCLA} \\
  \small{\texttt{eryu@math.ucla.edu}} \\
  \and
  \small{\textbf{Shuvomoy Das Gupta}}\\
  \small{Rice University} \\
  \small{\texttt{sd158@rice.edu}} \\
}
\date{}

\makeatother

\usepackage{babel}

\begin{document}

\maketitle

\begin{abstract}
We present \textsf{AutoOPT}, a domain-specific harness for end-to-end automation of optimization research. \textsf{AutoOPT} organizes the discovery of optimal first-order methods into four stages: numerical design through the BnB-PEP methodology; symbolic discovery of the analytic description and a convergence proof through frontier large language models (LLMs); formal verification in the Lean 4 proof assistant; and human interpretation and write-up. We demonstrate the framework on two case studies, each of independent interest. The first, lemniscate acceleration, is a new accelerated gradient method for minimizing the gradient norm of a smooth convex function: after $N$ gradient steps it reduces the squared gradient norm at the optimal $O(1/N^{4})$ rate, with a constant governed by the lemniscate constant $\varpi$, a classical elliptic-integral constant. The second is the analytic description of ITEM-f, a method previously known only numerically: for $L$-smooth, $\mu$-strongly convex minimization it contracts the function-value gap at an accelerated linear rate with a per-step factor $(1-\sqrt{\mu/L})^{2}$. The convergence theorems of both case studies are formalized and machine-checked in Lean 4.

\end{abstract}

\section*{Preface}

In an era where frontier large language models (LLMs) provide remarkable mathematical intelligence, what role remains for human researchers? In this work, we (human experts) augment the general-purpose intelligence of LLMs with a technical domain-specific numerical methodology, and automate optimization research end-to-end, at a level not yet possible with LLMs alone. We hope this shows the community one way domain experts can productively contribute in an AI-driven research workflow: by augmenting the LLM's general-purpose intelligence with a domain-specific harness derived from their expertise.

\section{Introduction}\label{sec:introduction}

Since the pioneering work of Nesterov on accelerated gradient methods
\cite{nesterov1983method} and of Nemirovsky and Yudin on information-based
complexity \cite{nemirovsky1983problem}, the design of efficient and optimal
first-order methods has been a central pursuit in the study of large-scale
optimization. The performance estimation programming (PEP)  framework 
\cite{drori2014performance, taylor2017smooth, TaylorHendrickxGlineur2017Composite} turned this pursuit
into a computer-assisted discipline by formulating the worst-case performance computation of a first-order method as a convex semidefinite program (SDP). Since its inception, PEP has produced many notable algorithms including the optimized gradient method (OGM)
\cite{kim2016optimized, drori2017exact}, its gradient-norm counterpart OGM-G
\cite{kim2021optimizing}, the information-theoretic exact method (ITEM)
\cite{taylor2021optimal, drori2022oracle}. Designing an optimal method through the PEP framework reduces to optimizing over the space of methods and branch-and-bound performance estimation programming (BnB-PEP) \cite{dasgupta2022BnBPEP} 
formulates this design process as a nonconvex quadratically constrained quadratic
program (QCQP) and solves it to  global optimality using a spatial branch-and-bound algorithm. 

Yet, this computer-assisted methodology can be heavily labor-intensive: every stage from numerically computing the method to coming up with the analytical form of the algorithm and its convergence analysis demands deep domain expertise, and the rate of discovery is set by human effort. While recent open-source packages such as PESTO
\cite{TaylorHendrickxGlineur2017PESTO} and PEPit
\cite{GoujaudEtAl2022PEPit} can help by solving the worst-case SDP numerically, they come at the cost of a rigid, language-specific syntax and a PEP
derivation encoded in low-level library code that can be difficult for
newcomers to parse or modify. Meanwhile, large language models (LLMs) and LLM-driven agents have begun to
produce genuine algorithmic and mathematical discoveries, which we review
in Section~\ref{subsec:prior-work}. The natural next step is to combine LLMs with these classical
computer-assisted tools to further accelerate research in optimization.

\begin{figure}[h]
\centering
\definecolor{aoptink}{RGB}{70,70,70}
\definecolor{aoptbluefill}{RGB}{224,235,245}
\definecolor{aoptblueline}{RGB}{61,103,148}
\definecolor{aopttealfill}{RGB}{208,230,220}
\definecolor{aopttealline}{RGB}{40,117,91}
\definecolor{aoptamberfill}{RGB}{243,226,196}
\definecolor{aoptamberline}{RGB}{166,118,29}
\definecolor{aoptvioletfill}{RGB}{226,218,233}
\definecolor{aoptvioletline}{RGB}{108,82,135}
\definecolor{aoptledgerfill}{RGB}{244,244,244}
\definecolor{aoptledgerline}{RGB}{95,95,95}
\begin{tikzpicture}[
  font=\sffamily,
  stage/.style={draw, rounded corners=3.5pt, line width=0.7pt, align=center,
    text width=2.62cm, minimum height=1.85cm, inner sep=5pt, text=aoptink,
    font=\sffamily\footnotesize},
  chip/.style={circle, text=white, font=\sffamily\bfseries\scriptsize,
    inner sep=1.2pt, minimum size=11pt},
  flow/.style={-{Stealth[length=5.5pt]}, line width=0.8pt, draw=aoptink},
  rw/.style={{Stealth[length=3.5pt]}-{Stealth[length=3.5pt]},
    line width=0.55pt, draw=aoptledgerline},
  lab/.style={font=\sffamily\scriptsize, text=aoptink, align=center,
    inner sep=1.5pt},
]
\node[stage, draw=aoptblueline, fill=aoptbluefill] (s1) at (0,0)
  {\textbf{Numerical design of}\\\textbf{optimal algorithm}};
\node[stage, draw=aopttealline, fill=aopttealfill, right=1.3cm of s1] (s2)
  {\textbf{Discovery of \\ analytical results}};
\node[stage, draw=aoptamberline, fill=aoptamberfill, right=1.3cm of s2] (s3)
  {\textbf{Formal}\\\textbf{verification}};
\node[stage, draw=aoptvioletline, fill=aoptvioletfill, right=1.3cm of s3] (s4)
  {\textbf{Human}\\\textbf{interpretation}\\\textbf{\& write-up}};
\node[chip, fill=aoptblueline]   at ($(s1.north west)+(0.32,0)$) {1};
\node[chip, fill=aopttealline]   at ($(s2.north west)+(0.32,0)$) {2};
\node[chip, fill=aoptamberline]  at ($(s3.north west)+(0.32,0)$) {3};
\node[chip, fill=aoptvioletline] at ($(s4.north west)+(0.32,0)$) {4};
\draw[flow] (s1) -- (s2)
  node[lab, midway, yshift=22pt] {Numerically\\optimal\\method};
\draw[flow] (s2) -- (s3)
  node[lab, midway, yshift=22pt] {Candidate\\theorems\\\& proofs};
\draw[flow] (s3) -- (s4)
  node[lab, midway, yshift=15pt] {Verified\\theorems};
\foreach \a/\b in {s1/s2, s2/s3, s3/s4}{
  \begin{scope}[shift={($(\a.east)!0.5!(\b.west)$)}]
    \draw[fill=white, draw=aoptink, line width=0.6pt] circle[radius=0.21];
    \fill[aoptink] (0,0.05) circle[radius=0.052];
    \fill[aoptink] (-0.082,-0.118)
      arc[start angle=180, end angle=0, radius=0.082] -- cycle;
    \node[lab, anchor=north] at (0,-0.26) {human\\approval};
  \end{scope}
}
\node[draw=aoptledgerline, fill=aoptledgerfill, rounded corners=3pt,
      line width=0.6pt, inner ysep=5pt, align=center,
      fit={([yshift=-2.05cm]s1.west) ([yshift=-2.52cm]s4.east)}]
  (ledger) {};
\node[align=center, text=aoptink, font=\sffamily\footnotesize\bfseries]
  at ($(ledger.north)+(0,-0.16)$)
  {Research workspace};
\node[align=center, text=aoptink, font=\sffamily\scriptsize]
  at ($(ledger.north)+(0,-0.39)$)
  {persistent memory of the research project};
\node[align=center, text=aoptink, font=\sffamily\scriptsize\itshape]
  at ($(ledger.north)+(0,-0.63)$)
  {goals \,$\cdot$\, sources \,$\cdot$\, claims \& evidence
   \,$\cdot$\, experiments \,$\cdot$\, next steps};
\foreach \s in {s1, s2, s3, s4}{
  \draw[rw] (\s.south) -- (\s.south |- ledger.north);
}
\node[lab, anchor=east]
  at ($(s1.south)!0.5!(s1.south |- ledger.north)+(-0.08,0)$)
  {records \&\\retrieves};
\end{tikzpicture}\caption{ The researcher operates
\textsf{AutoOPT} by conversing with an LLM agent 
in a research workspace in four stages. 
Stage 1 numerically designs an
optimal first-order method for the problem class that the researcher
specifies. Stage 2 consults a frontier LLM to propose an analytic form of
the numerically designed method along with candidate convergence
proofs. Stage 3 formally verifies the candidate theorems and proofs in the
Lean proof assistant, ensuring every proof step is correct
and explicit. In Stage 4, human authors interpret the
verified results and write them up. 
} 
\label{fig:autoopt-pipeline}
\end{figure}

\subsection{Contribution.}
This work presents \textsf{AutoOPT}, a domain-specific harness that end-to-end automates
optimization research by combining the computer-assisted PEP
methodology with LLMs and Lean formalization;
Figure~\ref{fig:autoopt-pipeline} illustrates the pipeline. 

We demonstrate \textsf{AutoOPT} on two case studies, each producing a
result of independent interest.
The first case study, \emph{lemniscate acceleration}
(Section~\ref{sec:lemniscate}), is a new accelerated gradient method for
minimizing the gradient norm of an $L$-smooth convex function $f$ with a
minimizer $x_{\star}$. From any initial point $x_{0}$, its final
iterate $x_{N}$ satisfies
\begin{equation*}
\norm{\nabla f(x_{N})}^{2}
\le \frac{\varpi^{4}L^{2}\norm{x_{0}-x_{\star}}^{2}}{(N+1)^{4}},
\end{equation*}
where $\varpi\approx2.622$ is the lemniscate constant. This rate is
optimal in its $N$-dependence
\cite{nemirovsky1991optimality,nemirovsky1992information}, and its
constant improves upon the previous best guarantee, obtained by
concatenating OGM and OGM-G, by a factor of about $1.35$
(Section~\ref{sec:lemniscate}).
To the best of our knowledge, this is the first time the lemniscate
constant and its associated elliptic functions have been connected to
optimization theory.

The second case study, \emph{analytic ITEM-f}, (Section~\ref{sec:itemf}) is the optimal method for minimizing an
$L$-smooth, $\mu$-strongly convex function $f$ with minimizer $x_{\star}$ together with the accelerated
linear rate
\[
f(x_{N})-f(x_{\star})\leq 4 \left(1-\sqrt{\frac{\mu}{L}}\right)^{2N}\left(f(x_{0})-f(x_{\star})\right),
\]
whose per-step contraction factor $\left(1-\sqrt{\mu/L}\right)^{2}$ matches the
asymptotics of the complexity lower bound \cite{drori2022oracle}. ITEM-f was previously available only through numerical values
\cite[Appendix E]{taylor2021optimal}.
We formalize and machine-check the main convergence theorems of both lemniscate acceleration and ITEM-f in Lean 4 with mathlib \cite{demoura2021lean,mathlib2020lean}.

More broadly, this work serves as a case study on how domain experts can contribute productively to an AI-driven research workflow by designing and utilizing a domain-specific harness.

\subsection{Prior work}\label{subsec:prior-work}

\paragraph{AI-driven discovery and domain-specific harnesses.}
A rapidly growing body of work uses LLMs and related search methods as
discovery engines. Evolutionary symbolic search produced the Lion optimizer
\cite{chen2023symbolic}; AlphaTensor discovered faster matrix multiplication
algorithms through reinforcement learning \cite{fawzi2022discovering}; and
FunSearch and AlphaEvolve couple LLM code generation with automatic
evaluation to find new mathematical constructions and algorithmic
improvements
\cite{romera-paredesMathematicalDiscoveriesProgram2024,novikovAlphaEvolveCodingAgent2025}.
At the level of whole research programs, the AI Scientist pursues end-to-end
automation of machine-learning research
\cite{luAIScientistFully2024,lu2026towards}; Aletheia reports research-level
problem solving in mathematics \cite{fengAutonomousMathematicsResearch2026};
an internal version of OpenAI's unreleased Astra model
solved ten long-open problems in mathematics and theoretical computer
science, with each argument formalized in Lean
\cite{openaiTenAdvances2026};
and, with the Lean 4 proof assistant and its
mathematical library mathlib \cite{demoura2021lean,mathlib2020lean} now
mature enough to formalize research-level mathematics, AI-driven formal
proof search has resolved open problems
\cite{tsoukalas2026advancing}. In the natural sciences,
AlphaFold and RoseTTAFold transformed protein-structure prediction
\cite{jumper2021highly,baek2021accurate}, and the Virtual Lab coordinates a
team of scientific agents that designed and experimentally validated new
nanobodies \cite{swanson2025virtual}.

In the modern parlance of agentic AI, a \emph{harness} is the scaffolding
built around an LLM, including the tools, prompts, workflows, and
verification loops. We define a \emph{domain-specific harness} to be a
harness designed with expert domain knowledge for the purpose of augmenting
the general-purpose intelligence of LLMs with the specialized methodology of
a technical domain. Despite the large body of work using LLMs to conduct
mathematical research, most of it operates the models through simple chat
interfaces, and instances of domain-specific harnesses are not yet common.
Although their authors do not use the term, AlphaEvolve
\cite{novikovAlphaEvolveCodingAgent2025} and Aletheia
\cite{fengAutonomousMathematicsResearch2026} can be regarded as
domain-specific harnesses: AlphaEvolve targets scientific and algorithmic
discovery across a range of computational problems, whereas Aletheia targets
mathematical research. Both systems, however, remain broad in scope and were
built in industry research labs with very large inference compute budgets. In
contrast, \textsf{AutoOPT} specializes further, to the domain of first-order
optimization research, and demonstrates this research principle at an
academic scale of compute and token usage.

\paragraph{Computer-assisted design of optimization methods.}
The performance estimation methodology originates with Drori and Teboulle
\cite{drori2014performance}, who formulated the worst-case performance of a
fixed-step first-order method as an optimization problem and bounded it
through an SDP relaxation; Taylor, Hendrickx, and Glineur proved the
relaxation exact through convex interpolation \cite{taylor2017smooth} and then extended the framework to the composite setup in \cite{TaylorHendrickxGlineur2017Composite}. The
methodology has since driven a sequence of discoveries. Drori and Teboulle
numerically constructed OGM \cite{drori2014performance}, Kim and Fessler
found its analytic description \cite{kim2016optimized}, and Drori proved its
exact optimality through a matching lower bound \cite{drori2017exact}. Kim
and Fessler constructed OGM-G, with the best known rate for reducing the
gradient norm of smooth convex functions \cite{kim2021optimizing}. Taylor and
Drori constructed ITEM for reducing the distance from the set of optimal solutions, whose exact optimality for smooth strongly convex
minimization follows from a matching lower bound
\cite{taylor2021optimal,drori2022oracle}. BnB-PEP \cite{dasgupta2022BnBPEP}
unified the design side of this program by solving the underlying nonconvex
QCQPs to certifiable global optimality, constructing optimal methods in
setups where the prior convexity-based reformulations do not apply. Its
methodology produced OptISTA, an optimal method for composite minimization
\cite{jang2025optista}. Its numerical certificates also helped pave the way
for the provably faster long-step gradient descent of Grimmer
\cite{grimmer2024provably} and the silver stepsize schedule of Altschuler
and Parrilo \cite{altschuler2025acceleration1,altschuler2025acceleration2},
a line of work surveyed in \cite{altschuler2026stepsize}. In every
one of these successes, the path from a numerically optimal solution to a
theorem that includes analytic coefficients, a dual certificate, and a written proof was
traversed by hand. \textsf{AutoOPT} automates precisely this path while
keeping the human researcher in control of every decision boundary.

\paragraph{AI-assisted optimization theory.}
Very recently, frontier models have also begun proving research-level results in optimization theory:
an AI agent writing directly in Lean discovered an improved last-iterate
rate for anchored gradient descent ascent for minimax optimization \cite{surina2026improved},
GPT~5.6 Sol Pro closed a query-complexity gap open since 1996 in
derivative-free convex optimization, with the central
lower bound subsequently formalized in Lean \cite{kerger2026closing}, and
ChatGPT~5.5 produced the first convergence proof for Bregman
Douglas--Rachford splitting as a matrix scaling algorithm
\cite{ma2026convergence}. In all three works the core workflow was that a human prompted a frontier model to solve a known open question, the model supplied the mathematics, and correctness was
secured afterward, by the authors' own checking or by a Lean formalization
 provided for that one result.

Two recent works \cite{upadhyaya2026optimal, suh2026peppy} combine PEP with LLM consultation for optimization research. In \cite{upadhyaya2026optimal}, using ChatGPT and Codex for exploratory PEP computations, the authors discovered an optimal
proximal gradient method for minimizing the sum of a smooth and a
nonsmooth convex function, with the final Lyapunov proofs
derived by the authors. Peppy \cite{suh2026peppy} is an AI-assisted
workflow for tight convergence analysis of known algorithms: given a
problem class, a \emph{known} algorithm, a performance measure, and an initial
condition, it implements and solves the associated PEPs through the
deterministic PEPFlow package, extracts fixed-horizon proof certificates, assembles
Lyapunov partial sums, and simplifies them into closed-form analytic
proofs, with symbolic checks along the way.

\subsection{Organization}\label{subsec:organization}

This paper is organized as follows.
In Section~\ref{sec:background-agentic}, we review conversational chatbots,
agentic research systems, coding and persistent agents, reusable agent skills,
and the research practices on which \textsf{AutoOPT} builds.
In Section~\ref{sec:methodology}, we describe
\textsf{AutoOPT} methodology consisting of numerical design, symbolic discovery, formal
verification, and human interpretation.
In Section~\ref{subsec:notation}, we present lemniscate
acceleration and analytic ITEM-f, respectively, together with their
convergence analyses, continuous-time interpretations, \textsf{AutoOPT}
design histories, and Lean formalizations.
In Section~\ref{sec:conclusion}, we discuss the applicability and limitations
of \textsf{AutoOPT} and the broader role of domain-specific harnesses in
AI-assisted research.
Appendices~\ref{sec:appendix-lemniscate}
and~\ref{sec:appendix-itemf} collect the deferred proofs, PEP formulations,
and technical details for the two case studies.

\subsection{Computational setup}

We open-source the portable agent skills and Lean verification projects associated with
\textsf{AutoOPT} at the link:
\begin{center}
    \url{https://github.com/Shuvomoy/AutoOPT}
\end{center}

Unless otherwise specified, we conducted the experiments from a laptop
running macOS Tahoe with an Apple M5 Max processor and 128 GB of memory. The
agentic experiments used GPT-5.5
\cite{openaiGPT55} and GPT-5.6 Sol \cite{openaiGPT56} through the USD
200-per-month ChatGPT Pro tier \cite{openaiChatGPTPro}. No internal or
unreleased model was used for the agentic experiments in this paper. The Lean verification
projects use Lean 4.32.0
\cite{demoura2021lean} and mathlib 4.32.0 \cite{mathlib2020lean}.

Within Stage 1 of \textsf{AutoOPT}, we formulated the numerical design
problems in \texttt{JuMP} 1.31.1 \cite{Lubin2023}, a domain-specific modeling
language for mathematical optimization embedded in \texttt{Julia} 1.12.6
\cite{bezanson2017julia}. The reported BnB-PEP computations used
\texttt{MOSEK} 11.2.2 \cite{mosek2026} as the primary solver for the 
fixed-method SDPs and \texttt{KNITRO} 16.0.0 \cite{byrd2006knitro} and \texttt{Ipopt} 3.14.19 \cite{wachter2006ipopt}  as the
primary solvers for the local QCQPs.
\texttt{Gurobi} 13.0.2 \cite{gurobi2026} was used to perform
spatial branch-and-bound.
\texttt{MOSEK} and \texttt{Gurobi} provide no-cost
academic licenses \cite{mosekAcademicLicenses,gurobiAcademicLicenses}.
\texttt{Ipopt} is open source under the Eclipse Public License 2.0
\cite{ipoptLicense}, while \texttt{KNITRO} is commercial and offers academic
and teaching programs \cite{knitroPrograms}.

\section{Background on agentic research}\label{sec:background-agentic}

In this section, we briefly review recent agentic research practice,
with an emphasis on what each class of tool can and cannot responsibly
do in a mathematical research workflow.

\paragraph{From chatbots to agents.}
\emph{Conversational} chatbots such as ChatGPT, Claude, and Gemini were
the first widely used form of LLM assistance in
research. A researcher can ask for explanations, brainstorming, proof
debugging, literature orientation, or alternative formulations of a
claim. This is extremely valuable, but it has a structural
limitation: a chat agent does not own a repository, run the tests that
would falsify its suggestions, or maintain a reproducible artifact
trail. It can offer advice, but it cannot execute \emph{research} over a
long time horizon.

\paragraph{Agentic research systems.}
Recent work on using LLMs for research has moved in a more
\emph{agentic} direction, closing part of the propose-test-revise
loop. Program-search systems such as FunSearch and AlphaEvolve use LLMs
to generate candidate programs and evaluate them automatically, yielding
new constructions and algorithmic improvements
\cite{romera-paredesMathematicalDiscoveriesProgram2024,novikovAlphaEvolveCodingAgent2025}.
Broader scientific-agent frameworks, such as the AI Scientist, attempt
larger portions of the open-ended research cycle
\cite{luAIScientistFully2024}, and in mathematics, systems such as
Aletheia point toward autonomous or semi-autonomous research-level
problem solving \cite{fengAutonomousMathematicsResearch2026}.

\paragraph{Coding agents.}
For individual academic research and mathematical software development,
the most practical setup beyond the conversational chatbot is the coding
agent, such as Codex, Claude Code, and OpenCode
\cite{openaiCodexCLI,anthropicClaudeCode,openCode}. A coding agent can
operate on an active repository: it can read files, edit source code, run
shell commands, inspect compiler or solver output, and iterate after
errors. The important shift from the conversational chatbot is that the
agent can act on artifacts the researcher will later inspect.

Persistent personal research agents extend this idea from a single coding session
toward a more persistent research workflow. OpenClaw can be
viewed as a personal AI assistant that runs on the user's devices and
replies on messaging surfaces such as WhatsApp, Telegram, Slack,
Mattermost, and Discord \cite{openclawDocs}. Hermes Agent is a
self-improving agent with persistent memory, generated workflow packages,
scheduled automations, and support for similar messaging surfaces
\cite{nousHermesAgent}. These agents can support research progress by preserving state across repositories, files, commands, logs, memories, scheduled tasks, and other inspectable artifacts; their ability to maintain context across interactions and serve as standing research assistants is therefore highly valuable.

\paragraph{Agent skills.}
In the context of agentic LLMs, a \emph{skill} is a reusable package of instructions and supporting material that an agent can
load for a specialized task. In the open Agent Skills format, the
minimal object is a folder containing a \texttt{SKILL.md} file with
metadata and instructions; the folder may also contain scripts, references,
templates, and other resources \cite{agentSkillsStandard}. OpenAI
Codex and Claude Code document skill mechanisms based on this principle
\cite{openaiCodexSkills,anthropicClaudeCodeSkills}. For an optimization
workflow, the point of a skill is to move recurring guidance out of
a transient prompt and into an inspectable artifact.

\paragraph{Agentic research practice.}
Across these tool classes, effective agentic research consistently
depends on persistent instructions, inspectable
 reports and artifacts, staged evaluation, explicit verification, and
 intermittent human steering \cite{zimmer2026AgenticResearcher}.
Mathematical research tightens each requirement. A claimed algorithm
must have a precise statement; a computational run must record solver
status, tolerances, and enough environment information to be reproduced;
a proof attempt must be labeled theorem, conjecture, checked derivation,
or failed approach; and a formal-verification claim requires actual
checker output. \textsf{AutoOPT} instantiates these requirements for
optimization research: each stage of the pipeline is packaged as a skill
that any skill-loading agent can execute, every stage is approved or revised
by a human, and the research state lives in a versioned repository. We
describe the framework in Section~\ref{sec:methodology}.

\section{Methodology}\label{sec:methodology} \label{subsec:autoopt-workflow}

\textsf{AutoOPT} is an agentic framework for the end-to-end design,
convergence analysis, and formal verification of optimal first-order
methods. The framework has four stages, illustrated in
Figure~\ref{fig:autoopt-pipeline} and detailed in the following
subsections. Throughout this section, we describe \textsf{AutoOPT} at a
high level, focusing on how to use the methodology; readers interested in
implementation details can refer to the appendix and the accompanying skills.

\paragraph{Operating \textsf{AutoOPT}.}
\textsf{AutoOPT} is model- and agent-agnostic:  a user can run
it from any skill-loading agent, e.g., Codex, Claude Code, or OpenCode, and
can use any commercial or open-source LLM supported by the
agent. The user operates
\textsf{AutoOPT} by conversing with the agent.

At the start, \textsf{AutoOPT} invokes the agent skill
\textsf{research-repo-manager}, which creates a research workspace in a
directory of the user's choice. The workspace serves as persistent memory
for the research project, keeping every source, claim, experiment,
artifact, reproducibility note, deferred goal, and potential next step
explicit and inspectable.
The workspace overcomes a key limitation of single-session LLM
workflows: as a conversation grows, the LLM's context window becomes
crowded, compressed, or incomplete, making it harder to reliably recover
earlier decisions, assumptions, and evidence. By storing the research
state in versioned repository files rather than only in the chat
transcript, \textsf{research-repo-manager} lets a later session reload the
current goals, findings, artifacts, and next action directly from the
workspace.

\subsection{Stage 1: Numerical design of optimal first-order methods via
\textsf{bnb-pep-skill}}\label{subsec:autoopt-stage-numerical}

\paragraph{The design problem.}\label{subsec:autoopt-math-background}
The research question \textsf{AutoOPT} addresses is finding the fastest
gradient-based method for a given problem class: minimizing a function
$f\colon\mathbb{R}^{d}\to\mathbb{R}$ from a known \emph{function
class} $\mathcal{F}$, such as smooth convex functions, smooth strongly
convex functions, weakly convex functions, or a nonsmooth class. Here,
``fastest'' means the best worst-case guarantee with respect to a chosen
performance measure $\mathcal{E}$. For example, the goal may be to design an
$N$-step algorithm producing a final point $x_{N}$ such that
$\mathcal{E}=f(x_{N})-\inf f$ or $\mathcal{E}=\|\nabla f(x_{N})\|^{2}$ is
guaranteed to be small, subject to an initial condition $\mathcal{C}\leq0$
normalizing the starting point, such as
$\mathcal{C}=\|x_{0}-x_{\star}\|^{2}-R^{2}$ for a given $R>0$.

\textsf{AutoOPT} searches over the class of \emph{fixed-step first-order
methods} (FSFOMs): methods that form each iterate from the starting point
$x_{0}\in\mathbb{R}^{d}$ and the first-order information observed so far via
\[
x_{i}=x_{i-1}-\sum_{j=0}^{i-1}h_{i,j}\,f^\prime(x_{j}),\qquad i=1,\ldots,N,
\]
where $f^\prime(x_j) \in \partial f(x_j)$ is a subgradient of $f$ at $x_j$ and $\{h_{i,j}\}_{0\le j<i\le N}$ is a lower triangular array of
stepsizes; lower triangularity expresses that each update depends only on
the first-order information observed so far. The stepsizes $\{h_{i,j}\}_{0\le j<i\le N}$ parameterize the FSFOM and
are the main decision variables we optimize over; we write $\mathcal{M}_{N}$ for the class of all $N$-step FSFOMs. We write $M=\{h_{i,j}\}_{0\le j<i\le N}\in \mathcal{M}_N$ to denote an individual FSFOM.

Formally, the design problem becomes a minimax problem of the form
\begin{equation}\label{eq:autoopt-outer-design}
\mathcal{R}^{\star}(\mathcal{M}_{N})\;\triangleq\;\min_{M\in\mathcal{M}_{N}}\;
\underbrace{\max\left\{ \mathcal{E}\;:\;f\in\mathcal{F},\;\mathcal{C}\leq0,\;
x_{1},\ldots,x_{N}\text{ generated by }M=\{h_{i,j}\}_{0\le j<i\le N}\right\}}_{\mathcal{R}(M),\;\text{the worst-case performance of }M}.
\end{equation}

\paragraph{From function classes to finitely many inequalities.}
At first sight, the minimax problem above may seem intractable as the inner
maximization runs over all functions in $\mathcal{F}$, an
infinite-dimensional class.
The performance estimation methodology
\cite{drori2014performance,taylor2017smooth} removes this obstacle in two
steps. First, for a fixed method $M=\{h_{i,j}\}_{0\le j<i\le N}$, the inner problem depends on $f$ only
through its values and first-order information at the iterates
$x_{0},x_{1},\ldots,x_{N}$ and at a stationary point (minimizer for convex setups) $x_{\star}$, adjoined to
encode stationarity (optimality for convex setups) through $0\in\partial f(x_{\star})$; collecting these
indices in $I^{\star}_{N}\triangleq\{0,1,\ldots,N,\star\}$ and writing
$g_{i}\in\partial f(x_{i})$ and $f_{i}\triangleq f(x_{i})$, the trajectory
data $\{(x_{i},g_{i},f_{i})\}_{i\in I^{\star}_{N}}$ carry everything the
inner problem needs to know about $f$. Second, the function classes the
methodology covers are \emph{quadratically representable}
\cite{dasgupta2025BnBPEPTutorial,dasgupta2022BnBPEP}: membership in
$\mathcal{F}$ is described by inequalities of the form
\begin{equation}\label{eq:autoopt-quad-class}
\begin{aligned}
c_{0}f(y)\geq{} & c_{1}f(x)+c_{2}\norm{x}^{2}+c_{3}\norm{y}^{2}+c_{4}\norm{u}^{2}+c_{5}\norm{v}^{2}+c_{6}\langle x,y\rangle\\
 & +c_{7}\langle x,u\rangle+c_{8}\langle x,v\rangle+c_{9}\langle y,u\rangle+c_{10}\langle y,v\rangle+c_{11}\langle u,v\rangle
\end{aligned}
\end{equation}
for all $x,y\in\mathbb{R}^{d}$, $u\in\partial f(x)$, and $v\in\partial
f(y)$, where $\partial f$ denotes the subdifferential, reducing to
$\partial f(x) = \{\nabla f (x)\}$ for the differentiable classes, and $c_{0},\ldots,c_{11}$
are fixed coefficients determined by the class parameters; smooth convex,
smooth strongly convex, and weakly convex functions are all of this form.

Technically, replacing the constraint \(f\in\mathcal{F}\) with the inequalities \eqref{eq:autoopt-quad-class} imposed only on the finitely many pairs in \(I^{\star}_{N}\) is, a priori, a relaxation. Establishing that this finite system is lossless requires a separate theorem for each function class, and such characterizations are known as \emph{interpolation conditions}. We refer the reader to prior work for a detailed discussion of this issue; see, for example, \cite{taylor2017smooth, RubbensEtAl2024Interpolation, RubbensEtAl2025Constructive}.

\paragraph{Gram lifting: a convex SDP for fixed stepsizes.}
It is convenient to reparameterize the FSFOM as
$x_{i}=x_{0}-\sum_{j=0}^{i-1}\alpha_{i,j}g_{j}$, where
$\alpha_{i,j}\triangleq\sum_{k=j+1}^{i}h_{k,j}$; this triangular map is
invertible, so $\alpha$ is the design variable. Collect the relevant
trajectory vectors as columns of a matrix $P$ and the sampled function
values in a row vector $F$, and form the Gram matrix
$G\triangleq P^{\top}P\succeq0$. Since the iterates are affine in
$\alpha$, the sampled inequalities \eqref{eq:autoopt-quad-class}, the
performance measure, and the initial condition are affine in $(F,G)$,
with Gram-side coefficient matrices whose entries have degree at most
two in $\alpha$. Writing $\mathbf{tr}$ for the trace, the inner
worst-case problem becomes
\begin{equation}
\mathcal{R}(M)=\left(\begin{array}{l}
\textrm{maximize}\quad F\mathbf{u}_{\mathcal{E}}+\mathbf{tr}\,GQ_{\mathcal{E}}(\alpha)\\
\textrm{subject to}\\
F\mathbf{u}_{i,j}+\mathbf{tr}\,GQ_{i,j}(\alpha)\leq0,\quad i,j\in I^{\star}_{N}:i\neq j,\quad\mathbin{{\color{annotgray}\rhd}}{\color{annotgray}\,\textsf{dual var.}\,\lambda_{i,j}\geq0}\\
-G\preceq0,\quad\mathbin{{\color{annotgray}\rhd}}{\color{annotgray}\,\textsf{dual var.}\,Z\succeq0}\\
F\mathbf{u}_{\mathcal{C}}+\mathbf{tr}\,GQ_{\mathcal{C}}(\alpha)\leq r,\quad\mathbin{{\color{annotgray}\rhd}}{\color{annotgray}\,\textsf{dual var.}\,\nu\geq0}\\
\mathop{\textbf{rank}}G\leq d,
\end{array}\right),\label{eq:autoopt-inner-sdp}
\end{equation}

where the vectors
$\mathbf{u}_{\mathcal{E}},\mathbf{u}_{\mathcal{C}},\mathbf{u}_{i,j}$,
the symmetric matrices
$Q_{\mathcal{E}}(\alpha),Q_{\mathcal{C}}(\alpha),Q_{i,j}(\alpha)$,
and the scalar $r$ are assembled mechanically from
\eqref{eq:autoopt-quad-class}, $\mathcal{E}$, and $\mathcal{C}$; for
instance, the initial condition
$\mathcal{C}=\norm{x_{0}-x_{\star}}^{2}-R^{2}$ has
$\mathbf{u}_{\mathcal{C}}=0$ and $r=R^{2}$. Under the large-scale assumption that $d$ is at least the
number of columns of $P$, the constraint $\mathop{\textbf{rank}}G\leq d$ is
vacuous. Dropping it makes \eqref{eq:autoopt-inner-sdp} a convex
semidefinite program (SDP) in $(F,G)$, free of the problem dimension
$d$. This mechanical assembly is exactly what
\textsf{bnb-pep-skill} automates: it fixes the exact contents of $P$ and
$F$ for the instance at hand, derives the corresponding vectors
$\mathbf{u}$ and matrices $Q(\alpha)$ symbolically, and records the
resulting Gram dimension together with the matching assumption on $d$. Appendix~\ref{sec:lemniscate-pep-design} carries out this assembly
explicitly for lemniscate acceleration.

\paragraph{Dualization: from worst-case values to proofs.}
SDP duality turns the inner maximization
\eqref{eq:autoopt-inner-sdp} into a minimization,
\begin{equation}\label{eq:autoopt-dual-sdp}
\overline{\mathcal{R}}(M)=\left(\begin{array}{l}
\textrm{minimize}\quad\nu r\\
\textrm{subject to}\\
\mathbf{u}_{\mathcal{E}}-\nu\mathbf{u}_{\mathcal{C}}-\sum_{i,j\in I^{\star}_{N}:i\neq j}\lambda_{i,j}\mathbf{u}_{i,j}=0,\\
\nu Q_{\mathcal{C}}(\alpha)-Q_{\mathcal{E}}(\alpha)+\sum_{i,j\in I^{\star}_{N}:i\neq j}\lambda_{i,j}Q_{i,j}(\alpha)=Z,\\
Z\succeq0,\\
\nu\geq0,\;\lambda_{i,j}\geq0,\quad i,j\in I^{\star}_{N}:i\neq j
\end{array}\right),
\end{equation}
whose decision variables
$\lambda=\{\lambda_{i,j}\}_{i,j\in I^{\star}_{N}:i\neq j}$, $\nu$, and
the positive-semidefinite slack matrix $Z$ we call the \emph{inner-dual variables}. By
weak duality, $\mathcal{R}(M)\leq\overline{\mathcal{R}}(M)$, and this
inequality carries the interpretive weight of the methodology: any
feasible inner-dual point is a certificate, an explicit nonnegative
aggregation of the sampled inequalities proving that
$\mathcal{E}\leq\nu r$ for every function in the class satisfying the
initial condition, that is, a machine-checkable convergence proof. Strong
duality holds under mild conditions which implies $\mathcal{R}(M)=\overline{\mathcal{R}}(M)$  and is assumed in this paper.
Each derivation that \textsf{bnb-pep-skill} writes states
this dual explicitly, with the sign conventions of its multipliers fixed,
and records whether the resulting bound invokes strong duality or only
the weak direction.
The certificate view is what the later stages of \textsf{AutoOPT} use internally: Stage~2
(Section~\ref{subsec:autoopt-stage-symbolic}) fits analytic expressions to
the numerically designed
$(\alpha^{\star},\lambda^{\star},\nu^{\star},Z^{\star})$ 
and then converts the fitted multipliers into the Lyapunov function based convergence proofs, and Stage~3
(Section~\ref{subsec:autoopt-stage-lean}) formalizes those Lyapunov
proofs in Lean.

\paragraph{Cholesky elimination and the BnB-PEP QCQP.}
Substituting the dual \eqref{eq:autoopt-dual-sdp} for the
inner maximization in \eqref{eq:autoopt-outer-design} collapses the
minimax into a single minimization, jointly over the stepsizes and the
inner-dual variables. 
In other words, the single minimization problem jointly optimizes over the algorithm and its convergence proof. 

However, this joint optimization is non-convex. The BnB-PEP methodology \cite{dasgupta2022BnBPEP}
eliminates the semidefinite constraint $Z\succeq 0$ through the parameterization $Z=VV^{\top}$ and, by introducing auxiliary variables to represent bilinear or quadratic terms when necessary, formulates the numerical optimization problem as a nonconvex quadratically constrained quadratic program (QCQP)
\begin{equation}\label{eq:autoopt-qcqp}
\mathcal{R}^{\star}(\mathcal{M}_{N})=\left(\begin{array}{l}
\textrm{minimize}\quad\nu r\\
\textrm{subject to}\\
\mathbf{u}_{\mathcal{E}}-\nu\mathbf{u}_{\mathcal{C}}-\sum_{i,j\in I^{\star}_{N}:i\neq j}\lambda_{i,j}\mathbf{u}_{i,j}=0,\\
\nu Q_{\mathcal{C}}(\alpha)-Q_{\mathcal{E}}(\alpha)+\sum_{i,j\in I^{\star}_{N}:i\neq j}\lambda_{i,j}Q_{i,j}(\alpha)=Z,\\
V\text{ is lower triangular with nonnegative diagonals},\\
Z=VV^{\top},\\
\nu\geq0,\;\lambda_{i,j}\geq0,\quad i,j\in I^{\star}_{N}:i\neq j
\end{array}\right),
\end{equation}
with decision variables $\{\alpha_{i,j}\}_{0\leq j<i\leq N}$, $\lambda$,
$\nu$, $Z$, and $V$. 
Any feasible point to this QCQP corresponds to an algorithm and its convergence proof, while an optimal point gives the best algorithm along with its convergence proof.

\paragraph{The three-stage solution strategy.}
Despite its nonconvexity, \eqref{eq:autoopt-qcqp} can be solved to
certifiable global optimality, up to prescribed tolerances, by the
three-stage BnB-PEP strategy of \cite{dasgupta2022BnBPEP}: a fixed-method
dual SDP supplies a feasible point, an interior-point solver refines it
to a local optimum, and a customized spatial branch-and-bound solver
performs the global certification. The final stage is made practical by
the QCQP-specific bounds and cuts developed in that work. Accordingly,
\textsf{bnb-pep-skill} runs the first two stages by default and invokes
the global stage only at the user's explicit request. The BnB-PEP methodology can be laborious to implement and debug and the manual coding is cumbersome and error-prone and requires substantial PEP expertise. This implementation barrier has previously limited its use.

\paragraph{Automation with \textsf{bnb-pep-skill}.}
The methodology described above, including several technical details omitted from the exposition, can be tedious to formulate and implement by hand. Our \textsf{bnb-pep-skill} automates this process through an executable and auditable protocol.

A run of the \textsf{bnb-pep-skill} opens as a structured interview that
assumes no PEP expertise on the user's part and produces a fully specified
instance: the function class with its inequalities, the method update
equations, the performance measure, the initial condition, the constants,
and the horizon $N$. Whenever a required item is missing or ambiguous, the
skill asks a targeted question rather than guessing, and it accepts the
instance only when another researcher could write down every constraint
and identify every decision variable without making further choices.

\paragraph{Code generation and guardrails.}
Once the user approves the mathematics, \textsf{bnb-pep-skill}  writes code. The
generated \texttt{Julia}/\texttt{JuMP} \cite{Lubin2023} program builds the primal and dual SDPs of the instance
together with the design problem \eqref{eq:autoopt-qcqp} and numerically solves the non-convex QCQP.

This protocol makes concrete the division of labor
behind the domain-specific harness of Section~\ref{sec:introduction}: the
LLM agent interviews the user,
instantiating the derivation, and writing code, while the harness supplies
the accumulated domain expertise, from exactness theorems for
interpolation conditions to warm-start numerics, that a general-purpose
model cannot be relied upon to reproduce unaided. The same division of
labor recurs in Stages~2 and~3.

\paragraph{The agentic workflow.}
In the first stage,
\textsf{AutoOPT} invokes \textsf{bnb-pep-skill} and interviews the user
about the problem setup for which an optimal first-order method is
sought. The interview assumes no PEP expertise on the user's part.
From the agreed setup, the skill writes out the complete
derivation, instantiating
\eqref{eq:autoopt-quad-class}--\eqref{eq:autoopt-qcqp} for the user's
instance, and stops for the user's approval of the mathematics before any
code is written.

Once the user approves the derivation, the skill generates and runs
\texttt{Julia}/\texttt{JuMP} code to obtain candidate stepsizes and
associated inner-dual certificates. It stores the derivation, the code,
the numerical solution, the solver status, and the logs in the research
workspace.
Figure~\ref{fig:bnb-pep-stage} summarizes the workflow; the complete
derivations produced this way for our two case studies
appear, lightly edited for presentation, in
Appendices~\ref{sec:lemniscate-pep-design}
and~\ref{sec:itemf-pep-design}.

\begin{figure}[H]
\centering
\begin{adjustbox}{max width=\textwidth, center}
\definecolor{bnbink}{RGB}{70,70,70}
\definecolor{bnbbluefill}{RGB}{224,235,245}
\definecolor{bnbbluebadgefill}{RGB}{246,250,253}
\definecolor{bnbblueline}{RGB}{61,103,148}
\definecolor{bnbtealfill}{RGB}{208,230,220}
\definecolor{bnbtealline}{RGB}{40,117,91}
\definecolor{bnbamberfill}{RGB}{243,226,196}
\definecolor{bnbamberline}{RGB}{166,118,29}
\definecolor{bnbvioletfill}{RGB}{226,218,233}
\definecolor{bnbvioletline}{RGB}{108,82,135}
\definecolor{bnbgrayfill}{RGB}{244,244,244}
\definecolor{bnbgrayline}{RGB}{95,95,95}
\begin{tikzpicture}[
  font=\sffamily,
  stage/.style={draw, rounded corners=4pt, line width=0.72pt, align=center,
    text width=2.00cm, minimum height=1.62cm, inner sep=4pt, text=bnbink,
    font=\sffamily\small},
  flow/.style={-{Stealth[length=5.7pt]}, line width=0.78pt, draw=bnbink},
  lab/.style={font=\sffamily\scriptsize, text=bnbink, align=center,
    inner sep=1.0pt, fill=white},
  skilltag/.style={draw, rounded corners=2.4pt, line width=0.58pt,
    font=\sffamily\scriptsize, text=bnbink, align=center,
    inner xsep=4.2pt, inner ysep=1.8pt},
]
\node[stage, draw=bnbblueline, fill=bnbbluefill] (q) at (0,0)
  {\textbf{Problem}\\\textbf{description}\\\textbf{by user}};
\node[stage, draw=bnbtealline, fill=bnbtealfill, right=1.28cm of q] (deriv)
  {\textbf{BnB-PEP}\\\textbf{derivation}};
\node[stage, draw=bnbamberline, fill=bnbamberfill, right=1.28cm of deriv] (julia)
   {\textbf{Julia}\\\textbf{code}\\\textbf{generation}};
\node[stage, draw=bnbvioletline, fill=bnbvioletfill, right=1.28cm of julia] (run)
  {\textbf{Run}\\\textbf{Julia}\\\textbf{code}};
\node[stage, draw=bnbgrayline, fill=bnbgrayfill, right=1.28cm of run] (out)
  {\textbf{Store}\\\textbf{derivation \&}\\\textbf{numerical solution}};

\node[skilltag, draw=bnbtealline, fill=bnbbluebadgefill, anchor=south]
  at ($(q.north)!0.5!(out.north)+(0,0.18)$) {\texttt{bnb-pep-skill}};

\draw[flow] (q) -- (deriv);
\draw[flow] (deriv) -- (julia);
\draw[flow] (julia) -- (run);
\draw[flow] (run) -- (out);

\foreach \a/\b/\labeltop/\labelbottom in {q/deriv/approve/edit,
    deriv/julia/approve/edit, julia/run/approve/edit}{
  \begin{scope}[shift={($(\a.east)!0.5!(\b.west)$)}]
    \draw[fill=white, draw=bnbink, line width=0.6pt] circle[radius=0.21];
    \fill[bnbink] (0,0.05) circle[radius=0.052];
    \fill[bnbink] (-0.082,-0.118)
      arc[start angle=180, end angle=0, radius=0.082] -- cycle;
    \node[lab, anchor=north] at (0,-0.30)
      {\labeltop/\\[-0.5pt]\labelbottom};
  \end{scope}
}
\end{tikzpicture}\end{adjustbox}
\caption{
Stage 1 of \textsf{AutoOPT}.
Numerical design of the optimal method via \textsf{bnb-pep-skill}.}
\label{fig:bnb-pep-stage}
\end{figure}

\subsection{Stage 2: Symbolic discovery of analytic descriptions and
convergence proofs via
LLMs}\label{subsec:autoopt-stage-symbolic}

\paragraph{Numerical to symbolic discovery.}
Once the numerically optimal stepsizes and the BnB-PEP mathematical
derivation are available from Stage 1, \textsf{AutoOPT} invokes the skill
\textsf{frontier-llm-consult} to discover analytic descriptions of the
numerically designed methods and candidate convergence proofs.
Figure~\ref{fig:symbolic-fitting-stage} summarizes the stage. 
The skill \textsf{frontier-llm-consult} supports
two routes. The external consultation route,
\textsf{chatgpt-pro-session}, runs a persistent ChatGPT Pro session
in a web browser and is the default for ordinary Stage 2 work; the
same route also covers bounded one-shot reviews, each conducted as a
single approved turn in a fresh session. For the consultation
route, the skill
assembles a deliberately small context bundle containing a
self-contained prompt, the numerical findings from the previous stage,
and the relevant mathematical background and derivations. The skill
then presents the bundle for user approval; the user may approve it as
is or edit it first, and only after approval is the bundle shared with
the frontier model. The persistent session enables symbolic discovery
from numerical findings through several rounds of pattern search,
algebraic refinement, and follow-up questions from the user. The native
route, \textsf{solve-with-highest-reasoning}, runs only on explicit
user invocation and is not an external consultation: the host agent
itself conducts a long native campaign on the problem, using the
strongest currently available Codex GPT model at its highest supported
reasoning setting, with direct repository exploration, local tools,
and adversarial verification, over at least eight wall-clock hours.
The user approves the repository scope, no problem material is sent to
any external model, and the full evidence trail is archived in the
research workspace. For the consultation route, the user can edit the
underlying \texttt{SKILL.md} file to substitute a different frontier
model.

Whichever route is selected, discovery proceeds in
three substeps. The frontier
model first identifies the analytic form of the numerically designed
method, that is, an analytic parametrization of the
numerically optimal FSFOM stepsizes from Stage 1, together with an
analytic feasible point of the inner dual
\eqref{eq:autoopt-dual-sdp}. By the certificate view of
Section~\ref{subsec:autoopt-stage-numerical}, feasibility of this point
already proves the worst-case bound, so the first substep yields a
candidate convergence proof by dual feasibility.
In the second substep, the model recasts the
method from this raw stepsize parametrization into a simple,
memory-efficient algorithmic form, recursive or momentum-based, which is
the form in which this paper states each method.
In the third substep, the
frontier model derives a second argument from the
same certificate: reading coefficients off the certificate's multipliers,
it constructs a Lyapunov sequence of potentials that is nonincreasing
along the iterates and whose endpoint values give the claimed
bound, with the potentials stated directly on the recast
form of the second substep. Both arguments are candidate proofs of
the same theorem. This paper presents the Lyapunov proofs,
the form that a classical optimization audience reads most naturally; the
dual-feasibility certificates remain intermediate artifacts of the
research workspace, and it is their multipliers that the printed proofs
are built from.

Figure~\ref{fig:stage2-prompt} makes the workflow concrete for the
lemniscate acceleration case study of Section~\ref{sec:lemniscate}. It shows an
illustrative Stage-2 prompt sequence, consultation-shaped because both
published runs used the external \textsf{chatgpt-pro-session} route: one
initial consultation prompt requesting the first substep, followed by
two short turns in the same session requesting the remaining two. Only
the author-written task text is printed; the consultation package that
wraps it and the approve-before-send gate are those described above.

\begin{figure}[H]
\centering
\definecolor{symink}{RGB}{70,70,70}
\definecolor{symtealfill}{RGB}{208,230,220}
\definecolor{symtealline}{RGB}{40,117,91}
\definecolor{symamberfill}{RGB}{243,226,196}
\definecolor{symamberline}{RGB}{166,118,29}
\definecolor{symvioletfill}{RGB}{226,218,233}
\definecolor{symvioletline}{RGB}{108,82,135}
\begin{tikzpicture}[
  font=\sffamily,
  stage/.style={draw, rounded corners=3.5pt, line width=0.7pt, align=center,
    text width=2.35cm, minimum height=1.55cm, inner sep=4.5pt, text=symink,
    font=\sffamily\footnotesize},
  gate/.style={draw, circle, line width=0.6pt, fill=white, minimum size=0.43cm,
    inner sep=0pt},
  flow/.style={-{Stealth[length=5.5pt]}, line width=0.8pt, draw=symink},
  lab/.style={font=\sffamily\scriptsize, text=symink, align=center,
    inner sep=1.5pt},
  skilltag/.style={draw, rounded corners=2.2pt, line width=0.55pt,
    font=\sffamily\scriptsize, text=symink, align=center,
    inner xsep=4pt, inner ysep=1.7pt},
]
\node[stage, draw=symtealline, fill=symtealfill] (bundle) at (0,0)
  {\textbf{Context}\\\textbf{bundle}};
\node[stage, draw=symamberline, fill=symamberfill] (consult) at (4.65,0)
  {\textbf{Selected}\\\textbf{frontier-model}\\\textbf{route}};
\node[stage, draw=symvioletline, fill=symvioletfill] (candidate) at (9.3,0)
  {\textbf{Candidate}\\\textbf{formula and}\\\textbf{convergence}\\\textbf{analysis}};
\node[gate] (gate) at ($(bundle.east)!0.5!(consult.west)$) {};

\node[skilltag, draw=symtealline, fill=white, anchor=south]
  at ($(bundle.north)!0.5!(candidate.north)+(0,0.12)$)
  {\texttt{frontier-llm-consult}};
\node[lab, anchor=north] at ($(gate.south)+(0,-0.06)$) {approve/edit};
\fill[symink] ($(gate.center)+(0,0.05)$) circle[radius=0.052];
\fill[symink] ($(gate.center)+(-0.082,-0.118)$)
  arc[start angle=180, end angle=0, radius=0.082] -- cycle;

\draw[flow] (bundle.east) -- (gate.west);
\draw[flow] (gate.east) -- (consult.west);
\draw[flow] (consult.east) -- (candidate.west);
\end{tikzpicture}\caption{Stage 2 of \textsf{AutoOPT}. Symbolic discovery of analytic descriptions of the numerically designed
methods and their candidate convergence proofs via \textsf{frontier-llm-consult}.}
\label{fig:symbolic-fitting-stage}
\end{figure}

\begin{figure}[tp]
\centering
\begin{mdframed}[linewidth=0.7pt, roundcorner=3.5pt, innerleftmargin=8pt,
  innerrightmargin=8pt, innertopmargin=6pt, innerbottommargin=6pt]

\footnotesize\ttfamily
Attached are the Stage 1 outputs of \textsf{AutoOPT} for the design of an
$N$-step FSFOM that minimizes the worst-case squared gradient norm
$\norm{\nabla f(x_{N})}^{2}$ over $L$-smooth convex functions with
$\norm{x_{0}-x_{\star}}\leq R$: the BnB-PEP derivation
(\texttt{derivation.md}), the \textup{\texttt{Julia}}/\textup{\texttt{JuMP}}
code that solved it, and a CSV sweep
(\verb|stepsize_certificate_sweep.csv|) of the numerically optimal
stepsizes $h$ and inner-dual certificates $(\lambda,\nu,Z)$ for the
horizons \textup{\texttt{N = 1,...,25}} with first 5  being globally optimal via 
spatial branch-and-bound and the last 20 being locally optimal. 

From these data, propose a candidate analytic parametrization of the
stepsizes $h$ valid for every horizon $N$, together with a candidate
analytic feasible point $(\lambda,\nu,Z)$ of the inner dual SDP stated in
the derivation. Feasibility of that point already certifies the worst-case
bound, so also state the candidate rate it implies in closed form. Every
proposed formula is a candidate, not a proof; report which entries of the
sweep you checked it against and to what precision. Please ask me clarifying 
questions if anything is not clear to you.

\medskip\hrule\medskip

Your parametrization stores the full lower-triangular stepsize array $h$.
Recast the method into an equivalent memory-efficient form, recursive or
momentum-based, that reproduces the same iterates $x_{0},\ldots,x_{N}$
while carrying only a constant number of vectors, and give closed-form
expressions for the coefficients of the recast update. Check the recast
against the $h$-form on the sweep horizons and report the agreement.

\medskip\hrule\medskip

Reading the coefficients off the multipliers $(\lambda,\nu)$ of your
candidate certificate, construct a candidate Lyapunov proof of the same
bound: a sequence of potentials $\mathcal V_{0},\ldots,\mathcal V_{N}$,
stated directly on the recast form of your previous reply, that is
nonincreasing along the iterates and whose endpoint values yield the
claimed bound. Express each one-step decrement as a nonnegative
combination of the interpolation inequalities from the derivation, and
flag any step you cannot close. 

\end{mdframed}
\caption{An illustrative Stage-2 prompt sequence for the
lemniscate case study: one initial prompt followed by two
follow-up prompts corresponding to the three
substeps of Section~\ref{subsec:autoopt-stage-symbolic}.
}
\label{fig:stage2-prompt}
\end{figure}

\paragraph{Preparation for machine verification.}
At the end of this stage, \textsf{AutoOPT} saves the outputs from
the frontier LLMs in the research workspace so that the reasoning
trace is inspectable by the user. \textsf{AutoOPT}
marks the frontier LLM output as a ``candidate'' analytical solution:
it may give the researcher something concrete to test, simplify, formalize,
or prove, but it still needs independent validation, which is
supplied, for the results advanced to the paper, by the
Lean verification stage described next.

\subsection{Stage 3: Formal verification of candidate theorems via
\textsf{lean-verify}}\label{subsec:autoopt-stage-lean}

\begin{figure}[H]
\centering
\begin{adjustbox}{max width=\textwidth, center}
\definecolor{lvink}{RGB}{70,70,70}
\definecolor{lvbluefill}{RGB}{224,235,245}
\definecolor{lvblueline}{RGB}{61,103,148}
\definecolor{lvtealfill}{RGB}{208,230,220}
\definecolor{lvtealline}{RGB}{40,117,91}
\definecolor{lvamberfill}{RGB}{243,226,196}
\definecolor{lvamberline}{RGB}{166,118,29}
\definecolor{lvvioletfill}{RGB}{226,218,233}
\definecolor{lvvioletline}{RGB}{108,82,135}
\definecolor{lvrosefill}{RGB}{248,228,224}
\definecolor{lvroseline}{RGB}{162,92,82}
\definecolor{lvgrayfill}{RGB}{244,244,244}
\definecolor{lvgrayline}{RGB}{95,95,95}
\begin{tikzpicture}[
  font=\sffamily,
  stage/.style={draw, rounded corners=3.5pt, line width=0.7pt, align=center,
    text width=2.16cm, minimum height=1.40cm, inner sep=4.2pt, text=lvink,
    font=\sffamily\footnotesize},
  flow/.style={-{Stealth[length=5.3pt]}, line width=0.78pt, draw=lvink},
  lab/.style={font=\sffamily\scriptsize, text=lvink, align=center,
    inner sep=1.4pt, fill=white},
  sub/.style={font=\sffamily\scriptsize, text=lvink, align=center,
    inner sep=1.4pt},
  skilltag/.style={draw, rounded corners=2.2pt, line width=0.55pt,
    font=\sffamily\scriptsize, text=lvink, align=center,
    inner xsep=4pt, inner ysep=1.7pt},
]
\node[stage, draw=lvblueline, fill=lvbluefill] (candidate) at (0,0)
  {\textbf{Candidate}\\\textbf{theorems \&}\\\textbf{proofs}};
\node[stage, draw=lvvioletline, fill=lvvioletfill, right=0.82cm of candidate] (formalize)
  {\textbf{Formalized}\\\textbf{in Lean}};
\node[stage, draw=lvtealline, fill=lvtealfill, right=0.82cm of formalize] (check1)
  {\textbf{Check 1:}\\\textbf{Lean verifies}\\\textbf{every step}};
\node[stage, draw=lvamberline, fill=lvamberfill, right=0.82cm of check1] (check2)
  {\textbf{Check 2:}\\\textbf{independent}\\\textbf{re-examination}};
\node[draw=lvtealline, circle, fill=white, line width=0.7pt,
  minimum size=0.54cm, inner sep=0pt, right=2.30cm of check2] (verified) {};
\draw[lvtealline, line width=1.0pt]
  ($(verified.center)+(-0.12,-0.01)$) -- ($(verified.center)+(-0.03,-0.10)$) --
  ($(verified.center)+(0.14,0.12)$);
\node[lab, anchor=north, fill=white] at ($(verified.south)+(0,-0.08)$)
  {\textbf{formally verified}};

\node[sub, anchor=north] at ($(candidate.south)+(0,-0.10)$)
  {from Stage 2};
\node[sub, anchor=north] at ($(formalize.south)+(0,-0.10)$)
  {every definition \&\\proof step made explicit};
\node[sub, anchor=north] at ($(check1.south)+(0,-0.10)$)
  {no unfinished, incomplete,\\or incorrect steps};
\node[sub, anchor=north] at ($(check2.south)+(0,-0.10)$)
  {exactly what was proved,\\under which assumptions};

\node[skilltag, draw=lvtealline, fill=white, anchor=south]
  at ($(formalize.north)!0.5!(check1.north)+(0,0.62)$) {\texttt{lean-verify}};

\draw[flow] (candidate) -- (formalize);
\draw[flow] (formalize) -- (check1);
\draw[flow] (check1) -- (check2);
\draw[flow] (check2) -- (verified);

\begin{scope}[shift={($(check2.east)!0.5!(verified.west)$)}]
  \draw[fill=white, draw=lvink, line width=0.6pt] circle[radius=0.21];
  \fill[lvink] (0,0.05) circle[radius=0.052];
  \fill[lvink] (-0.082,-0.118)
    arc[start angle=180, end angle=0, radius=0.082] -- cycle;
  \node[lab, anchor=south] at (0,0.26) {final human\\confirmation};
\end{scope}
\end{tikzpicture}\end{adjustbox}
\caption{Formal verification via \textsf{lean-verify}: candidate theorems
and proofs from Stage 2 are formalized in Lean, checked by Lean itself, and
re-examined by an independent verifier; the result is recorded as formally
verified only after the human authors confirm that the formal theorems
faithfully express the paper's claims.}
\label{fig:lean-verification-stage}
\end{figure}

\paragraph{From candidate to formalization.}
Stage 3 of \textsf{AutoOPT} is the formal verification stage. Here,
\textsf{AutoOPT} invokes the skill \textsf{lean-verify}, which uses
machine verification in the open-source proof assistant Lean to
determine whether the chain of logic in the candidate theorems and
proposed proofs discovered in Stage 2 is correct. During machine
verification, Lean records every definition, theorem statement, and
proof precisely, examines the logic of every step of a proof, and
accepts a proof only when each step is correct and explicit.

The \textsf{lean-verify} skill first formalizes in Lean the candidate theorems together with the proofs selected
for presentation in the paper, which for our case studies are the
Lyapunov proofs derived in Stage 2, and reconstructs the proof steps
there in detail, so that any gap, hidden assumption, or algebraic slip
in the candidate argument is exposed rather than granted on trust. A
proof is recorded as \emph{formally verified} only after two
successive independent checks followed by a final human confirmation. The first
check is Lean's own internal verification, which confirms that the
accepted proof contains no unfinished, incomplete, or incorrect steps.
The second independent check is through a separate verifier called the
comparator that reexamines the finished formalization and confirms exactly
which statements have been proved and which assumptions the proofs
were permitted to use, so that what has been machine verified is
pinned down without ambiguity. Once the two checks are complete, the
human authors confirm that the formal theorem statements in Lean
faithfully express the corresponding theorem statements in the paper,
and only then record the result as formally verified.
Figure~\ref{fig:lean-verification-stage} summarizes the workflow of
the skill \textsf{lean-verify}.

An illustrative Stage-3 prompt, the file structure of
each resulting Lean project, and per-project summary statistics are
presented with the case studies, in Sections~\ref{sec:lemniscate-lean}
and~\ref{sec:itemf-lean}.

\subsection{Stage 4: Human interpretation and
write-up}\label{subsec:autoopt-stage-human}

The goal of this paper is to provide a new methodology for automating what used to be a highly time-consuming and technical component of optimization research. As researchers, we value accelerating progress and solving harder and more interesting problems.

However, we hope that this pursuit of automation does not cause us to lose sight of the fact that mathematical research is a human endeavor and that the value of research is realized when it is conveyed and used by other researchers. Therefore, the final part of the pipeline is for the human authors to interpret the results, frame why they are of interest to the community, and present insights that go beyond the mere statement of the result.

In what follows, we briefly offer our personal views on the best practices for humans using such methodologies.

\paragraph{Publication restraint.}
In this era of AI accelerating research, proofs of novel results are no longer scarce. We therefore believe authors should now ask whether a novel and correct result is sufficiently interesting to the research community, and exercise judgment and parsimony by forgoing publication of results that do not meet this new higher bar.

\paragraph{Distilling generalizable insights.}
While optimization is an applied field, worst-case complexity guarantees are theoretical analyses whose value lies in expanding our collective understanding of algorithm design and eventually informing the optimization practice. Merely presenting a new algorithm with a verifiable but opaque analysis delivers little of this value, especially when AI assistance means the human authors themselves lack strong intuition about the result.

It is therefore essential that authors extract compact, generalizable insights from the formal discovery and present them in a form that is maximally useful to human researchers. For example, a machine-discovered optimization algorithm may first arrive in a dense stepsize-matrix representation. In our pipeline, Stage 2 already recasts such a method into an equivalent memory-efficient, recursive, or momentum-based formulation (Section~\ref{subsec:autoopt-stage-symbolic}); what remains a matter of human judgment is which of the available forms serves readers best. The authors should make that choice deliberately, potentially with further AI assistance, and present the formulation that a human reader can most readily implement, interpret, and analyze.

\paragraph{A calibrated standard of evidence.}
In the final presentation, the authors should clearly assign each claim an evidentiary status (human-verified, formally verified, LLM-verified, numerical evidence, or conjecture), so that no intermediate object or computational observation is presented as stronger evidence than it actually provides.

We also emphasize that human authors must exercise their own due diligence in verification. Even a Lean formalization may fail to establish the intended result: the formal theorem statement may encode a claim weaker than the paper's natural-language statement, or the proof may remain incomplete because it contains \texttt{sorry} placeholders. Even with thorough harnesses, AI systems still make mistakes, and the human authors are ultimately accountable for the work they publish.

\section{New algorithms discovered, analyzed, and verified using AutoOPT}
\label{subsec:notation}
In this section, we present two new first-order algorithms found using \textsf{AutoOPT}: \emph{lemniscate acceleration} and \emph{analytic ITEM-f}. We present these algorithms both as algorithmic advances in their own right and as demonstrations of the strength of \textsf{AutoOPT}.

Note that the order of discovery via \textsf{AutoOPT} differs from the order of presentation of those results in this paper. For each method, \textsf{AutoOPT} first solved the BnB-PEP
design problem numerically, next identified the method's analytic form
together with an analytic dual certificate of its worst-case guarantee,
then recast the method from the raw stepsize
parametrization of the design problem into the memory-efficient form in
which this paper states it, and finally converted the certificate's
multipliers into a Lyapunov convergence proof for the presented form. The paper presents each method in the order a reader
expects: algorithm, convergence theorem, and proof. The printed proofs
are the Lyapunov proofs from Stage 2, with Lean formalization discussed
in Sections~\ref{sec:lemniscate-lean} and~\ref{sec:itemf-lean}; the
subsections titled ``\textsf{AutoOPT} design'' recount how each method
was found, and Appendices~\ref{sec:lemniscate-pep-design}
and~\ref{sec:itemf-pep-design} reproduce the Stage-1 PEP formulations.

We use the following notation and notions in the rest of the paper. Write $\mathbb{R}^{d}$ for the underlying Euclidean space and we
write $\left\langle \cdot,\cdot\right\rangle $ and $\|\cdot\|$ to
denote the standard inner product and norm on $\mathbb{R}^{d}$. Write
$\mathbb{R}^{m\times n}$ for the set of $m\times n$ matrices, $\mathbb{S}^{n}$
for the set of $n\times n$ symmetric matrices, and $\mathbb{S}^{n}_{+}$
for the set of $n\times n$ positive-semidefinite matrices. We use
the standard notation $e_{i}\in\mathbb{R}^{d}$ for the unit vector
having a single 1 as its $i$-th component. Write $\left(\cdot\odot\cdot\right)\colon\mathbb{R}^{d}\times\mathbb{R}^{d}\to\mathbb{R}^{d\times d}$
to denote the symmetric outer product, that is
$x\odot y=\left(xy^{\intercal}+yx^{\intercal}\right)/2$ for every $x,y\in\mathbb{R}^{d}$, and $\delta_{i,j}$
denotes the Kronecker delta, equal to $1$ if $i=j$ and $0$ otherwise. We
write $\mathbf{tr}$ for the trace of a square matrix.
For $0\leq\mu<L<\infty$, we write $\mathcal{F}_{\mu,L}$ for the class of
$L$-smooth $\mu$-strongly convex functions
$f\colon\mathbb{R}^{d}\to\mathbb{R}$, that is, differentiable functions for
which $\nabla f$ is $L$-Lipschitz and $f-(\mu/2)\norm{\cdot}^{2}$ is convex;
the case $\mu=0$ gives the class $\mathcal{F}_{0,L}$ of $L$-smooth convex
functions. Every function $f\in\mathcal{F}_{0,L}$ satisfies
\begin{equation}\label{eq:F_0L_formula}
	f(x)-f(y)-\langle\nabla f(y),x-y\rangle
	-\frac{1}{2L}\|\nabla f(x)-\nabla f(y)\|^{2}\geq 0
\end{equation}
for all $x,y\in\mathbb{R}^{d}$
\cite[Theorem A.1]{AspremontEtAl2021Acceleration}.

Throughout, $x_{\star}$ denotes a minimizer of the function $f$
under consideration, $f_{\star}\triangleq f(x_{\star})$, and the subscript
$\star$ refers to this optimal point. For a method that runs for $N$
iterations and produces the iterates $x_{0},x_{1},\ldots,x_{N}$, we collect the
iteration indices together with the optimal point in the index set
$I^{\star}_{N}\triangleq\{0,1,\ldots,N,\star\}$.

\subsection{Lemniscate acceleration for efficient gradient-norm minimization}\label{sec:lemniscate}

Consider the unconstrained minimization problem
\begin{equation}\label{eq:main-opt}
\begin{array}{ll}
\underset{x\in\mathbb{R}^{d}}{\mbox{minimize}} & f(x),\end{array}
\end{equation}
where $f\colon\mathbb{R}^{d}\to\mathbb{R}$ is an $L$-smooth convex function. We seek an algorithm that uses $N$ gradient evaluations to produce an output $x_N$ with a small worst-case guarantee on $\norm{\nabla f(x_N)}$.

Throughout this section, we use the single-gradient-step notation
\[
z^{+}\triangleq z-\frac{1}{L}\nabla f(z),\qquad\forall\,z\in\mathbb{R}^d.
\]

\subsubsection*{Algorithm statement}
The \emph{lemniscate accelerated gradient method} \eqref{eq:lemniscate-acceleration} is
\begin{equation*}\tag{LemniAcc}\label{eq:lemniscate-acceleration}
\begin{aligned}
 z_{k+1}
 &=z_{k}-\Omega_{N}\left(
    \frac{1+\rho_{k+1}^{2}}{2\rho_{k+1}}
    -\frac{1+\rho_{k}^{2}}{2\rho_{k}}
    \right)\frac{1}{L}\nabla f(x_{k})\\
 x_{k+1}
 &=x_{k}^{+}+\left(
    \frac{1+\rho_{k+1}^{2}}{1-\rho_{k+1}^{2}}
    -\frac{1+\rho_{k+2}^{2}}{1-\rho_{k+2}^{2}}
    \right)z_{k+1}
\end{aligned}
\end{equation*}
for $k=0,1,2,\dots, N-1$,
where $N\geq1$ is an integer-valued  prespecified iteration budget, $x_{0}\in\mathbb{R}^{d}$ is the starting point, $z_{0}=0$, and $\Omega_N>0$ and $1=\rho_{0}>\rho_{1}>\cdots>\rho_{N}>\rho_{N+1}=0$ are constants that we define in Lemma~\ref{lem:lemniscate-recurrence}.

We can also express \eqref{eq:lemniscate-acceleration} equivalently in the so-called momentum form
\begin{equation}
\begin{aligned} & x_{1}=x^{+}_{0}+\alpha^{\text{(init)}}(x^{+}_{0}-x_{0}),\\
 & x_{k+1}=x^{+}_{k}+\alpha^{\text{(mom)}}_{k}(x^{+}_{k}-x^{+}_{k-1})+\alpha^{\text{(corr)}}_{k}(x^{+}_{k}-x_{k}),\qquad k=1,\dots,N-1,
\end{aligned}
\label{eq:lemniscate-momentum-form}
\end{equation}
where
\begin{align*}
\alpha^{\text{(init)}}&=\Omega_{N}
\left(
\frac{1+\rho_{1}^{2}}{2\rho_{1}}
-\frac{1+\rho_{0}^{2}}{2\rho_{0}}
\right)
\left(
\frac{1+\rho_{N}^{2}}{2\rho_{N}}
-\frac{1+\rho_{N-1}^{2}}{2\rho_{N-1}}
\right),
\\
\alpha^{\text{(mom)}}_k&=\frac{
\frac{1+\rho_{N-k}^{2}}{2\rho_{N-k}}
-\frac{1+\rho_{N-k-1}^{2}}{2\rho_{N-k-1}}
}{
\frac{1+\rho_{N-k+1}^{2}}{2\rho_{N-k+1}}
-\frac{1+\rho_{N-k}^{2}}{2\rho_{N-k}}
},\\
\alpha^{\text{(corr)}}_k&=\left(
\frac{1+\rho_{N-k}^{2}}{2\rho_{N-k}}
-\frac{1+\rho_{N-k-1}^{2}}{2\rho_{N-k-1}}
\right)
\left[
\Omega_{N}
\left(
\frac{1+\rho_{k+1}^{2}}{2\rho_{k+1}}
-\frac{1+\rho_{k}^{2}}{2\rho_{k}}
\right)
-
\frac{1}{
\displaystyle
\frac{1+\rho_{N-k+1}^{2}}{2\rho_{N-k+1}}
-\frac{1+\rho_{N-k}^{2}}{2\rho_{N-k}}
}
\right].
\end{align*}
The two forms are equivalent in the sense that they produce the same iterates $x_{0},\ldots,x_{N}$; a proof is given in Appendix~\ref{sec:lemniscate-form-equivalence}.

\begin{table}[t]
\centering
\begin{tabular}{c|c|ccccccc}
\toprule
$N$ & $\Omega_{N}$ & $\rho_{1}$ & $\rho_{2}$ & $\rho_{3}$ & $\rho_{4}$ & $\rho_{5}$ & $\rho_{6}$ & $\rho_{7}$\\
\midrule
1 & 2.4142 & 0.4142 & & & & & & \\
2 & 4.1628 & 0.6126 & 0.2402 & & & & & \\
3 & 6.2479 & 0.7241 & 0.4142 & 0.1601 & & & & \\
4 & 8.6645 & 0.7931 & 0.5380 & 0.3004 & 0.1154 & & & \\
5 & 11.4075 & 0.8388 & 0.6277 & 0.4142 & 0.2287 & 0.0877 & & \\
6 & 14.4730 & 0.8707 & 0.6942 & 0.5052 & 0.3287 & 0.1805 & 0.0691 & \\
7 & 17.8578 & 0.8939 & 0.7446 & 0.5780 & 0.4142 & 0.2675 & 0.1464 & 0.0560\\
\bottomrule
\end{tabular}
\vspace{0.05in}
\caption{Values of $\Omega_{N}$ and $\rho_{1},\dots,\rho_{N}$ for $N=1,\ldots,7$,
computed by the bisection method.
}
\label{tab:lemniscate-parameters}
\end{table}

\subsubsection*{Construction of the coefficients}

To fully define \eqref{eq:lemniscate-acceleration}, it remains to define the coefficients $\rho_0,\rho_1,\dots,\rho_N,\rho_{N+1}$ and $\Omega_N$.

\begin{lem}\label{lem:lemniscate-recurrence}
Let $N$ be a nonnegative integer. There exists a unique $\Omega_{N}>0$ for which there is a strictly decreasing nonnegative sequence $1=\rho_{0} > \rho_{1} > \ldots  > \rho_{N+1}=0$ satisfying the recurrence equation
\begin{equation}\label{eq:lemniscate-recurrence}
    \Omega_{N}\left(\rho_{k}-\rho_{k+1}\right)^{2}=\rho_{k}\left(1-{\rho_{k+1}}^{2}\right), \quad k=0,\dots,N.
\end{equation}
Moreover, for this value of $\Omega_{N}$, the sequence $\rho_{0},\rho_{1},\dots,\rho_{N+1}$ is uniquely determined.
\end{lem}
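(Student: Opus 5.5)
The plan is a shooting argument in the parameter $\Omega$. Fix $\Omega>0$ and $\rho\in(0,1]$, and study the equation $\Omega(\rho-r)^{2}=\rho(1-r^{2})$ for the unknown $r\in[0,\rho)$. Write
\[
\varphi(r)\triangleq\Omega(\rho-r)^{2}-\rho(1-r^{2})=(\Omega+\rho)r^{2}-2\Omega\rho r+\rho(\Omega\rho-1).
\]
This is a convex quadratic in $r$ with $\varphi(0)=\rho(\Omega\rho-1)$ and $\varphi(\rho)=-\rho(1-\rho^{2})\le0$.

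\textbf{Step 1: one step of the recurrence.} A convexity argument gives a trichotomy.
\begin{itemize}
\item If $\Omega\rho>1$, there is exactly one root in $(0,\rho)$. When $\rho=1$ the other root is $r=1$ itself, which is excluded by strict decrease.
\item If $\Omega\rho=1$, the only admissible root is $r=0$.
\item If $\Omega\rho<1$, there is no root in $[0,\rho)$, since $\varphi<0$ there by convexity.
\end{itemize}
In the first two cases call the root $r=R(\Omega,\rho)$. It is the smaller root
\[
R(\Omega,\rho)=\frac{\Omega\rho-\sqrt{\Omega\rho(1-\rho^{2})+\rho^{2}}}{\Omega+\rho},
\]
so it is continuous in $(\Omega,\rho)$ on $\{\Omega\rho\ge1\}$.

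\textbf{Step 2: the sequence is forced.} By Step 1, any admissible sequence must satisfy $\rho_{k+1}=R(\Omega,\rho_k)$ for each $k$. The terminal condition $\rho_{N+1}=0$ together with the equation at $k=N$ forces $\Omega\rho_N^{2}=\rho_N$, i.e. $\Omega\rho_N=1$. It also requires $\Omega\rho_k>1$ for $k<N$, so that $\rho_{k+1}>0$. This already proves the ``moreover'' clause: once $\Omega$ is fixed, the sequence is uniquely determined. It remains to show there is exactly one $\Omega$ with $g_N(\Omega)\triangleq\Omega\rho_N(\Omega)=1$ and $g_k(\Omega)>1$ for all $k<N$.

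\textbf{Step 3: monotonicity in $\Omega$ and $\rho$.} I will use implicit differentiation of $\varphi$ at the smaller root, where $\partial_r\varphi<0$.
\begin{itemize}
\item In $\Omega$: $\partial_\Omega\varphi=(\rho-r)^{2}>0$, so $\partial_\Omega R>0$.
\item In $\rho$: $\partial_\rho\varphi=2\Omega(\rho-r)-(1-r^{2})$. Substituting $1-r^{2}=\Omega(\rho-r)^{2}/\rho$ from the equation gives $\partial_\rho\varphi=\Omega(\rho-r)(\rho+r)/\rho>0$, so $\partial_\rho R>0$.
\end{itemize}
By induction on $k$, each $\rho_k(\Omega)$ is strictly increasing and continuous on its domain $D_k=\{\Omega:g_j(\Omega)>1\ \forall j<k\}$. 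Hence $g_k$ is also strictly increasing and continuous there. The same monotonicity shows $D_k$ is an interval $(\Omega_k^{*},\infty)$, with $D_0=(0,\infty)$, $g_0(\Omega)=\Omega$, and $D_1=(1,\infty)$.

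\textbf{Step 4: intermediate value argument.} Two boundary limits are needed.
\begin{itemize}
\item As $\Omega\downarrow\Omega_N^{*}$, we have $g_{N-1}\to1$, so $\rho_N=R(\Omega,\rho_{N-1})\to0$ by continuity of $R$ up to $\Omega\rho=1$. Hence $g_N\to0<1$.
\item As $\Omega\to\infty$, $g_N\to\infty$. Since $\rho_k$ is increasing in $\Omega$, it suffices that $\rho_N$ stays bounded away from $0$. This follows inductively, because $R(\Omega,\rho)\to\rho$ as $\Omega\to\infty$ for fixed $\rho\in(0,1]$.
\end{itemize}
Strict monotonicity and continuity of $g_N$ on $D_N$ then give a unique $\Omega_N$ with $g_N(\Omega_N)=1$. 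Outside $D_N$ no admissible sequence exists, so this $\Omega_N$ is globally unique. For $N=0$ the statement reduces to $\Omega_0=1$.

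\textbf{Expected obstacle.} I expect the main difficulty to be the boundary behaviour in Step 4. One must justify the continuity of $R$ at $\Omega\rho=1$, which holds from the explicit formula. One must also make the induction on the domains $D_k$ and the limit $\Omega\to\infty$ fully rigorous, keeping in mind that at $\rho_0=1$ the larger root coincides with $\rho_0$.
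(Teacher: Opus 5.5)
Your proposal is correct and follows essentially the paper's route: the same smaller-root one-step map, the same implicit-differentiation monotonicity in $\Omega$ and $\rho$, the observation that every admissible sequence is a forced shooting orbit, and an intermediate value argument for $\Omega\rho_N(\Omega)=1$ on the ray where the orbit is defined (your $D_N$ is exactly $(\Omega_{N-1},\infty)$ in the paper's induction on the horizon). The differences are minor. The paper proves uniqueness by a separate orbit-comparison argument and gets the growth at infinity from the telescoped bound $\rho_N(\Omega)\geq 1-N/\sqrt{\Omega}$. You instead obtain uniqueness from strict monotonicity of $g_N$ on $D_N$, together with the fact that any admissible $\Omega$ lies in $D_N$, and the growth from the limit $R(\Omega,\rho)\to\rho$.
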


We can calculate the numerical values of $\{\rho_{k}\}_{k=0}^{N+1}$ and $\Omega_{N}$ using bisection based on $1\le \Omega_N\le (N+1)^2$.
Appendix~\ref{sec:app-recurrence-existence} gives the exact shooting decision rule, its termination case, and its correctness proof. Table~\ref{tab:lemniscate-parameters} reports the values of
$\Omega_{N}$ and of the nontrivial entries $\rho_{1},\dots,\rho_{N}$ for $N=1,\ldots,7$.

We note that the sequence $\{\rho_{k}\}_{k=0}^{N+1}$ has a symmetry: the map $T(\rho)\triangleq(1-\rho)/(1+\rho)$, which satisfies $T\circ T=\mathrm{id}$, relates mirrored entries through $\rho_{N+1-k}=T(\rho_{k})$ for $0\leq k\leq N+1$. This follows from the uniqueness in Lemma~\ref{lem:lemniscate-recurrence} together with the fact that $(\rho_{k},\rho_{k+1})=(a,b)$ satisfies the recurrence \eqref{eq:lemniscate-recurrence} if and only if $(T(b),T(a))$ does. In particular, for odd $N$, the middle index $k=(N+1)/2$ is its own mirror image, so $\rho_{(N+1)/2}$ must be the point such that $T(\rho)=\rho$, namely $\rho_{(N+1)/2}=\sqrt{2}-1\approx0.4142$.

\subsubsection*{Convergence theorem}

\begin{thm}\label{thm:lemniscate-convergence}
Let $N\geq1$ be an integer, and let $f\colon\mathbb{R}^{d} \to \mathbb{R}$ be an $L$-smooth convex function with a minimizer $x_\star$. Then, the final iterate $x_{N}$ of~\eqref{eq:lemniscate-acceleration} satisfies
\[       \norm{\nabla f(x_N)}^{2} \leq \frac{L^{2} \norm{x_{0}-x_{\star}}^{2}}{\Omega_{N}^{2}} \leq \frac{\varpi^{4} L^{2} \norm{x_{0}-x_{\star}}^{2}}{(N+1)^{4}}.
  \]
\end{thm}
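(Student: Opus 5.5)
The theorem has two inequalities, and I will prove them separately: first the rate $L^{2}\norm{x_0-x_\star}^2/\Omega_N^2$ via a Lyapunov argument, then the comparison $\Omega_N\ge (N+1)^2/\varpi^2$ via a continuum analysis of the recurrence \eqref{eq:lemniscate-recurrence}.

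\textbf{First inequality.} I work with the recursive form \eqref{eq:lemniscate-acceleration}, writing $a_k\triangleq(1+\rho_k^2)/(2\rho_k)$ and $b_k\triangleq(1+\rho_k^2)/(1-\rho_k^2)$. Then $a_0=1$, $a_{N+1}=\infty$, $b_0=\infty$ and $b_{N+1}=1$, so the step sizes $\Omega_N(a_{k+1}-a_k)$ and $b_{k+1}-b_{k+2}$ are positive. Since $z_0=0$ and $z$ accumulates gradients, $z_k=-\frac{\Omega_N}{L}\sum_{j<k}(a_{j+1}-a_j)\nabla f(x_j)$.

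I look for a potential of the form
\[
\mathcal V_k=A_k\Bigl(f(x_k)-f_\star-\tfrac{1}{2L}\norm{\nabla f(x_k)}^2\Bigr)+B_k\norm{\nabla f(x_k)}^2+\tfrac{L}{2}\norm{x_0-x_\star+c_k z_k}^2+(\text{cross terms in }z_k,\nabla f(x_k)),
\]
with coefficients that are rational functions of $\rho_k$. The coefficients are fixed by the requirement $\mathcal V_0\le \frac{L}{2}\norm{x_0-x_\star}^2$ and by a final potential that dominates $\frac{\Omega_N^2}{2L}\norm{\nabla f(x_N)}^2$ once $\rho_{N+1}=0$ is imposed.

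The core step is showing $\mathcal V_{k+1}\le\mathcal V_k$. I will write $\mathcal V_k-\mathcal V_{k+1}$ as a nonnegative combination of the inequalities \eqref{eq:F_0L_formula} for the pairs $(x_k,x_{k+1})$, $(x_{k+1},x_k)$ and $(x_\star,x_{k+1})$, plus a sum of squares. The multipliers are read off the Stage~2 certificate. When everything is expanded in the vectors $x_0-x_\star$, $z_{k+1}$, $\nabla f(x_k)$ and $\nabla f(x_{k+1})$, the residual quadratic form has a determinant-type coefficient. That coefficient should vanish exactly when \eqref{eq:lemniscate-recurrence} holds, which is what makes the remainder a perfect square. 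This is the main obstacle: guessing the cross terms and the multipliers so that the recurrence emerges as the zero-discriminant condition. I would first try a residual that is a single square $\norm{\alpha\nabla f(x_{k+1})-\beta\nabla f(x_k)-\gamma z_{k+1}}^2$ and solve for $\alpha,\beta,\gamma$ symbolically.

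\textbf{Second inequality.} This is purely about the recurrence. Setting $\rho=\tan^2(\theta/2)$-type substitutions or, more naturally, $\rho_k=\phi(t_k)$, the relation \eqref{eq:lemniscate-recurrence} reads $\Omega_N(\Delta\rho)^2\approx\rho(1-\rho^2)$. Its continuum limit is $\sqrt{\Omega_N}\,|d\rho|=\sqrt{\rho(1-\rho^2)}\,dt$, and the total travel satisfies
\[
\int_0^1\frac{d\rho}{\sqrt{\rho(1-\rho^2)}}=2\int_0^1\frac{du}{\sqrt{1-u^4}}=\varpi
\]
under the substitution $\rho=u^2$. This gives $N+1\approx\varpi\sqrt{\Omega_N}$ asymptotically.

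For the exact inequality I need a discrete bound. Let $\psi(\rho)=\int_\rho^1 ds/\sqrt{s(1-s^2)}$. I plan to show $\psi(\rho_{k+1})-\psi(\rho_k)\le \varpi/(N+1)$-compatible increments, precisely $\psi(\rho_{k+1})-\psi(\rho_k)\ge (\rho_k-\rho_{k+1})/\sqrt{\rho_k(1-\rho_{k+1}^2)}=1/\sqrt{\Omega_N}$. This follows by comparing the integrand on $[\rho_{k+1},\rho_k]$ with its value at the endpoints, using $s(1-s^2)\le\rho_k(1-\rho_{k+1}^2)$ for $s$ in that interval. Summing over $k=0,\dots,N$ gives $(N+1)/\sqrt{\Omega_N}\le\psi(0)=\varpi$, that is, $\Omega_N^{-2}\le\varpi^4/(N+1)^4$, as required.
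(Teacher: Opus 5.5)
Your argument for the second inequality is correct and matches the paper's own (Part~1 of Corollary~\ref{cor:app-omega-bounds}). On $[\rho_{k+1},\rho_k]$ one has $s(1-s^2)\le\rho_k(1-\rho_{k+1}^2)$, the recurrence turns each lower bound into exactly $1/\sqrt{\Omega_N}$, and telescoping gives $(N+1)/\sqrt{\Omega_N}\le\varpi$.

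The gap is the first inequality. You never write down a potential or check a decrement. The step you call ``the main obstacle'' is the whole content of the proof. The Stage~2 certificate is not something a proof can appeal to, so the multipliers have to be produced. Moreover, the ansatz you describe cannot close as written. It is built only from $x_0$, $z_k$ and data at $x_k$, and it uses only consecutive and star inequalities. The star inequality at $x_{k+1}$ contributes $\langle\nabla f(x_{k+1}),x_{k+1}-x_\star\rangle$. Now $x_{k+1}-x_0$ weights each past gradient $\nabla f(x_j)$, $j<k$, relative to its weight $-\frac{\Omega_N}{L}(a_{j+1}-a_j)$ in $z_{k+1}$, by the factor $1-b_{k+2}+\frac{2\rho_j\rho_{j+1}}{1-\rho_j\rho_{j+1}}$, which changes with $j$. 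So for $k\ge2$ this vector is not in the span of $x_0-x_\star$, $z_{k+1}$, $\nabla f(x_k)$, $\nabla f(x_{k+1})$ in which you plan to expand. No potential term, other inequality, or residual square in that span can absorb it. More fundamentally, nothing in your ansatz refers to the final iterate, while the certificate that works uses the interpolation inequality between $x_N$ and every earlier iterate.

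For reference, write $\mathcal D_{i,j}=f(x_i)-f(x_j)-\langle\nabla f(x_j),x_i-x_j\rangle-\frac{1}{2L}\norm{\nabla f(x_i)-\nabla f(x_j)}^2\ge0$ and $\tilde a_k=(1-\rho_k^2)/(2\rho_k)$. The paper's potential is
\[
\mathcal V_k=\tilde a_k\mathcal D_{k-1,\star}-\frac{(1-\rho_k)^2}{2\rho_k}\mathcal D_{N,\star}+\frac{L}{2\Omega_N}\norm{x_k-x_\star+b_{k+1}z_k}^2-\frac{L}{2\Omega_N}\norm{x_N^+-x_\star+z_k}^2 .
\]
The decrement is the exact identity $\mathcal V_k-\mathcal V_{k+1}=\tilde a_k\mathcal D_{k-1,k}+(\rho_k-\rho_{k+1})\mathcal D_{\star,k}+(a_{k+1}-a_k)\mathcal D_{N,k}$, with no residual square. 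Note that the anchor is $x_k-x_\star+b_{k+1}z_k$, not $x_0-x_\star+c_kz_k$.

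The recurrence enters through two scalar identities:
\begin{itemize}
\item $\Omega_N[(\tilde a_{k+1}-\tilde a_k)-b_{k+1}(a_{k+1}-a_k)]=1$, which makes the anchor move by $-\frac{\Omega_N}{L}(\tilde a_{k+1}-\tilde a_k)\nabla f(x_k)$;
\item $\frac{\Omega_N}{2}[(\tilde a_{k+1}-\tilde a_k)^2-(a_{k+1}-a_k)^2]=\tilde a_{k+1}$, which makes the $\norm{\nabla f(x_k)}^2$ terms from the \emph{difference} of the two squares combine, via the three-point identity, into interpolation gaps.
\end{itemize}
The negative terminal square and the $-\mathcal D_{N,\star}$ term (whose coefficient changes by exactly $a_{k+1}-a_k$) generate the $\mathcal D_{N,k}$ terms. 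These are the H-dual counterparts of the $\mathcal D_{\star,k}$ terms, as in OGM-G-type analyses, and they are the idea your ansatz is missing.

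The endpoints are then easy. Dropping the negative square gives $\mathcal V_0\le\frac{L}{2\Omega_N}\norm{x_0-x_\star}^2$. The last $x$-update has coefficient $2/(\Omega_N^2-1)$ since $\rho_N=1/\Omega_N$, and it gives $\mathcal V_N\ge\frac{\Omega_N}{2L}\norm{\nabla f(x_N)}^2$.
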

Here, $\varpi = 2\int_0^1 1/\sqrt{1-x^{4}}\, dx$ is the lemniscate constant, and the latter relaxation comes from $ (N+1)^{2}/\varpi^{2}<\Omega_{N} $ (Corollary~\ref{cor:app-omega-bounds} in Appendix~\ref{sec:app-recurrence-existence}). We present a convergence proof soon in Section~\ref{sec:lemniscate-lyapunov}.

\eqref{eq:lemniscate-acceleration} is not the first method to attain an
$O(L^{2}\norm{x_{0}-x_{\star}}^{2}/N^{4})$ rate on the squared gradient norm
$\norm{\nabla f(x_{N})}^{2}$ of $L$-smooth convex functions.
Nesterov et al.~\cite[Remark~2.1]{NesterovGasnikovGuminovDvurechensky2020}
observed that running Nesterov's accelerated gradient method (AGM)
\cite{nesterov1983method}
(or Kim--Fessler's optimized gradient method (OGM) \cite{kim2016optimized})
for the first half of the iterations and OGM-G
\cite{kim2021optimizing} for the second half achieves this rate, which
matches the $\Omega(L^{2}\norm{x_{0}-x_{\star}}^{2}/N^{4})$ lower bound of
\cite{nemirovsky1991optimality,nemirovsky1992information} and is therefore
optimal up to a constant factor.

Lemniscate acceleration improves upon this two-phase scheme in two ways.
First, it attains the optimal rate as a single method, with one coefficient
sequence and no switch between two algorithms. Second, it improves the
constant of the guarantee. For even $N$, chaining the standard guarantee of
OGM
with that of OGM-G
yields
\[
\norm{\nabla f(x_{N})}^{2}
\leq\frac{64\,L^{2}\norm{x_{0}-x_{\star}}^{2}}{(N+2)^{4}}.
\]
In comparison,
Theorem~\ref{thm:lemniscate-convergence} attains the constant
$\varpi^{4}\approx47.27$, improving the leading asymptotic constant  by a factor of approximately $1.35$.

\subsubsection{Convergence proof} \label{sec:lemniscate-lyapunov}

In this section, we prove
Theorem~\ref{thm:lemniscate-convergence}. Throughout,
$f\colon\mathbb{R}^{d}\to\mathbb{R}$ is an $L$-smooth convex function
with a minimizer $x_{\star}$, and $x_{k}$ and $z_{k}$ denote the
iterates of \eqref{eq:lemniscate-acceleration}; in particular
$z_{0}=0$. The argument has three steps. We first define the
interpolation gaps and, from the multipliers of the analytic dual
certificate found in Stage 2 of the pipeline, a Lyapunov sequence
$\mathcal V_{0},\mathcal V_{1},\ldots,\mathcal V_{N}$ along the
iterates. We then evaluate the two endpoints: $\mathcal V_{0}$ is at
most $\left(L/(2\Omega_{N})\right)\norm{x_{0}-x_{\star}}^{2}$, and the
terminal value $\mathcal V_{N}$ dominates
$\left(\Omega_{N}/(2L)\right)\norm{\nabla f(x_{N})}^{2}$
(Lemma~\ref{lem:lyap-terminal}). Finally,
Lemma~\ref{lem:lyap-closed-decrement} shows that the sequence is
nonincreasing along the iterates, and chaining the three facts proves
Theorem~\ref{thm:lemniscate-convergence}; the same machinery yields a
function-value guarantee (Corollary~\ref{cor:lyap-function-value}).

Write $g_{i}\triangleq\nabla f(x_{i})$ and $g_{\star}=0$, so that
$x_{\star}^{+}=x_{\star}$, and denote the interpolation gap by
\[
\mathcal D_{i,j}
 \triangleq
 f(x_{i})-f(x_{j})-\langle g_{j},x_{i}-x_{j}\rangle
 -\frac{1}{2L}\norm{g_{i}-g_{j}}^{2}\geq0,
 \quad i,j\in I_{N}^{\star},
\]
which is nonnegative by \eqref{eq:F_0L_formula}. The three-point identity
\begin{equation}
 \mathcal D_{i,\star}
 -\left\langle x_{i}^{+}-x_{\star},g_{j}\right\rangle
 =\mathcal D_{i,j}-\mathcal D_{\star,j}
 \label{eq:lyap-three-point}
\end{equation}
follows from direct calculation.
For notational simplicity, we also set
$\mathcal{D}_{-1,0}\triangleq0$, $\mathcal{D}_{-1,\star}\triangleq0$,
and $x_{-1}^{+}\triangleq x_{0}$.
Evaluating the recurrence
\eqref{eq:lemniscate-recurrence} at $k=N$, where $\rho_{N+1}=0$, and at
$k=0$, where $\rho_{0}=1$, gives the two boundary identities
$\rho_{N}=1/\Omega_{N}$ and $\Omega_{N}=(1+\rho_{1})/(1-\rho_{1})>1$,
which we use repeatedly below.

For $0\leq k\leq N$, define the Lyapunov sequence
\begin{equation}
\label{eq:lyap-sequence-def}
 \mathcal V_{k}\triangleq
 \frac{1-\rho_{k}^{2}}{2\rho_{k}}
 \mathcal D_{k-1,\star}
 -\frac{(1-\rho_{k})^{2}}{2\rho_{k}}
 \mathcal D_{N,\star}
 +\frac{L}{2\Omega_{N}}\norm{
 x_{k}-x_{\star}
 +\frac{1+\rho_{k+1}^{2}}{1-\rho_{k+1}^{2}}z_{k}}^{2}
 -\frac{L}{2\Omega_{N}}\norm{
 x_{N}^{+}-x_{\star}
 +z_{k}}^{2}.
\end{equation}
At $k=0$, since $\mathcal{D}_{-1,\star}=0$,
$\rho_{0}=1$, and $z_{0}=0$, we get
\begin{equation*}
 \mathcal V_{0}
 =\frac{L}{2\Omega_{N}}\norm{x_{0}-x_{\star}}^{2}
 -\frac{L}{2\Omega_{N}}\norm{x_{N}^{+}-x_{\star}}^{2}
 \leq \frac{L}{2\Omega_{N}}\norm{x_{0}-x_{\star}}^{2}.
\end{equation*}

Next, we evaluate the terminal value $\mathcal V_{N}$.
\begin{lem}[Terminal decomposition]
\label{lem:lyap-terminal}
The terminal value of the Lyapunov sequence
\eqref{eq:lyap-sequence-def} satisfies
\begin{equation*}
 \mathcal V_{N}
 =\frac{\Omega_{N}}{2L}\norm{\nabla f(x_{N})}^{2}
 +\mathcal D_{N,\star}
 +\frac{1}{\Omega_{N}}\mathcal D_{\star,N}
 +\frac{\Omega_{N}^{2}-1}{2\Omega_{N}}\mathcal D_{N-1,N}
 \geq\frac{\Omega_{N}}{2L}\norm{\nabla f(x_{N})}^{2}.
\end{equation*}
\end{lem}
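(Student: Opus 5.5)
The plan is to evaluate \eqref{eq:lyap-sequence-def} at $k=N$ directly. We use the boundary data $\rho_{N}=1/\Omega_{N}$ and $\rho_{N+1}=0$, and rewrite every term as a linear combination of $\norm{g_{N}}^{2}$ and interpolation gaps $\mathcal D_{i,j}$. Once the identity is established, the inequality is immediate: each $\mathcal D_{i,j}\geq0$ by \eqref{eq:F_0L_formula}, and $\Omega_{N}>1$ makes every coefficient nonnegative.

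\textbf{Step 1 (scalar coefficients).} With $\rho_{N}=1/\Omega_{N}$, the coefficients become
\[
\frac{1-\rho_{N}^{2}}{2\rho_{N}}=\frac{\Omega_{N}^{2}-1}{2\Omega_{N}},\qquad
\frac{(1-\rho_{N})^{2}}{2\rho_{N}}=\frac{(\Omega_{N}-1)^{2}}{2\Omega_{N}}.
\]
Since $\rho_{N+1}=0$, the factor multiplying $z_{N}$ in the first norm equals $1$.

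\textbf{Step 2 (norm difference).} Set $w=x_{N}-x_{\star}+z_{N}$ and note that $x_{N}^{+}=x_{N}-g_{N}/L$. Expanding gives
\[
\frac{L}{2\Omega_{N}}\bigl(\norm{w}^{2}-\norm{w-g_{N}/L}^{2}\bigr)=\frac{1}{\Omega_{N}}\langle x_{N}-x_{\star}+z_{N},g_{N}\rangle-\frac{1}{2L\Omega_{N}}\norm{g_{N}}^{2}.
\]

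\textbf{Step 3 (eliminate $z_{N}$).} Use the $x$-update of \eqref{eq:lemniscate-acceleration} at $k=N-1$. Here $\frac{1+\rho_{N}^{2}}{1-\rho_{N}^{2}}-1=\frac{2}{\Omega_{N}^{2}-1}$, so
\[
z_{N}=\tfrac{\Omega_{N}^{2}-1}{2}\bigl(x_{N}-x_{N-1}^{+}\bigr).
\]
For $N=1$ this uses the convention $x_{-1}^{+}$ only through $x_{0}^{+}$, so no special case arises. We then split
\[
\langle x_{N}-x_{N-1}^{+},g_{N}\rangle=\langle x_{N}-x_{\star},g_{N}\rangle-\langle x_{N-1}^{+}-x_{\star},g_{N}\rangle .
\]
The inner products are converted into gaps as follows:
\begin{itemize}
\item From the definitions, $\mathcal D_{N,\star}+\mathcal D_{\star,N}=\langle g_{N},x_{N}-x_{\star}\rangle-\frac{1}{L}\norm{g_{N}}^{2}$.
\item The three-point identity \eqref{eq:lyap-three-point} with $(i,j)=(N-1,N)$ gives $\langle x_{N-1}^{+}-x_{\star},g_{N}\rangle=\mathcal D_{N-1,\star}-\mathcal D_{N-1,N}+\mathcal D_{\star,N}$.
\end{itemize}

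\textbf{Step 4 (collect terms).} Substitute Steps 1--3 into $\mathcal V_{N}$ and collect the coefficients of $\mathcal D_{N-1,\star}$, $\mathcal D_{N,\star}$, $\mathcal D_{\star,N}$, $\mathcal D_{N-1,N}$ and $\norm{g_{N}}^{2}$. We expect the following:
\begin{itemize}
\item The $\mathcal D_{N-1,\star}$ terms cancel: $+\frac{\Omega_{N}^{2}-1}{2\Omega_{N}}$ from Step 1 against $-\frac{1}{\Omega_{N}}\cdot\frac{\Omega_{N}^{2}-1}{2}$ from Step 3.
\item $\mathcal D_{N-1,N}$ receives exactly $\frac{\Omega_{N}^{2}-1}{2\Omega_{N}}$.
\item $\langle x_{N}-x_{\star},g_{N}\rangle$ carries total weight $\frac{1}{\Omega_{N}}\bigl(1+\frac{\Omega_{N}^{2}-1}{2}\bigr)=\frac{\Omega_{N}^{2}+1}{2\Omega_{N}}$.
\item Converting that inner product via the first bullet of Step 3 and combining with $-\frac{(\Omega_{N}-1)^{2}}{2\Omega_{N}}\mathcal D_{N,\star}$ leaves coefficient $1$ on $\mathcal D_{N,\star}$.
\item The coefficient on $\mathcal D_{\star,N}$ should reduce to $\frac{\Omega_{N}^{2}+1}{2\Omega_{N}}-\frac{\Omega_{N}^{2}-1}{2\Omega_{N}}=\frac{1}{\Omega_{N}}$.
\item The $\norm{g_{N}}^{2}$ coefficient should be $\frac{\Omega_{N}^{2}+1}{2L\Omega_{N}}-\frac{1}{2L\Omega_{N}}=\frac{\Omega_{N}}{2L}$.
\end{itemize}

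The main obstacle is purely bookkeeping in Step 4, in particular keeping signs straight in the three-point identity and the sum $\mathcal D_{N,\star}+\mathcal D_{\star,N}$. I would verify the final identity by treating $f_{N},f_{N-1},f_{\star}$ and the inner products among $x_{N}-x_{\star}$, $x_{N-1}-x_{\star}$, $g_{N-1}$, $g_{N}$ as free symbols, and checking that both sides agree.
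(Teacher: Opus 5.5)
Your proposal is correct and follows the paper's proof essentially step for step. You evaluate at $k=N$ with $\rho_N=1/\Omega_N$ and $\rho_{N+1}=0$, expand the norm difference, eliminate $z_N$ via the $x$-update at $k=N-1$, and apply the three-point identity at $(N-1,N)$; your relation $\mathcal D_{N,\star}+\mathcal D_{\star,N}=\langle g_N,x_N-x_\star\rangle-\frac{1}{L}\norm{g_N}^{2}$ is exactly the paper's three-point identity at $(N,N)$ after writing $x_N=x_N^{+}+g_N/L$, your Step~4 coefficients all check out, and the positivity of the multipliers follows from $\Omega_N=(1+\rho_1)/(1-\rho_1)>1$ as in the paper.
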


\begin{proof}[Proof of Lemma~\ref{lem:lyap-terminal}]
Solving the $x$-update at $k=N-1$, whose coefficient evaluates to
$(1+\rho_{N}^{2})/(1-\rho_{N}^{2})
-(1+\rho_{N+1}^{2})/(1-\rho_{N+1}^{2})
=2/(\Omega_{N}^{2}-1)$,
for $z_{N}$ and adding $x_{N}-x_{\star}$ gives
\begin{equation}\label{eq:lyap-terminal-z}
 x_{N}-x_{\star}+z_{N}
 =-\frac{\Omega_{N}^{2}-1}{2}(x_{N-1}^{+}-x_{\star})
 +\frac{\Omega_{N}^{2}+1}{2}(x_{N}-x_{\star}).
\end{equation}
Substituting $\rho_{N}=1/\Omega_{N}$, $\rho_{N+1}=0$ into
the Lyapunov sequence at $k=N$ and expanding,
\begin{equation*}
\begin{aligned}
 \mathcal V_{N}
 ={}&\frac{\Omega_{N}^{2}-1}{2\Omega_{N}}\mathcal D_{N-1,\star}
 -\frac{(\Omega_{N}-1)^{2}}{2\Omega_{N}}\mathcal D_{N,\star}
 +\frac{L}{2\Omega_{N}}\norm{x_{N}-x_{\star}+z_{N}}^{2}
 -\frac{L}{2\Omega_{N}}\norm{x_{N}^{+}-x_{\star}+z_{N}}^{2}\\
 ={}&\frac{\Omega_{N}^{2}-1}{2\Omega_{N}}\mathcal D_{N-1,\star}
 -\frac{(\Omega_{N}-1)^{2}}{2\Omega_{N}}\mathcal D_{N,\star}
 +\frac{1}{\Omega_{N}}\left\langle x_{N}-x_{\star}+z_{N},g_{N}\right\rangle
 -\frac{1}{2L\Omega_{N}}\norm{g_{N}}^{2}\\
 ={}&\frac{\Omega_{N}^{2}-1}{2\Omega_{N}}
 \left(
 \mathcal D_{N-1,\star}
 -\left\langle x_{N-1}^{+}-x_{\star},g_{N}\right\rangle
 \right)
 -\frac{(\Omega_{N}-1)^{2}}{2\Omega_{N}}\mathcal D_{N,\star}\\
 &{}+\frac{\Omega_{N}^{2}+1}{2\Omega_{N}}\left\langle x_{N}-x_{\star},g_{N}\right\rangle
 -\frac{1}{2L\Omega_{N}}\norm{g_{N}}^{2}\\
 ={}&\frac{\Omega_{N}^{2}-1}{2\Omega_{N}}
 \left(\mathcal D_{N-1,N}-\mathcal D_{\star,N}\right)
 -\frac{(\Omega_{N}-1)^{2}}{2\Omega_{N}}\mathcal D_{N,\star}\\
 &{}+\frac{\Omega_{N}^{2}+1}{2\Omega_{N}}\left\langle x_{N}^{+}-x_{\star},g_{N}\right\rangle
 +\frac{\Omega_{N}}{2L}\norm{g_{N}}^{2}\\
 ={}&\frac{\Omega_{N}}{2L}\norm{g_{N}}^{2}
 +\mathcal D_{N,\star}
 +\frac{1}{\Omega_{N}}\mathcal D_{\star,N}
 +\frac{\Omega_{N}^{2}-1}{2\Omega_{N}}\mathcal D_{N-1,N},
\end{aligned}
\end{equation*}
where the second equality expands the difference of the two squared
norms, the third substitutes \eqref{eq:lyap-terminal-z} and groups the
terms by their coefficients, the fourth applies
\eqref{eq:lyap-three-point} at $(i,j)=(N-1,N)$ and substitutes
$x_{N}=x_{N}^{+}+\left(1/L\right) g_{N}$, and the last applies
\eqref{eq:lyap-three-point} at $(i,j)=(N,N)$ in the form
$\left\langle x_{N}^{+}-x_{\star},g_{N}\right\rangle
=\mathcal D_{N,\star}+\mathcal D_{\star,N}$
using $\mathcal D_{N,N}=0$, and collects terms. The three interpolation gaps are nonnegative and their multipliers are
positive by $\Omega_{N}=(1+\rho_{1})/(1-\rho_{1})>1$, which gives the second claim.
\end{proof}

\begin{lem}[Monotonicity]
\label{lem:lyap-closed-decrement}
The Lyapunov sequence \eqref{eq:lyap-sequence-def},
evaluated on the iterates of \eqref{eq:lemniscate-acceleration},
satisfies for $0\leq k\leq N-1$
\begin{equation}
 \mathcal V_{k}-\mathcal V_{k+1}
 =\frac{1-\rho_{k}^{2}}{2\rho_{k}}
 \mathcal D_{k-1,k}
 +(\rho_{k}-\rho_{k+1})\mathcal D_{\star,k}
 +\left(
 \frac{1+\rho_{k+1}^{2}}{2\rho_{k+1}}
 -\frac{1+\rho_{k}^{2}}{2\rho_{k}}
 \right)\mathcal D_{N,k}\geq0.
 \label{eq:lyap-closed-decrement}
\end{equation}
Consequently,
$\mathcal V_{0}\geq\mathcal V_{1}\geq\cdots\geq\mathcal V_{N}$.
\end{lem}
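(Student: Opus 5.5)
The plan is to prove the identity \eqref{eq:lyap-closed-decrement} by direct expansion, in the style of the proof of Lemma~\ref{lem:lyap-terminal}, and then read off nonnegativity. Nonnegativity is the easy part. Each $\mathcal D_{i,j}\geq0$. The first multiplier $(1-\rho_k^2)/(2\rho_k)\ge0$ because $0<\rho_k\le1$. The second multiplier $\rho_k-\rho_{k+1}>0$ by strict monotonicity. For the third, note that $\rho\mapsto(1+\rho^2)/(2\rho)=(\rho+\rho^{-1})/2$ is strictly decreasing on $(0,1]$; since $\rho_{k+1}<\rho_k$ with $\rho_{k+1}>0$ for $k\le N-1$, the bracket is positive. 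Monotonicity of the chain $\mathcal V_0\ge\cdots\ge\mathcal V_N$ then follows immediately.

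For the identity, I will abbreviate $a_k\triangleq(1+\rho_k^2)/(2\rho_k)$ and $c_k\triangleq(1+\rho_{k}^2)/(1-\rho_{k}^2)$. The method then reads
\[
z_{k+1}=z_k-\frac{\Omega_N}{L}(a_{k+1}-a_k)g_k,\qquad x_{k+1}=x_k^++(c_{k+1}-c_{k+2})z_{k+1}.
\]
I would split $\mathcal V_k-\mathcal V_{k+1}$ into three pieces.

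\begin{itemize}
\item The first piece is the $\mathcal D_{k-1,\star},\mathcal D_{k,\star}$ terms.
\item The second piece is the $\mathcal D_{N,\star}$ terms, whose coefficient difference is $\frac{(1-\rho_{k+1})^2}{2\rho_{k+1}}-\frac{(1-\rho_k)^2}{2\rho_k}=(a_{k+1}-a_k)-(\rho_k-\rho_{k+1})$.
\item The third piece is the two differences of squared norms.
\end{itemize}

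For the last $x_N^+$-norm, only $z$ changes, so
\[
\norm{x_N^+-x_\star+z_{k+1}}^2-\norm{x_N^+-x_\star+z_k}^2
\]
is linear in $g_k$ plus $\frac{\Omega_N^2}{L^2}(a_{k+1}-a_k)^2\norm{g_k}^2$. For the first norm, substituting the $x$-update shows that $x_{k+1}-x_\star+c_{k+2}z_{k+1}=x_k^+-x_\star+c_{k+1}z_{k+1}$. Hence its difference with $x_k-x_\star+c_{k+1}z_k$ is $-\frac1L g_k-c_{k+1}\frac{\Omega_N}{L}(a_{k+1}-a_k)g_k$, again a multiple of $g_k$. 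So every norm-difference becomes an inner product of $g_k$ with a combination of $x_k-x_\star$, $z_k$, and $x_N^+-x_\star$, plus $\norm{g_k}^2$ terms.

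I then need to eliminate $z_k$. Solving the $x$-update at step $k-1$ gives $z_k=(x_k-x_{k-1}^+)/(c_k-c_{k+1})$ for $k\ge1$, and $z_0=0$ covers $k=0$ with the convention $x_{-1}^+=x_0$. After this substitution, everything is written through inner products $\langle x_{k-1}^+-x_\star,g_k\rangle$, $\langle x_k-x_\star,g_k\rangle$, and $\langle x_N^+-x_\star,g_k\rangle$.

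These are converted to gaps with the three-point identity \eqref{eq:lyap-three-point} at $(i,j)=(k-1,k)$, $(k,k)$ and $(N,k)$:
\[
\langle x_i^+-x_\star,g_k\rangle=\mathcal D_{i,\star}-\mathcal D_{i,k}+\mathcal D_{\star,k}.
\]
The $\mathcal D_{k-1,\star}$, $\mathcal D_{k,\star}$, and $\mathcal D_{N,\star}$ terms should cancel against the first two pieces. What remains is exactly $\mathcal D_{k-1,k}$, $\mathcal D_{\star,k}$, and $\mathcal D_{N,k}$ with the claimed multipliers, provided the residual $\norm{g_k}^2$ coefficient vanishes.

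The main obstacle is verifying that the scalar coefficients match. The coefficient of $\mathcal D_{N,k}$ should come out as $a_{k+1}-a_k$ directly from the cross term with $z$. The $\mathcal D_{k-1,k}$ coefficient involves $\Omega_N c_{k+1}(a_{k+1}-a_k)/(c_k-c_{k+1})$-type expressions. The vanishing of the $\norm{g_k}^2$ coefficient is a quadratic relation among $\rho_k,\rho_{k+1},\Omega_N$.

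I expect all of these to reduce, after clearing denominators, to the recurrence \eqref{eq:lemniscate-recurrence}, $\Omega_N(\rho_k-\rho_{k+1})^2=\rho_k(1-\rho_{k+1}^2)$. Useful reformulations are
\[
a_{k+1}-a_k=\frac{(\rho_k-\rho_{k+1})(1-\rho_k\rho_{k+1})}{2\rho_k\rho_{k+1}},\qquad c_k-c_{k+1}=\frac{2(\rho_k^2-\rho_{k+1}^2)}{(1-\rho_k^2)(1-\rho_{k+1}^2)}.
\]
With these, each coefficient identity becomes a polynomial identity in $(\rho_k,\rho_{k+1})$ modulo the recurrence. I would first check it symbolically, then verify it at $N=1$ using Table~\ref{tab:lemniscate-parameters}. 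The boundary case $k=0$, where $\rho_0=1$ and $c_0$ is infinite, needs separate handling: use $z_0=0$ and $\mathcal D_{-1,\cdot}=0$ rather than the substitution for $z_k$.
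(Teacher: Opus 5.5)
Your plan is essentially the paper's proof. It uses the same one-step relations for the two shifted vectors, eliminates $z_k$ through the $x$-update at index $k-1$ (with $z_0=0$ at $k=0$), applies the three-point identity \eqref{eq:lyap-three-point} at $j=k$, relies on scalar identities that reduce to the recurrence (e.g.\ the $\norm{g_k}^2$ cancellation is $\frac{\Omega_N}{2}\bigl[(b_{k+1}-b_k)^2-(a_{k+1}-a_k)^2\bigr]=b_{k+1}$ with $b_k=(1-\rho_k^2)/(2\rho_k)$), and closes with the same sign argument.

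One slip needs fixing. Since $(1-\rho)^2/(2\rho)=(1+\rho^2)/(2\rho)-1$, the $\mathcal D_{N,\star}$ coefficient difference is exactly your $a_{k+1}-a_k$, not $(a_{k+1}-a_k)-(\rho_k-\rho_{k+1})$. With your value, the $\mathcal D_{N,\star}$ terms would not cancel against $-(a_{k+1}-a_k)\langle x_N^+-x_\star,g_k\rangle$. The multiplier $\rho_k-\rho_{k+1}$ actually appears only as the net coefficient $(b_{k+1}-b_k)-(a_{k+1}-a_k)$ of $\mathcal D_{\star,k}$.
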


\begin{proof}
We calculate the difference $\mathcal V_{k}-\mathcal V_{k+1}$
and group the resulting terms by the three-point identity
\eqref{eq:lyap-three-point}; we first prepare the supporting
identities.

We use the following two scalar identities. Rearranging the recurrence
\eqref{eq:lemniscate-recurrence} gives for $0\leq k\leq N-1$,
\begin{equation}\label{eq:lyap-scalar-identities}
\begin{aligned}
&\Omega_{N}\left(
\frac{1+\rho_{k+1}^{2}}{2\rho_{k+1}}-\frac{1+\rho_{k}^{2}}{2\rho_{k}}
\right)\frac{1+\rho_{k+1}^{2}}{1-\rho_{k+1}^{2}}
+1
=\Omega_{N}\left(
\frac{1-\rho_{k+1}^{2}}{2\rho_{k+1}}-\frac{1-\rho_{k}^{2}}{2\rho_{k}}
\right),\\
 &\frac{\Omega_{N}}{2}\Bigg[
 \left(
 \frac{1-\rho_{k+1}^{2}}{2\rho_{k+1}}
 -\frac{1-\rho_{k}^{2}}{2\rho_{k}}
 \right)^{2}
 -\left(
 \frac{1+\rho_{k+1}^{2}}{2\rho_{k+1}}
 -\frac{1+\rho_{k}^{2}}{2\rho_{k}}
 \right)^{2}
 \Bigg]
 =\frac{1-\rho_{k+1}^{2}}{2\rho_{k+1}}.
\end{aligned}
\end{equation}

The update equations of
\eqref{eq:lemniscate-acceleration} give the two one-step relations
\begin{equation}\label{eq:lyap-onestep}
\begin{aligned}
 x_{k+1}-x_{\star}
 +\frac{1+\rho_{k+2}^{2}}{1-\rho_{k+2}^{2}}z_{k+1}
 &=x_{k}-x_{\star}
 +\frac{1+\rho_{k+1}^{2}}{1-\rho_{k+1}^{2}}z_{k}
 -\frac{\Omega_{N}}{L}\left(
 \frac{1-\rho_{k+1}^{2}}{2\rho_{k+1}}
 -\frac{1-\rho_{k}^{2}}{2\rho_{k}}
 \right)g_{k},\\
 x_{N}^{+}-x_{\star}+z_{k+1}
 &=x_{N}^{+}-x_{\star}+z_{k}
 -\frac{\Omega_{N}}{L}\left(
 \frac{1+\rho_{k+1}^{2}}{2\rho_{k+1}}
 -\frac{1+\rho_{k}^{2}}{2\rho_{k}}
 \right)g_{k},
\end{aligned}
\end{equation}
both for $0\leq k\leq N-1$.  
For the first relation, the $x$-update gives
\begin{equation*}
x_{k+1}-x_{\star}
+\frac{1+\rho_{k+2}^{2}}{1-\rho_{k+2}^{2}}z_{k+1}
=x_{k}-x_{\star}-\frac{1}{L}g_{k}
+\frac{1+\rho_{k+1}^{2}}{1-\rho_{k+1}^{2}}z_{k+1},
\end{equation*}
and substituting the $z$-update for $z_{k+1}$ on the right-hand side
and evaluating the resulting coefficient of $g_{k}$ by
the first line of \eqref{eq:lyap-scalar-identities} proves it.
The second relation of \eqref{eq:lyap-onestep} follows by adding
$x_{N}^{+}-x_{\star}$ to both sides of the $z$-update.

We also state a useful vector identity:
for $0\leq k\leq N-1$,
\begin{equation}
\begin{aligned}
 &\left(
 \frac{1-\rho_{k+1}^{2}}{2\rho_{k+1}}
 -\frac{1-\rho_{k}^{2}}{2\rho_{k}}
 \right)
 \left(
 x_{k}-x_{\star}
 +\frac{1+\rho_{k+1}^{2}}{1-\rho_{k+1}^{2}}z_{k}
 \right)
 -\left(
 \frac{1+\rho_{k+1}^{2}}{2\rho_{k+1}}
 -\frac{1+\rho_{k}^{2}}{2\rho_{k}}
 \right)
 \left(
 x_{N}^{+}-x_{\star}+z_{k}
 \right)\\
 &\quad=-\frac{1-\rho_{k}^{2}}{2\rho_{k}}(x_{k-1}^{+}-x_{\star})
 +\frac{1-\rho_{k+1}^{2}}{2\rho_{k+1}}(x_{k}-x_{\star})
 -\left(
 \frac{1+\rho_{k+1}^{2}}{2\rho_{k+1}}
 -\frac{1+\rho_{k}^{2}}{2\rho_{k}}
 \right)(x_{N}^{+}-x_{\star}).
\end{aligned}
 \label{eq:lyap-direct-vector-identity}
\end{equation}
At $k=0$, both sides equal
$\left((1-\rho_{1}^{2})/(2\rho_{1})\right)(x_{0}-x_{\star})
-\left(\left((1+\rho_{1}^{2})/(2\rho_{1})\right)-1\right)(x_{N}^{+}-x_{\star})$,
by $z_{0}=0$, $(1-\rho_{0}^{2})/(2\rho_{0})=0$, and
$x_{-1}^{+}=x_{0}$.  For $1\leq k\leq N-1$, distribute the two
products on the left-hand side: the coefficient of $z_{k}$ collects to
\begin{equation*}
\left(
\frac{1-\rho_{k+1}^{2}}{2\rho_{k+1}}-\frac{1-\rho_{k}^{2}}{2\rho_{k}}
\right)\frac{1+\rho_{k+1}^{2}}{1-\rho_{k+1}^{2}}
-\left(
\frac{1+\rho_{k+1}^{2}}{2\rho_{k+1}}-\frac{1+\rho_{k}^{2}}{2\rho_{k}}
\right)
=\frac{1-\rho_{k}^{2}}{2\rho_{k}}\left(
\frac{1+\rho_{k}^{2}}{1-\rho_{k}^{2}}
-\frac{1+\rho_{k+1}^{2}}{1-\rho_{k+1}^{2}}
\right),
\end{equation*}
and the $x$-update at index $k-1$ gives
\begin{equation*}
\frac{1-\rho_{k}^{2}}{2\rho_{k}}\left(
\frac{1+\rho_{k}^{2}}{1-\rho_{k}^{2}}
-\frac{1+\rho_{k+1}^{2}}{1-\rho_{k+1}^{2}}
\right)z_{k}
=\frac{1-\rho_{k}^{2}}{2\rho_{k}}
\left(x_{k}-x_{k-1}^{+}\right).
\end{equation*}
Substituting this and splitting
$x_{k}-x_{k-1}^{+}=(x_{k}-x_{\star})-(x_{k-1}^{+}-x_{\star})$
verifies \eqref{eq:lyap-direct-vector-identity}.

Fix $0\leq k\leq N-1$.  Expanding the difference $\mathcal V_{k}-\mathcal V_{k+1}$ using
the one-step relations \eqref{eq:lyap-onestep},
\begin{equation*}
\begin{aligned}
 \mathcal V_{k}-\mathcal V_{k+1}
 ={}&\frac{1-\rho_{k}^{2}}{2\rho_{k}}
 \mathcal D_{k-1,\star}
 -\frac{1-\rho_{k+1}^{2}}{2\rho_{k+1}}
 \mathcal D_{k,\star}
 +\left(
 \frac{1+\rho_{k+1}^{2}}{2\rho_{k+1}}
 -\frac{1+\rho_{k}^{2}}{2\rho_{k}}
 \right)\mathcal D_{N,\star}\\
 &{}+\left(
 \frac{1-\rho_{k+1}^{2}}{2\rho_{k+1}}
 -\frac{1-\rho_{k}^{2}}{2\rho_{k}}
 \right)\cdot\left\langle
 x_{k}-x_{\star}
 +\frac{1+\rho_{k+1}^{2}}{1-\rho_{k+1}^{2}}z_{k},
 g_{k}\right\rangle\\
 &{}-\left(
 \frac{1+\rho_{k+1}^{2}}{2\rho_{k+1}}
 -\frac{1+\rho_{k}^{2}}{2\rho_{k}}
 \right)\cdot\left\langle
 x_{N}^{+}-x_{\star}+z_{k},
 g_{k}\right\rangle\\
 &{}-\frac{\Omega_{N}}{2L}\Bigg[
 \left(
 \frac{1-\rho_{k+1}^{2}}{2\rho_{k+1}}
 -\frac{1-\rho_{k}^{2}}{2\rho_{k}}
 \right)^{2}
 -\left(
 \frac{1+\rho_{k+1}^{2}}{2\rho_{k+1}}
 -\frac{1+\rho_{k}^{2}}{2\rho_{k}}
 \right)^{2}
 \Bigg]\norm{g_{k}}^{2}\\
 ={}&\frac{1-\rho_{k}^{2}}{2\rho_{k}}
 \left(
 \mathcal D_{k-1,\star}
 -\left\langle x_{k-1}^{+}-x_{\star},g_{k}\right\rangle
 \right)\\
 &{}-\frac{1-\rho_{k+1}^{2}}{2\rho_{k+1}}
 \left(
 \mathcal D_{k,\star}
 -\left\langle x_{k}-x_{\star},g_{k}\right\rangle
 +\frac{1}{L}\norm{g_{k}}^{2}
 \right)\\
 &{}+\left(
 \frac{1+\rho_{k+1}^{2}}{2\rho_{k+1}}
 -\frac{1+\rho_{k}^{2}}{2\rho_{k}}
 \right)
 \left(
 \mathcal D_{N,\star}
 -\left\langle x_{N}^{+}-x_{\star},g_{k}\right\rangle
 \right)\\
 ={}&\frac{1-\rho_{k}^{2}}{2\rho_{k}}
 \left(\mathcal D_{k-1,k}-\mathcal D_{\star,k}\right)
 -\frac{1-\rho_{k+1}^{2}}{2\rho_{k+1}}
 \left(\mathcal D_{k,k}-\mathcal D_{\star,k}\right)
 +\left(
 \frac{1+\rho_{k+1}^{2}}{2\rho_{k+1}}
 -\frac{1+\rho_{k}^{2}}{2\rho_{k}}
 \right)
 \left(\mathcal D_{N,k}-\mathcal D_{\star,k}\right)\\
 ={}&\frac{1-\rho_{k}^{2}}{2\rho_{k}}
 \mathcal D_{k-1,k}
 +(\rho_{k}-\rho_{k+1})\mathcal D_{\star,k}
 +\left(
 \frac{1+\rho_{k+1}^{2}}{2\rho_{k+1}}
 -\frac{1+\rho_{k}^{2}}{2\rho_{k}}
 \right)\mathcal D_{N,k},
\end{aligned}
\end{equation*}
where the second equality substitutes the vector identity \eqref{eq:lyap-direct-vector-identity} and
the second line of \eqref{eq:lyap-scalar-identities},
the third applies \eqref{eq:lyap-three-point} at $j=k$
to each of the three groups, where at $k=0$ the first group vanishes since its factor $(1-\rho_{0}^{2})/(2\rho_{0})$ is zero,
and the fourth uses $\mathcal D_{k,k}=0$.  This proves the equality in
\eqref{eq:lyap-closed-decrement}.

Finally, the right-hand side of \eqref{eq:lyap-closed-decrement} is
nonnegative: the interpolation gaps are nonnegative, and the three multipliers
are nonnegative, since Lemma~\ref{lem:lemniscate-recurrence} gives
$1=\rho_{0}>\rho_{1}>\cdots>\rho_{N+1}=0$ and since the map
$\rho\mapsto(1+\rho^{2})/(2\rho)$ is strictly
decreasing for $0<\rho\leq1$, 
so that $(1+\rho_{k+1}^{2})/(2\rho_{k+1})>(1+\rho_{k}^{2})/(2\rho_{k})$ for $0\leq k\leq N-1$.
\end{proof}

Combining Lemma~\ref{lem:lyap-terminal} and
Lemma~\ref{lem:lyap-closed-decrement}, the proof of
Theorem~\ref{thm:lemniscate-convergence} is now immediate.
\begin{proof}[Proof of Theorem~\ref{thm:lemniscate-convergence}]
Combining Lemma~\ref{lem:lyap-terminal}, the
monotonicity of Lemma~\ref{lem:lyap-closed-decrement}, and the
evaluation of $\mathcal V_{0}$ gives
\[
\frac{\Omega_{N}}{2L}\norm{\nabla f(x_{N})}^{2}
 \leq\mathcal V_{N}
 \leq\mathcal V_{N-1}
\le\cdots
 \leq\mathcal V_{1}
\leq\mathcal V_{0}
 \leq\frac{L}{2\Omega_{N}}\norm{x_{0}-x_{\star}}^{2},
\]
and multiplying through by $2L/\Omega_{N}$ yields the first
inequality; the second inequality then follows from the lower bound
$\Omega_{N}>(N+1)^{2}/\varpi^{2}$ of
Corollary~\ref{cor:app-omega-bounds}.
\end{proof}

The Lyapunov sequence also yields a guarantee on the function value
of the final iterate.

\begin{cor}[Function-value bound]
\label{cor:lyap-function-value}
Under the assumptions of Theorem~\ref{thm:lemniscate-convergence},
the final iterate $x_{N}$ of~\eqref{eq:lemniscate-acceleration}
satisfies
\begin{equation*}
 f(x_{N})-f(x_{\star})
 \leq\frac{L}{2\Omega_{N}}\norm{x_{0}-x_{\star}}^{2}
 \leq\frac{\varpi^{2}L\norm{x_{0}-x_{\star}}^{2}}{2(N+1)^{2}}.
\end{equation*}
\end{cor}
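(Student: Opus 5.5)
We need $f(x_N)-f_\star \le \mathcal V_N$ (or some $\mathcal V$ lower bound) and then chain the established inequalities. From Lemma~\ref{lem:lyap-terminal}, $\mathcal V_N$ dominates $\frac{\Omega_N}{2L}\|g_N\|^2 + \mathcal D_{N,\star}+\frac1{\Omega_N}\mathcal D_{\star,N}$ plus a nonnegative term. Writing $\mathcal D_{N,\star}=f(x_N)-f_\star-\frac{1}{2L}\|g_N\|^2$, we get
\[
\mathcal V_N\ge f(x_N)-f_\star+\Bigl(\frac{\Omega_N}{2L}-\frac{1}{2L}\Bigr)\|g_N\|^2+\frac{1}{\Omega_N}\mathcal D_{\star,N}+\frac{\Omega_N^2-1}{2\Omega_N}\mathcal D_{N-1,N}.
\]
Since $\Omega_N=(1+\rho_1)/(1-\rho_1)>1$, every extra term is nonnegative. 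Hence $f(x_N)-f_\star\le\mathcal V_N$.

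Now chain $\mathcal V_N\le\mathcal V_{N-1}\le\cdots\le\mathcal V_0$, which is Lemma~\ref{lem:lyap-closed-decrement}. Then use the bound $\mathcal V_0\le\frac{L}{2\Omega_N}\|x_0-x_\star\|^2$ obtained after the definition \eqref{eq:lyap-sequence-def}. Together these give the first inequality.

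The second inequality follows from $\Omega_N>(N+1)^2/\varpi^2$ (Corollary~\ref{cor:app-omega-bounds}), which gives $\frac{L}{2\Omega_N}<\frac{\varpi^2L}{2(N+1)^2}$.

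There is no real obstacle. The only point needing care is that $\mathcal D_{N,\star}$ carries the $-\frac1{2L}\|g_N\|^2$ term. It is absorbed by the $\frac{\Omega_N}{2L}\|g_N\|^2$ term precisely because $\Omega_N\ge1$.

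As an alternative, we could instead use $f(x_N)-f_\star=\mathcal D_{\star,N}+\langle g_N,x_N-x_\star\rangle-\frac1{2L}\|g_N\|^2$. This route is messier, so we would stay with the decomposition above.
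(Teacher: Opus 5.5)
Your proposal is correct and follows the paper's own proof. The paper likewise drops the $\mathcal D_{\star,N}$ and $\mathcal D_{N-1,N}$ terms from Lemma~\ref{lem:lyap-terminal}, expands $\mathcal D_{N,\star}$ using $\Omega_N>1$, chains with $\mathcal V_N\le\mathcal V_0\le\frac{L}{2\Omega_N}\norm{x_0-x_\star}^2$, and finishes with the lower bound $\Omega_N>(N+1)^2/\varpi^2$ from Corollary~\ref{cor:app-omega-bounds}.
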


\begin{proof}
Dropping the two nonnegative terms
$(1/\Omega_{N})\mathcal D_{\star,N}$ and
$\left((\Omega_{N}^{2}-1)/(2\Omega_{N})\right)\mathcal D_{N-1,N}$ in
Lemma~\ref{lem:lyap-terminal} and expanding
$\mathcal D_{N,\star}$,
\begin{equation*}
\begin{aligned}
 \mathcal V_{N}
 &\geq\frac{\Omega_{N}}{2L}\norm{\nabla f(x_{N})}^{2}
 +\mathcal D_{N,\star}\\
 &=f(x_{N})-f(x_{\star})
 +\frac{\Omega_{N}-1}{2L}\norm{\nabla f(x_{N})}^{2}
 \geq f(x_{N})-f(x_{\star}),
\end{aligned}
\end{equation*}
where the last inequality uses $\Omega_{N}=(1+\rho_{1})/(1-\rho_{1})>1$.  Combining with
$\mathcal V_{N}\leq\mathcal V_{0}
\leq \left(L/(2\Omega_{N})\right)\norm{x_{0}-x_{\star}}^{2}$ proves the first
inequality; the second follows from
$(N+1)^{2}/\varpi^{2}<\Omega_{N}$
(Corollary~\ref{cor:app-omega-bounds}), as in the proof of
Theorem~\ref{thm:lemniscate-convergence}.
\end{proof}

\subsubsection{\textsf{AutoOPT} design: Optimal method for reducing the gradient magnitude from an initial-distance bound}
\label{sec:lemniscate-autooptdesign}

The method \eqref{eq:lemniscate-acceleration} was designed by \textsf{AutoOPT} as the optimal fixed-step first-order method such that for all $L$-smooth convex functions $f$ with minimizer $x_\star$, the guarantee
\[
\|\nabla f(x_N)\|^2
\leq
C_N\|x_0-x_\star\|^2
\]
is optimal, i.e., $C_N$ is minimized.
Specifically, among all \(N\)-step fixed-step first-order methods of the form
\[
x_i
=
x_{i-1}
-
\frac{1}{L}
\sum_{j=0}^{i-1} h_{i,j}\nabla f(x_j),
\qquad
i=1,\ldots,N,
\]
\textsf{AutoOPT} jointly searches over the coefficients
\(\{h_{i,j}\}_{0\leq j<i\leq N}\) and a convergence certificate to minimize $C_N$. In other words, \textsf{AutoOPT} jointly optimizes over the algorithm and its convergence proof to find the algorithm with the optimal (smallest) convergence guarantee.

In Stage 1 of the pipeline, the skill
\textsf{bnb-pep-skill} wrote out the BnB-PEP formulation of this design
problem. At a high level, performance estimation programming replaces the
maximization over the infinite-dimensional function class
$\mathcal{F}_{0,L}$ by a finite-dimensional semidefinite program
involving only the function values, gradients, and pairwise inner
products observed along the trajectory; dualizing this inner semidefinite
program and jointly optimizing its dual variables and the method
coefficients produces a nonconvex quadratically constrained quadratic
program. Upon our approval of the formulation, reproduced with light
editing in Appendix~\ref{sec:lemniscate-pep-design}, the skill solved
this program for the horizons $N=1,2,3, \ldots , 25$, running spatial
branch-and-bound at our request to certify global optimality for $N=1,\ldots,5$ and local optimality for $N=6, \ldots, 25$, and returned the corresponding candidate
stepsizes together with the candidate dual certificates.

These numerical solutions, together with the Stage-1
derivation, became the context for Stage 2, in which the skill
\textsf{frontier-llm-consult} placed them before a frontier LLM. The
consultation first identified an analytic description valid for
arbitrary $N\in\mathbb{N}$, the parametrization in terms of $\Omega_N$
and the sequence $\rho_0,\ldots,\rho_{N+1}$, together with an analytic
feasible point of the dual certifying the guarantee. At
this point the method was still written as the dense stepsize array
$\{h_{i,j}\}$, its entries now in closed form; the consultation next
recast it into the memory-efficient two-sequence form
\eqref{eq:lemniscate-acceleration} in which Section~\ref{sec:lemniscate}
states the method. Continuing the same
conversation, Stage 2 then converted the multipliers of this certificate
into the Lyapunov proof of Theorem~\ref{thm:lemniscate-convergence}
presented in Section~\ref{sec:lemniscate-lyapunov}, a
proof stated directly on this two-sequence form; its Lean
formalization is discussed in Section~\ref{sec:lemniscate-lean}.

\subsubsection{Background: Lemniscate constant and lemniscate elliptic functions}\label{sec:lemniscate-constant}
The lemniscate constant $\varpi\in\mathbb{R}$ is a transcendental number \cite{Waldschmidt2008} defined by
\[
\varpi=2 \int_{0}^{1} \frac{1}{\sqrt{1-x^{4}}}\, dx \approx 2.62205755.
\]
It is also characterized as the ratio of perimeter to diameter of the unit
lemniscate.
See
Figure~\ref{fig:lemniscate-curve}.

\begin{figure}[htb]
\centering
\definecolor{lemnired}{RGB}{228,26,28}
\begin{tikzpicture}[x=3.1cm, y=3.1cm]
\draw[black, -{Stealth[length=2mm]}, line width=0.5pt]
    (-1.22,0) -- (1.28,0) node[anchor=west, font=\scriptsize, black] {$x$};
\draw[black, -{Stealth[length=2mm]}, line width=0.5pt]
    (0,-0.42) -- (0,0.40) node[anchor=south west, font=\scriptsize, black, inner sep=1pt] {$y$};
\draw[black, line width=1.4pt] plot[domain=0:360, samples=400, smooth, variable=\t]
    ({cos(\t)/(1+sin(\t)*sin(\t))},
     {cos(\t)*sin(\t)/(1+sin(\t)*sin(\t))});
\node[font=\small, text=black] at (-0.69,0.52) {$(x^{2}+y^{2})^{2}=x^{2}-y^{2}$};
\fill[black] ({1/sqrt(2)},0) circle [radius=1.2pt];
\fill[black] ({-1/sqrt(2)},0) circle [radius=1.2pt];
\node[font=\scriptsize, anchor=north, text=black] at ({1/sqrt(2)},-0.032) {$1/\sqrt{2}$};
\node[font=\scriptsize, anchor=north, text=black] at ({-1/sqrt(2)},-0.032) {$-1/\sqrt{2}$};
\fill[black] (1,0) circle [radius=1.4pt];
\fill[black] (-1,0) circle [radius=1.4pt];
\draw[black, line width=0.6pt, dashed] (-1,0) -- (-1,-0.52);
\draw[black, line width=0.6pt, dashed] (1,0) -- (1,-0.52);
\draw[lemnired, {Stealth[length=2.2mm]}-{Stealth[length=2.2mm]}, line width=0.9pt]
    (-1,-0.48) -- (1,-0.48)
    node[midway, anchor=north, font=\small, text=lemnired] {diameter $2$};
\node[lemnired, font=\small, anchor=south] (perim) at (0.80,0.51)
    {perimeter $2\varpi$};
\draw[lemnired, -{Stealth[length=2mm]}, line width=0.9pt, shorten >=2pt]
    (perim.south) -- ({2*sqrt(3)/5},{sqrt(3)/5});
\end{tikzpicture}\caption{The unit lemniscate $(x^{2}+y^{2})^{2}=x^{2}-y^{2}$ with foci at $(\pm1/\sqrt{2},0)$ and tips at
$(\pm1,0)$. Its perimeter is $2\varpi$,
and its diameter is $2$. Thus $\varpi\approx2.62206$ is its
perimeter-to-diameter ratio.}
\label{fig:lemniscate-curve}
\end{figure}

The lemniscate elliptic functions are defined through the arc-length integral of the lemniscate. Specifically, let
\[
\operatorname{arcsl}(u)=
\int_{0}^{u}\frac{dx}{\sqrt{1-x^{4}}},
\]
for $u\in[0,1]$ and define the lemniscate sine $\operatorname{sl}(x)$ on
$[0,\varpi/2]$ as the inverse of $\operatorname{arcsl}$. The lemniscate
cosine $\operatorname{cl}(x)$ is defined on $[0,\varpi/2]$ by
\[
\operatorname{cl}(x)
=
\operatorname{sl}\left(\frac{\varpi}{2}-x\right).
\]

Finally, we briefly note that while $\Omega_{N}$ is an algebraic number, since a finite optimal value of an QCQP with rational coefficients, 
the limit $N^{2}/\Omega_{N} \rightarrow \varpi^{2}$
(see Lemma~\ref{lem:app-omega-limit} in
Appendix~\ref{sec:app-recurrence-existence}) is transcendental.

\subsubsection{Continuous-time analysis}
\label{sec:lemniscate-continuous-time}

In this section, we present the continuous time limit of lemniscate acceleration as an ODE, and state an analogous convergence theorem. The details for this section can be found in Appendix~\ref{sec:app-continuous-time}.

\paragraph{Continuous-time ODE.}
For a finite time interval $[0,T]$, define $\rho$ on $[0,T]$ and $\sigma$ on $[0,T)$ by
\begin{align*}
\rho(t)&\triangleq\operatorname{cl}^{2}\left(\frac{\varpi t}{2T}\right),\\
\sigma(t)& \triangleq\sqrt{1/\rho(t)-\rho(t)} = \sqrt{\operatorname{cl}^{-2}\left(\varpi t/(2T)\right)-\operatorname{cl}^{2}\left(\varpi t/(2T)\right)}
\end{align*}
where  $\varpi$ is the lemniscate
constant.
Figure~\ref{fig:lemniscate-continuous-coefficients} shows these profiles and
the discrete coefficients $\rho_k$ approaching the continuous curve $\rho(t)$.
\begin{figure}[t]
\centering
\definecolor{lemnictblue}{RGB}{55,126,184}\definecolor{lemnictred}{RGB}{228,26,28}\definecolor{lemnictgreen}{RGB}{27,120,55}\begin{tikzpicture}
\begin{axis}[
    name=rhoplot,
    width=0.43\textwidth,
    height=0.36\textwidth,
    xmin=0, xmax=1,
    xlabel={$t$},
    xtick={0,0.5,1},
    xticklabels={$0$,$T/2$,$T$},
    ylabel={$\rho(t)$},
    ymin=0, ymax=1.05,
    ytick={0,0.4142135624,1},
    yticklabels={$0$,$\sqrt{2}-1$,$1$},
    label style={font=\small},
    tick label style={font=\scriptsize},
    axis line style={gray!70},
    tick style={gray!70},
    legend style={
        at={(axis description cs:0.5,1.04)},
        anchor=south,
        draw=none,
        fill=none,
        font=\scriptsize,
        legend columns=-1,
        /tikz/column 2/.style={column sep=0.7em},
    },
    legend cell align={left},
]
\addplot[lemnictblue, line width=1.4pt] coordinates {
    (0.00,1.0000000000)
    (0.02,0.9986259077)
    (0.04,0.9945149440)
    (0.06,0.9877008627)
    (0.08,0.9782393061)
    (0.10,0.9662069022)
    (0.12,0.9517000364)
    (0.14,0.9348333332)
    (0.16,0.9157378870)
    (0.18,0.8945592914)
    (0.20,0.8714555154)
    (0.22,0.8465946805)
    (0.24,0.8201527895)
    (0.26,0.7923114589)
    (0.28,0.7632557003)
    (0.30,0.7331717945)
    (0.32,0.7022452938)
    (0.34,0.6706591839)
    (0.36,0.6385922274)
    (0.38,0.6062175079)
    (0.40,0.5737011828)
    (0.42,0.5412014505)
    (0.44,0.5088677291)
    (0.46,0.4768400425)
    (0.48,0.4452486005)
    (0.50,0.4142135624)
    (0.52,0.3838449657)
    (0.54,0.3542428039)
    (0.56,0.3254972331)
    (0.58,0.2976888903)
    (0.60,0.2708893034)
    (0.62,0.2451613746)
    (0.64,0.2205599212)
    (0.66,0.1971322573)
    (0.68,0.1749188012)
    (0.70,0.1539536971)
    (0.72,0.1342654384)
    (0.74,0.1158774833)
    (0.76,0.0988088536)
    (0.78,0.0830747111)
    (0.80,0.0686869036)
    (0.82,0.0556544781)
    (0.84,0.0439841555)
    (0.86,0.0336807650)
    (0.88,0.0247476368)
    (0.90,0.0171869490)
    (0.92,0.0110000311)
    (0.94,0.0061876199)
    (0.96,0.0027500702)
    (0.98,0.0006875185)
    (1.00,0.0000000000)
};
\addlegendentry{continuous $\rho(t)$}

\addplot[
    only marks,
    mark=*,
    mark size=1.45pt,
    mark options={draw=lemnictgreen, fill=white, line width=0.7pt},
] coordinates {
    (0.00000000,1.0000000000)
    (0.02439024,0.9933780970)
    (0.04878049,0.9827560973)
    (0.07317073,0.9685361886)
    (0.09756098,0.9509977519)
    (0.12195122,0.9303947570)
    (0.14634146,0.9069798913)
    (0.17073171,0.8810115953)
    (0.19512195,0.8527552267)
    (0.21951220,0.8224817832)
    (0.24390244,0.7904655359)
    (0.26829268,0.7569812131)
    (0.29268293,0.7223010805)
    (0.31707317,0.6866921226)
    (0.34146341,0.6504134547)
    (0.36585366,0.6137140455)
    (0.39024390,0.5768307964)
    (0.41463415,0.5399870006)
    (0.43902439,0.5033911850)
    (0.46341463,0.4672363229)
    (0.48780488,0.4316993980)
    (0.51219512,0.3969412872)
    (0.53658537,0.3631069302)
    (0.56097561,0.3303257462)
    (0.58536585,0.2987122613)
    (0.60975610,0.2683669070)
    (0.63414634,0.2393769551)
    (0.65853659,0.2118175566)
    (0.68292683,0.1857528551)
    (0.70731707,0.1612371511)
    (0.73170732,0.1383160987)
    (0.75609756,0.1170279237)
    (0.78048780,0.0974046591)
    (0.80487805,0.0794734087)
    (0.82926829,0.0632576668)
    (0.85365854,0.0487787570)
    (0.87804878,0.0360575176)
    (0.90243902,0.0251165067)
    (0.92682927,0.0159833543)
    (0.95121951,0.0086969359)
    (0.97560976,0.0033219503)
    (1.00000000,0.0000000000)
};
\addlegendentry{discrete $\rho_k$ ($N=40$)}
\end{axis}

\begin{axis}[
    name=sigmaplot,
    at={($(rhoplot.north east)+(1.0cm,0)$)},
    anchor=north west,
    width=0.39\textwidth,
    height=0.19\textwidth,
    xmin=0, xmax=1,
    xlabel={$t$},
    xtick={0,0.5,1},
    xticklabels={$0$,$T/2$,$T$},
    ymin=0, ymax=5,
    ytick={0,1.4142135624},
    yticklabels={$0$,$\sqrt{2}$},
    label style={font=\small},
    tick label style={font=\scriptsize},
    axis line style={gray!70},
    tick style={gray!70},
    clip=false,
]
\addplot[lemnictred, line width=1.4pt] coordinates {
    (0.000,0.0000000000)
    (0.020,0.0524411610)
    (0.040,0.1048826195)
    (0.060,0.1573258627)
    (0.080,0.2097747579)
    (0.100,0.2622367435)
    (0.120,0.3147240228)
    (0.140,0.3672547653)
    (0.160,0.4198543195)
    (0.180,0.4725564478)
    (0.200,0.5254045947)
    (0.220,0.5784532057)
    (0.240,0.6317691175)
    (0.260,0.6854330480)
    (0.280,0.7395412201)
    (0.300,0.7942071628)
    (0.320,0.8495637429)
    (0.340,0.9057654941)
    (0.360,0.9629913261)
    (0.380,1.0214477164)
    (0.400,1.0813725144)
    (0.420,1.1430395214)
    (0.440,1.2067640550)
    (0.460,1.2729097660)
    (0.480,1.3418970572)
    (0.500,1.4142135624)
    (0.520,1.4904272941)
    (0.540,1.5712032804)
    (0.560,1.6573248032)
    (0.580,1.7497207773)
    (0.600,1.8495014191)
    (0.620,1.9580052584)
    (0.640,2.0768619049)
    (0.660,2.2080770498)
    (0.680,2.3541494288)
    (0.700,2.5182346542)
    (0.720,2.7043793444)
    (0.740,2.9178633942)
    (0.760,3.1657134617)
    (0.780,3.4574966138)
    (0.800,3.8065902356)
    (0.820,4.2322986162)
    (0.840,4.7635570415)
    (0.847545421,5.0000000000)
};
\draw[lemnictred, densely dashed, line width=0.9pt, -{Stealth[length=2mm]}]
    (axis cs:0.84,4.7636) -- (axis cs:0.850,5.12);
\node[lemnictred, anchor=north west, font=\small]
    at (axis description cs:0.5,0.93) {$\sigma(t)$};
\end{axis}

\begin{axis}[
    name=gammaplot,
    at={($(rhoplot.south east)+(1.0cm,0)$)},
    anchor=south west,
    width=0.39\textwidth,
    height=0.19\textwidth,
    xmin=0, xmax=1,
    xlabel={$t$},
    xtick={0,0.5,1},
    xticklabels={$0$,$T/2$,$T$},
    ymin=0, ymax=50,
    ytick={0,7.8661726629},
    yticklabels={$0$,$3\varpi/T$},
    label style={font=\small},
    tick label style={font=\scriptsize},
    axis line style={gray!70},
    tick style={gray!70},
    clip=false,
]
\addplot[lemnictgreen, line width=1.4pt] coordinates {
    (0.060003677,50.0000000000)
    (0.080000000,37.5072606267)
    (0.100000000,30.0141815713)
    (0.120000000,25.0245078284)
    (0.140000000,21.4674943755)
    (0.160000000,18.8081131491)
    (0.180000000,16.7494355464)
    (0.200000000,15.1135868760)
    (0.220000000,13.7876364791)
    (0.240000000,12.6965448298)
    (0.260000000,11.7885989569)
    (0.280000000,11.0270925579)
    (0.300000000,10.3853372041)
    (0.320000000,9.8435479427)
    (0.340000000,9.3868322391)
    (0.360000000,9.0038540528)
    (0.380000000,8.6859253345)
    (0.400000000,8.4263765686)
    (0.420000000,8.2201148149)
    (0.440000000,8.0633113943)
    (0.460000000,7.9531819915)
    (0.480000000,7.8878350171)
    (0.500000000,7.8661726629)
    (0.520000000,7.8878350171)
    (0.540000000,7.9531819915)
    (0.560000000,8.0633113943)
    (0.580000000,8.2201148149)
    (0.600000000,8.4263765686)
    (0.620000000,8.6859253345)
    (0.640000000,9.0038540528)
    (0.660000000,9.3868322391)
    (0.680000000,9.8435479427)
    (0.700000000,10.3853372041)
    (0.720000000,11.0270925579)
    (0.740000000,11.7885989569)
    (0.760000000,12.6965448298)
    (0.780000000,13.7876364791)
    (0.800000000,15.1135868760)
    (0.820000000,16.7494355464)
    (0.840000000,18.8081131491)
    (0.860000000,21.4674943755)
    (0.880000000,25.0245078284)
    (0.900000000,30.0141815713)
    (0.920000000,37.5072606267)
    (0.939996323,50.0000000000)
};
\draw[lemnictgreen, densely dashed, line width=0.9pt, -{Stealth[length=2mm]}]
    (axis cs:0.08,37.5073) -- (axis cs:0.058,50.8);
\draw[lemnictgreen, densely dashed, line width=0.9pt, -{Stealth[length=2mm]}]
    (axis cs:0.92,37.5073) -- (axis cs:0.942,50.8);
\node[lemnictgreen, anchor=north, font=\small]
    at (axis description cs:0.5,0.92) {$\gamma(t)$};
\end{axis}
\end{tikzpicture}
\caption{Continuous-time coefficient profiles.
The left panel shows
$\rho(t)=\operatorname{cl}^{2}\!\left(\varpi t/(2T)\right)$ together with the
discrete coefficients $\rho_k$ for $N=40$, placed at
$t_k=k\,T/(N+1)$. We see that $\rho(t)$ is the continuous limit of the discrete coefficients.
The right panels show
$\sigma(t)=\sqrt{1/\rho(t)-\rho(t)}$ and
$\gamma(t)=3\dot{\sigma}(t)/\sigma(t)$.  
The arrows indicate endpoint divergences to $+\infty$.
}
\label{fig:lemniscate-continuous-coefficients}
\end{figure}

Taking the small-stepsize limit of \eqref{eq:lemniscate-acceleration} under
the quantization $t=k\,T/N$ and $L=N^{2}/T^{2}$ with $N\rightarrow\infty$
suggests that
the continuous-time counterparts $X$ and $Z$ of $x_k$ and
$\bigl(N/(\sqrt{\Omega_N}\,T)\bigr)z_k$, respectively, satisfy the ODE
\begin{align*}
      \dot{Z}(t)&=-\frac{\sigma(t)^{3}}{2} \nabla f(X(t)),\\
      \dot{X}(t)&=\frac{4}{\sigma(t)^{3}} Z(t)
\end{align*}
for $t\in(0,T)$,
with initial conditions $X(0)=x_0$ and $\dot{X}(0)=0$.
Eliminating $Z(t)$ and defining the friction coefficient
$\gamma(t)\triangleq 3\dot{\sigma}(t)/\sigma(t)$ yields the second-order ODE
\begin{equation*}
    \ddot{X} + \gamma(t)\dot{X} + 2 \nabla f(X) = 0,
\end{equation*}
which, in terms of the lemniscate functions $\operatorname{cl}$ and $\operatorname{sl}$,
has the form
\begin{equation*}
\Ddot{X}(t)+
\underbrace{\frac{3\varpi}{2T}\left[\frac{\operatorname{cl}\!\left(\frac{\varpi t}{2T}\right)}{\operatorname{sl}\!\left(\frac{\varpi t}{2T}\right)}+\frac{\operatorname{sl}\!\left(\frac{\varpi t}{2T}\right)}{\operatorname{cl}\!\left(\frac{\varpi t}{2T}\right)}\right]
}_{=\gamma(t)}
\dot{X}(t)+2\nabla f\left(X(t)\right)=0
\end{equation*}
for $t\in(0,T)$.
The endpoint expansions of $\gamma$ give $\gamma(t)\sim3/t$ as $t\downarrow0$ and $\gamma(t)\sim3/(T-t)$
as $t\uparrow T$ so near $t=0$ the ODE asymptotically matches the continuous-time limit of OGM and near
$t=T$ that of OGM-G \cite{suh2022continuous}.

We present an analogous Lyapunov analysis for the continuous-time ODE.

\begin{thm}\label{thm:continuous-time-lyap}
Fix $T>0$, and let $f\colon\mathbb{R}^{d} \to \mathbb{R}$ be an $L$-smooth convex
function with a minimizer $x_\star$.  Any trajectory
$X\in C^{2}([0,T];\mathbb{R}^{d})$ satisfying
$\ddot{X}+\gamma(t)\dot{X}+2\nabla f(X)=0$ on $(0,T)$, with
$X(0)=x_0$ and $\dot{X}(0)=0$, has the following bounds on the endpoint $X(T)$:
\begin{align*}
     & \norm{\nabla f(X(T))}^{2}
    \leq
    \frac{\varpi^{4}}{T^{4}}\norm{x_{0}-x_{\star}}^{2},\\
     & f(X(T))-f(x_{\star})
    \leq
    \frac{\varpi^{2}}{2T^{2}}\norm{x_{0}-x_{\star}}^{2}.
\end{align*}
\end{thm}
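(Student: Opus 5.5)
The plan is to mimic the discrete Lyapunov argument of Section~\ref{sec:lemniscate-lyapunov} in continuous time. I would take its continuous-time limit under the same quantization ($t=kT/N$, $L=N^{2}/T^{2}$, $\Omega_{N}\approx N^{2}/\varpi^{2}$, $\rho_{k}\to\rho(t)$) and rescale by $2\Omega_{N}/L\to 2T^{2}/\varpi^{2}$. The discrete multiplier $(1-\rho_{k}^{2})/(2\rho_{k})$ becomes $\tfrac12\sigma(t)^{2}$, which suggests an energy of the form
\[
\mathcal E(t)=a(t)\bigl(f(X(t))-f_{\star}\bigr)-b(t)\bigl(f(X(T))-f_{\star}\bigr)+\tfrac12\norm{X(t)-x_{\star}+c(t)Z(t)}^{2}-\tfrac12\norm{X(T)-x_{\star}+Z(t)}^{2}.
\]
Here $Z=\sigma^{3}\dot X/4$ is the first-order variable, with $\dot Z=-\tfrac12\sigma^{3}\nabla f(X)$. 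The coefficients $a,b,c$ are the limits of $\Omega_{N}^{-1}(1-\rho_k^2)/(2\rho_k)$, $\Omega_{N}^{-1}(1-\rho_k)^{2}/(2\rho_k)$ and the $z$-coefficient, all expressed through $\rho(t)$ and $\sigma(t)$. The gaps $\mathcal D_{i,j}$ lose their $\frac1{2L}$ term and become plain convexity gaps. At $t=0$ we have $a(0)=b(0)=0$ and $Z(0)=0$, so $\mathcal E(0)\le\frac12\norm{x_{0}-x_{\star}}^{2}$.

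The key step is the continuous analogue of Lemma~\ref{lem:lyap-closed-decrement}. I would differentiate $\mathcal E$ and use $\dot X=4Z/\sigma^{3}$ and $\dot Z=-\frac12\sigma^{3}\nabla f(X)$. The goal is to show that the coefficient choices make $\dot{\mathcal E}$ equal to
\[
-\bigl[\alpha(t)\,(f_{\star}-f(X)-\langle\nabla f(X),x_{\star}-X\rangle)+\beta(t)\,(f(X(T))-f(X)-\langle\nabla f(X),X(T)-X\rangle)\bigr]
\]
with $\alpha,\beta\ge0$. These are the limits of the multipliers $(\rho_k-\rho_{k+1})$ and $\bigl(\frac{1+\rho_{k+1}^2}{2\rho_{k+1}}-\frac{1+\rho_k^2}{2\rho_k}\bigr)$; the $\mathcal D_{k-1,k}$ term vanishes in the limit. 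The scalar identities \eqref{eq:lyap-scalar-identities} become ODE identities for $\rho$. They should follow from the limit of the recurrence \eqref{eq:lemniscate-recurrence}, namely $\Omega_N(\rho_k-\rho_{k+1})^2\to$ an equation of the form $\dot\rho^{2}=(\varpi/T)^{2}\rho(1-\rho^{2})$. Up to the scaling by $\varpi/(2T)$, that is the lemniscate differential equation $(\mathrm{cl}^{2})'^{2}=4\,\mathrm{cl}^{2}(1-\mathrm{cl}^{4})$, which $\rho=\mathrm{cl}^{2}$ satisfies. I would verify these identities symbolically and then check that $\alpha,\beta\ge0$ from the monotonicity of $\rho$. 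This gives $\mathcal E(t)\le\mathcal E(0)$ for $t<T$.

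The main obstacle is the endpoint $t=T$, where $\sigma\to\infty$ and $\gamma\sim3/(T-t)$. Here I would use $X\in C^{2}([0,T])$. Since $\gamma\dot X=-\ddot X-2\nabla f(X)$ stays bounded, we get $\dot X(T)=0$. Moreover, $\dot X(t)=\ddot X(T)(t-T)+o(T-t)$, so $\gamma\dot X\to-3\ddot X(T)$, and the ODE at $t=T$ gives $\ddot X(T)=\nabla f(X(T))$. Meanwhile $Z=\sigma^{3}\dot X/4$, with $\sigma^{3}\sim C(T-t)^{-3/2}$ (from $\rho\sim c(T-t)^{2}$), which requires a careful expansion rather than a naive limit. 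I would compute $\lim_{t\uparrow T}\mathcal E(t)$ as in Lemma~\ref{lem:lyap-terminal}. The two squared norms should combine, via $\dot Z=-\tfrac12\sigma^{3}\nabla f$ integrated near $T$ and $\ddot X(T)=\nabla f(X(T))$, into $\frac{T^{4}}{2\varpi^{4}}\norm{\nabla f(X(T))}^{2}$ plus a nonnegative convexity gap (e.g.\ $\langle\nabla f(X(T)),X(T)-x_\star\rangle-(f(X(T))-f_\star)$), plus $\frac{T^{2}}{\varpi^{2}}(f(X(T))-f_{\star})$-type terms. Should the raw $\mathcal E$ diverge, I would first subtract the divergent parts, which cancel between the two norms, and only then pass to the limit.

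Chaining $\lim_{t\uparrow T}\mathcal E(t)\le\mathcal E(0)\le\frac12\norm{x_0-x_\star}^2$ and dropping the nonnegative gaps then yields both bounds, mirroring the proofs of Theorem~\ref{thm:lemniscate-convergence} and Corollary~\ref{cor:lyap-function-value}.
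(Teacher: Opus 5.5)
Your strategy is the paper's proof. You use the same first-order variable $Z=\sigma^3\dot X/4$ and the same Lyapunov function up to the overall factor $T^2/\varpi^2$. The derivative is minus a nonnegative combination of $D_f(x_\star,X)$ and $D_f(X(T),X)$; in your normalization the multipliers are $\tfrac{T}{\varpi}\sigma\rho$ and $\tfrac{T}{2\varpi}\sigma^3$. The terminal argument is also the same: $\dot X(T)=0$, $\ddot X(T)=\nabla f(X(T))$, then Taylor expansion. Two ingredients, however, are wrong as written and would derail the computation.

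\textbf{Normalization.} The continuum correspondence is $z_k\leftrightarrow(T/\varpi)Z$, and $\mathcal V_k$ should be rescaled by $\Omega_N/L\to T^2/\varpi^2$. That rescaling is what puts $\tfrac12$ in front of the norms; $\Omega_N^{-1}(\cdot)$ would send the function-value coefficients to $0$. So $a=\tfrac{T^2}{2\varpi^2}\sigma^2$, $b=\tfrac{T^2}{\varpi^2}\tfrac{(1-\rho)^2}{2\rho}$, $c=\tfrac{T}{\varpi}\tfrac{1+\rho^2}{1-\rho^2}$, and the second norm must be $\tfrac12\norm{X(T)-x_\star+\tfrac{T}{\varpi}Z}^2$.

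The coefficient $e$ of $Z$ in that norm is not free. Killing the $\langle X(T)-x_\star,Z\rangle$ terms forces $e$ to be constant. Cancelling the $\langle Z,\nabla f(X)\rangle$ terms requires $4a/\sigma^3=\tfrac12(c^2-e^2)\sigma^3$; together with $\dot c=-4/\sigma^3$ and $\dot a=c\sigma^3/2$, this pins $e=T/\varpi$. With $e=1$ your derivative identity fails unless $T=\varpi$.

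Also, $c\to\infty$ as $t\downarrow 0$, so $Z(0)=0$ alone does not give the value at $0^+$. You need $cZ=\tfrac{T}{\varpi}\tfrac{(1+\rho^2)\sigma}{4\rho}\dot X\to 0$.

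\textbf{Terminal asymptotics.} Since $\sigma^2=1/\rho-\rho$ and $\rho\sim\tfrac{\varpi^2}{4T^2}(T-t)^2$, you get $\sigma\sim\tfrac{2T}{\varpi(T-t)}$. Hence $\sigma^3\sim(T-t)^{-3}$, not $(T-t)^{-3/2}$, and $Z\sim-\tfrac{2T^3}{\varpi^3}(T-t)^{-2}\nabla f(X(T))$.

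This exponent matters. Write $g_T=\nabla f(X(T))$. The limit is a balance of three $O(1)$ pieces:
$b\,(f(X)-f(X(T)))\to\tfrac{T^4}{\varpi^4}\norm{g_T}^2$;
$\tfrac12(c^2-\tfrac{T^2}{\varpi^2})\norm{Z}^2\to\tfrac{T^4}{2\varpi^4}\norm{g_T}^2$;
and the cross term $\tfrac{T}{\varpi}\langle X-X(T),Z\rangle\to-\tfrac{T^4}{\varpi^4}\norm{g_T}^2$.
With your exponent the last two would vanish, giving a wrong terminal value.

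In particular, the two norms together contribute $-\tfrac{T^4}{2\varpi^4}\norm{g_T}^2$, not the positive gradient term you expected. The positive term comes from the function-value pair. Its coefficients $a,b\sim\tfrac{T^2}{2\varpi^2\rho}$ each diverge and must be regrouped as $\tfrac{T^2}{\varpi^2}(1-\rho)(f(X)-f_\star)+b\,(f(X)-f(X(T)))$.

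This regrouping, together with expanding the norms, is exactly the paper's expanded form of the Lyapunov function, in which every term has a finite limit. It is a rearrangement, not a subtraction from $\mathcal E$, so monotonicity is preserved. It yields $\lim_{t\uparrow T}\mathcal E=\tfrac{T^4}{2\varpi^4}\norm{g_T}^2+\tfrac{T^2}{\varpi^2}(f(X(T))-f_\star)$, with no leftover convexity gap. Both bounds then follow from $\mathcal E(0^+)\le\tfrac12\norm{x_0-x_\star}^2$.
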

\begin{proof}[Proof sketch]
    We denote $x_{T}\triangleq X(T)$, set
    $Z(t)\triangleq\sigma(t)^3\dot X(t)/4$ for $t\in(0,T)$, and define
    the continuous-time Lyapunov function $\mathcal V(t)$ on $(0,T)$ by
    \begin{align*}
        \mathcal V(t) &\triangleq \frac{1-\rho^2}{2\rho}\left(f(X(t))-f(x_\star)\right) - \frac{(1-\rho)^2}{2\rho}\left(f(x_T)-f(x_\star)\right)\\
        &\quad{}+\frac{\varpi^2}{2T^2}\left(\norm{X(t)-x_\star+\frac{1+\rho^2}{1-\rho^2}\frac{T}{\varpi}Z(t)}^2-\norm{x_{T}-x_\star+\frac{T}{\varpi}Z(t)}^2\right).
    \end{align*}
    The derivative can be expressed as the following sum of Bregman divergences $D_{f}(x,y)\triangleq f(x)-f(y)-\left\langle x-y, \nabla f(y)\right\rangle$, thus is nonpositive:
    \begin{equation*}
        \frac{d}{dt}\mathcal V(t)=-\frac{\varpi}{T}\left(\sigma\rho D_{f}\left(x_\star,X(t)\right)+\frac12\sigma^3 D_{f}\left(x_{T},X(t)\right)\right)\leq 0.
    \end{equation*}
    The limits of the endpoints $t\downarrow 0$, $t\uparrow T$ are
    \begin{equation*}
    \begin{aligned}
        \mathcal V(0+)&=\frac{\varpi^2}{2T^2}\left(\norm{x_0-x_\star}^2-\norm{x_{T}-x_\star}^2\right),\\
        \mathcal V(T-)&=\frac{T^2}{2\varpi^2}\norm{\nabla f(x_T)}^2+f(x_T)-f(x_\star),
    \end{aligned}
    \end{equation*}
    and the bounds follow by chaining $\left(\varpi^2/(2T^2)\right)\norm{x_0-x_\star}^2 \geq \mathcal V(0+) \geq \mathcal V(T-)$ and $\mathcal V(T-) \geq \left( T^2 / (2\varpi^2)\right)\norm{\nabla f(x_T)}^2$ for the gradient bound and $\mathcal V(T-) \geq f(x_T)-f(x_\star)$ for the function value bound.
\end{proof}

\subsubsection{Lean formalization}\label{sec:lemniscate-lean}

\begin{figure}[t]
\centering
\begin{mdframed}[linewidth=0.7pt, roundcorner=3.5pt, innerleftmargin=8pt,
  innerrightmargin=8pt, innertopmargin=6pt, innerbottommargin=6pt]
\footnotesize\ttfamily
Using the \textsf{lean-verify} skill, formalize and verify the lemniscate acceleration results that are
located in the \texttt{ResearchLog} folder. Targets for the verification are the coefficient construction,
the Lyapunov identities,
the convergence theorem, the function-value bound, the continuous-time theorem.
State the results in the
manuscript's Euclidean setting or a generalization of it. 
Constraints are Lean 4.32.0 with mathlib 4.32.0, pinned by commit; no
\textup{\texttt{sorry}}, \textup{\texttt{axiom}}, \textup{\texttt{admit}},
or \textup{\texttt{unsafe}} anywhere in the development.

Deliverables are a Lake project passing \textup{\texttt{lake build}} and the
hygiene scan; wrappers \path{Challenge.lean}, \path{Solution.lean}, and
\path{config.json} naming the agreed public theorem surface for comparator
replay; build, axiom-audit, and replay logs under \path{artifacts/}.

Please ask me clarifying questions if anything is not clear to you.
\end{mdframed}
\caption{An illustrative Stage-3 prompt for the formal verification
of the lemniscate accelerated gradient method using the
\textsf{lean-verify} skill.}
\label{fig:stage3-prompt}
\end{figure}

\begin{figure}[t]
\centering
\begin{adjustbox}{max width=\textwidth, center}
\definecolor{lfink}{RGB}{70,70,70}
\definecolor{lfamberfill}{RGB}{243,226,196}
\definecolor{lfamberline}{RGB}{166,118,29}
\definecolor{lfgrayfill}{RGB}{244,244,244}
\definecolor{lfgrayline}{RGB}{95,95,95}
\begin{tikzpicture}[
  fname/.style={font=\ttfamily\footnotesize\bfseries, text=black,
    inner sep=1.6pt, anchor=west},
  fnote/.style={font=\footnotesize, text=annotgray, inner sep=1.6pt,
    anchor=west},
  treeline/.style={draw=lfink!55, line width=0.55pt},
  pics/folder/.style={code={
    \fill[lfamberfill, draw=lfamberline, line width=0.5pt]
      (-0.15,0.04) -- (-0.15,0.12) -- (-0.04,0.12) -- (0.00,0.06)
      -- (0.15,0.06) -- (0.15,-0.11) -- (-0.15,-0.11) -- cycle;
  }},
  pics/lfile/.style={code={
    \fill[lfgrayfill, draw=lfgrayline, line width=0.5pt]
      (-0.10,-0.13) -- (-0.10,0.13) -- (0.04,0.13) -- (0.10,0.07)
      -- (0.10,-0.13) -- cycle;
    \draw[lfgrayline, line width=0.5pt] (0.04,0.13) -- (0.04,0.07)
      -- (0.10,0.07);
  }},
]
\newcommand{\treerow}[6]{  \pic at (#2,#3) {#4};
  \node[fname] (row#1) at (#2+0.22,#3) {#5};
  \node[fnote] at ($(row#1.east)+(0.12,0)$) {#6};
}
\treerow{a}{0}{0}{folder}{LemniAcc/}{Lean/Lake project for lemniscate
  acceleration}
\treerow{b}{0.78}{-0.60}{lfile}{lakefile.toml, lean-toolchain}{build
  configuration; commit-pinned Lean and mathlib}
\treerow{c}{0.78}{-1.20}{lfile}{config.json}{pins the ten theorem
  names and permitted axioms for the comparator}
\treerow{d}{0.78}{-1.80}{lfile}{Challenge.lean}{comparator challenge
  wrapper over the stated development}
\treerow{e}{0.78}{-2.40}{lfile}{Solution.lean}{comparator solution
  wrapper, replayed against it}
\treerow{f}{0.78}{-3.00}{lfile}{AxiomAudit.lean}{runs the axiom audit
  over the ten declarations}
\treerow{g}{0.78}{-3.60}{lfile}{LemniAcc.lean}{umbrella module importing
  the development}
\treerow{h}{0.78}{-4.20}{folder}{LemniAcc/}{proof modules, organized in
  dependency layers:}
\treerow{i}{1.56}{-4.80}{lfile}{Model, FiniteInterpolation}{problem
  class, oracle data, interpolation inequality}
\treerow{j}{1.56}{-5.40}{folder}{Discrete/Recurrence/}{coefficient
  existence and uniqueness by shooting}
\treerow{k}{1.56}{-6.00}{folder}{Discrete/}{iterates, Lyapunov
  identities, discrete rate theorems}
\treerow{l}{1.56}{-6.60}{folder}{Lemniscatic/}{the lemniscate integral
  and lemniscatic calculus}
\treerow{m}{1.56}{-7.20}{folder}{Continuous/}{the continuous-time
  Lyapunov development}
\treerow{n}{0.78}{-7.80}{folder}{artifacts/}{build, hygiene,
  axiom-audit, and comparator replay logs}
\draw[treeline] (0.02,-0.22) -- (0.02,-7.80);
\foreach \y in {-0.60,-1.20,-1.80,-2.40,-3.00,-3.60,-4.20,-7.80}
  {\draw[treeline] (0.02,\y) -- (0.56,\y);}
\draw[treeline] (0.80,-4.42) -- (0.80,-7.20);
\foreach \y in {-4.80,-5.40,-6.00,-6.60,-7.20}
  {\draw[treeline] (0.80,\y) -- (1.34,\y);}
\end{tikzpicture}\end{adjustbox}
\caption{File structure of the Lean project for the lemniscate accelerated gradient method
(module files grouped by folder). The wrappers \texttt{Challenge.lean},
\texttt{Solution.lean}, and \texttt{config.json} pin the ten public
theorems and the permitted axioms for the comparator, and
\texttt{artifacts/} records the build, audit, and replay logs.}
\label{fig:lemniacc-file-tree}
\end{figure}

\begin{table}[t]
\centering
\begin{tabular}{lc}
\toprule
Public theorem declarations & 10\\
Dependency nodes in the closure & 22\\
Lean source files & 22\\
Lines of Lean & 7{,}384\\
\texttt{lake build} jobs, native macOS & 8{,}687\\
Comparator replay time, native macOS & 109\,s\\
Comparator replay time, WSL2 real Landrun & 155\,s\\
Toolchain & Lean 4.32.0, mathlib 4.32.0, commit-pinned\\
Axioms used & \texttt{propext}, \texttt{Quot.sound}, \texttt{Classical.choice}\\
\bottomrule
\end{tabular}
\vspace{0.05in}
\caption{Summary statistics of the lemniscate acceleration Lean
project: it formalizes a fixed 22-node dependency closure of manuscript
claims and their supporting steps through ten public theorem
declarations, builds cleanly under the commit-pinned toolchain, and is
accepted by the comparator replay under exactly the three standard
axioms listed.}
\label{tab:lemniacc-project}
\end{table}

The convergence theory of this section is machine-checked in Lean~4 with
mathlib \cite{demoura2021lean,mathlib2020lean} by Stage~3 of
\textsf{AutoOPT} (Section~\ref{subsec:autoopt-stage-lean});
Figure~\ref{fig:stage3-prompt} shows an illustrative Stage-3 prompt.
The resulting Lean project proves ten public
theorem declarations, stating the smooth convex
inequality over the Euclidean space $\mathbb{R}^{d}$ and the
algorithmic and continuous-time results over an arbitrary complete real
inner-product space, which subsumes the manuscript's $\mathbb{R}^{d}$
setting. The declaration
\path{LemniAcc.gradient_rate} is the Lean counterpart of
Theorem~\ref{thm:lemniscate-convergence}, proving both displayed bounds;
\path{LemniAcc.functionValue_rate} proves both bounds of
Corollary~\ref{cor:lyap-function-value}; and
\path{LemniAcc.continuousTime_lyapunov}
proves both endpoint bounds
of Theorem~\ref{thm:continuous-time-lyap}, with the $C^{2}$ trajectory
of the ODE supplied as a hypothesis exactly as stated there. The supporting declarations follow the paper's proofs:
\path{recurrence_existsUnique} is
Lemma~\ref{lem:lemniscate-recurrence}; \path{omega_bounds} gives the
strict bounds $(N+1)^{2}/\varpi^{2}<\Omega_{N}<(N+1)^{2}$ of
Corollary~\ref{cor:app-omega-bounds}; \path{finite_interpolation}
establishes \eqref{eq:F_0L_formula} that the
proofs use; \path{iterates} constructs the trajectory of
\eqref{eq:lemniscate-acceleration}; \path{lyapunov_terminal} and
\path{lyapunov_decrement} are Lemmas~\ref{lem:lyap-terminal}
and~\ref{lem:lyap-closed-decrement}; and
\path{Lemniscatic.sigma_identities} combines the lemniscatic calculus
of Lemma~\ref{lem:lemniscatic-calculus} with the
$\sigma$-identities of Lemma~\ref{lem:sigma-identities}. As a
representative statement, \path{LemniAcc.gradient_rate} reads as
follows, where \texttt{E} is an arbitrary complete real inner-product
space:
\begin{center}
\begin{minipage}{0.92\textwidth}
\ttfamily\footnotesize
theorem gradient\_rate\\
\hspace*{2em}(N : Nat) (hN : 1 $\leq$ N)\\
\hspace*{2em}(M : SmoothConvexModel E) (x0 xStar : E)\\
\hspace*{2em}(hxStar : M.IsMinimizer xStar) :\\
\hspace*{2em}$\Vert$M.grad (Discrete.canonicalX N M x0 N)$\Vert$ \^{} 2 $\leq$\\
\hspace*{4em}((M.L : $\mathbb{R}$) \^{} 2 / omega N \^{} 2) * $\Vert$x0 - xStar$\Vert$ \^{} 2\\
\hspace*{3em}$\wedge$\\
\hspace*{3em}((M.L : $\mathbb{R}$) \^{} 2 / omega N \^{} 2) * $\Vert$x0 - xStar$\Vert$ \^{} 2 $\leq$\\
\hspace*{4em}(Lemniscatic.varpi \^{} 4 * (M.L : $\mathbb{R}$) \^{} 2 *\\
\hspace*{6em}$\Vert$x0 - xStar$\Vert$ \^{} 2) /\\
\hspace*{5em}(((N + 1 : Nat) : $\mathbb{R}$) \^{} 4)
\end{minipage}
\end{center}

Figure~\ref{fig:lemniacc-file-tree} shows the layout of the project,
and Table~\ref{tab:lemniacc-project} reports its summary statistics.
The project comprises 22 Lean source files and 7{,}384 lines of
Lean, organized in dependency layers that parallel the paper: the
problem class and the interpolation inequality, the recurrence
construction, the discrete Lyapunov development, and the lemniscatic
calculus supporting the continuous-time theorem. It builds on mathlib's
theories of inner-product and Euclidean spaces, differentiability, and
one-variable calculus; in particular, the shooting argument of
Appendix~\ref{sec:app-recurrence-existence} rests on the intermediate
value theorem, and the lemniscatic calculus of
Appendix~\ref{sec:app-continuous-time} on interval integration.

The project passes \texttt{lake build} under the commit-pinned toolchain
Lean 4.32.0 with mathlib 4.32.0 (8{,}687 build jobs on native macOS), a
hygiene scan
confirms that no \texttt{sorry}, \texttt{axiom}, \texttt{admit}, or
\texttt{unsafe} appears anywhere in the development, and an axiom audit
reports that every public declaration depends on exactly the three
standard axioms \texttt{propext}, \texttt{Quot.sound}, and
\texttt{Classical.choice}. The comparator then independently certified
the ten public theorems: the approved wrappers \texttt{Challenge.lean},
\texttt{Solution.lean}, and \texttt{config.json} were replayed end to
end, rebuilding both wrappers, exporting their environments, and
re-checking the solution with the Lean default kernel, in 109 seconds;
in this first project both wrappers import the proof development
itself, and the ITEM-f project of Section~\ref{sec:itemf-lean}
strengthens the wrapper design with a proof-free statement layer.
Each project was certified twice, first replayed on native macOS, and then in a 
Linux environment with Landlock isolation and systemd restrictions.
The project was accepted by the Lean-kernel of the sandboxed environment 
in 155 seconds, thus carries the evidence label 
\path{Lean+comparator verified (WSL2 real Landrun; systemd address-family restriction)}.
The projects and their verification artifacts accompany the paper.

\subsection{ITEM-f for efficient function-value ratio contraction}\label{sec:itemf}

Consider the unconstrained minimization problem
\begin{equation}
\begin{array}{ll}
\underset{x\in\mathbb{R}^{d}}{\mbox{minimize}} & f(x),\end{array}\tag{\ensuremath{\mathcal{P}'}}\label{eq:itemf-opt}
\end{equation}
where $f\colon\mathbb{R}^{d}\to\mathbb{R}$ is an $L$-smooth and $\mu$-strongly
convex function with $0<\mu<L$.
Strong convexity guarantees that $f$ has a unique minimizer $x_{\star}$. Denote $f_{\star}\triangleq f(x_{\star})$.

\subsubsection*{Algorithm statement}

The method \eqref{eq:item-f-momentum} is
\begin{equation}
	\tag{ITEM-f}\begin{aligned}x_{k+1}={} & x^{+}_{k}+\dfrac{a_{k}\phi_{N-k-1}}{(1-q)a_{k+1}\phi_{N-k}}(x^{+}_{k}-x^{+}_{k-1})+\dfrac{\phi_{N-k-1}}{\phi_{N-k}}(x^{+}_{k}-x_{k}),\end{aligned}
	\label{eq:item-f-momentum}
\end{equation}
for $k=0,1,2,\dots,N-1$, where $N$ is a prespecified iteration count,
$x_{0}\in\mathbb{R}^{d}$ is the starting point, $x^{+}_{-1}=x_{0}$
by convention, $x^{+}_{k}=x_{k}-\frac{1}{L}\nabla f(x_{k})$, $q=\mu/L$,
$\phi_{0}=\sqrt{1-q}$, $\phi_{N}=\left((1-q)\Upsilon_{N}+a_{N}\right)/\sqrt{1-q}$,
$\phi_{k}=\sqrt{2\Upsilon_{N}a_{k}-q}$ for $1\le k\le N-1$ and $a_{0},a_{1},\dots,a_{N}$
and $\Upsilon_{N}$ are positive constants that we define in Lemma~\ref{lem:itemf-construction}.

Taylor and Drori
\cite[Appendix~E]{taylor2021optimal}
reported numerically optimized step sizes of ITEM-f for $N=1,\dots,5$ in the setting $L=1$ and $\mu=0.1$, but did not assign a name to the resulting method.
They also derived a closed-form method with a convergence analysis, called the Information-Theoretic Exact Method (ITEM), for a related performance criterion.
We use the name \emph{ITEM-f} to acknowledge their numerical construction while distinguishing it from ITEM.
The optimization objectives underlying ITEM and ITEM-f are discussed further in Section~\ref{sec:itemf-autooptdesign}.

\subsubsection*{Construction of the coefficients}

\begin{figure}[t]
\centering
\begin{adjustbox}{max width=\textwidth}
\definecolor{itemfblue}{RGB}{55,126,184}\definecolor{itemfred}{RGB}{228,26,28}\definecolor{itemfdgreen}{RGB}{27,120,55}\begin{tikzpicture}
\begin{axis}[
    width=0.86\textwidth,
    axis equal image,
    xmin=-1.15, xmax=4.85,
    ymin=0, ymax=2.52,
    clip=false,
    xlabel={$a$},
    ylabel={$b$},
    ylabel style={rotate=-90},
    label style={font=\normalsize},
    tick label style={font=\scriptsize},
    xtick={-1,0,1,2,3,4},
    ytick={0,1,2},
    extra x ticks={2.426697},
    extra x tick labels={$\Upsilon_{3}$},
    legend pos=outer north east,
    legend style={draw=none, font=\scriptsize},
    legend cell align={left},
]
\addlegendimage{itemfred, line width=0.9pt, dashed}
\addlegendentry{$P_3 P_4$ $\parallel a$-axis}
\addlegendimage{itemfred, line width=1.2pt, densely dotted}
\addlegendentry{$O,P_0,P_1$ collinear}
\addlegendimage{itemfdgreen, line width=1.2pt}
\addlegendentry{symmetry chords (through $S$)}
\addlegendimage{itemfdgreen, line width=1.4pt, dashed}
\addlegendentry{tangent from $S$, touching at $P_2$}
\addlegendimage{only marks, mark=*, mark options={fill=white, draw=black, line width=0.8pt}, mark size=2.6pt}
\addlegendentry{$P_1,P_2,P_3$ on circle}
\addlegendimage{only marks, mark=diamond*, black, mark size=3pt}
\addlegendentry{endpoints $P_0,P_4$}
\addplot[domain=0:180, samples=181, smooth, black, line width=1.0pt, forget plot]
    ({2.426697+2.211076*cos(x)}, {2.211076*sin(x)});
\draw[itemfdgreen, line width=1.2pt] (-1,0) -- (2.109342,2.391277);
\draw[itemfdgreen, line width=1.2pt] (-1,0) -- (2.906434,2.391277);
\draw[itemfdgreen, line width=1.2pt, dashed] (-1,0) -- (1.200000,1.858119);
\draw[black, line width=0.7pt] (1.106956,1.779534) -- (1.197291,1.672578)
    -- (1.090335,1.582242);
\draw[itemfred, line width=1.2pt, densely dotted] (0,0) -- (1.080000,2.265446);
\draw[itemfred, line width=0.9pt, dashed] (-0.15,2.097611) -- (2.93,2.097611);
\draw[black, line width=0.9pt, -{Stealth[length=2.6mm]}, shorten >=3pt]
    (2.426697,0) -- (0.578874,1.214251);
\draw[black, line width=0.9pt, -{Stealth[length=2.6mm]}, shorten >=3pt]
    (2.426697,0) -- (1.000000,1.689199);
\draw[black, line width=0.9pt, -{Stealth[length=2.6mm]}, shorten >=3pt]
    (2.426697,0) -- (1.727493,2.097611);
\addplot[only marks, mark=x, black, mark size=3.2pt,
    mark options={line width=1.2pt}, forget plot] coordinates {(2.426697,0)};
\addplot[only marks, mark=*, black, mark size=1.6pt, forget plot] coordinates {(0,0)};
\addplot[only marks, mark=*, itemfdgreen, mark size=1.6pt, forget plot] coordinates {(-1,0)};
\addplot[only marks, mark=diamond*, black, mark size=3pt, forget plot]
    coordinates {(0.412083,0.864389) (2.426697,2.097611)};
\addplot[only marks, mark=*, mark options={fill=white, draw=black, line width=0.9pt},
    mark size=2.6pt, forget plot]
    coordinates {(0.578874,1.214251) (1.000000,1.689199) (1.727493,2.097611)};
\node[anchor=east]       at (0.282,0.644)  {$P_0$};
\node[anchor=south east] at (0.429,1.244)  {$P_1$};
\node[anchor=south west] at (0.950,1.790)  {$P_2$};
\node[anchor=south]      at (1.727,2.258)  {$P_3$};
\node[anchor=south]      at (2.470,2.250)  {$P_4$};
\node[anchor=south east] at (-0.06,0.07)   {$O$};
\node[anchor=south west] at (2.505,0.075)  {$C$};
\node[itemfdgreen, anchor=south] at (-1.0,0.14) {$S$};
\end{axis}
\end{tikzpicture}\end{adjustbox}
\caption{The planar construction underlying Lemma~\ref{lem:itemf-construction}, shown for $N=3$, $q=0.1$.
The intermediate points $P_{1},\dots,P_{N}$ ($\circ$) lie on the circle of radius
$R_{N}=\sqrt{\Upsilon_{N}^{2}-1}$ centered at $C=(\Upsilon_{N},0)$; the endpoints
$P_{0}$ and $P_{N+1}$ ($\diamond$) lie strictly inside; successive points satisfy
$(P_k-C)\cdot(P_{k+1}-C)=R_{N}^{2}(1-q\,a_{k+1}/\Upsilon_{N})$. 
\emph{Shooting (red).} 
The construction is run backward from the horizontal final edge $b_{N}=b_{N+1}=\sqrt{1-q}\,R_{N}$
(dashed); $\Upsilon_{N}$ is the value for which $O$, $P_{0}$, $P_{1}$ are collinear in this order (dotted).
\emph{Symmetry (green).}
The map $T(a,b)=(1/a,b/a)$ satisfies $T(P_k)=P_{N+1-k}$; hence every chord
$P_kP_{N+1-k}$ passes through $S=(-1,0)$. For odd $N$, the tangent from $S$
touches the circle at the fixed point $P_{(N+1)/2}$.}
\label{fig:itemf-points}
\end{figure}

To fully define \ref{eq:item-f-momentum}, it remains to
specify the coefficients \(a_0,a_1,\ldots,a_N\) and \(\Upsilon_N\).
Lemma~\ref{lem:itemf-construction} formally defines these coefficients,
and Figure~\ref{fig:itemf-points} illustrates the underlying geometric
construction.

\begin{lem}\label{lem:itemf-construction}
For any integer $N\geq1$ and $0<q<1$, there exist a unique
real $\Upsilon_{N}>1$, unique reals
\[
1/\Upsilon_{N}=a_{0}<a_{1}<\cdots<a_{N}<a_{N+1}=\Upsilon_{N},
\]
and unique positive reals $b_{0},b_{1},\dots,b_{N},b_{N+1}$
satisfying the following conditions.
Write $C=(\Upsilon_{N},0)$ and $R_{N}=\sqrt{\Upsilon_{N}^{2}-1}$ and
write $P_{k}=(a_{k},b_{k})\in\mathbb{R}^2$ for $0\leq k\leq N+1$. The requirements are:
\begin{equation}
\begin{aligned} & \norm{P_k-C}^{2}=R_{N}^{2},\quad1\leq k\leq N,\\
 & (P_k-C)\cdot(P_{k+1}-C)=R_{N}^{2}\Big(1-\tfrac{q\,a_{k+1}}{\Upsilon_{N}}\Big),\quad0\leq k\leq N,\\
 & P_{0}=\Big(\tfrac{1}{\Upsilon_{N}},\tfrac{\sqrt{1-q}\,R_{N}}{\Upsilon_{N}}\Big),\\
 & P_{N+1}=\big(\Upsilon_{N},\sqrt{1-q}\,R_{N}\big).
\end{aligned}
\label{eq:itemf-construction-conditions}
\end{equation}
\end{lem}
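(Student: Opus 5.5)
The plan is a shooting argument in the single parameter $\Upsilon=\Upsilon_{N}$, analogous to the proof of Lemma~\ref{lem:lemniscate-recurrence}: run the construction backward from the prescribed endpoint $P_{N+1}$, and tune $\Upsilon$ so that the last condition, linking $P_{0}$ and $P_{1}$, holds.

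\textbf{Step 1 (backward construction for fixed $\Upsilon$).} Fix $\Upsilon>1$ and set $R=\sqrt{\Upsilon^{2}-1}$. Since $P_{N+1}-C=(0,\sqrt{1-q}\,R)$, the circle condition on $P_{N}$ together with the product condition at $k=N$ force $b_{N}=\sqrt{1-q}\,R$ and $a_{N}=\Upsilon\pm R\sqrt{q}$.
\begin{itemize}
\item We keep the root $a_{N}=\Upsilon-R\sqrt q<\Upsilon$, as required by monotonicity. This is the horizontal final edge in Figure~\ref{fig:itemf-points}.
\item For $1\le k\le N-1$, both $P_{k}$ and $P_{k+1}$ lie on the circle, so we write $P_{j}=C+R(\cos\psi_{j},\sin\psi_{j})$.
\item The product condition then reads $\cos(\psi_{k}-\psi_{k+1})=1-q\,a_{k+1}/\Upsilon$.
\item We take the root $\psi_{k}=\psi_{k+1}+\theta_{k+1}$ with $\theta_{k+1}=\arccos(1-q a_{k+1}/\Upsilon)\in(0,\pi/2)$, rotating counterclockwise. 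This makes $a_{k}=\Upsilon+R\cos\psi_{k}$ decrease as $k$ decreases.
\end{itemize}
We continue while $\psi_{k}<\pi$, which keeps $b_{k}>0$. The other branch is excluded exactly by the requirements $a_{k}<a_{k+1}$ and $b_{k}>0$; this choice is what gives uniqueness of the points once $\Upsilon$ is fixed. All the maps involved are continuous in $\Upsilon$ wherever they are defined.

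\textbf{Step 2 (the matching condition).} $P_{0}$ is already fixed by $\Upsilon$, so the only remaining requirement is the product condition at $k=0$. We define the defect
\[
F(\Upsilon)=(P_{0}-C)\cdot(P_{1}(\Upsilon)-C)-R^{2}\bigl(1-q\,a_{1}(\Upsilon)/\Upsilon\bigr).
\]
By a short computation using $\Upsilon a_{0}=1$, this is equivalent to $O$, $P_{0}$, $P_{1}$ being collinear in this order, as in the figure. To see this, note that $P_{0}=(1,\sqrt{1-q}\,R)/\Upsilon$, expand the product with $|P_{1}-C|^{2}=R^{2}$, and reduce it to $b_{1}/a_{1}=\sqrt{1-q}\,R$.

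So it is more convenient to shoot on the slope: set $G(\Upsilon)=b_{1}/a_{1}-\sqrt{1-q}\,R$, on the set where the backward recursion stays in the upper half-plane with $a_{1}>1/\Upsilon$.

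\textbf{Step 3 (sign change and uniqueness).}
\begin{itemize}
\item As $\Upsilon\downarrow1$ we have $R\to0$, and all the angles $\theta$ are of order $\sqrt{q a/\Upsilon}$ relative to a shrinking circle. We expect $P_{1}$ to sit near the top of the circle, giving $G>0$.
\item For $\Upsilon$ large, the $N-1$ rotations accumulate past the vertical through $O$, so $G<0$. Alternatively, we take $\Upsilon_{\max}$ to be the first value at which some $\psi_{k}$ reaches $\pi$ or $a_{1}$ reaches $1/\Upsilon$, and check the sign of $G$ there.
\end{itemize}
The intermediate value theorem then gives existence.

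For uniqueness, I would show that $\psi_{1}(\Upsilon)-\operatorname{arctan}$-type quantities are strictly monotone in $\Upsilon$. The natural tool is induction on $k$ down from $N$, showing that $\psi_{k}$ is increasing in $\Upsilon$ while the target slope $\sqrt{1-q}\,R$ increases only in a controlled way; I would verify this via derivatives of the rotation angles.

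Positivity and strict ordering of the $a_{k}$ then follow from the construction. The endpoint inequalities $a_{0}<a_{1}$ and $a_{N}<a_{N+1}$ come from the collinearity (with $P_{0}$ between $O$ and $P_{1}$) and from $a_{N}=\Upsilon-R\sqrt q$, respectively.

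\textbf{Main obstacle.} The hardest part is the global sign analysis and the monotonicity of $G$ in Step 3. The angles $\theta_{k}$ depend on $a_{k+1}$, which itself moves with $\Upsilon$, so monotonicity is not termwise obvious. If the derivative argument becomes unwieldy, I would fall back on the symmetry $T(a,b)=(1/a,b/a)$ from the figure, which satisfies $T(P_{k})=P_{N+1-k}$. This reduces the problem to a half-length shooting with a midpoint condition, as done for the $\rho_{k}$ in Appendix~\ref{sec:app-recurrence-existence}. I would try to transplant that shooting decision rule directly.
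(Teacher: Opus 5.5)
Your Steps 1--2 match the paper's setup: the backward orbit starts from $\cos\psi_N=-\sqrt q$, and the $k=0$ condition becomes $b_1=\sqrt{1-q}\,R\,a_1$ with $P_1$ the farther intersection. The gap is Step 3, which carries all the content; it lacks the key idea and uses the wrong monotonicity. Since $a_{k+1}=\Upsilon+R\cos\psi_{k+1}$, the rotation increment is $\arccos\bigl(1-q-p\cos\psi_{k+1}\bigr)$ with $p(\Upsilon)=qR/\Upsilon=q\sqrt{1-\Upsilon^{-2}}$. So the orbit depends on $\Upsilon$ only through this increasing scalar, via $F_p(\theta)=\theta+\arccos(1-q-p\cos\theta)$. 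This map has three properties:
\begin{itemize}
\item it is defined on all of $\mathbb{R}$;
\item it is strictly increasing in $\theta$, since $p^2\sin^2\theta+u^2\le(1-q+p)^2<1$, where $u$ is the $\arccos$ argument;
\item it is strictly decreasing in $p$ when $\cos\theta<0$.
\end{itemize}
Downward induction then gives $\psi_1(\Upsilon')<\psi_1(\Upsilon)$ for $\Upsilon<\Upsilon'$, whenever $\psi_1(\Upsilon)\le\pi$ and $N\ge2$ (for $N=1$ it is constant). That is the \emph{opposite} of the monotonicity you propose to prove. Meanwhile the angle $\vartheta(\Upsilon)$ of the target point $\bigl(a_+,\sqrt{1-q}\,R\,a_+\bigr)$, with $a_+=(\Upsilon+\sqrt q\,R)/(1+(1-q)R^2)$, is strictly increasing. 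So the residual $\psi_1-\vartheta$ is strictly decreasing and uniqueness is immediate; no ``controlled growth'' comparison is needed.

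Your endpoint picture is also reversed. As $\Upsilon\downarrow1$ the increments do not shrink; they tend to $\arccos(1-q)$, so $\psi_1\to\arccos(-\sqrt q)+(N-1)\arccos(1-q)$. The point of abscissa $1/\Upsilon$ tends to angle $\pi/2$, so $a_1<1/\Upsilon$ near $\Upsilon=1$. These $\Upsilon$ are therefore outside the set where you define $G$. Moreover, for $N\ge2$ (when $\psi_1\le\pi$ there), $P_1$ lies beyond the nearer intersection of the line $b=\sqrt{1-q}\,R\,a$ with the circle, so $G<0$, not $G>0$. Your domain is in fact a ray $(\Upsilon^\dagger,\infty)$, and $G$ also vanishes spuriously when $P_1$ hits the nearer intersection $a_-<1/\Upsilon$. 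The paper avoids both problems by shooting on the angle residual $\Xi_N=\theta_1-\vartheta$, which is defined for every $\Upsilon>1$:
\begin{itemize}
\item it is positive near $1$, because $\vartheta\to\arccos\sqrt q<\arccos(-\sqrt q)\le\lim\theta_1$;
\item it is negative near $\infty$, because the orbit converges to the $p=q$ orbit, which stays below $\pi$ (as $F_q(\theta)<\pi$ on $(0,\pi)$), while $\vartheta\to\pi$.
\end{itemize}
Finally, your symmetry fallback is circular as stated: in the paper, $T(P_k)=P_{N+1-k}$ is derived \emph{from} uniqueness. Also, the lemniscate appendix shoots over the full horizon by induction on $N$, not from a midpoint.
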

The proof of Lemma~\ref{lem:itemf-construction} is provided
in Appendix~\ref{sec:app-itemf-existence}.
To clarify, $a_{N+1}$ is not used in the definition of \ref{eq:item-f-momentum} but we defined it for convenience. We can compute the numerical values of $a_0,\dots,a_N$ and \(\Upsilon_N\) again by a one-dimensional bisection, as discussed in Appendix~\ref{sec:app-itemf-existence}.

The geometric construction carries a symmetry analogous to the lemniscate recurrence in Section~\ref{sec:lemniscate}: the map $T(a,b)\triangleq(1/a,\,b/a)$, defined for a>0, maps the circle $a^{2}+b^{2}-2\Upsilon_{N}a+1=0$ to itself and reverses the construction, $T(P_{k})=P_{N+1-k}$ for $0\leq k\leq N+1$, so that $a_{k}a_{N+1-k}=1$ and $a_{k}b_{N+1-k}=b_{k}$ (Lemma~\ref{lem:itemf-involution-symmetry} in Appendix~\ref{sec:app-itemf-existence}); in Figure~\ref{fig:itemf-points} this symmetry is visible as the chords $P_{k}P_{N+1-k}$ passing through the point $S\triangleq(-1,0)$. This symmetry supplies the identities $a_{k+1}a_{N-k}=1$ and $a_{1}a_{N}=1$ used in the convergence proof of Section~\ref{sec:itemf-lyapunov}.

\subsubsection*{Convergence theorem}

\begin{thm}\label{thm:itemf-convergence}
Let $N\geq1$ and $d\geq1$ be integers, and let
$f\colon\mathbb{R}^{d}\to\mathbb{R}$ be $L$-smooth and $\mu$-strongly convex
with $q=\mu/L\in(0,1)$, let $\Upsilon_{N}$ and
$a_{0},\dots,a_{N}$ be as in Lemma~\ref{lem:itemf-construction}, and
let $x_{\star}=\operatorname*{argmin}f$. Then, for any
starting point $x_{0}\in\mathbb{R}^{d}$, the
final iterate $x_{N}$ of~\eqref{eq:item-f-momentum} satisfies
\begin{equation}\label{eq:itemf-rate}
f(x_{N})-f_{\star}\;\leq\;\frac{1}{\Upsilon_{N}^{2}}\,\big(f(x_{0})-f_{\star}\big)
\;\leq\;4 (1-\sqrt q)^{2N}\,\big(f(x_{0})-f_{\star}\big).
\end{equation}
\end{thm}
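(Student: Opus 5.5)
The plan is to follow the template of the lemniscate proof in Section~\ref{sec:lemniscate-lyapunov}. For $f\in\mathcal{F}_{\mu,L}$ we build a Lyapunov sequence $\mathcal V_0,\dots,\mathcal V_N$ from the multipliers of the Stage-2 dual certificate, and then show three things: $\mathcal V_0\le f(x_0)-f_\star$, $\mathcal V_N\ge \Upsilon_N^2\,(f(x_N)-f_\star)$, and $\mathcal V_{k+1}\le\mathcal V_k$.

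\textbf{Interpolation gaps.} We work with the strongly convex gaps
\[
\mathcal Q_{i,j}\triangleq f_i-f_j-\langle g_j,x_i-x_j\rangle-\tfrac{1}{2(1-q)}\bigl(\tfrac1L\|g_i-g_j\|^2+\mu\|x_i-x_j\|^2-2q\langle g_i-g_j,x_i-x_j\rangle\bigr)\ge0,
\]
which are the standard $\mathcal{F}_{\mu,L}$ interpolation inequalities. Guided by the structure of ITEM \cite{taylor2021optimal}, the potential is
\[
\mathcal V_k = A_k\,(f(x^+_{k-1})\text{ or }f_k\text{-type gap}) + B_k\,\|\,x\text{-combination}-x_\star\|^2,
\]
where the weights $A_k,B_k$ are expressed through $a_k,b_k,\phi_k$. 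A natural guess is $A_k\propto a_k\Upsilon_N$. This gives $A_0=1$ from $a_0=1/\Upsilon_N$ and $A_{N+1}=\Upsilon_N^2$ from $a_{N+1}=\Upsilon_N$, which is exactly the ratio $1/\Upsilon_N^2$ in \eqref{eq:itemf-rate}.

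\textbf{Endpoints and monotonicity.}
\begin{itemize}
\item At $k=0$ we have $x^+_{-1}=x_0$, and the distance term must be absorbed into $f(x_0)-f_\star$ via $\mu$-strong convexity, $\tfrac\mu2\|x_0-x_\star\|^2\le f(x_0)-f_\star$. This is presumably where $P_0$, with $b_0=\sqrt{1-q}\,R_N/\Upsilon_N$, and $\phi_0=\sqrt{1-q}$ enter.
\item At $k=N$ the final step is a plain step without the next momentum. The horizontal last edge $b_N=b_{N+1}$ together with $\phi_N$ should let the terminal distance term be completed into a nonnegative combination of gaps, as in Lemma~\ref{lem:lyap-terminal}.
\item For the one-step decrease, we expand $\mathcal V_k-\mathcal V_{k+1}$ using the momentum update and the three-point-type identities. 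The goal is to write it as $\lambda_{k}\mathcal Q_{k-1,k}+\lambda'_k\mathcal Q_{\star,k}+(\text{PSD quadratic})$, with $\lambda_k,\lambda'_k\ge0$ following from the monotonicity $a_k<a_{k+1}$ of Lemma~\ref{lem:itemf-construction}.
\end{itemize}

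\textbf{Where the geometry enters.} The conditions \eqref{eq:itemf-construction-conditions} are used exactly to make the leftover coefficients of $\|g_k\|^2$, $\langle g_k,\cdot\rangle$ and $\|\cdot\|^2$ vanish or form a perfect square. The circle condition $\|P_k-C\|^2=R_N^2$, i.e.\ $a_k^2+b_k^2=2\Upsilon_Na_k-1$, matches $\phi_k^2=2\Upsilon_Na_k-q$ up to shifts. The inner-product condition between consecutive points is the scalar identity coupling steps $k$ and $k+1$, just as \eqref{eq:lyap-scalar-identities} did for the lemniscate. The symmetry $a_{k+1}a_{N-k}=1$ is needed because the momentum coefficient mixes $a_k$ with $\phi_{N-k-1}$; it converts $\phi_{N-k}$ into quantities indexed by $k$.

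\textbf{Second inequality.} This is a separate scalar claim, $\Upsilon_N\ge \tfrac12(1-\sqrt q)^{-N}$. The plan is to bound the angular step between consecutive points on the circle. The relation $(P_k-C)\cdot(P_{k+1}-C)=R_N^2(1-qa_{k+1}/\Upsilon_N)$ gives $\cos\theta_k=1-qa_{k+1}/\Upsilon_N$. Passing to the ratio variable along the $T$-orbit, the map $a\mapsto$ next $a$ should satisfy $a_{k+1}\ge a_k/(1-\sqrt q)^2$ up to boundary losses. Chaining from $a_0=1/\Upsilon_N$ to $a_{N+1}=\Upsilon_N$ gives $\Upsilon_N^2\ge c\,(1-\sqrt q)^{-2N}$. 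Controlling the first and last (non-circle) edges contributes the constant $1/4$. A cleaner alternative is induction in $N$ using the shooting monotonicity from Appendix~\ref{sec:app-itemf-existence}.

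\textbf{Main obstacle.} I expect the hardest step to be the exact algebraic closure of the one-step identity. This means guessing the right $x$-combination in the potential, which is likely $x^+_k$ plus a multiple of $x^+_k-x^+_{k-1}$, and verifying that the residual quadratic is a nonnegative multiple of a perfect square by means of the circle relations. The first thing I would try is to read these coefficients directly off the dual multipliers $\lambda_{i,j}$ of the Stage-2 certificate.
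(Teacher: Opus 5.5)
Your template is the right one, and several pieces agree with the paper. Your gaps $\mathcal Q_{i,j}$ coincide exactly with the paper's $\mathcal G_{i,j}$ for the convex $(L-\mu)$-smooth function $\widetilde f=f-f_\star-\frac{\mu}{2}\norm{\cdot-x_\star}^{2}$. Your normalization is the paper's multiplied by $\Upsilon_N$. The decrement multipliers you anticipate ($a_k$ on $\mathcal G_{k-1,k}$ and $a_{k+1}-a_k$ on $\mathcal G_{\star,k}$) are exactly the paper's.

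However, you never write down the potential, and the step you defer as the ``main obstacle'' is the entire first inequality. So that inequality is not yet proved. The missing ingredient is the structure of the quadratic part. In the paper it is not a single square of an $x$-combination such as your guess ($x_k^{+}$ plus a multiple of $x_k^{+}-x_{k-1}^{+}$). It is the rank-two form in $\mathcal V_k=a_k\mathcal G_{k-1,\star}+\frac{\mu}{2(1-q)\Upsilon_N}\norm{W_k}^{2}$. Here $W_k\in\mathbb{R}^{2d}$ stacks $a_k(x_{k-1}^{+}-x_\star)$ over $\bigl((1-q)a_{k+1}(x_k-x_\star)-c_ka_k(x_{k-1}^{+}-x_\star)\bigr)/s_k$, with $c_k=1-qa_{k+1}/\Upsilon_N$ and $s_k=\sqrt{q}\,a_{k+1}\phi_{N-k}/\Upsilon_N$.

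The geometry enters as follows. The symmetry $a_{k+1}a_{N-k}=1$ is used precisely to get $c_k^{2}+s_k^{2}=1$. The coordinate relations $c_ka_k+s_kb_k=(1-q)a_{k+1}$ and $a_k=(c_k+q)a_{k+1}-s_kb_{k+1}$ describe the rotation carrying $P_k-C$ to $P_{k+1}-C$. Together they turn the momentum update into $W_{k+1}=\mathcal O_k\bigl(W_k-\frac{\Upsilon_N}{\mu}((c_k-1)\widetilde g_k,\,s_k\widetilde g_k)\bigr)$, where $\mathcal O_k$ is orthogonal and $\widetilde g_k=\nabla f(x_k)-\mu(x_k-x_\star)$. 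Expanding $\norm{W_k}^{2}-\norm{W_{k+1}}^{2}$ and applying the three-point identity at $(k,k)$ and $(k-1,k)$ gives exactly $a_k\mathcal G_{k-1,k}+(a_{k+1}-a_k)\mathcal G_{\star,k}$, with no residual square.

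The endpoints are likewise exact identities, not the estimates you suggest. At the start, $(f(x_0)-f_\star)/\Upsilon_N-\mathcal V_1=(a_1-a_0)\mathcal G_{\star,0}$; this uses the gradient at $x_0$, not the bound $\frac{\mu}{2}\norm{x_0-x_\star}^{2}\le f(x_0)-f_\star$. At the end, $\mathcal V_N-\Upsilon_N(f(x_N)-f_\star)$ equals a completed square containing $\widetilde g_N$, plus $a_N\mathcal G_{N-1,N}+(\Upsilon_N-a_N)\mathcal G_{\star,N}$.

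For the second inequality, your key step is false, not merely lossy at the boundary: the abscissas do not grow by a factor $(1-\sqrt q)^{-2}$ per interior step. In the $N=3$, $q=0.1$ configuration of Figure~\ref{fig:itemf-points}, $a_1\approx0.579$, $a_2=1$, and $a_3\approx1.727$. So $a_2/a_1=a_3/a_2\approx1.727<(1-\sqrt{0.1})^{-2}\approx2.139$.

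The quantity that contracts uniformly is the half-angle cotangent $t_k=\cot(\theta_k/2)=b_k/(\Upsilon_N+R_N-a_k)$ of the angle $\theta_k$ of $P_k$ about $C$. Because $\cos\theta_{k+1}<0$ and $R_N<\Upsilon_N$, the inner-product condition gives $\cos(\theta_k-\theta_{k+1})=1-q\bigl(1+\frac{R_N}{\Upsilon_N}\cos\theta_{k+1}\bigr)<1-2q\cos^{2}(\theta_{k+1}/2)$. Passing to half-angle sines yields $t_k<(1-\sqrt q)\,t_{k+1}$ for $1\le k\le N-1$. The symmetry $a_1a_N=1$, $b_1=a_1b_N$ then supplies the boundary values $t_1t_N=1/(\Upsilon_N+R_N)$ and $t_N=\sqrt{1-q}/(1+\sqrt q)$. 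Hence $\frac{1}{2\Upsilon_N}<t_1t_N\le t_N^{2}(1-\sqrt q)^{N-1}=\frac{(1-\sqrt q)^{N}}{1+\sqrt q}$, which is the claimed bound.
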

We prove Theorem~\ref{thm:itemf-convergence} in the following section.

\subsubsection{Convergence proof}\label{sec:itemf-lyapunov}
In this section, we prove
Theorem~\ref{thm:itemf-convergence}. Throughout,
$f\colon\mathbb{R}^{d}\to\mathbb{R}$ is an $L$-smooth and
$\mu$-strongly convex function with the minimizer $x_{\star}$,
$q=\mu/L$, and $x_{k}$ denotes the iterates of
\eqref{eq:item-f-momentum} with the convention $x_{-1}^{+}=x_{0}$.
The argument has three steps. We first define a Lyapunov sequence
$\mathcal V_{1},\ldots,\mathcal V_{N}$ along the iterates, whose
coefficients are read off from the multipliers of the analytic dual
certificate found in Stage 2 of the pipeline, and compress it into a
compact form built from transformed interpolation gaps and vectors
$W_{k}\in\mathbb{R}^{2d}$. We then compare the two endpoints:
$\mathcal V_{1}$ is at most
$\left(f(x_{0})-f_{\star}\right)/\Upsilon_{N}$, and $\mathcal V_{N}$
dominates $\Upsilon_{N}\left(f(x_{N})-f_{\star}\right)$. Finally,
Lemma~\ref{lem:itemf-lyap-monotone} shows that the sequence is
nonincreasing, and chaining the three facts proves
Theorem~\ref{thm:itemf-convergence}. The norm identities that drive
these computations (Lemma~\ref{lem:itemf-lyap-norms}) are stated
below and proved in Appendix~\ref{sec:app-itemf-lyap-norms}.

For $1\leq k\leq N-1$, we define the Lyapunov sequence
\begin{align*}
\mathcal{V}_{k} &\triangleq a_{k}\left(f(x_{k-1})-f_{\star}-\frac{\mu}{2}\norm{x_{k-1}-x_{\star}}^{2}-\frac{1}{2(L-\mu)}\norm{\nabla f(x_{k-1})-\mu(x_{k-1}-x_{\star})}^{2}\right)\\
 &\quad {}+\frac{\mu(1-q)a^{2}_{k+1}}{2\Upsilon_N}\norm{x_{k}-x_{\star}}^{2}\\
 &\quad {}+\frac{\mu}{2\Upsilon_N(1-q)(1-c^{2}_{k})}
 \norm{(1-q)c_ka_{k+1}(x_k-x_\star)-a_k(x_{k-1}^+-x_\star)}^{2},
\end{align*}
and for the final iteration $N$, define
\begin{align*}
\mathcal{V}_{N} &\triangleq a_{N}\left(f(x_{N-1})-f_{\star}-\frac{\mu}{2}\norm{x_{N-1}-x_{\star}}^{2}-\frac{1}{2(L-\mu)}\norm{\nabla f(x_{N-1})-\mu(x_{N-1}-x_{\star})}^{2}\right)\\
 &\quad {}+\frac{\mu\Upsilon_{N}}{2}\norm{x_{N}-x_{\star}}^{2}\\
 &\quad {}+\frac{L}{2\Upsilon_N(1-q)}
 \norm{(1-q)\Upsilon_N(x_N-x_\star)-a_N(x_{N-1}^+-x_\star)}^{2},
\end{align*}
where we define $c_{k}\triangleq1-(qa_{k+1})/\Upsilon_{N}$ for $1\leq k\leq N$.

Following the approach of \cite{taylor2021optimal}, we introduce the convex $(L-\mu)$-smooth function and its gradient:
\begin{align*}
 & \widetilde{f}(x)\triangleq f(x)-f_{\star}-\frac{\mu}{2}\norm{x-x_{\star}}^{2},\\
 & \widetilde{g}_{k}\triangleq\nabla\widetilde{f}(x_{k})=\nabla f(x_{k})-\mu(x_{k}-x_{\star}).
\end{align*}
This makes $\widetilde g_{\star}=0$ and $\widetilde f(x_{\star})=0$.
For $i,j\in I_{N}^{\star}$, denote its interpolation gap by
\begin{align*}
 \mathcal G_{i,j}
 \triangleq{}&\widetilde f(x_{i})-\widetilde f(x_{j})
 -\langle \widetilde g_{j},x_{i}-x_{j}\rangle
 -\frac{1}{2(L-\mu)}\norm{\widetilde g_{i}-\widetilde g_{j}}^{2}\geq0,
\end{align*}
which is nonnegative by \eqref{eq:F_0L_formula} applied to $\widetilde f$.
From the relation
$x_i^+-x_\star=(1-q)(x_i-x_\star)-L^{-1}\widetilde g_i$
(which follows from $x_{i}^{+}=x_{i}-L^{-1}\nabla f(x_i)$,
$\nabla f(x_i)=\widetilde g_{i}+\mu(x_{i}-x_{\star})$, and $\mu=qL$), the
three-point identity follows:
\begin{equation}
 \mathcal G_{i,\star}
 -\frac{1}{1-q}\left\langle x_i^+-x_\star,\widetilde g_j\right\rangle
 =\mathcal G_{i,j}-\mathcal G_{\star,j}.
\label{eq:itemf-lyap-three-point}
\end{equation}

To simplify the analysis of \eqref{eq:item-f-momentum}, for $1 \leq k \leq N$, we define
\begin{equation}
 W_{k}\triangleq
 \begin{bmatrix}
  a_k(x_{k-1}^+-x_\star)\\[2mm]
  \dfrac{(1-q)a_{k+1}(x_k-x_\star)
  -c_ka_k(x_{k-1}^+-x_\star)}{s_k}
 \end{bmatrix}\in\mathbb{R}^{2d},
\label{eq:itemf-lyap-W-def}
\end{equation}
where we define $s_k\triangleq\sqrt q\,a_{k+1}\phi_{N-k}/\Upsilon_N$.
At $k=N$, these definitions give the boundary values
$c_{N}=1-q$ and $s_{N}=\sqrt{q(1-q)}$, using $a_{N+1}=\Upsilon_{N}$
and $\phi_{0}=\sqrt{1-q}$.

For $1\leq k\leq N-1$, the relation $s_k^2+c_k^2=1$ can be proved using
the symmetry identity $a_{k+1}a_{N-k}=1$ of the construction:
\begin{align*}
s^{2}_{k}+c^{2}_{k} & =\frac{qa^{2}_{k+1}}{\Upsilon^{2}_{N}}(2\Upsilon_{N}a_{N-k}-q)+\left(1-\frac{qa_{k+1}}{\Upsilon_{N}}\right)^{2}=\frac{qa_{k+1}}{\Upsilon^{2}_{N}}(2\Upsilon_{N}-qa_{k+1})+\left(1-\frac{qa_{k+1}}{\Upsilon_{N}}\right)^{2}=1.
\end{align*}

For $1\leq k\leq N$, define
\begin{equation*}
  \mathcal{O}_k\triangleq
  \begin{bmatrix}
    c_k I_d&s_k I_d\\
    -s_k I_d&c_k I_d
  \end{bmatrix}
  \in\mathbb R^{2d\times2d}.
\end{equation*}
For $1\leq k\leq N-1$, the identity $s_k^2+c_k^2=1$ makes
$\mathcal O_k$ orthogonal, and multiplying it by $W_k$ gives
\begin{equation}
 \mathcal{O}_kW_k=
 \begin{bmatrix}
  (1-q)a_{k+1}(x_k-x_\star)\\[2mm]
  \dfrac{(1-q)c_ka_{k+1}(x_k-x_\star)
  -a_k(x_{k-1}^+-x_\star)}{s_k}
 \end{bmatrix},\quad 1\leq k\leq N-1.
\label{eq:itemf-lyap-W-rotation}
\end{equation}
Since $\mathcal{O}_k$ is orthogonal for $k<N$, this gives the two
quadratic terms in $\mathcal V_k$. At $k=N$, where $s_N^2+c_N^2=1-q$
and the terminal block $\mathcal{O}_N$ is not orthogonal, the quadratic terms in the final
$\mathcal V_N$ follow instead from the direct expansion in the third
identity of Lemma~\ref{lem:itemf-lyap-norms}, using $c_N=1-q$,
$a_{N+1}=\Upsilon_N$, and $s_N=\sqrt{q(1-q)}$. Thus the Lyapunov sequence has the compact form
\begin{equation}
\label{eq:itemf-lyap-compact}
 \mathcal V_{k}
 =a_{k}\mathcal G_{k-1,\star}
 +\frac{\mu}{2(1-q)\Upsilon_N}\norm{W_{k}}^{2},
 \quad 1 \leq k \leq N,
\end{equation}
where the norm is the Euclidean norm on $\mathbb R^{2d}$.

\begin{lem}[Norm identities]\label{lem:itemf-lyap-norms}
The squared norms $\norm{W_k}^{2}$ for the iterates of \eqref{eq:item-f-momentum} satisfy the following identities:
\begin{align*}
	& \norm{W_{1}}^{2}=(1-q)\norm{x_{0}-x_{\star}}^{2}-\frac{2\Upsilon_{N}}{\mu}(a_{1}-a_{0})\left\langle x^{+}_{0}-x_{\star},\widetilde{g}_{0}\right\rangle +\frac{\Upsilon_{N}a_{0}}{\mu L}\norm{\widetilde{g}_{0}}^{2},\\
	& \norm{W_{k+1}}^{2}=\norm{W_{k}-\frac{\Upsilon_{N}}{\mu}\begin{bmatrix}(c_{k}-1)\widetilde{g}_{k}\\
			s_{k}\widetilde{g}_{k}
	\end{bmatrix}}^{2},\quad1\leq k\leq N-1,\\
	& \norm{W_{N}}^{2}=(1-q)\Upsilon^{2}_{N}\norm{x_{N}-x_{\star}}^{2}+\frac{1}{q}\norm{(1-q)\Upsilon_{N}(x_{N}-x_{\star})-a_{N}(x^{+}_{N-1}-x_{\star})}^{2}.
\end{align*}
\end{lem}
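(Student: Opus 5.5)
The three identities are algebraic consequences of the iteration \eqref{eq:item-f-momentum}, the definitions of $W_k$, $c_k$, $s_k$, and the coefficient relations of Lemma~\ref{lem:itemf-construction}. I would prove them in order of difficulty: third, first, second.

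\emph{Third identity.} At $k=N$ we have $a_{N+1}=\Upsilon_N$, $c_N=1-q$ and $s_N=\sqrt{q(1-q)}$. Write $u=x_N-x_\star$ and $v=a_N(x^+_{N-1}-x_\star)$, so that
\[
W_N=\bigl(v,\;((1-q)\Upsilon_N u-(1-q)v)/s_N\bigr),\qquad
\norm{W_N}^2=\norm{v}^2+\tfrac{1-q}{q}\norm{\Upsilon_N u-v}^2 .
\]
The right-hand side of the claim is $(1-q)\Upsilon_N^2\norm{u}^2+\tfrac1q\norm{(1-q)\Upsilon_N u-v}^2$. Expanding both sides in $\norm{u}^2$, $\langle u,v\rangle$ and $\norm{v}^2$ and matching the three coefficients is elementary.

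\emph{First identity.} With $x^+_{-1}=x_0$, the first block of $W_1$ is $a_0(x_0-x_\star)$. For the second block, express $x_1$ through the $k=0$ update, $x_1=x_0^++(\phi_{N-1}/\phi_N)(x_0^+-x_0)$, where the momentum term vanishes. Then use $x_0^+-x_\star=(1-q)(x_0-x_\star)-L^{-1}\widetilde g_0$ to write everything in terms of $e=x_0-x_\star$ and $\widetilde g_0$. The claimed right-hand side is also a quadratic form in $(e,\widetilde g_0)$ once $x_0^+$ is substituted. Comparing the three coefficients reduces the identity to scalar relations among $a_0=1/\Upsilon_N$, $a_1$, $\phi_N$, $\phi_{N-1}$ and $s_1$. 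These are closed using the definitions of $\phi$, the symmetry $a_1a_N=1$, and the first-step relation $(P_0-C)\cdot(P_1-C)=R_N^2(1-qa_1/\Upsilon_N)$ together with $P_0$ explicit.

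\emph{Second identity.} This is the core step, and I expect it to be the main obstacle. I would show the vector identity
\[
W_{k+1}=\mathcal O_k W_k-\tfrac{\Upsilon_N}{\mu}\begin{bmatrix}(c_k-1)\widetilde g_k\\ s_k\widetilde g_k\end{bmatrix}
\]
up to an orthogonal change that preserves the norm. The natural route is the following.
\begin{enumerate}
\item By \eqref{eq:itemf-lyap-W-rotation}, the first block of $\mathcal O_kW_k$ is $(1-q)a_{k+1}(x_k-x_\star)$. Since $x_k^+-x_\star=(1-q)(x_k-x_\star)-L^{-1}\widetilde g_k$, the first block of $W_{k+1}$, namely $a_{k+1}(x_k^+-x_\star)$, differs from it by $-a_{k+1}L^{-1}\widetilde g_k$.
\item The second block of $W_{k+1}$ contains $x_{k+1}$. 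Substitute \eqref{eq:item-f-momentum}, which writes $x_{k+1}$ in terms of $x_k^+$, $x_{k-1}^+$ and $x_k$, then eliminate $x_k^+$ as in step 1. This yields a linear combination of $x_k-x_\star$, $x^+_{k-1}-x_\star$ and $\widetilde g_k$.
\item Match the coefficients of $x_k-x_\star$ and $x^+_{k-1}-x_\star$ against those in $\mathcal O_k W_k$ or $W_k$, whichever form the comparison suggests (since $\mathcal O_k$ is orthogonal, $\norm{\mathcal O_kW_k-w}=\norm{W_k-\mathcal O_k^\top w}$). These matchings are scalar identities in $a_k,a_{k+1},a_{k+2},\phi_{N-k},\phi_{N-k-1},\Upsilon_N,q$.
\item Verify the scalar identities using the circle relation $a_k^2+b_k^2-2\Upsilon_Na_k+1=0$, the consecutive-inner-product condition of Lemma~\ref{lem:itemf-construction}, $\phi_j^2=2\Upsilon_Na_j-q$, and the symmetry $a_{k+1}a_{N-k}=1$, which converts $\phi_{N-k}$ into expressions in $a_{k+1}$ exactly as in the proof of $s_k^2+c_k^2=1$. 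One must also treat $k=N-1$ separately, because $\phi_0$, $c_N$ and $s_N$ take their boundary values there.
\end{enumerate}

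I expect the $\widetilde g_k$ coefficient to be the hardest match, since the factor $\Upsilon_N/\mu$ must emerge from $L^{-1}$ and the $\phi$-ratios. Before attempting a symbolic proof of the general identity, I would check it numerically using the coefficient values from the bisection.
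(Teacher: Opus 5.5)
Your third identity is correct, and your plan for the second follows the paper's route: compute $W_{k+1}-\mathcal O_kW_k$, then use the orthogonality of $\mathcal O_k$. The first identity, however, is set up incorrectly in two places. Each slip on its own already makes the claimed identity false, so the coefficient matching you describe cannot close.

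\emph{First slip.} By \eqref{eq:itemf-lyap-W-def} at $k=1$, the first block of $W_1$ is $a_1(x_0^+-x_\star)$, not $a_0(x_0-x_\star)$. The convention $x_{-1}^+=x_0$ does not enter $W_1$ directly.

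\emph{Second slip.} The momentum term of the $k=0$ update does not vanish. With $x_{-1}^+=x_0$ it equals $\frac{a_0\phi_{N-1}}{(1-q)a_1\phi_N}(x_0^+-x_0)$, which is parallel to the correction term, so
\[
x_1=x_0^++\frac{\phi_{N-1}}{\phi_N}\Bigl(1+\frac{a_0}{(1-q)a_1}\Bigr)(x_0^+-x_0)
=x_0^+-\frac{\phi_{N-1}}{\Upsilon_N\sqrt{1-q}}\,\frac{\nabla f(x_0)}{L}.
\]
This simplification is exactly where $\phi_N=((1-q)\Upsilon_N+a_N)/\sqrt{1-q}$, $a_N=1/a_1$ and $a_0=1/\Upsilon_N$ are needed. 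Your version has the coefficient $\phi_{N-1}/\phi_N$, which is strictly smaller because $a_N>0$.

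\emph{How to close it.} With the correct objects, the definition of $s_1$ makes the gradient coefficient in the second block equal to $-\sqrt{(1-q)/q}/L$. The rotation relation $c_1a_1+s_1b_1=(1-q)a_2$ makes the $x_0^+-x_\star$ coefficient equal to $b_1$. So $W_1$ has blocks $a_1(x_0^+-x_\star)$ and $b_1(x_0^+-x_\star)-\sqrt{(1-q)/q}\,\nabla f(x_0)/L$. Expanding $\norm{W_1}^2$ then closes with two facts:
\begin{enumerate}
\item the circle equation $a_1^2+b_1^2=2\Upsilon_Na_1-1$;
\item $b_1=\sqrt{(1-q)/q}\,(\Upsilon_Na_1-1)$, which combines your first-step relation $b_1=\sqrt{1-q}\,R_Na_1$ with $\Upsilon_Na_1-1=\sqrt q\,R_Na_1$ (equivalent to $a_1a_N=1$).
\end{enumerate}
Finish by rewriting $(1-q)\nabla f(x_0)=\widetilde g_0+\mu(x_0^+-x_\star)$.

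For the second identity, the vector identity you display is not literally true. What holds, and what your step 1 actually produces in the first block ($-a_{k+1}\widetilde g_k/L=-\frac{\Upsilon_N}{\mu}(1-c_k)\widetilde g_k$), is
\[
W_{k+1}-\mathcal O_kW_k=-\frac{\Upsilon_N}{\mu}\begin{bmatrix}(1-c_k)\widetilde g_k\\ s_k\widetilde g_k\end{bmatrix}
=-\frac{\Upsilon_N}{\mu}\,\mathcal O_k\begin{bmatrix}(c_k-1)\widetilde g_k\\ s_k\widetilde g_k\end{bmatrix}.
\]
After this, your orthogonality remark finishes the argument.

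For step 4, it is much easier to bring in the ordinates $b_k$ than to work with the $a_k$ and $\phi_j$ alone. Since $c_k$ and $s_k$ are the cosine and sine of $\theta_k-\theta_{k+1}$, the rotation from $P_k-C$ to $P_{k+1}-C$ gives
\[
c_ka_k+s_kb_k=(1-q)a_{k+1},\qquad a_k=(c_k+q)a_{k+1}-s_kb_{k+1}.
\]
The $\phi$-ratio in the update is purely definitional: $\phi_{N-k-1}/\phi_{N-k}=a_{k+1}s_{k+1}/(a_{k+2}s_k)$. After substituting the update:
\begin{enumerate}
\item the first relation at index $k+1$ identifies $((1-q)a_{k+2}-c_{k+1}a_{k+1})/s_{k+1}$ with $b_{k+1}$;
\item the second relation then collapses the second block of $W_{k+1}-\mathcal O_kW_k$ to $\frac{a_{k+1}(1+c_k)}{s_k}\bigl(x_k^+-x_\star-(1-q)(x_k-x_\star)\bigr)$;
\item $a_{k+1}(1+c_k)/s_k=a_{k+1}s_k/(1-c_k)=\Upsilon_Ns_k/q$, which produces the factor $\Upsilon_N/\mu$ you expected to be hard.
\end{enumerate}
The only boundary issue at $k=N-1$ is the first relation at index $N$. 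It follows from $b_N=\sqrt{1-q}\,R_N$ and $\Upsilon_N-a_N=\sqrt q\,R_N$.
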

The proof is tedious but straightforward algebra on
the coordinate relations of the construction; it is deferred to
Appendix~\ref{sec:app-itemf-lyap-norms}.

\begin{lem}[Monotonicity]
\label{lem:itemf-lyap-monotone}
For $1 \leq k \leq N-1$,
\begin{equation*}
 \mathcal V_{k}-\mathcal V_{k+1}
 =a_{k}\mathcal G_{k-1,k}
 +(a_{k+1}-a_{k})\mathcal G_{\star,k}
 \geq0.
\end{equation*}
Consequently, $\mathcal V_1\geq\mathcal V_2\geq\cdots\geq\mathcal V_N$.
\end{lem}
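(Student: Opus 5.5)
The plan is to mirror the proof of Lemma~\ref{lem:lyap-closed-decrement}, working from the compact form \eqref{eq:itemf-lyap-compact}. That form gives
\[
\mathcal V_k-\mathcal V_{k+1}
= a_k\mathcal G_{k-1,\star}-a_{k+1}\mathcal G_{k,\star}
+\frac{\mu}{2(1-q)\Upsilon_N}\bigl(\norm{W_k}^2-\norm{W_{k+1}}^2\bigr).
\]
Write $v_k\triangleq\bigl[(c_k-1)\widetilde g_k;\ s_k\widetilde g_k\bigr]$. The second identity of Lemma~\ref{lem:itemf-lyap-norms} expands the norm difference as
\[
\norm{W_k}^2-\norm{W_{k+1}}^2=\frac{2\Upsilon_N}{\mu}\langle W_k,v_k\rangle-\frac{\Upsilon_N^2}{\mu^2}\bigl((1-c_k)^2+s_k^2\bigr)\norm{\widetilde g_k}^2 .
\]
For $k\le N-1$ we have $s_k^2+c_k^2=1$, so $(1-c_k)^2+s_k^2=2(1-c_k)=2qa_{k+1}/\Upsilon_N$. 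The quadratic term therefore reduces to $-\frac{a_{k+1}}{(1-q)L}\norm{\widetilde g_k}^2$ once the prefactor is included.

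Next I compute the cross term from the definition \eqref{eq:itemf-lyap-W-def}. We have
\[
\langle W_k,v_k\rangle=(c_k-1)a_k\langle x_{k-1}^+-x_\star,\widetilde g_k\rangle+(1-q)a_{k+1}\langle x_k-x_\star,\widetilde g_k\rangle-c_ka_k\langle x_{k-1}^+-x_\star,\widetilde g_k\rangle .
\]
The $c_k$ terms cancel, and after multiplying by $\frac{\mu}{2(1-q)\Upsilon_N}\cdot\frac{2\Upsilon_N}{\mu}=\frac1{1-q}$ this becomes
\[
-\frac{a_k}{1-q}\langle x_{k-1}^+-x_\star,\widetilde g_k\rangle+a_{k+1}\langle x_k-x_\star,\widetilde g_k\rangle .
\]
The cancellation is the point of the rotated coordinates, and it is the step I expect to be the main obstacle. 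One must check that the scaling of the second block by $s_k$ and the $c_k$ term inside $W_k$ line up exactly with $v_k$. I also expect to need the symmetry identity $a_{k+1}a_{N-k}=1$ only through $s_k^2+c_k^2=1$, which has already been verified. If the cancellation fails I would recheck the sign of the $(c_k-1)$ block against the proof of Lemma~\ref{lem:itemf-lyap-norms}.

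Now I group the terms. The relation $x_k^+-x_\star=(1-q)(x_k-x_\star)-L^{-1}\widetilde g_k$ gives
\[
a_{k+1}\langle x_k-x_\star,\widetilde g_k\rangle-\frac{a_{k+1}}{(1-q)L}\norm{\widetilde g_k}^2=\frac{a_{k+1}}{1-q}\langle x_k^+-x_\star,\widetilde g_k\rangle .
\]
Hence
\[
\mathcal V_k-\mathcal V_{k+1}=a_k\Bigl(\mathcal G_{k-1,\star}-\tfrac1{1-q}\langle x_{k-1}^+-x_\star,\widetilde g_k\rangle\Bigr)-a_{k+1}\Bigl(\mathcal G_{k,\star}-\tfrac1{1-q}\langle x_k^+-x_\star,\widetilde g_k\rangle\Bigr).
\]
Applying the three-point identity \eqref{eq:itemf-lyap-three-point} with $j=k$ to each bracket gives
\[
a_k(\mathcal G_{k-1,k}-\mathcal G_{\star,k})-a_{k+1}(\mathcal G_{k,k}-\mathcal G_{\star,k}).
\]
Since $\mathcal G_{k,k}=0$, this equals the claimed identity.

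For nonnegativity, every gap $\mathcal G$ is nonnegative by \eqref{eq:F_0L_formula} applied to $\widetilde f$. The multipliers $a_k>0$ and $a_{k+1}-a_k>0$ come from the strict ordering in Lemma~\ref{lem:itemf-construction}. Chaining the inequalities over $k=1,\dots,N-1$ gives the monotone chain $\mathcal V_1\ge\mathcal V_2\ge\cdots\ge\mathcal V_N$. The case $k=N-1$ needs no special treatment, because \eqref{eq:itemf-lyap-compact} also holds at index $N$.
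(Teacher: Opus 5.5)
Your proposal is correct and follows essentially the same route as the paper's proof. Like the paper, you start from the compact form \eqref{eq:itemf-lyap-compact} and expand the second norm identity into the cross term $\langle (1-q)a_{k+1}(x_k-x_\star)-a_k(x_{k-1}^+-x_\star),\widetilde g_k\rangle$ and the quadratic term $2qa_{k+1}\norm{\widetilde g_k}^2/\Upsilon_N$, then close with the three-point identity at $(k,k)$ and $(k-1,k)$. The only difference is cosmetic: you make the absorption of the $\norm{\widetilde g_k}^2$ term into $\langle x_k^+-x_\star,\widetilde g_k\rangle$ an explicit step, which the paper folds into a single line.
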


\begin{proof}
From the definition of $W_k$ and
$c_{k}=1-qa_{k+1}/\Upsilon_{N}$, we have
\begin{align*}
 \left\langle
 \begin{bmatrix}(c_k-1)\widetilde g_k\\ s_k\widetilde g_k\end{bmatrix},
 W_k\right\rangle_{\mathbb R^{2d}}
 &=
  \left\langle (1-q)a_{k+1}(x_{k}-x_{\star})
  -a_{k}(x_{k-1}^+-x_\star),\widetilde g_k
  \right\rangle,\\
 \norm{\begin{bmatrix}(c_k-1)\widetilde g_k\\ s_k\widetilde g_k\end{bmatrix}}^{2}
 &=\left((1-c_k)^2+s_k^2\right)\norm{\widetilde g_k}^{2}
 =2(1-c_k)\norm{\widetilde g_k}^{2}
 =\frac{2qa_{k+1}}{\Upsilon_N}\norm{\widetilde g_k}^{2}.
\end{align*}

Using the compact form \eqref{eq:itemf-lyap-compact} and expanding
the second identity of Lemma~\ref{lem:itemf-lyap-norms}, we obtain
\begin{align*}
 \mathcal V_{k}-\mathcal V_{k+1}
 ={}&a_k\mathcal G_{k-1,\star}
 -a_{k+1}\mathcal G_{k,\star}
 +\frac{\mu}{2(1-q)\Upsilon_N}
 \left(\norm{W_k}^{2}-\norm{W_{k+1}}^{2}\right)\\
 ={}&a_k\mathcal G_{k-1,\star}
 -a_{k+1}\mathcal G_{k,\star}
 \\
 &\quad
 {}+\frac{1}{1-q}\left\langle
  (1-q)a_{k+1}(x_k-x_\star)
    -a_k(x_{k-1}^+-x_\star),\widetilde g_k
  \right\rangle
 -\frac{a_{k+1}}{L(1-q)}\norm{\widetilde g_k}^{2}\\
 ={}&a_k\left(
  \mathcal G_{k-1,\star}
  -\frac{1}{1-q}\left\langle x_{k-1}^+-x_\star,\widetilde g_k\right\rangle
 \right)
 +a_{k+1}\mathcal G_{\star,k}\\
 ={}&a_k\left(\mathcal G_{k-1,k}-\mathcal G_{\star,k}\right)
 +a_{k+1}\mathcal G_{\star,k}\\
 ={}&a_k\mathcal G_{k-1,k}
 +(a_{k+1}-a_k)\mathcal G_{\star,k},
\end{align*}
where the second equality uses the above two identities and the third and fourth use
the three-point identity~\eqref{eq:itemf-lyap-three-point} at
$(i,j)=(k,k)$ and $(k-1,k)$, respectively.  Since $a_{k+1}\geq a_k>0$ and
$\mathcal G_{i,j}\geq0$, the last line is nonnegative.
\end{proof}

We now provide the proof of Theorem~\ref{thm:itemf-convergence}.
\begin{proof}[Proof of Theorem~\ref{thm:itemf-convergence}]
The relation $a_0=1/\Upsilon_N$, the first identity in
Lemma~\ref{lem:itemf-lyap-norms}, and the compact form
\eqref{eq:itemf-lyap-compact} of $\mathcal V_1$ give
\begin{align*}
 \frac{f(x_{0})-f_{\star}}{\Upsilon_N}-\mathcal V_{1}
 ={}&(a_{1}-a_{0})\left(
  \frac{1}{1-q}\left\langle x_0^+-x_\star,\widetilde g_0\right\rangle
  -\mathcal G_{0,\star}
 \right)\\
 ={}&(a_{1}-a_{0})\mathcal G_{\star,0}\geq0,
\end{align*}
where the last equality applies \eqref{eq:itemf-lyap-three-point} at
$(i,j)=(0,0)$.

The relation $a_{N+1}=\Upsilon_N$, the third identity in
Lemma~\ref{lem:itemf-lyap-norms}, and the compact form
\eqref{eq:itemf-lyap-compact} of $\mathcal V_{N}$ give
\begin{align*}
 \mathcal V_{N}
 &-\Upsilon_{N}\bigl(f(x_{N})-f_{\star}\bigr)\\
 &{}-\frac{L}{2(1-q)\Upsilon_N}
  \norm{(1-q)\Upsilon_{N}(x_{N}-x_{\star})-a_{N}(x_{N-1}^{+}-x_{\star})
  -\frac{\Upsilon_{N}}{L}\widetilde g_{N}}^{2}\\
 ={}&a_N\left(
  \mathcal G_{N-1,\star}
  -\frac{1}{1-q}
   \left\langle x_{N-1}^+-x_\star,\widetilde g_N\right\rangle
 \right)
 +\Upsilon_N\mathcal G_{\star,N}\\
 ={}&a_N\left(\mathcal G_{N-1,N}-\mathcal G_{\star,N}\right)
 +\Upsilon_N\mathcal G_{\star,N}\\
 ={}&a_{N}\mathcal G_{N-1,N}
 +(\Upsilon_{N}-a_{N})\mathcal G_{\star,N}\geq0,
\end{align*}
where the second equality applies \eqref{eq:itemf-lyap-three-point} at
$(i,j)=(N-1,N)$.
The completed square is nonnegative, so
$\mathcal V_{N}\geq\Upsilon_{N}\left(f(x_{N})-f_{\star}\right)$;
combining the two endpoint bounds with
Lemma~\ref{lem:itemf-lyap-monotone} gives
\begin{equation*}
 \frac{f(x_{0})-f_{\star}}{\Upsilon_N}
 \geq\mathcal V_{1}\geq\cdots\geq\mathcal V_{N}
 \geq\Upsilon_{N}\bigl(f(x_{N})-f_{\star}\bigr).
\end{equation*}
The explicit relaxation in the second inequality then follows from Lemma~\ref{lem:itemf-explicit-rate}.
\end{proof}

\subsubsection{\textsf{AutoOPT} design: Optimal method for contracting function-value suboptimality}
\label{sec:itemf-autooptdesign}

The method \eqref{eq:item-f-momentum} was designed by \textsf{AutoOPT} as the optimal fixed-step first-order method for contracting the function-value suboptimality. Specifically, \textsf{AutoOPT} minimizes the contraction factor $C_N$ in the guarantee
\[
f(x_N)-f_\star
\leq
C_N\left(f(x_0)-f_\star\right)
\]
and attains the optimal value $C_N=1/\Upsilon_N^2$.
The pipeline ran as for lemniscate acceleration
(Section~\ref{sec:lemniscate-autooptdesign}): \textsf{bnb-pep-skill}
wrote the BnB-PEP formulation of this design problem, reproduced with
light editing in Appendix~\ref{sec:itemf-pep-design}, and, upon our
approval, solved the resulting QCQP numerically over a range of
horizons.

Taylor and Drori \cite[Appendix~E]{taylor2021optimal} previously considered the same optimization problem and numerically obtained ITEM-f for $N=1,\dots,5$ in the setting $L=1$ and $\mu=0.1$. From the Stage-1 formulation and its numerical
solutions, \textsf{frontier-llm-consult} identified a closed-form
expression for the algorithm valid for arbitrary $N$, $\mu$, and $L$,
together with an analytic feasible point of the dual certifying the
contraction factor $1/\Upsilon_N^2$, and then rewrote
the method, first obtained in the stepsize variables of the design
problem, as the momentum form \eqref{eq:item-f-momentum} in which this
paper states it; continued consultation converted the
certificate's multipliers into the Lyapunov proof of
Section~\ref{sec:itemf-lyapunov}, stated on that
momentum form, and Section~\ref{sec:itemf-lean}
discusses its Lean formalization.

\subsubsection{Continuous-time analysis}
For a fixed finite horizon $T>0$ and strong convexity parameter $\mu>0$,
let $\Upsilon>1$ be the unique real number such that, with
$s\triangleq\sqrt{1-\Upsilon^{-2}}$,
\begin{equation*}
 \sqrt{2\mu}\,T=\int_0^{\arcsin s}\frac{dx}{\sqrt{1-s\sin x}}.
\end{equation*}
We note the continuous-time limit concerns $L=N^2/T^2$, and with $q_N=\mu T^2 / N^2$, we have $ \Upsilon_N(q_N)\rightarrow\Upsilon$ as $N\rightarrow\infty$.
Define $a,b\colon[0,T]\to\mathbb R$ by
\begin{align*}
\dot a(t)&=b(t)\sqrt{\frac{2\mu a(t)}{\Upsilon}},\\
\dot b(t)&=(\Upsilon-a(t))\sqrt{\frac{2\mu a(t)}{\Upsilon}},\\
a(0)&=\frac{1}{\Upsilon},\\
b(0)&=\frac{\sqrt{\Upsilon^2-1}}{\Upsilon}.
\end{align*}

Then the continuous-time ODE of ITEM-f is
\begin{equation*}
\ddot X(t)+\frac{3 \dot a(t)}{2 a(t)}\dot X(t)+2\nabla f(X(t))=0,\quad 0\leq t \leq T.
\end{equation*}
\begin{remark}[Continuous-time convergence of ITEM-f]\label{rem:itemf-ct}
For an $L$-smooth and $\mu$-strongly convex function $f\colon \mathbb{R}^d \to \mathbb{R}$, any solution $X\in C^{2}([0,T];\mathbb{R}^{d})$ of the ODE above with $X(0)=x_0$ and $\dot X(0)=0$ satisfies
\begin{equation*}
f(X(T))-f_\star \leq \frac{1}{\Upsilon^2} \left(f(X(0))-f_\star\right).
\end{equation*}
This can be shown with a continuous-time Lyapunov function
\begin{equation*}
     \mathcal{V}(t)=a(t)\left(f(X(t))-f_\star-\frac{\mu}{2}\norm{X(t)-x_\star}^2\right)
     +\frac{\mu a(t)^2}{2\Upsilon}\norm{X(t)-x_\star}^2
     +\frac{1}{4a(t)}\norm{\dot a(t)(X(t)-x_\star)+a(t)\dot X(t)}^2,
\end{equation*}
by the monotonicity $\dot{\mathcal{V}}(t)\leq 0$ and the endpoints $\mathcal{V}(0)=\Upsilon^{-1}\left(f(X(0))-f_\star\right)$ and $\mathcal{V}(T)\geq \Upsilon\left(f(X(T))-f_\star\right)$.
We do not pursue a detailed proof or Lean verification of
this remark; both can be generated by running the \textsf{AutoOPT}
pipeline on this continuous-time claim.
\end{remark}

\subsubsection{Lean formalization}\label{sec:itemf-lean}

\begin{figure}[t]
\centering
\begin{adjustbox}{max width=\textwidth, center}
\definecolor{lfink}{RGB}{70,70,70}
\definecolor{lfamberfill}{RGB}{243,226,196}
\definecolor{lfamberline}{RGB}{166,118,29}
\definecolor{lfgrayfill}{RGB}{244,244,244}
\definecolor{lfgrayline}{RGB}{95,95,95}
\begin{tikzpicture}[
  fname/.style={font=\ttfamily\footnotesize\bfseries, text=black,
    inner sep=1.6pt, anchor=west},
  fnote/.style={font=\footnotesize, text=annotgray, inner sep=1.6pt,
    anchor=west},
  treeline/.style={draw=lfink!55, line width=0.55pt},
  pics/folder/.style={code={
    \fill[lfamberfill, draw=lfamberline, line width=0.5pt]
      (-0.15,0.04) -- (-0.15,0.12) -- (-0.04,0.12) -- (0.00,0.06)
      -- (0.15,0.06) -- (0.15,-0.11) -- (-0.15,-0.11) -- cycle;
  }},
  pics/lfile/.style={code={
    \fill[lfgrayfill, draw=lfgrayline, line width=0.5pt]
      (-0.10,-0.13) -- (-0.10,0.13) -- (0.04,0.13) -- (0.10,0.07)
      -- (0.10,-0.13) -- cycle;
    \draw[lfgrayline, line width=0.5pt] (0.04,0.13) -- (0.04,0.07)
      -- (0.10,0.07);
  }},
]
\newcommand{\treerow}[6]{  \pic at (#2,#3) {#4};
  \node[fname] (row#1) at (#2+0.22,#3) {#5};
  \node[fnote] at ($(row#1.east)+(0.12,0)$) {#6};
}
\treerow{a}{0}{0}{folder}{ITEM-f/}{Lean/Lake project for analytic
  ITEM-f}
\treerow{b}{0.78}{-0.60}{lfile}{lakefile.toml, lean-toolchain}{build
  configuration; commit-pinned Lean and mathlib}
\treerow{c}{0.78}{-1.20}{lfile}{config.json}{pins the fourteen theorem
  names and permitted axioms for the comparator}
\treerow{d}{0.78}{-1.80}{lfile}{Challenge.lean}{states the fourteen
  declarations, one approved placeholder each}
\treerow{e}{0.78}{-2.40}{lfile}{Solution.lean}{the same statements,
  closed by the proof development}
\treerow{f}{0.78}{-3.00}{lfile}{AxiomAudit.lean}{runs the axiom audit
  over the fourteen declarations}
\treerow{g}{0.78}{-3.60}{lfile}{ITEMf.lean}{umbrella module importing
  the development}
\treerow{h}{0.78}{-4.20}{folder}{ITEMf/}{proof modules, organized in
  dependency layers:}
\treerow{i}{1.56}{-4.80}{folder}{Spec/}{proof-free statement
  interfaces; all that \texttt{Challenge.lean} imports}
\treerow{j}{1.56}{-5.40}{folder}{Model/}{problem class, interpolation
  inequality, transformed objective}
\treerow{k}{1.56}{-6.00}{folder}{Construction/}{the shooting argument:
  one-step map, orbit, target angle, symmetry, rate}
\treerow{l}{1.56}{-6.60}{folder}{Algorithm/}{iterates, coefficient
  identities, coordinate relations}
\treerow{m}{1.56}{-7.20}{folder}{Lyapunov/}{norm identities, rotations,
  monotonicity}
\treerow{n}{1.56}{-7.80}{lfile}{Convergence.lean}{the convergence
  theorem}
\treerow{o}{0.78}{-8.40}{folder}{artifacts/}{build, hygiene,
  axiom-audit, and comparator replay logs}
\draw[treeline] (0.02,-0.22) -- (0.02,-8.40);
\foreach \y in {-0.60,-1.20,-1.80,-2.40,-3.00,-3.60,-4.20,-8.40}
  {\draw[treeline] (0.02,\y) -- (0.56,\y);}
\draw[treeline] (0.80,-4.42) -- (0.80,-7.80);
\foreach \y in {-4.80,-5.40,-6.00,-6.60,-7.20,-7.80}
  {\draw[treeline] (0.80,\y) -- (1.34,\y);}
\end{tikzpicture}\end{adjustbox}
\caption{File structure of the ITEM-f Lean project
(module files grouped by folder). The proof-free \texttt{Spec/} layer
is all that \texttt{Challenge.lean} imports, so the proofs reach
the comparator only through \texttt{Solution.lean};
\texttt{config.json} pins the fourteen public theorems and the
permitted axioms, and \texttt{artifacts/} records the build, audit, and
replay logs.}
\label{fig:itemf-file-tree}
\end{figure}

\begin{table}[t]
\centering
\begin{tabular}{lc}
\toprule
Public theorem declarations & 14\\
Dependency nodes in the closure & 20\\
Lean source files & 38\\
Lines of Lean & 7{,}985\\
\texttt{lake build} jobs & 8{,}694\\
Comparator replay time, native macOS & 105\,s\\
Comparator replay time, WSL2 real Landrun & 115\,s\\
Toolchain & Lean 4.32.0, mathlib 4.32.0, commit-pinned\\
Axioms used & \texttt{propext}, \texttt{Quot.sound}, \texttt{Classical.choice}\\
\bottomrule
\end{tabular}
\vspace{0.05in}
\caption{Summary statistics of the ITEM-f Lean
project: it formalizes a fixed 20-node dependency closure of manuscript
claims and their supporting steps through fourteen public theorem
declarations, builds cleanly under the commit-pinned toolchain, and is
accepted by the comparator replay under exactly the three standard
axioms listed.}
\label{tab:itemf-project}
\end{table}

The convergence theory of ITEM-f is machine-checked in the same manner
as that of lemniscate acceleration
(Section~\ref{sec:lemniscate-lean}), by a second Lean project whose
fourteen public declarations correspond to the results of this
section and its appendix. The
root declaration \texttt{ITEMf.convergence} is the Lean counterpart of
Theorem~\ref{thm:itemf-convergence}, stated over the Euclidean space
$\mathbb{R}^{d}$ and proving both inequalities of \eqref{eq:itemf-rate};
it reads:
\begin{center}
\begin{minipage}{0.92\textwidth}
\ttfamily\footnotesize
theorem convergence\\
\hspace*{2em}\{d N : Nat\} (hd : 1 $\leq$ d) (hN : 1 $\leq$ N)\\
\hspace*{2em}(M : StronglyConvexSmoothModel (Euclidean d)) \{xStar : Euclidean d\}\\
\hspace*{2em}(hxStar : M.IsMinimizer xStar)\\
\hspace*{2em}(C : CoeffData N) (hC : ValidCoefficients M.q C)\\
\hspace*{2em}(x0 : Euclidean d) :\\
\hspace*{2em}M.f (itemfIterate M C x0 N) - M.f xStar $\leq$\\
\hspace*{5em}(1 / C.Upsilon \^{} 2) * (M.f x0 - M.f xStar) $\wedge$\\
\hspace*{3em}(1 / C.Upsilon \^{} 2) * (M.f x0 - M.f xStar) $\leq$\\
\hspace*{5em}4 * (1 - Real.sqrt M.q) \^{} (2 * N) *\\
\hspace*{6em}(M.f x0 - M.f xStar)
\end{minipage}
\end{center}
The supporting declarations follow the paper's proofs.
\path{coefficientsExistUnique} is the construction
Lemma~\ref{lem:itemf-construction}, obtained through the shooting
argument of Appendix~\ref{sec:app-itemf-existence} via
\path{oneStepMap} (Lemma~\ref{lem:app-itemf-one-step}),
\texttt{orbitComparison} (Lemma~\ref{lem:app-itemf-comparison}),
\texttt{targetAngle} (Lemma~\ref{lem:app-itemf-target-angle}), and
\path{shootingIffAdmissible}
(Lemma~\ref{lem:app-itemf-shooting-equivalence});
\path{involutionSymmetry} is
Lemma~\ref{lem:itemf-involution-symmetry};
\path{explicitRate} is Lemma~\ref{lem:itemf-explicit-rate}; and
\path{finiteInterpolation} is \eqref{eq:F_0L_formula}. The Lyapunov argument of
Section~\ref{sec:itemf-lyapunov} is formalized by
\texttt{transformedMemF0}, \texttt{threePointIdentity},
\texttt{coordinateRelations} (the identities
\eqref{eq:itemf-lyap-coordinate-relations}), \texttt{normIdentities}
(Lemma~\ref{lem:itemf-lyap-norms}), and \texttt{lyapunovMonotone}
(Lemma~\ref{lem:itemf-lyap-monotone}).

The project comprises 38 Lean source files and 7{,}985 lines of Lean,
organized as in Figure~\ref{fig:itemf-file-tree} in a layout parallel
to the LemniAcc project, with one addition: a proof-free
specification layer
(\path{ITEMf/Spec}) contains the definitions and statements of
the fourteen declarations, and the comparator-facing
\path{Challenge.lean}, which restates each declaration with exactly
one approved placeholder, imports only this layer, so the proofs reach
the comparator only through
\texttt{Solution.lean}, which closes every placeholder. The project
passes \texttt{lake build} under the same commit-pinned toolchain
(8{,}694 build jobs); the hygiene scan confirms that no \texttt{sorry},
\texttt{axiom}, \texttt{admit}, or \texttt{unsafe} appears anywhere in
the proof development or in \texttt{Solution.lean}; and the axiom audit
again reports exactly \texttt{propext}, \texttt{Quot.sound}, and
\texttt{Classical.choice} for every public declaration. The comparator
accepted the fourteen public theorems twice, under the two-platform
protocol,
evidence label, and disclosures of Section~\ref{sec:lemniscate-lean}:
in 105 seconds on native macOS and in 115 seconds under the isolated
WSL2 real-Landrun replay;
Table~\ref{tab:itemf-project} summarizes the project.

\subsection{Self-H-duality of \ref{eq:lemniscate-acceleration} and \ref{eq:item-f-momentum}}
\label{sec:lemniscate-time-reversal}

The notion of \emph{H-duality} was introduced in \cite{kim2023time} and
further developed in
\cite{kim2023mirror,yoon2024optimal,yoon2025hinvariance,yoon2026composition}.
Recall that a fixed-step first-order method (FSFOM) with stepsizes
$\{h_{i,j}\}_{0\leq j<i\leq N}$ is a method of the form
\[
x_{i}=x_{i-1}-\frac{1}{L}\sum_{j=0}^{i-1}h_{i,j}\,\nabla f(x_{j}),
\qquad i=1,\ldots,N.
\]
The \emph{H-dual} of this method is the FSFOM with stepsizes $h^{\text{A}}_{i,j}\triangleq h_{N-j,N-i}$ for $0\leq j<i\leq N$. Viewing the lower-triangular array $\{h_{i,j}\}_{0\leq j<i\leq N}$ as an $(N+1)\times(N+1)$ matrix, taking the H-dual corresponds to taking the anti-diagonal transpose, i.e., the entries are reflected
across the anti-diagonal rather than the usual diagonal. These prior
works observe that H-duality exchanges the type of guarantee while
preserving its worst-case value: for example, OGM, which reduces the
function value under an initial-distance condition, maps to OGM-G, which
reduces the squared gradient norm under an initial function-gap
condition, with exactly the same constant \cite{kim2023time}.

\begin{remark}[Self-H-duality]\label{rem:self-h-duality}
In this work, we point out that \eqref{eq:lemniscate-acceleration} and
\eqref{eq:item-f-momentum} are \emph{self-H-dual} in the sense that each method is its own
H-dual. Moreover, the corresponding continuous-time ODEs are also self-H-dual in the sense of \cite{kim2023time}: the ODEs have the form $\ddot{X}(t)+\gamma(t)\dot{X}(t)+2\nabla f(X(t))=0$, with the symmetry $\gamma(t)=\gamma(T-t)$ for $0<t<T$.
We record this observation without a detailed proof or
Lean verification; a reader who wants both can obtain them by running
the \textsf{AutoOPT} pipeline on this claim.
\end{remark}

\section{Conclusion}\label{sec:conclusion}

In this work, we present \textsf{AutoOPT}, a framework that automates much of the labor-intensive parts in the design and analysis of optimal first-order methods. \textsf{AutoOPT} combines the domain-specific methodology of performance estimation programming with the general-purpose intelligence of frontier LLMs. We demonstrate the strength of this harness with two case studies, producing two optimal algorithms of independent interest: lemniscate acceleration (Section~\ref{sec:lemniscate}) and analytic ITEM-f (Section~\ref{sec:itemf}). More broadly, this work serves as a case study of how a domain-specific harness can augment the general-purpose intelligence of LLMs.

\textsf{AutoOPT} has significant limitations. For example, it does not handle stochastic or second-order optimization. We expect \textsf{AutoOPT} itself to be extensible to such settings, and we believe the general principle of the domain-specific harness extends further still.

Finally, we offer this paper itself as one data point on what scholarship can look like in the era of LLM intelligence. As stated in the Preface, we believe domain experts can contribute productively in an AI-driven research workflow by augmenting the general-purpose intelligence of LLMs with harnesses derived from their expertise. Automation lets us take on harder and more interesting problems and elevate our scholarship, and it frees the researcher to concentrate on the parts of research that are irreducibly human: choosing the problems, interpreting the answers, and conveying the insight to others.

\section*{Acknowledgments}

S.\ Das Gupta and E.\ K.\ Ryu acknowledge support by AFOSR Grant Number FA9550-25-1-0183.

\bibliographystyle{plainurl}
\bibliography{references}

\appendix
\section{Deferred proofs and details for lemniscate acceleration}
\label{sec:appendix-lemniscate}

\subsection{Existence and uniqueness of the lemniscate recurrence}
\label{sec:app-recurrence-existence}

In this section we prove Lemma~\ref{lem:lemniscate-recurrence}, establish
the bounds on $\Omega_{N}$ used in
Theorem~\ref{thm:lemniscate-convergence}, and give a complete bisection
rule for computing $\Omega_{N}$.

\paragraph{The one-step map.}
For $\Omega>0$ and $\rho\in(0,1]$, a pair of consecutive terms of
\eqref{eq:lemniscate-recurrence} solves
\begin{equation}\label{eq:app-one-step}
\Omega\left(\rho-t\right)^{2}=\rho\left(1-t^{2}\right)
\end{equation}
in the unknown $t$, which rearranges to the quadratic equation $(\Omega+\rho)\,t^{2}-2\Omega\rho\,t+\Omega\rho^{2}-\rho=0$. We work on the admissible set $\mathcal{A}\triangleq\left\{ (\Omega,\rho):\Omega>0,\;0<\rho\leq1,\;\Omega\rho\geq1\right\}$.

\begin{lem}[One-step map]\label{lem:app-one-step-map}
For $(\Omega,\rho)\in\mathcal{A}$, equation \eqref{eq:app-one-step} has
exactly one solution $t$ with $0\leq t<\rho$, namely
\begin{equation}\label{eq:app-F-def}
F_{\Omega}(\rho)\triangleq
\frac{\Omega\rho-\sqrt{\rho^{2}+\Omega\rho(1-\rho^{2})}}{\Omega+\rho},
\end{equation}
and the other solution is greater than or equal to $\rho$, with equality
only when $\rho=1$. Moreover:
\begin{enumerate}
\item $F_{\Omega}(\rho)=0$ if and only if $\Omega\rho=1$;
\item $F$ is continuous on $\mathcal{A}$ and strictly increasing in each of
$\Omega$ and $\rho$ on the fibers of $\mathcal{A}$ (the other variable
being fixed).
\end{enumerate}
\end{lem}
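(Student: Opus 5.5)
The plan is to treat \eqref{eq:app-one-step} as the quadratic $p(t)\triangleq(\Omega+\rho)t^{2}-2\Omega\rho\,t+\Omega\rho^{2}-\rho=0$ and read off both roots explicitly. Since $\Omega+\rho>0$, $p$ is an upward parabola. Its discriminant divided by four is
\[
\Omega^{2}\rho^{2}-(\Omega+\rho)(\Omega\rho^{2}-\rho)=\rho^{2}+\Omega\rho(1-\rho^{2}).
\]
This is positive because $\rho>0$ and $\rho\le1$. The two roots are therefore
\[
t_{\pm}=\frac{\Omega\rho\pm\sqrt{\rho^{2}+\Omega\rho(1-\rho^{2})}}{\Omega+\rho},
\]
and $t_{-}=F_{\Omega}(\rho)$.

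To locate the roots relative to $0$ and $\rho$, I would evaluate $p$ at these two points. At $t=\rho$, using \eqref{eq:app-one-step}, we get $p(\rho)=\rho(\rho^{2}-1)\le0$, with equality iff $\rho=1$. Hence $\rho$ lies in the closed interval between the roots, so $t_{-}\le\rho\le t_{+}$. Next, $p(0)=\rho(\Omega\rho-1)\ge0$ on $\mathcal{A}$. The vertex $\Omega\rho/(\Omega+\rho)$ is positive, so $0$ lies left of the vertex, and together with $p(0)\ge0$ this gives $t_{-}\ge0$, with $t_{-}=0$ iff $p(0)=0$ iff $\Omega\rho=1$. This proves item 1.

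For the strict inequality $t_{-}<\rho$, it suffices to note $t_{-}<t_{+}$ (the discriminant is positive). When $\rho<1$ we have $p(\rho)<0$, so $\rho$ is strictly between the roots. When $\rho=1$, $p(\rho)=0$, and since $t_{+}\ge\rho$ the root $\rho$ must be $t_{+}$, so $t_{-}<\rho$. In all cases $t_{+}\ge\rho$ with equality only when $\rho=1$. Uniqueness of the solution in $[0,\rho)$ follows because $t_{+}\notin[0,\rho)$.

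Continuity on $\mathcal{A}$ is immediate from the formula: the radicand is positive and the denominator is positive. The main obstacle is strict monotonicity, and I would handle it through implicit differentiation rather than the explicit formula. Write $G(\Omega,\rho,t)=\Omega(\rho-t)^{2}-\rho(1-t^{2})$, so $t=F_{\Omega}(\rho)$ solves $G=0$. Then
\[
\partial_tG=-2\Omega(\rho-t)+2\rho t=p'(t)/1,
\]
which is negative at the smaller root $t_{-}<$ vertex. For the other partial derivatives:
\begin{itemize}
\item $\partial_\Omega G=(\rho-t)^{2}>0$, so $\partial_\Omega F=-\partial_\Omega G/\partial_tG>0$.
\item $\partial_\rho G=2\Omega(\rho-t)-(1-t^{2})$. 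At the root, \eqref{eq:app-one-step} gives $1-t^{2}=\Omega(\rho-t)^{2}/\rho$, so
\[
\partial_\rho G=\frac{\Omega(\rho-t)}{\rho}\,(2\rho-(\rho-t))=\frac{\Omega(\rho-t)(\rho+t)}{\rho}>0.
\]
Hence $\partial_\rho F>0$.
\end{itemize}
The implicit function theorem applies because $\partial_tG\ne0$ and $F$ is smooth on $\mathcal{A}$. A fiber of $\mathcal{A}$ with one variable fixed is an interval: in $\rho$ it is $[1/\Omega,1]$, and in $\Omega$ it is $[1/\rho,\infty)$. A positive derivative along such an interval, including its endpoints by continuity, then gives strict increase.
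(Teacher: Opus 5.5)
Your proof is correct and follows essentially the paper's route. Both use the quadratic formula with the same quarter-discriminant, then implicit differentiation of $\Omega(\rho-t)^2-\rho(1-t^2)$ together with the root identity $1-t^2=\Omega(\rho-t)^2/\rho$. The differences are minor:
\begin{itemize}
\item You locate the roots from the signs $p(0)=\rho(\Omega\rho-1)\ge0$ and $p(\rho)=\rho(\rho^2-1)\le0$. The paper instead bounds $t_+$ directly and uses the product $t_+t_-=\rho(\Omega\rho-1)/(\Omega+\rho)$.
\item You write out the partial derivatives the paper calls ``tedious'', namely $\partial_tG=p'(t_-)<0$ and $\partial_\rho G=\Omega(\rho-t)(\rho+t)/\rho>0$.
\end{itemize}

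There is one small slip in the case $\rho=1$. The fact $t_+\ge\rho$ does not by itself force the root $\rho$ to be $t_+$, since $\rho=t_-$ is equally consistent with it. Use instead $t_-<\Omega\rho/(\Omega+\rho)<\rho$: the smaller root lies left of the vertex, and the vertex lies left of $\rho$. This gives $t_-<\rho$ in every case without a case split, as the paper does.
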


\begin{proof}
The quarter-discriminant of the quadratic equals
$\Omega^{2}\rho^{2}-(\Omega+\rho)(\Omega\rho^{2}-\rho)
=\rho^{2}+\Omega\rho(1-\rho^{2})>0$, so \eqref{eq:app-one-step} has the two
real solutions
\begin{equation*}
t_{\pm}=\frac{\Omega\rho\pm\sqrt{\rho^{2}+\Omega\rho(1-\rho^{2})}}{\Omega+\rho}.
\end{equation*}
Since $\Omega\rho(1-\rho^{2})\geq0$, the square root is at least $\rho$,
hence
\begin{equation*}
t_{+}\geq\frac{\Omega\rho+\rho}{\Omega+\rho}
\geq\frac{\Omega\rho+\rho^{2}}{\Omega+\rho}=\rho,
\end{equation*}
and both inequalities hold with equality only when $\rho=1$. On the other
hand, $t_{-}<\Omega\rho/(\Omega+\rho)<\rho$. The product of the two
solutions is $t_{+}t_{-}=\rho(\Omega\rho-1)/(\Omega+\rho)\geq0$ on
$\mathcal{A}$; since $t_{+}\geq\rho>0$, this gives $t_{-}\geq0$, with
$t_{-}=0$ exactly when $\Omega\rho=1$. Therefore $F_{\Omega}(\rho)=t_{-}$
is the unique solution in $[0,\rho)$, which proves the root dichotomy and
claim 1.

For claim 2, continuity follows from \eqref{eq:app-F-def} because the
denominator is positive. For the strict monotonicity, define
\begin{equation*}
\psi(\Omega,\rho,t)\triangleq\Omega(\rho-t)^{2}-\rho(1-t^{2}),
\end{equation*}
and fix a point in the interior of $\mathcal{A}$ with
$t=F_{\Omega}(\rho)\in[0,\rho)$. Substituting the root identity
$1-t^{2}=\Omega(\rho-t)^{2}/\rho$ into the partial derivatives of
$\psi$ and using $0\leq t<\rho\leq1$, tedious but straightforward
algebra gives $\psi_{\Omega}>0$, $\psi_{\rho}>0$, and $\psi_{t}<0$ at
this point. By the implicit function
theorem, $F$ is continuously differentiable in the interior of
$\mathcal{A}$ with
$\partial F/\partial\Omega=-\psi_{\Omega}/\psi_{t}>0$ and
$\partial F/\partial\rho=-\psi_{\rho}/\psi_{t}>0$. Consequently, for fixed
$\rho\in(0,1)$ the map $\Omega\mapsto F_{\Omega}(\rho)$ is continuous on
$[1/\rho,\infty)$ and strictly increasing on its interior, hence strictly
increasing on $[1/\rho,\infty)$; the same argument applies to
$\rho\mapsto F_{\Omega}(\rho)$ on $[1/\Omega,1]$ for fixed $\Omega>1$; and
on the remaining boundary segment $\rho=1$ the explicit formula
$F_{\Omega}(1)=(\Omega-1)/(\Omega+1)$ is strictly increasing in $\Omega$.
\end{proof}

\paragraph{Shooting orbits.}
For $\Omega\geq1$ define the shooting iterates $\Phi_{0}(\Omega)\triangleq1$ and $\Phi_{k+1}(\Omega)\triangleq F_{\Omega}\left(\Phi_{k}(\Omega)\right)$
defined for as long as $(\Omega,\Phi_{k}(\Omega))\in\mathcal{A}$. Taking
the positive square root in \eqref{eq:app-one-step} gives the step identity
\begin{equation}\label{eq:app-step-size}
\Phi_{k}(\Omega)-\Phi_{k+1}(\Omega)
=\sqrt{\frac{\Phi_{k}(\Omega)\left(1-\Phi_{k+1}(\Omega)^{2}\right)}{\Omega}}
\leq\frac{1}{\sqrt{\Omega}}
\end{equation}
whenever the left-hand side is defined; the final bound holds because the
iterates satisfy
$0\leq\Phi_{k+1}(\Omega)<\Phi_{k}(\Omega)\leq\Phi_{0}(\Omega)=1$
(Lemma~\ref{lem:app-one-step-map}), so
$\Phi_{k}(\Omega)\left(1-\Phi_{k+1}(\Omega)^{2}\right)\leq1$.

\begin{proof}[Proof of Lemma~\ref{lem:lemniscate-recurrence}]
Call a pair $(\Omega,\{\rho_{k}\}_{k=0}^{N+1})$ \emph{admissible for
horizon $N$} if it satisfies all requirements of the lemma:
$1=\rho_{0}>\rho_{1}>\cdots>\rho_{N+1}=0$ and
\eqref{eq:lemniscate-recurrence} for $0\leq k\leq N$.

\emph{Step 0: every admissible pair is a shooting orbit.}
Let $(\Omega,\{\rho_{k}\})$ be admissible for horizon $N$. Strict decrease
gives $\rho_{k}>\rho_{N+1}=0$ for $k\leq N$. The recurrence at $k=N$ reads
$\Omega\rho_{N}^{2}=\rho_{N}$, so $\rho_{N}=1/\Omega$, and the decrease of
the sequence gives $\Omega\rho_{k}\geq\Omega\rho_{N}=1$ for $k\leq N$;
hence $(\Omega,\rho_{k})\in\mathcal{A}$ for $0\leq k\leq N$. In particular
$1/\Omega=\rho_{N}\leq\rho_{0}=1$ forces $\Omega\geq1$, and for $N\geq1$
the strict inequality $\rho_{N}<\rho_{0}$ forces $\Omega>1$. Finally, for
$0\leq k\leq N$ the term $\rho_{k+1}$ solves \eqref{eq:app-one-step} with
$0\leq\rho_{k+1}<\rho_{k}$, so Lemma~\ref{lem:app-one-step-map} gives
$\rho_{k+1}=F_{\Omega}(\rho_{k})$. Therefore $\rho_{k}=\Phi_{k}(\Omega)$
for all $0\leq k\leq N+1$: the sequence is uniquely determined by
$\Omega$, which is the final claim of the lemma.

\emph{Step 1: existence, by induction on the horizon.}
We construct numbers $1=\Omega_{0}<\Omega_{1}<\Omega_{2}<\cdots$ such that
$\left(\Omega_{N},\{\Phi_{k}(\Omega_{N})\}_{k=0}^{N+1}\right)$ is
admissible for horizon $N$. For $N=0$ the requirements reduce to
$\Omega(\rho_{0}-\rho_{1})^{2}=\rho_{0}(1-\rho_{1}^{2})$ with $\rho_{0}=1$
and $\rho_{1}=0$, that is, $\Omega_{0}=1$; and indeed
$\Phi_{1}(1)=F_{1}(1)=0$.

For the induction step, assume
$\left(\Omega_{N},\{\Phi_{k}(\Omega_{N})\}\right)$ is admissible for
horizon $N$. We first record, by induction on $k\in\{1,\ldots,N+1\}$, that
the maps $\Omega\mapsto\Phi_{k}(\Omega)$ are well defined, continuous, and
strictly increasing on the ray $[\Omega_{N},\infty)$. For $k=1$,
$\Phi_{1}(\Omega)=F_{\Omega}(1)=(\Omega-1)/(\Omega+1)$ has all three
properties. For the step from $k$ to $k+1$ (with $k\leq N$), the
monotonicity of $F$ and admissibility at horizon $N$ give the domain bound $\Omega\,\Phi_{k}(\Omega)\geq\Omega_{N}\,\Phi_{k}(\Omega_{N})\geq\Omega_{N}\,\Phi_{N}(\Omega_{N})=\Omega_{N}/\Omega_{N}=1$,
so $(\Omega,\Phi_{k}(\Omega))\in\mathcal{A}$ and
$\Phi_{k+1}(\Omega)=F_{\Omega}(\Phi_{k}(\Omega))$ is well defined; it is
continuous by Lemma~\ref{lem:app-one-step-map} and the induction
hypothesis, and strictly increasing because, for
$\Omega_{N}\leq\Omega<\Omega'$, we have $\Phi_{k+1}(\Omega)=F_{\Omega}\left(\Phi_{k}(\Omega)\right)<F_{\Omega'}\left(\Phi_{k}(\Omega)\right)<F_{\Omega'}\left(\Phi_{k}(\Omega')\right)=\Phi_{k+1}(\Omega')$,
with the middle application being legitimate since
$\Omega'\,\Phi_{k}(\Omega)>\Omega\,\Phi_{k}(\Omega)\geq1$, and the second
inequality being strict by the induction hypothesis
$\Phi_{k}(\Omega)<\Phi_{k}(\Omega')$.

Now consider $h(\Omega)\triangleq\Omega\,\Phi_{N+1}(\Omega)$ on
$[\Omega_{N},\infty)$. It is continuous, and it is strictly increasing
because $\Phi_{N+1}$ is nonnegative and strictly increasing there: for
$\Omega_{N}\leq\Omega<\Omega'$,
$\Omega\,\Phi_{N+1}(\Omega)\leq\Omega\,\Phi_{N+1}(\Omega')
<\Omega'\,\Phi_{N+1}(\Omega')$. Moreover
$h(\Omega_{N})=\Omega_{N}\cdot\Phi_{N+1}(\Omega_{N})=0$, and telescoping
\eqref{eq:app-step-size} over $0\leq k\leq N$, all iterates being defined
on this ray, gives
$\Phi_{N+1}(\Omega)\geq1-(N+1)/\sqrt{\Omega}$, so
$h(\Omega)\geq\Omega-(N+1)\sqrt{\Omega}\to\infty$ as $\Omega\to\infty$. By
the intermediate value theorem there is a unique
$\Omega_{N+1}\in(\Omega_{N},\infty)$ with $h(\Omega_{N+1})=1$, that is, $\Phi_{N+1}(\Omega_{N+1})=1/\Omega_{N+1}$. The sequence $\rho_{k}\triangleq\Phi_{k}(\Omega_{N+1})$ for
$0\leq k\leq N+1$, extended by $\rho_{N+2}\triangleq0$, is then admissible
for horizon $N+1$: the recurrence holds for $0\leq k\leq N$ by
construction, and at $k=N+1$ it reads
$\Omega_{N+1}(\rho_{N+1}-0)^{2}=1/\Omega_{N+1}=\rho_{N+1}(1-0^{2})$; the
sequence is strictly decreasing because $F_{\Omega}(\rho)<\rho$ at every
step and $\rho_{N+1}=1/\Omega_{N+1}>0=\rho_{N+2}$; and it is nonnegative
because $\rho_{k}\geq\rho_{N+1}>0$ for $k\leq N+1$. This completes the
induction, so an admissible $\Omega$ exists for every horizon.

\emph{Step 2: uniqueness of $\Omega$.}
Suppose $(\Omega,\{\rho_{k}\})$ and $(\Omega',\{\rho'_{k}\})$ are both
admissible for horizon $N$ with $\Omega<\Omega'$. By Step 0, both sequences
are shooting orbits, every pair $(\Omega,\rho_{k})$ and
$(\Omega',\rho'_{k})$ with $k\leq N$ lies in $\mathcal{A}$, and
$\Omega,\Omega'\geq1$. For $N=0$, Step 0 gives
$\rho_{1}=(\Omega-1)/(\Omega+1)=0$ and
$\rho'_{1}=(\Omega'-1)/(\Omega'+1)=0$, forcing $\Omega=\Omega'=1$ and
contradicting $\Omega<\Omega'$; so assume $N\geq1$. We prove
$\rho_{k}<\rho'_{k}$ for $1\leq k\leq N+1$ by induction. For $k=1$,
\begin{equation*}
\rho_{1}=\frac{\Omega-1}{\Omega+1}<\frac{\Omega'-1}{\Omega'+1}=\rho'_{1}.
\end{equation*}
For the induction step, the strict monotonicity of $F$ in both arguments
(Lemma~\ref{lem:app-one-step-map}) gives $\rho_{k+1}=F_{\Omega}(\rho_{k})<F_{\Omega'}(\rho_{k})<F_{\Omega'}(\rho'_{k})=\rho'_{k+1}$, where the middle steps are valid because
$\Omega'\rho_{k}>\Omega\rho_{k}\geq1$, and the second inequality is
strict by the induction hypothesis $\rho_{k}<\rho'_{k}$. Iterating the
induction through
the final step $k=N$ yields $\rho_{N+1}<\rho'_{N+1}$, contradicting
$\rho_{N+1}=\rho'_{N+1}=0$. Hence the admissible
$\Omega$ is unique, and Step 0 shows the sequence is uniquely determined
by it.
\end{proof}

The same machinery yields the quantitative bounds on $\Omega_{N}$ quoted in
Theorem~\ref{thm:lemniscate-convergence} and
a complete bisection rule for computing $\Omega_{N}$.

\begin{cor}[Bounds on $\Omega_{N}$ and bisection correctness]\label{cor:app-omega-bounds}
Let $N\geq1$, and let $\Omega_{N}$ and $\{\rho_{k}\}_{k=0}^{N+1}$ be as in
Lemma~\ref{lem:lemniscate-recurrence}. Then:
\begin{enumerate}
\item $\dfrac{(N+1)^{2}}{\varpi^{2}}<\Omega_{N}<(N+1)^{2}$, where $\varpi$
is the lemniscate constant defined in
Theorem~\ref{thm:lemniscate-convergence}; moreover
$1=\Omega_{0}<\Omega_{1}<\Omega_{2}<\cdots$.
\item Fix $\Omega\geq1$ and generate the shooting iterates
$\Phi_{0}(\Omega),\Phi_{1}(\Omega),\ldots$, computing
$\Phi_{k+1}(\Omega)=F_{\Omega}\left(\Phi_{k}(\Omega)\right)$ whenever
$\Phi_{k}(\Omega)\geq1/\Omega$. If $\Omega>\Omega_{N}$, then
$\Phi_{k}(\Omega)>1/\Omega$ for all $0\leq k\leq N$ and
$\Phi_{N+1}(\Omega)>0$. If $\Omega<\Omega_{N}$, then there is an index
$k\leq N$ with $\Phi_{k}(\Omega)<1/\Omega$. If $\Omega=\Omega_{N}$, the
iterates realize the sequence of Lemma~\ref{lem:lemniscate-recurrence}.

Consequently, the following exact bisection is correct. Initialize
$[\Omega_{l},\Omega_{r}]=[1,(N+1)^{2}]$. Given the midpoint
$\Omega_{m}=(\Omega_{l}+\Omega_{r})/2$, generate the shooting iterates
until either some $k\leq N$ satisfies
$\Phi_{k}(\Omega_{m})<1/\Omega_{m}$, in which case set
$\Omega_{l}\gets\Omega_{m}$, or the iterates have been generated
through $\Phi_{N+1}(\Omega_{m})$. In the latter case, set
$\Omega_{r}\gets\Omega_{m}$ if $\Phi_{N+1}(\Omega_{m})>0$, and return
$\Omega_{m}=\Omega_{N}$ and the generated sequence if
$\Phi_{N+1}(\Omega_{m})=0$. At every nonterminal step the invariant
$\Omega_{l}<\Omega_{N}\leq\Omega_{r}$ is maintained, the bracket length
is halved, and the midpoints converge to $\Omega_{N}$.

\end{enumerate}
\end{cor}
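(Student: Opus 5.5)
The plan is to prove the upper and lower bounds in part 1 by telescoping the step identity \eqref{eq:app-step-size} along the admissible orbit $\rho_k=\Phi_k(\Omega_N)$, and to prove part 2 by the same monotone-comparison argument used in Step 2 of the proof of Lemma~\ref{lem:lemniscate-recurrence}.

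\emph{Upper bound.} Summing \eqref{eq:app-step-size} over $0\le k\le N$ at $\Omega=\Omega_N$ gives
\[
1=\rho_0-\rho_{N+1}=\sum_{k=0}^{N}\sqrt{\frac{\rho_k(1-\rho_{k+1}^2)}{\Omega_N}}.
\]
Every summand is at most $1/\sqrt{\Omega_N}$. The $k=0$ summand is strictly smaller, because $\rho_0(1-\rho_1^2)=1-\rho_1^2<1$: for $N\ge1$ we have $\rho_1>\rho_{N+1}=0$. Hence $1<(N+1)/\sqrt{\Omega_N}$, that is, $\Omega_N<(N+1)^2$.

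\emph{Lower bound.} Introduce the potential
\[
G(\rho)\triangleq\int_\rho^1\frac{dr}{\sqrt{r(1-r^2)}},\qquad \rho\in[0,1].
\]
Both endpoint singularities are integrable. The substitution $r=u^2$ turns the integrand into $2\,du/\sqrt{1-u^4}$, so $G(0)=2\int_0^1du/\sqrt{1-u^4}=\varpi$. On each interval $[\rho_{k+1},\rho_k]$ we have $r\le\rho_k$ and $1-r^2\le1-\rho_{k+1}^2$. Hence $r(1-r^2)\le\rho_k(1-\rho_{k+1}^2)$, with strict inequality for every $r$ in the open interval $(\rho_{k+1},\rho_k)$. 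It follows that
\[
G(\rho_{k+1})-G(\rho_k)>\frac{\rho_k-\rho_{k+1}}{\sqrt{\rho_k(1-\rho_{k+1}^2)}}=\frac{1}{\sqrt{\Omega_N}},
\]
where the equality is the recurrence \eqref{eq:lemniscate-recurrence}. Summing over $k=0,\dots,N$ gives $\varpi=G(0)-G(1)>(N+1)/\sqrt{\Omega_N}$, which is the claimed lower bound. The chain $1=\Omega_0<\Omega_1<\cdots$ comes directly from the induction in Step 1, where $\Omega_{N+1}$ was found in $(\Omega_N,\infty)$, together with the uniqueness proved in Step 2.

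\emph{Part 2, case $\Omega>\Omega_N$.} Induct on $k$, exactly as in Step 1, to show that $\Phi_k(\Omega)$ is defined and satisfies $\Phi_k(\Omega)>\Phi_k(\Omega_N)$ for $1\le k\le N+1$. Lemma~\ref{lem:app-one-step-map} supplies the strict monotonicity of $F$ in each argument, and the domain condition holds because $\Omega\Phi_k(\Omega)>\Omega_N\Phi_k(\Omega_N)\ge\Omega_N\rho_N=1$. This yields $\Phi_k(\Omega)>\rho_k\ge1/\Omega_N>1/\Omega$ for $k\le N$, and $\Phi_{N+1}(\Omega)>0$.

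\emph{Part 2, case $\Omega<\Omega_N$.} Suppose, for contradiction, that $\Phi_k(\Omega)\ge1/\Omega$ for every $k\le N$. Then all iterates through $\Phi_{N+1}(\Omega)$ are defined and lie in $[0,1]$. The same comparison run downward gives $\Phi_k(\Omega)<\Phi_k(\Omega_N)$, and in particular $\Phi_{N+1}(\Omega)<\rho_{N+1}=0$. This contradicts $F\ge0$ on $\mathcal A$.

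\emph{Part 2, case $\Omega=\Omega_N$, and the bisection.} For $\Omega=\Omega_N$, Step 0 shows that the iterates are exactly the sequence $\rho_k$. For the bisection, part 1 gives the initial bracket $1<\Omega_N<(N+1)^2$. The three cases then say that each test correctly reports which side of $\Omega_N$ the midpoint lies on, or returns exactly at $\Omega_N$. So the invariant $\Omega_l<\Omega_N\le\Omega_r$ is preserved, and the halving bracket forces the midpoints to converge to $\Omega_N$.

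The step I expect to need most care is the strictness and improper-integral bookkeeping in the lower bound. This includes the endpoint intervals where $\rho_0=1$ or $\rho_{N+1}=0$ makes the integrand singular. There I would argue via monotone convergence, or simply by noting that each piece is a convergent improper integral of a positive function that strictly exceeds the constant bound on an open set.
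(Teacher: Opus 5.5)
Your proposal is correct and follows the paper's proof essentially step for step. It telescopes the step identity for the upper bound, compares each gap $\rho_k-\rho_{k+1}$ with the integral of $1/\sqrt{t(1-t^{2})}$ for the lower bound, and reuses the comparison inductions of Steps~1 and~2 for the bisection cases. The differences are cosmetic: you take strictness from the $k=0$ summand instead of $k=1$, and in the case $\Omega<\Omega_N$ you close the contradiction at index $N+1$ via $F\geq0$, whereas the comparison already gives $\Phi_N(\Omega)<1/\Omega_N<1/\Omega$ at index $N$, which is where the paper stops.
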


\begin{proof}
\emph{Part 1.} At $\Omega=\Omega_{N}$ the step identity
\eqref{eq:app-step-size} holds for $0\leq k\leq N$, and telescoping gives
\begin{equation*}
1=\rho_{0}-\rho_{N+1}
=\sum_{k=0}^{N}\sqrt{\frac{\rho_{k}\left(1-\rho_{k+1}^{2}\right)}{\Omega_{N}}}
\leq\frac{N+1}{\sqrt{\Omega_{N}}},
\end{equation*}
and the term $k=1$, present since $N\geq1$, is strictly smaller than
$1/\sqrt{\Omega_{N}}$ because $\rho_{1}<1$; hence $\sqrt{\Omega_{N}}<N+1$. For the lower bound, dividing
the step identity by $\sqrt{\rho_{k}(1-\rho_{k+1}^{2})}$ and summing gives
\begin{equation*}
\frac{N+1}{\sqrt{\Omega_{N}}}
=\sum_{k=0}^{N}\frac{\rho_{k}-\rho_{k+1}}{\sqrt{\rho_{k}\left(1-\rho_{k+1}^{2}\right)}}
=\sum_{k=0}^{N}\int_{\rho_{k+1}}^{\rho_{k}}
\frac{dt}{\sqrt{\rho_{k}\left(1-\rho_{k+1}^{2}\right)}}
<\sum_{k=0}^{N}\int_{\rho_{k+1}}^{\rho_{k}}\frac{dt}{\sqrt{t\left(1-t^{2}\right)}}
=\int_{0}^{1}\frac{dt}{\sqrt{t\left(1-t^{2}\right)}}=\varpi,
\end{equation*}
where the strict inequality holds because on the interior of each
interval the integrand on the right strictly dominates the constant
integrand on the left, and the substitution $t=s^{2}$ identifies the
comparison integral with the form
$\varpi=2\int_{0}^{1}ds/\sqrt{1-s^{4}}$ of
Section~\ref{sec:lemniscate-constant}. Hence $\Omega_{N}>(N+1)^{2}/\varpi^{2}$. Strict monotonicity of
$N\mapsto\Omega_{N}$ was established in Step 1 of the proof of
Lemma~\ref{lem:lemniscate-recurrence} and is inherited by the unique
admissible values.

\emph{Part 2.} If $\Omega>\Omega_{N}$, the sub-induction in Step 1 (anchored
at $\Omega_{N}$) shows that $\Phi_{k}(\Omega)$ is defined and strictly
increasing in $\Omega$ for $1\leq k\leq N+1$; therefore, for
$0\leq k\leq N$, using that the iterates decrease in the index
($F_{\Omega}(\rho)<\rho$),
\begin{equation*}
\Phi_{k}(\Omega)\geq\Phi_{N}(\Omega)>\Phi_{N}(\Omega_{N})
=\frac{1}{\Omega_{N}}>\frac{1}{\Omega},
\end{equation*}
and $\Phi_{N+1}(\Omega)>\Phi_{N+1}(\Omega_{N})=0$. If
$1\leq\Omega<\Omega_{N}$, suppose toward a contradiction that
$\Phi_{k}(\Omega)\geq1/\Omega$ for all $0\leq k\leq N$; then all iterates
up to $\Phi_{N}(\Omega)$ are defined. The comparison induction of Step 2
does not use admissibility of the lower orbit at the terminal index: it
only needs both shooting orbits to be defined up to the index being
compared, the domain of each step being guaranteed here by the
contradiction hypothesis and by
$\Omega_{N}\,\Phi_{k}(\Omega)>\Omega\,\Phi_{k}(\Omega)\geq1$. The same
induction therefore gives
$\Phi_{N}(\Omega)<\Phi_{N}(\Omega_{N})=1/\Omega_{N}<1/\Omega$, a
contradiction. The case $\Omega=\Omega_{N}$ is
Lemma~\ref{lem:lemniscate-recurrence} itself.

For the bisection, Part 1 gives
$\Omega_{l}=1<\Omega_{N}<(N+1)^{2}=\Omega_{r}$ at initialization.
The sign characterization just proved says that the left update occurs
exactly when $\Omega_{m}<\Omega_{N}$, the right update occurs exactly
when $\Omega_{m}>\Omega_{N}$, and the remaining exact case is
$\Omega_{m}=\Omega_{N}$, where
Lemma~\ref{lem:lemniscate-recurrence} gives
$\Phi_{N+1}(\Omega_{m})=0$. Hence each nonterminal update preserves
$\Omega_{l}<\Omega_{N}\leq\Omega_{r}$ while halving the bracket, and the
midpoints converge to $\Omega_{N}$.

\end{proof}

The comparison with the integral in Part 1 in fact identifies the limit
of $\Omega_{N}/(N+1)^{2}$: the lower bound of
Corollary~\ref{cor:app-omega-bounds} is asymptotically tight.

\begin{lem}\label{lem:app-omega-limit}
Let $\Omega_{N}$ be as in Lemma~\ref{lem:lemniscate-recurrence}. Then
\begin{equation*}
\lim_{N\to\infty}\frac{\Omega_{N}}{(N+1)^{2}}=\frac{1}{\varpi^{2}}.
\end{equation*}
\end{lem}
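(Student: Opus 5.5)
We need the matching upper bound: $\limsup (N+1)/\sqrt{\Omega_N}\ge\varpi$, i.e. $(N+1)/\sqrt{\Omega_N}\to\varpi$. Since Corollary~\ref{cor:app-omega-bounds} already gives $(N+1)/\sqrt{\Omega_N}<\varpi$, it suffices to show $\liminf_{N}(N+1)/\sqrt{\Omega_N}\ge\varpi$.

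The starting point is the exact identity from Part~1 of the corollary,
\[
\frac{N+1}{\sqrt{\Omega_N}}=\sum_{k=0}^{N}\frac{\rho_k-\rho_{k+1}}{\sqrt{\rho_k(1-\rho_{k+1}^2)}} .
\]
This is a Riemann-type sum for $\int_0^1 dt/\sqrt{t(1-t^2)}$, with the integrand evaluated at the mixed point $(\rho_k,\rho_{k+1})$. The plan is to show that the mesh $\max_k(\rho_k-\rho_{k+1})$ tends to $0$. This follows from the step identity \eqref{eq:app-step-size}, which gives $\rho_k-\rho_{k+1}\le 1/\sqrt{\Omega_N}$. Moreover $\Omega_N\to\infty$: by Part~1, $\Omega_N>(N+1)^2/\varpi^2$.

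The difficulty is that the integrand is singular at both endpoints $t=0$ and $t=1$, so uniform Riemann-sum convergence does not apply directly. Fix $\delta\in(0,1/2)$. I would restrict the sum to the indices $k$ with $\delta\le\rho_{k+1}<\rho_k\le1-\delta$. On $[\delta,1-\delta]$ the function $(s,t)\mapsto 1/\sqrt{s(1-t^2)}$ is uniformly continuous. Hence each restricted term equals $\int_{\rho_{k+1}}^{\rho_k}dt/\sqrt{t(1-t^2)}$ up to a relative error $\epsilon(\text{mesh})\to0$.

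The union of the restricted intervals covers $[\delta,1-\delta]$ except for at most two boundary intervals, each of length at most the mesh. Dropping the unrestricted terms, which are nonnegative, then yields
\[
\liminf_{N}\frac{N+1}{\sqrt{\Omega_N}}\ge\int_\delta^{1-\delta}\frac{dt}{\sqrt{t(1-t^2)}} .
\]
Letting $\delta\downarrow0$ gives $\varpi$ by monotone convergence. Combined with the upper bound, $(N+1)/\sqrt{\Omega_N}\to\varpi$, and squaring gives $\Omega_N/(N+1)^2\to1/\varpi^2$.

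The main obstacle is the bookkeeping at the boundary intervals near $\delta$ and $1-\delta$, together with the fact that the evaluation point mixes $\rho_k$ in one factor with $\rho_{k+1}$ in the other. Since the lower bound only requires dropping nonnegative terms, this reduces to uniform continuity on a compact set and should cause no real trouble. No control of the singular tails is needed, because those tails only make the left side larger.
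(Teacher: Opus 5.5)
Your proposal is correct and follows essentially the same route as the paper. Both arguments read the telescoped identity for $(N+1)/\sqrt{\Omega_N}$ as a lower Riemann-type sum, control the mesh via the step identity, use uniform continuity of $(s,t)\mapsto 1/\sqrt{s(1-t^{2})}$ away from the singular endpoints, drop the remaining nonnegative terms, and then shrink the cutoff. The paper takes the intervals meeting $[\varepsilon,1-\varepsilon]$ and bounds the tails explicitly by $O(\sqrt{\varepsilon})$, whereas you take the intervals contained in $[\delta,1-\delta]$ and invoke convergence of the improper integral; this is equally valid. One small slip: your opening sentence should read $\liminf_{N}(N+1)/\sqrt{\Omega_N}\ge\varpi$, which is what you correctly state in the next line.
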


\begin{proof}
Write $\{\rho_{k}\}_{k=0}^{N+1}$ for the sequence of
Lemma~\ref{lem:lemniscate-recurrence} at $\Omega_{N}$, put
$\ell_{k}\triangleq\rho_{k}-\rho_{k+1}>0$, and set
\begin{equation*}
g(x)\triangleq\frac{1}{\sqrt{x\left(1-x^{2}\right)}},\quad 0<x<1,
\end{equation*}
so that the improper integral $\int_{0}^{1}g(x)\,dx=\varpi$, as recorded in the proof of
Corollary~\ref{cor:app-omega-bounds}. As in Part 1 of that proof,
dividing the step identity \eqref{eq:app-step-size} at $\Omega=\Omega_{N}$
by $\sqrt{\rho_{k}(1-\rho_{k+1}^{2})}$ and summing over $0\leq k\leq N$
gives
\begin{equation}\label{eq:app-omega-limit-sum}
\frac{N+1}{\sqrt{\Omega_{N}}}
=\sum_{k=0}^{N}\frac{\ell_{k}}{\sqrt{\rho_{k}\left(1-\rho_{k+1}^{2}\right)}}.
\end{equation}
We claim that the right-hand side converges to $\varpi$; the lemma
follows, since then $\sqrt{\Omega_{N}}/(N+1)\to1/\varpi$.

The intervals $[\rho_{k+1},\rho_{k}]$, $0\leq k\leq N$, partition
$[0,1]$, and by the step identity and the lower bound of
Corollary~\ref{cor:app-omega-bounds} their lengths satisfy $\delta_{N}\triangleq\max_{0\leq k\leq N}\ell_{k}\leq1/\sqrt{\Omega_{N}}<\varpi/(N+1),$
so the mesh $\delta_{N}$ tends to zero. For
$x\in[\rho_{k+1},\rho_{k}]\cap(0,1)$ we have $x\leq\rho_{k}$ and
$1-x^{2}\leq1-\rho_{k+1}^{2}$, hence
$g(x)\geq1/\sqrt{\rho_{k}(1-\rho_{k+1}^{2})}$, so the right-hand side of
\eqref{eq:app-omega-limit-sum} is a lower Riemann-type sum for
$\int_{0}^{1}g(x)\,dx=\varpi$.

In particular, each summand is at most
$\int_{\rho_{k+1}}^{\rho_{k}}g(x)\,dx$, so the sum is at most $\varpi$
for every $N$. For the matching lower bound, fix
$\varepsilon\in(0,1/4]$. On $(0,\varepsilon]$ we have
$1-x^{2}\geq3/4$, and on $[1-\varepsilon,1)$ we have $x\geq1/2$ and
$1-x^{2}\geq1-x$, so the endpoint tails satisfy
\begin{equation*}
\int_{0}^{\varepsilon}g(x)\,dx\leq\frac{4}{\sqrt{3}}\sqrt{\varepsilon}
\qquad\text{and}\qquad
\int_{1-\varepsilon}^{1}g(x)\,dx\leq2\sqrt{2}\,\sqrt{\varepsilon}.
\end{equation*}
Once $\delta_{N}<\varepsilon/2$, every interval
$[\rho_{k+1},\rho_{k}]$ that meets $[\varepsilon,1-\varepsilon]$ lies
in $[\varepsilon/2,1-\varepsilon/2]$, where the two-variable kernel
$(x,y)\mapsto1/\sqrt{x(1-y^{2})}$ is uniformly continuous, with
modulus of continuity $\omega_{\varepsilon}$; since $g(x)$ is this
kernel at $(x,x)$, we get
$1/\sqrt{\rho_{k}(1-\rho_{k+1}^{2})}\geq g(x)-\omega_{\varepsilon}(\delta_{N})$
for every $x\in[\rho_{k+1},\rho_{k}]$. Discarding the remaining
(positive) summands and summing over the intervals that meet
$[\varepsilon,1-\varepsilon]$, whose union covers
$[\varepsilon,1-\varepsilon]$ and has total length at most $1$,
\begin{equation*}
\sum_{k=0}^{N}\frac{\ell_{k}}{\sqrt{\rho_{k}\left(1-\rho_{k+1}^{2}\right)}}
\geq\int_{\varepsilon}^{1-\varepsilon}g(x)\,dx-\omega_{\varepsilon}(\delta_{N})
\geq\varpi-\left(\frac{4}{\sqrt{3}}+2\sqrt{2}\right)\sqrt{\varepsilon}-\omega_{\varepsilon}(\delta_{N}).
\end{equation*}
Letting $N\to\infty$ and then $\varepsilon\downarrow0$ shows that the
sum converges to $\varpi$.

Hence $(N+1)/\sqrt{\Omega_{N}}\to\varpi$.
\end{proof}

\subsection{Equivalence of the two forms of \eqref{eq:lemniscate-acceleration}}
\label{sec:lemniscate-form-equivalence}
Throughout this subsection, set $\phi_{k}\triangleq(1+\rho^{2}_{k})/(2\rho_{k})$
for $0\leq k\leq N$, $\Delta_{k}\triangleq\phi_{k+1}-\phi_{k}$ for
$0\leq k\leq N-1$, and $\psi_{k}\triangleq(1+\rho^{2}_{k})/(1-\rho^{2}_{k})$
for $1\leq k\leq N+1$ so that the update of $x_{k+1}$ in (\ref{eq:lemniscate-acceleration})
reads $x_{k+1}=x^{+}_{k}+\left(\psi_{k+1}-\psi_{k+2}\right)z_{k+1}$
and the coefficients of (\ref{eq:lemniscate-momentum-form}) read
$\alpha^{\text{(init)}}=\Omega_{N}\Delta_{0}\Delta_{N-1}$, $\alpha^{\text{(mom)}}_{k}=\Delta_{N-k-1}/\Delta_{N-k}$,
and $\alpha^{\text{(corr)}}_{k}=\Delta_{N-k-1}\left(\Omega_{N}\Delta_{k}-\frac{1}{\Delta_{N-k}}\right)$.
Since $1=\rho_{0}>\rho_{1}>\cdots>\rho_{N}>0$ and the map $\rho\mapsto(1+\rho^{2})/(2\rho)$
is strictly decreasing on $(0,1]$, we have $\Delta_{k}>0$ for $0\leq k\leq N-1$.

\begin{lem} \label{lem:lemniscate-form-equivalence} Fix $N\geq1$,
	and let $\Omega_{N}$ and $\rho_{0},\dots,\rho_{N+1}$ be as in Lemma~\ref{lem:lemniscate-recurrence}.
	From any starting point $x_{0}\in\mathbb{R}^{d}$, the iterates $x_{0},x_{1},\dots,x_{N}$
	of (\ref{eq:lemniscate-acceleration}) coincide with those of the
	momentum form (\ref{eq:lemniscate-momentum-form}). \end{lem}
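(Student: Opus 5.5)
The plan is to eliminate $z_k$ from \eqref{eq:lemniscate-acceleration} and read off a momentum recursion. The whole proof then reduces to one scalar identity, $\psi_{k+1}-\psi_{k+2}=\Delta_{N-k-1}$, which comes from the mirror symmetry $\rho_{N+1-k}=T(\rho_k)$ noted after Lemma~\ref{lem:lemniscate-recurrence}.

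\emph{Step 1 (eliminating $z$).} Write $\beta_k\triangleq\psi_{k+1}-\psi_{k+2}$ for $0\leq k\leq N-1$. Since $x_k^+-x_k=-\tfrac1L\nabla f(x_k)$, the $z$-update reads
\[
z_{k+1}=z_k+\Omega_N\Delta_k\,(x_k^+-x_k).
\]
If $\beta_k\neq0$ (Step 2 shows $\beta_k=\Delta_{N-k-1}>0$), the $x$-update gives $z_{k+1}=(x_{k+1}-x_k^+)/\beta_k$.
\begin{itemize}
\item For $k=0$: since $z_0=0$, we have $z_1=\Omega_N\Delta_0(x_0^+-x_0)$, and hence $x_1=x_0^++\beta_0\Omega_N\Delta_0(x_0^+-x_0)$.
\item For $1\leq k\leq N-1$: substitute $z_k=(x_k-x_{k-1}^+)/\beta_{k-1}$ and split $x_k-x_{k-1}^+=(x_k^+-x_{k-1}^+)-(x_k^+-x_k)$. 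This yields
\[
x_{k+1}=x_k^++\frac{\beta_k}{\beta_{k-1}}(x_k^+-x_{k-1}^+)+\beta_k\Big(\Omega_N\Delta_k-\frac{1}{\beta_{k-1}}\Big)(x_k^+-x_k).
\]
\end{itemize}
Both sequences start at the same $x_0$, and each $x_{k+1}$ depends only on earlier iterates. By induction on $k$, the two forms therefore produce the same iterates provided $\beta_k=\Delta_{N-k-1}$ for $0\leq k\leq N-1$. Indeed, this identity turns the displayed coefficients exactly into $\alpha^{\text{(init)}}=\Omega_N\Delta_0\Delta_{N-1}$, $\alpha^{\text{(mom)}}_k=\Delta_{N-k-1}/\Delta_{N-k}$ and $\alpha^{\text{(corr)}}_k=\Delta_{N-k-1}(\Omega_N\Delta_k-1/\Delta_{N-k})$.

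\emph{Step 2 (the scalar identity).} Direct computation gives
\[
\frac{1+T(\rho)^2}{2T(\rho)}=\frac{(1+\rho)^2+(1-\rho)^2}{2(1-\rho)(1+\rho)}=\frac{1+\rho^2}{1-\rho^2},
\]
so $\phi$ evaluated at $T(\rho)$ equals $\psi$ evaluated at $\rho$. Using $\rho_{N+1-j}=T(\rho_j)$, this gives $\psi_j=\phi_{N+1-j}$ for $1\leq j\leq N+1$. The endpoint $j=N+1$ is consistent, since $\rho_{N+1}=0$ gives $\psi_{N+1}=1=\phi_0$. Hence
\[
\beta_k=\phi_{N-k}-\phi_{N-k-1}=\Delta_{N-k-1}>0,
\]
which also justifies the division by $\beta_{k-1}$ in Step 1.

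\emph{Step 3 (the symmetry itself).} The main obstacle is that the paper only asserts $\rho_{N+1-k}=T(\rho_k)$, so it must be proved. The plan is as follows.
\begin{itemize}
\item First, verify algebraically that $(a,b)$ satisfies $\Omega(a-b)^2=a(1-b^2)$ if and only if $(T(b),T(a))$ does. Substituting $T(b)-T(a)=2(a-b)/((1+a)(1+b))$, $T(b)=(1-b)/(1+b)$ and $1-T(a)^2=4a/(1+a)^2$, both sides scale by the common factor $4/((1+a)^2(1+b)^2)$, up to swapping the roles $a(1-b^2)\leftrightarrow(1-b)(1+b)\cdot a$. This is a short check, but I expect it to be where care is needed.
\item Next, define $\tilde\rho_k\triangleq T(\rho_{N+1-k})$. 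Since $T$ is a strictly decreasing involution with $T(0)=1$ and $T(1)=0$, the sequence $\{\tilde\rho_k\}$ satisfies $1=\tilde\rho_0>\cdots>\tilde\rho_{N+1}=0$, and by the previous point it satisfies \eqref{eq:lemniscate-recurrence} with the same $\Omega_N$.
\item Finally, the uniqueness part of Lemma~\ref{lem:lemniscate-recurrence} forces $\tilde\rho_k=\rho_k$, which is the symmetry.
\end{itemize}
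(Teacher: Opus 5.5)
Your proposal is correct and follows the paper's proof essentially step for step: you combine the identity $\frac{1+T(\rho)^2}{2T(\rho)}=\frac{1+\rho^2}{1-\rho^2}$ with $\rho_{N+1-k}=T(\rho_k)$ to get $\psi_{k+1}-\psi_{k+2}=\Delta_{N-k-1}$, then eliminate $z_k$ by solving the $x$-update at index $k-1$ and close by induction on $k$. Your Step~3 spells out the justification of the mirror symmetry that the paper only sketches right after Lemma~\ref{lem:lemniscate-recurrence} (invariance of the recurrence under $(a,b)\mapsto(T(b),T(a))$ together with uniqueness), and your algebra there, with the common factor $4/((1+a)^2(1+b)^2)$, checks out.
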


\begin{proof} First, we claim that
	\begin{equation}
		\psi_{k}=\phi_{N+1-k},\quad1\leq k\leq N+1.\label{eq:lemniscate-psi-phi-mirror}
	\end{equation}
	Indeed, for $0<\rho<1$, the map $T(\rho)=(1-\rho)/(1+\rho)$ satisfies
	\[
	\frac{1+T(\rho)^{2}}{2T(\rho)}=\frac{(1+\rho)^{2}+(1-\rho)^{2}}{2(1+\rho)(1-\rho)}=\frac{1+\rho^{2}}{1-\rho^{2}},
	\]
	and combining this identity at $\rho=\rho_{k}$, which lies in $(0,1)$
	for $1\leq k\leq N$, with the relation $\rho_{N+1-k}=T(\rho_{k})$
	recorded after Lemma~\ref{lem:lemniscate-recurrence} in Section~\ref{sec:lemniscate}
	gives $\phi_{N+1-k}=\psi_{k}$ for $1\leq k\leq N$. At the endpoint
	$k=N+1$, where $\rho_{N+1}=0$ and $\rho_{0}=1$, direct evaluation
	gives $\psi_{N+1}=1=\phi_{0}$, which completes (\ref{eq:lemniscate-psi-phi-mirror}).
	By (\ref{eq:lemniscate-psi-phi-mirror}), $\psi_{k+1}-\psi_{k+2}=\phi_{N-k}-\phi_{N-k-1}=\Delta_{N-k-1}$
	for $0\leq k\leq N-1$, so the $x$-update of (\ref{eq:lemniscate-acceleration})
	is
	\begin{equation}
		x_{k+1}=x^{+}_{k}+\Delta_{N-k-1}\,z_{k+1},\quad0\leq k\leq N-1.\label{eq:lemniscate-x-update-compact}
	\end{equation}

	We now argue by induction that the two forms generate the same iterates.
	They start from the same $x_{0}$, and whenever the iterates agree
	up to index $k$, both forms evaluate the gradient at the same points,
	so the vectors $x^{+}_{j}$ for $0\leq j\leq k$ agree as well.

	At $k=0$: since $z_{0}=0$ and $-\nabla f(x_{0})/L=x^{+}_{0}-x_{0}$,
	the $z$-update of (\ref{eq:lemniscate-acceleration}) gives $z_{1}=\Omega_{N}\Delta_{0}\left(x^{+}_{0}-x_{0}\right)$,
	and (\ref{eq:lemniscate-x-update-compact}) yields $x_{1}=x^{+}_{0}+\Omega_{N}\Delta_{0}\Delta_{N-1}\left(x^{+}_{0}-x_{0}\right)=x^{+}_{0}+\alpha^{\text{(init)}}\left(x^{+}_{0}-x_{0}\right)$, the
	first update of (\ref{eq:lemniscate-momentum-form}).

	For $1\leq k\leq N-1$: solving (\ref{eq:lemniscate-x-update-compact})
	at index $k-1$ for the auxiliary variable gives $z_{k}=\left(x_{k}-x^{+}_{k-1}\right)/\Delta_{N-k}$,
	and the $z$-update gives $z_{k+1}=z_{k}+\Omega_{N}\Delta_{k}\left(x^{+}_{k}-x_{k}\right)$.
	Substituting both into (\ref{eq:lemniscate-x-update-compact}) and
	writing $x_{k}-x^{+}_{k-1}=\left(x^{+}_{k}-x^{+}_{k-1}\right)-\left(x^{+}_{k}-x_{k}\right)$,
	we get
	\[
	x_{k+1}=x^{+}_{k}+\frac{\Delta_{N-k-1}}{\Delta_{N-k}}\left(x^{+}_{k}-x^{+}_{k-1}\right)+\Delta_{N-k-1}\left(\Omega_{N}\Delta_{k}-\frac{1}{\Delta_{N-k}}\right)\left(x^{+}_{k}-x_{k}\right),
	\]
	which is the $k$-th update of (\ref{eq:lemniscate-momentum-form})
	with the coefficients $\alpha^{\text{(mom)}}_{k}$ and $\alpha^{\text{(corr)}}_{k}$
	 above. By induction, the two forms produce the same iterates
	$x_{0},x_{1},\dots,x_{N}$. \end{proof}

\subsection{Computer-assisted algorithm design by PEP}
\label{sec:lemniscate-convergence-proof}\label{sec:lemniscate-pep-design}
\paragraph{PEP formulation of the problem.}
This subsection presents, lightly edited for
presentation, the Stage-1 derivation that \textsf{bnb-pep-skill} wrote
for the design problem behind \eqref{eq:lemniscate-acceleration}.
The skill instantiates the Stage 1 design problem of
Section~\ref{subsec:autoopt-stage-numerical} for
problem~\eqref{eq:main-opt}, in the notation of
Section~\ref{subsec:notation}: the function class is
$\mathcal{F}=\mathcal{F}_{0,L}$, the performance measure is
$\mathcal{E}=\norm{\nabla f(x_{N})}^{2}$, the initial condition is
$\mathcal{C}=\norm{x_{0}-x_{\star}}^{2}-R^{2}\leq0$ for a given $R>0$, and
the method class is the class $\mathcal{M}_{N}$ of fixed-step first-order
methods (FSFOMs) with $N$ steps. Applied to $f\in\mathcal{F}_{0,L}$ from a
starting point $x_{0}\in\mathbb{R}^{d}$, an FSFOM produces its iterates
with
\begin{equation*}
x_{i}=x_{i-1}-\frac{1}{L}\sum^{i-1}_{j=0}h_{i,j}\nabla f(x_{j})
\end{equation*}
for $1\leq i\leq N$, where, relative to the generic form
displayed in Section~\ref{subsec:autoopt-stage-numerical}, the stepsizes
$h_{i,j}$ are normalized by $L$. We can equivalently express the FSFOM
as
\begin{equation*}
x_{i}=x_{0}-\frac{1}{L}\sum^{i-1}_{j=0}\alpha_{i,j}\nabla f(x_{j}),
\end{equation*}
where $\alpha_{i,j}\triangleq\sum_{k=j+1}^{i}h_{k,j}$ for
$0\leq j<i\leq N$; this triangular relationship between $h$ and
$\alpha$ is invertible, so we can compute $h$ given $\alpha$ and vice
versa. The stepsizes $h$
or $\alpha$ may depend on $\mathcal{F}_{0,L}$ and the value of
$N$, but are otherwise predetermined.

Instantiated with these choices, the worst-case
performance $\mathcal{R}(M)$ of a method $M\in\mathcal{M}_{N}$ in
\eqref{eq:autoopt-outer-design} becomes
\begin{align}
\mathcal{R}\left(M\right) & =\left(\begin{array}{ll}
\textrm{maximize}\quad\norm{\nabla f(x_{N})}^{2}\\
\textrm{subject to}\\
f\in\mathcal{F}_{0,L},\\
x_{\star}\textrm{ is an optimal solution to }\eqref{eq:main-opt}\text{ satisfying }\nabla f(x_{\star})=0,\\
\{x_{i}\}_{1\leq i\leq N}\textrm{ is generated by \ensuremath{M} with initial point \ensuremath{x_{0}}},\\
\norm{x_{0}-x_{\star}}^{2}\leq R^{2},\\
x_{\star}=0,\,f(x_{\star})=0
\end{array}\right),\tag{\ensuremath{\mathcal{O}^{\textrm{inner}}}}\label{eq:worst-case-pfm}
\end{align}
where $f$, $x_{0},\ldots,x_{N}$, and $x_{\star}$ are the decision
variables. We set $x_{\star}=0$ and $f(x_{\star})=0$ without any
loss of generality because the function class $\mathcal{F}_{0,L}$
is closed and invariant under shifting variables and function values.

The optimal FSFOM $M^{\star}_{N}\in\mathcal{M}_{N}$ for
this setup solves the outer design problem
$\mathcal{R}^{\star}(\mathcal{M}_{N})=\min_{M\in\mathcal{M}_{N}}\mathcal{R}(M)$
of \eqref{eq:autoopt-outer-design}.

Following the BnB-PEP reformulation outlined in
Section~\ref{subsec:autoopt-stage-numerical}, the
derivation reduces this design problem
to a finite-dimensional nonconvex QCQP in two steps: it
first formulates the
inner problem \eqref{eq:worst-case-pfm} as a finite-dimensional convex SDP, and then
optimizes the stepsizes against its dual \cite{dasgupta2022BnBPEP}.

As the first step, the derivation formulates the inner problem
\eqref{eq:worst-case-pfm} as a finite-dimensional convex SDP using the
smooth-convex interpolation result of
\cite[Corollary 1]{taylor2017smooth}. This result exactly characterizes the
sampled function values and gradients that arise from a function in
$\mathcal F_{0,L}$.

In other words, we impose the smooth-convex interpolation inequalities at the
sampled points $x_i$, $i\in I_N^\star$, using
\eqref{eq:F_0L_formula}, with the notation
$g_{i}\triangleq\nabla f(x_{i})$ and $f_{i}\triangleq f(x_{i})$. This
finite representation is lossless by
\cite[Corollary 1]{taylor2017smooth}. Thus, the derivation
casts \eqref{eq:worst-case-pfm}
as the following finite-dimensional but nonconvex problem:

\begin{align}
\mathcal{R}(M)=\left(\begin{array}{ll}
\textrm{maximize} & \norm{g_{N}}^{2}\\
\textrm{subject to} & f_{i}\geq f_{j}+\langle g_{j}, x_{i}-x_{j}\rangle+\frac{1}{2L}\norm{g_{i}-g_{j}}^{2},\quad i,j\in I^{\star}_{N},\\
 & g_{\star}=0,\\
 & x_{i}=x_{0}-\frac{1}{L}\sum^{i-1}_{j=0}\alpha_{i,j}g_{j},\quad 1\leq i\leq N,\\
 & \norm{x_{0}-x_{\star}}^{2}\leq R^{2},\\
 & x_{\star}=0,\;f_{\star}=0
\end{array}\right),\label{eq:worst-case-pfm-fmuL-1}
\end{align}
where the decision variables are $\{x_{i},g_{i},f_{i}\}_{i\in I^{\star}_{N}}\subseteq\mathbb{R}^{d}\times\mathbb{R}^{d}\times\mathbb{R}$.

Next, \eqref{eq:worst-case-pfm-fmuL-1} is formulated as a convex SDP.
Let $P\triangleq[x_{0}\mid g_{0}\mid g_{1}\mid\ldots\mid g_{N}]\in\mathbb{R}^{d\times(N+2)},$
$G\triangleq P^{\top}P\in\mathbb{S}^{N+2}_{+},$ and $F\triangleq[f_{0}\mid f_{1}\mid\ldots\mid f_{N}]\in\mathbb{R}^{1\times(N+1)}.$
Note that $\mathop{\textbf{rank}}G\leq d$. Define the following notation
for selecting columns and elements of $P$ and $F$: 
\begin{equation*}
\begin{alignedat}{1} & \mathbf{g}_{\star}=0\in\mathbb{R}^{N+2},\;\mathbf{g}_{i}=e_{i+2}\in\mathbb{R}^{N+2},\quad 0\leq i\leq N,\\
 & \mathbf{x}_{0}=e_{1}\in\mathbb{R}^{N+2},\;\mathbf{x}_{\star}=0\in\mathbb{R}^{N+2},\;\mathbf{x}_{i}=\mathbf{x}_{0}-\frac{1}{L}\sum^{i-1}_{j=0}\alpha_{i,j}\mathbf{g}_{j}\in\mathbb{R}^{N+2},\quad 1\leq i\leq N,\\
 & \mathbf{f}_{\star}=0\in\mathbb{R}^{N+1},\;\mathbf{f}_{i}=e_{i+1}\in\mathbb{R}^{N+1},\quad 0\leq i\leq N.
\end{alignedat}
\end{equation*}
This notation is defined so that $x_{i}=P\mathbf{x}_{i},\;g_{i}=P\mathbf{g}_{i},\;f_{i}=F\mathbf{f}_{i}$
for $i\in I^{\star}_{N}$. Note that $\mathbf{x}_{i}$ depends on
$\{\alpha_{i,j}\}_{0\leq j\leq i-1}$ linearly for $1\leq i\leq N$.
For $i,j\in I^{\star}_{N}$,
now define
\begin{equation}
\begin{alignedat}{1} & A_{i,j}(\alpha)=\mathbf{g}_{j}\odot(\mathbf{x}_{i}-\mathbf{x}_{j})\in\mathbb{S}^{N+2},\\
 & B_{i,j}(\alpha)=(\mathbf{x}_{i}-\mathbf{x}_{j})\odot(\mathbf{x}_{i}-\mathbf{x}_{j})\in\mathbb{S}^{N+2}_{+},\\
 & C_{i,j}=(\mathbf{g}_{i}-\mathbf{g}_{j})\odot(\mathbf{g}_{i}-\mathbf{g}_{j})\in\mathbb{S}^{N+2}_{+}.
\end{alignedat}
\label{eq:ABCa-mat-vec}
\end{equation}

Note that $A_{i,j}(\alpha)$ is affine and $B_{i,j}(\alpha)$
is quadratic as functions of $\{\alpha_{i,j}\}_{0\leq j<i\leq N}$. Under this notation, $\left\langle g_{j},x_{i}-x_{j}\right\rangle =\mathbf{tr}GA_{i,j}(\alpha)$,  $\norm{x_{0}-x_{\star}}^{2}=\mathbf{tr}GB_{0,\star},$
and $\norm{g_{i}-g_{j}}^{2}=\mathbf{tr}GC_{i,j}$ for $i,j\in I^{\star}_{N}$. In these terms, \eqref{eq:worst-case-pfm-fmuL-1} becomes a
maximization over $F$ and $G=P^{\top}P$, at the price of the rank
constraint $\mathop{\textbf{rank}}G\leq d$: the equivalence relies on the
fact that any $G\in\mathbb{S}^{N+2}_{+}$ with
$\mathop{\textbf{rank}}G\leq d$ factors as $G=P^{\top}P$ with
$P\in\mathbb{R}^{d\times(N+2)}$; see
\cite[$\mathsection$3.2]{taylor2017smooth}. The rank constraint is removed
with the following large-scale assumption.

\begin{assumption} \label{large-scale-assumption} We have $d\geq N+2$.
\end{assumption}

Under this assumption, the constraint $\mathop{\textbf{rank}}G\leq d$
becomes vacuous, since $G\in\mathbb{S}^{N+2}_{+}$. The \textsf{bnb-pep-skill}
derivation drops the rank
constraint and formulates \eqref{eq:worst-case-pfm} as a convex SDP
\begin{align}
\mathcal{R}(M)= & \left(\begin{array}{l}
\textrm{maximize}\quad\mathbf{tr}G\left(C_{N,\star}\right)\\
\textrm{subject to}\\
F(\mathbf{f}_{j}-\mathbf{f}_{i})+\mathbf{tr}G\Big[A_{i,j}(\alpha)+\frac{1}{2L}C_{i,j}\Big]\leq0,\quad i,j\in I^{\star}_{N}:i\neq j,\quad\mathbin{{\color{annotgray}\rhd}}{\color{annotgray}\,\textsf{dual var.}\,\lambda_{i,j}\geq0}\\
-G\preceq0,\quad\mathbin{{\color{annotgray}\rhd}}{\color{annotgray}\,\textsf{dual var.}\,\ensuremath{Z\succeq0}}\\
\mathbf{tr}GB_{0,\star}\leq R^{2},{\color{annotgray}\quad\rhd\,\textsf{dual var.}\,\texttt{\ensuremath{\nu\geq0}}}
\end{array}\right),\label{eq:worst-case-primal}
\end{align}
where $F\in\mathbb{R}^{N+1},$ $G\in\mathbb{S}^{N+2}$ are the decision
variables. We denote the corresponding dual variables on the right-hand side of the constraints for later use. Note that \eqref{eq:worst-case-primal}
is free from problem dimension $d$.

Next the \textsf{bnb-pep-skill}
derivation uses convex duality to formulate the SDP \eqref{eq:worst-case-primal}
in the previous step, originally a maximization problem, as a minimization
problem. Taking the dual of \eqref{eq:worst-case-primal} gives
\begin{align}
\mathcal{\overline{R}}(M)= & \left(\begin{array}{l}
\textrm{minimize}\quad\nu R^{2}\\
\textrm{subject to}\\
\sum_{i,j\in I^{\star}_{N}:i\neq j}\lambda_{i,j}(\mathbf{f}_{j}-\mathbf{f}_{i})=0,\\
\nu B_{0,\star}-C_{N,\star}+\sum_{i,j\in I^{\star}_{N}:i\neq j}\lambda_{i,j}\left[A_{i,j}(\alpha)+\frac{1}{2L}C_{i,j}\right]=Z,\\
Z\succeq0,\\
\nu\geq0,\;\lambda_{i,j}\geq0,\quad i,j\in I^{\star}_{N}:i\neq j
\end{array}\right),\label{eq:worst-case-pfm-dual-1}
\end{align}
where $\nu\in\mathbb{R}$, $\lambda=\{\lambda_{i,j}\}_{i,j\in I^{\star}_{N}:i\neq j}$,
and $Z\in\mathbb{S}^{N+2}_{+}$ are the decision variables. We call
$\lambda,\nu,$ and $Z$ the \emph{inner-dual variables}. By weak
duality of convex SDPs, we have $\mathcal{R}(M)\le\overline{\mathcal{R}}(M).$

In convex SDPs, strong duality holds often but not always. For the
sake of simplicity, we will assume strong duality holds in our setup.
\begin{assumption}\label{strong-duality-assumption} Strong duality
holds between \eqref{eq:worst-case-primal} and \eqref{eq:worst-case-pfm-dual-1},
i.e., $\mathcal{R}(M)=\overline{\mathcal{R}}(M)$.
\end{assumption}

As the second step, with the inner problem
\eqref{eq:worst-case-pfm} replaced by its dual minimization
\eqref{eq:worst-case-pfm-dual-1}, the outer design problem becomes a joint
minimization over the stepsizes $\{\alpha_{i,j}\}_{0\leq j<i\leq N}$ and
the inner-dual variables. This joint problem is nonconvex, and, following
\cite{dasgupta2022BnBPEP}, the derivation formulates it as a QCQP by replacing the
semidefinite constraint $Z\succeq0$ with the Cholesky factorization
$Z=VV^{\top}$, where $V$ is lower triangular with nonnegative diagonals
\cite[Corollary 7.2.9]{horn2012matrix}. The problem of finding the optimal
FSFOM then takes the QCQP form
\begin{align}
\mathcal{R}^{\star}(\mathcal{M}_{N})= & \left(\begin{array}{l}
\textrm{minimize}\quad\nu R^{2}\\
\textrm{subject to}\\
\sum_{i,j\in I^{\star}_{N}:i\neq j}\lambda_{i,j}(\mathbf{f}_{j}-\mathbf{f}_{i})=0,\\
\nu B_{0,\star}-C_{N,\star}+\sum_{i,j\in I^{\star}_{N}:i\neq j}\lambda_{i,j}\left[A_{i,j}(\alpha)+\frac{1}{2L}C_{i,j}\right]=Z,\\
V\text{ is lower triangular with nonnegative diagonals},\\
Z=VV^{\top},\\
\nu\geq0,\;\lambda_{i,j}\geq0,\quad i,j\in I^{\star}_{N}:i\neq j
\end{array}\right),\label{eq:BnB-PEP-Preli}
\end{align}
where $\lambda=\{\lambda_{i,j}\}_{i,j\in I^{\star}_{N}:i\neq j}$,
$\nu$, $Z$, $V$, $\{\alpha_{i,j}\}_{0\leq j<i\leq N}$ are
the decision variables.

Upon user approval of the derivation above,
\textsf{bnb-pep-skill} solves \eqref{eq:BnB-PEP-Preli} numerically;
the resultant optimal $\alpha^{\star}$ gives the optimal stepsizes
$h^{\star}$ for any large-scale setup $d\geq N+2$. By the scale
invariance discussed in \cite[$\mathsection$3.5]{taylor2017smooth}, it
suffices to solve the QCQP for $L=1$ and $R=1$: for any other $L>0$ and
$R>0$ the optimal stepsizes are unchanged, and the optimal worst-case
value scales by the factor $L^{2}R^{2}$.

In Stage 2 of the pipeline, the
\textsf{frontier-llm-consult} skill fitted symbolic expressions to the
numerical solutions of \eqref{eq:BnB-PEP-Preli}, yielding
an analytic parametrization of the numerically optimal
stepsizes in the design variables $\alpha$, equivalently $h$, together
with an analytic feasible point of
the inner dual \eqref{eq:worst-case-pfm-dual-1}, whose feasibility
already certifies the worst-case bound by weak duality.
From this stepsize parametrization, the consultation
then recast the method into the memory-efficient form
\eqref{eq:lemniscate-acceleration} presented in
Section~\ref{sec:lemniscate}. In continued
consultation, Stage 2 converted the multipliers of this feasible point
into the Lyapunov proof of Theorem~\ref{thm:lemniscate-convergence}
given in Section~\ref{sec:lemniscate-lyapunov}, the form in which this
paper presents the convergence analysis; the Lean coverage of the
presented results is described in Section~\ref{sec:lemniscate-lean}.

\subsection{Details for continuous-time analysis of lemniscate ODE}
\label{sec:app-continuous-time}
In this section, we develop a leading-order continuous-time model of the
lemniscate acceleration algorithm and prove Theorem~\ref{thm:continuous-time-lyap}. For an $L_f$-smooth objective with
$L_f>0$, we apply the method with the algorithmic smoothness upper bound
$\widehat L_N=N^{2}/T^{2}$ and use the iterate grid $t=kT/N$. At fixed
$L_f$ and $T$, the condition $L_f\leq\widehat L_N$ holds for all sufficiently
large $N$. We retain only the leading-order terms as $N\rightarrow\infty$.

Recall the continuous-time coefficients $\rho(t)$ and $\sigma(t)$
are given by 
\begin{equation}
\begin{aligned}\rho(t) & =\mathrm{cl}^{2}\left(\frac{\varpi_{T}}{2}t\right),\\
\sigma(t) & =\sqrt{\frac{1-\rho(t)^{2}}{\rho(t)}},
\end{aligned}
\label{eq:def-sigma}
\end{equation}
where $\varpi_{T}\triangleq\varpi/T$ and $\mathrm{cl}(x)$ is the
lemniscate cosine function in Section~\ref{sec:lemniscate}. We prove
some identities of $\rho(t)$ and $\sigma(t)$ used in the derivation
of the ODE and the Lyapunov function.

We first collect the properties of the lemniscate elliptic functions
that these derivations use. Recall from
Section~\ref{sec:lemniscate-constant} that
$\operatorname{arcsl}(u)=\int_{0}^{u}dx/\sqrt{1-x^{4}}$, that
$\operatorname{sl}$ is the inverse of $\operatorname{arcsl}$, and that
$\operatorname{cl}(x)=\operatorname{sl}(\varpi/2-x)$.

\begin{lem}[Lemniscatic calculus]\label{lem:lemniscatic-calculus}
The following hold.
\begin{enumerate}
\item $\operatorname{sl}$ is a continuous strictly increasing bijection
of $[0,\varpi/2]$ onto $[0,1]$ with $\operatorname{sl}(0)=0$ and
$\operatorname{sl}(\varpi/2)=1$; consequently $\operatorname{cl}$ is a
continuous strictly decreasing bijection of $[0,\varpi/2]$ onto
$[0,1]$, and $\operatorname{sl}(x),\operatorname{cl}(x)\in(0,1)$ for
$x\in(0,\varpi/2)$.
\item $\operatorname{sl}'(x)=\sqrt{1-\operatorname{sl}^{4}(x)}$ and
$\operatorname{cl}'(x)=-\sqrt{1-\operatorname{cl}^{4}(x)}$ for
$x\in(0,\varpi/2)$.
\item $\operatorname{cl}(\varpi/2-x)=\operatorname{sl}(x)$ for
$x\in[0,\varpi/2]$.
\item $\operatorname{cl}^{2}(x)=\dfrac{1-\operatorname{sl}^{2}(x)}{1+\operatorname{sl}^{2}(x)}$
for $x\in[0,\varpi/2]$; equivalently,
$\operatorname{cl}^{2}(x)+\operatorname{sl}^{2}(x)+\operatorname{cl}^{2}(x)\operatorname{sl}^{2}(x)=1$.
\item $\operatorname{sl}(x)=x+o(x)$ as $x\downarrow0$.
\item For $T>0$, the coefficient (\ref{eq:def-sigma}) satisfies $0<\rho(t)<1$
for $t\in(0,T)$. Its symmetry identity $\rho(T-t)=\left(1-\rho(t)\right)/\left(1+\rho(t)\right)$
holds for $t\in[0,T]$.
\end{enumerate}
\end{lem}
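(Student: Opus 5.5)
I will prove the six items in order, building everything from the integral definition of $\operatorname{arcsl}$ and the inverse function theorem on the open interval, with separate care at the endpoints.

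\emph{Items 1, 2 and 3.} The integrand $1/\sqrt{1-x^{4}}$ is positive and continuous on $[0,1)$, and it is integrable near $1$ because $1-x^{4}\geq 1-x$ there. Hence $\operatorname{arcsl}$ is continuous on $[0,1]$ and strictly increasing, with $\operatorname{arcsl}(0)=0$ and $\operatorname{arcsl}(1)=\varpi/2$. It is therefore a homeomorphism onto $[0,\varpi/2]$, and its inverse $\operatorname{sl}$ inherits continuity, strict monotonicity and the endpoint values; the statements about $\operatorname{cl}$ follow by the reflection $x\mapsto\varpi/2-x$. On $(0,1)$ the fundamental theorem of calculus gives $\operatorname{arcsl}'(u)=1/\sqrt{1-u^{4}}>0$. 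The inverse function theorem then yields $\operatorname{sl}'(x)=\sqrt{1-\operatorname{sl}^{4}(x)}$ for $x\in(0,\varpi/2)$, and the chain rule gives the formula for $\operatorname{cl}'$. Item 3 is immediate from the definition of $\operatorname{cl}$.

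\emph{Item 5.} I use the derivative at $0$, obtained by the same inverse-function argument: $\operatorname{arcsl}$ is differentiable at $0$ with derivative $1$, and $\operatorname{sl}$ is continuous. Alternatively, I can squeeze $u\leq\operatorname{arcsl}(u)\leq u/\sqrt{1-u^{4}}$ and invert.

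\emph{Item 4.} This is the step I expect to be the main obstacle. I define $h(x)=\sqrt{(1-\operatorname{sl}^{2}(x))/(1+\operatorname{sl}^{2}(x))}$ on $[0,\varpi/2]$ and aim to show $h=\operatorname{cl}$.

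1. Check the endpoint values: $h(0)=1=\operatorname{cl}(0)$ and $h(\varpi/2)=0=\operatorname{cl}(\varpi/2)$.
2. Compute $h'$ on the open interval using item 2, writing $s=\operatorname{sl}(x)$. The computation should give $h'=-\sqrt{1-h^{4}}$, using $1-h^{4}=4s^{2}/(1+s^{2})^{2}$ and $1-s^{4}=(1-s^{2})(1+s^{2})$.
3. Conclude $h=\operatorname{cl}$. Rather than invoking ODE uniqueness, which is delicate where $\sqrt{1-y^{4}}$ fails to be Lipschitz at $y=1$, I work with the composition $\operatorname{arcsl}\circ h$. The derivative computed above gives $(\operatorname{arcsl}\circ h)'=-1$ on $(0,\varpi/2)$, and continuity on the closed interval then gives $\operatorname{arcsl}(h(x))=\varpi/2-x$. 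Applying $\operatorname{sl}$ yields $h(x)=\operatorname{sl}(\varpi/2-x)=\operatorname{cl}(x)$.

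The equivalent polynomial identity follows by clearing denominators.

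\emph{Item 6.} Set $u=\varpi t/(2T)\in(0,\varpi/2)$. Item 1 gives $\operatorname{cl}(u)\in(0,1)$, so $0<\rho(t)<1$. For the symmetry, I have $\rho(T-t)=\operatorname{cl}^{2}(\varpi/2-u)=\operatorname{sl}^{2}(u)$ by item 3. By item 4, $\operatorname{cl}^{2}(u)=(1-\operatorname{sl}^{2}(u))/(1+\operatorname{sl}^{2}(u))$, that is $\rho(t)=T(\operatorname{sl}^{2}(u))$, where $T(\rho)=(1-\rho)/(1+\rho)$ is the involution introduced after Lemma~\ref{lem:lemniscate-recurrence}. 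Applying the involution $T$ to both sides gives $\operatorname{sl}^{2}(u)=(1-\rho(t))/(1+\rho(t))$, which is the claim. The endpoints $t=0$ and $t=T$ are checked directly.
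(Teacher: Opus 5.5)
Your proposal is correct and follows essentially the paper's proof. In item 4, your argument that $(\operatorname{arcsl}\circ h)'=-1$ in the variable $x$ is the paper's identity $\operatorname{arcsl}(u)+\operatorname{arcsl}(v(u))=\varpi/2$ rewritten under $u=\operatorname{sl}(x)$, since your $h$ is $v\circ\operatorname{sl}$.

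The other deviations are cosmetic. In item 5 you use a one-sided derivative or a squeeze at $0$ instead of the mean value theorem, and in item 6 you use the involution $\rho\mapsto(1-\rho)/(1+\rho)$ instead of the symmetric form of item 4. You should rename that involution in item 6, since $T$ already denotes the horizon there.
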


\begin{proof}
\emph{Part 1.} On $[0,1)$ the integrand $1/\sqrt{1-x^{4}}$ is positive,
and near $x=1$ it is bounded by $1/\sqrt{1-x}$, so
$\operatorname{arcsl}$ is a continuous strictly increasing bijection of
$[0,1]$ onto $[0,\varpi/2]$; here $\operatorname{arcsl}(1)=\varpi/2$ is
the definition of $\varpi$. Its inverse $\operatorname{sl}$ inherits
these properties, and $\operatorname{cl}(x)=\operatorname{sl}(\varpi/2-x)$
inherits the rest.

\emph{Part 2.} For $x\in(0,\varpi/2)$ the inverse function theorem
gives
$\operatorname{sl}'(x)=1/\operatorname{arcsl}'(\operatorname{sl}(x))=\sqrt{1-\operatorname{sl}^{4}(x)}$,
and the chain rule gives
$\operatorname{cl}'(x)=-\operatorname{sl}'(\varpi/2-x)=-\sqrt{1-\operatorname{cl}^{4}(x)}$.

\emph{Part 3.} Substitute $\varpi/2-x$ for $x$ in the definition
$\operatorname{cl}(x)=\operatorname{sl}(\varpi/2-x)$.

\emph{Part 4.} For $u\in[0,1]$ set
$v(u)\triangleq\sqrt{(1-u^{2})/(1+u^{2})}$, so that
$v(u)(1+u^{2})=\sqrt{1-u^{4}}$ and
$\sqrt{1-v(u)^{4}}=2u/(1+u^{2})$. For $u\in(0,1)$,
\begin{equation*}
v'(u)=\frac{-2u}{v(u)\left(1+u^{2}\right)^{2}},
\qquad\text{so}\qquad
\frac{v'(u)}{\sqrt{1-v(u)^{4}}}
=\frac{-1}{v(u)\left(1+u^{2}\right)}
=\frac{-1}{\sqrt{1-u^{4}}}.
\end{equation*}
Hence $\frac{d}{du}\left[\operatorname{arcsl}(u)+\operatorname{arcsl}(v(u))\right]=0$
on $(0,1)$, and by continuity and the value
$\operatorname{arcsl}(0)+\operatorname{arcsl}(1)=\varpi/2$ at $u=0$,
\begin{equation*}
\operatorname{arcsl}(u)+\operatorname{arcsl}\left(v(u)\right)=\frac{\varpi}{2},
\qquad u\in[0,1].
\end{equation*}
Taking $u=\operatorname{sl}(x)$ for $x\in[0,\varpi/2]$ gives
$\operatorname{arcsl}(v(\operatorname{sl}(x)))=\varpi/2-x$, that is,
$v(\operatorname{sl}(x))=\operatorname{sl}(\varpi/2-x)=\operatorname{cl}(x)$
by part 3. Squaring yields the first identity, and clearing
denominators yields the second.

\emph{Part 5.} By part 1, $\operatorname{sl}(0)=0$, and by the mean
value theorem
$\operatorname{sl}(x)=\operatorname{sl}'(\xi_{x})\,x$ for some
$\xi_{x}\in(0,x)$, where
$\operatorname{sl}'(\xi_{x})=\sqrt{1-\operatorname{sl}^{4}(\xi_{x})}\to1$
as $x\downarrow0$ by part 1 and continuity.

\emph{Part 6.} For $t\in(0,T)$ we have
$\varpi_{T}t/2\in(0,\varpi/2)$, so
$\rho(t)=\operatorname{cl}^{2}(\varpi_{T}t/2)\in(0,1)$ by part 1.
For the symmetry, parts 3 and 4 give
\begin{equation*}
\rho(T-t)
=\operatorname{cl}^{2}\left(\frac{\varpi}{2}-\frac{\varpi_{T}t}{2}\right)
=\operatorname{sl}^{2}\left(\frac{\varpi_{T}t}{2}\right)
=\frac{1-\operatorname{cl}^{2}(\varpi_{T}t/2)}{1+\operatorname{cl}^{2}(\varpi_{T}t/2)}
=\frac{1-\rho(t)}{1+\rho(t)}. \qedhere
\end{equation*}
\end{proof}

\begin{lem}\label{lem:sigma-identities} The following identities
hold for $t\in(0,T)$: 
\begin{align*}
 & \dot{\rho}(t)=-\varpi_{T}\sqrt{\rho(t)(1-\rho(t)^{2})}=-\varpi_{T}\sigma(t)\rho(t),\\
 & \dot{\sigma}(t)=\frac{\varpi_{T}}{2}\left(\rho(t)+\frac{1}{\rho(t)}\right),\\
 & \Ddot{\sigma}(t)=\frac{\varpi^{2}_{T}}{2}\sigma(t)^{3},\\
 & \sigma(t)\,\sigma(T-t)=2.
\end{align*}
\end{lem}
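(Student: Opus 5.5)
We reduce every identity to the lemniscatic calculus of Lemma~\ref{lem:lemniscatic-calculus}. Write $c\triangleq\operatorname{cl}(\varpi_{T}t/2)\in(0,1)$ for $t\in(0,T)$, so that $\rho=c^{2}$. The first step is the chain rule together with part~2 of Lemma~\ref{lem:lemniscatic-calculus}:
\[
\dot\rho=2c\cdot\left(-\sqrt{1-c^{4}}\right)\cdot\frac{\varpi_{T}}{2}=-\varpi_{T}\,c\sqrt{1-c^{4}}=-\varpi_{T}\sqrt{\rho(1-\rho^{2})}.
\]
Since $\sigma=\sqrt{(1-\rho^{2})/\rho}$, we have $\sigma\rho=\sqrt{\rho(1-\rho^{2})}$, which gives the second form $-\varpi_{T}\sigma\rho$.

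For $\dot\sigma$ we work with $\sigma^{2}=1/\rho-\rho$ rather than the square root. Differentiating gives $2\sigma\dot\sigma=-(1/\rho^{2}+1)\dot\rho=\varpi_{T}\sigma\rho(1/\rho^{2}+1)$. Dividing by $2\sigma>0$, which holds on $(0,T)$, yields $\dot\sigma=\tfrac{\varpi_{T}}{2}(\rho+1/\rho)$.

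Differentiating once more gives $\ddot\sigma=\tfrac{\varpi_{T}}{2}(1-1/\rho^{2})\dot\rho=\tfrac{\varpi_{T}^{2}}{2}\sigma\rho(1/\rho^{2}-1)$. Since $\rho(1/\rho^{2}-1)=1/\rho-\rho=\sigma^{2}$, this is $\tfrac{\varpi_{T}^{2}}{2}\sigma^{3}$.

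For the product identity we use the symmetry in part~6 of Lemma~\ref{lem:lemniscatic-calculus}, namely $\rho(T-t)=T(\rho)=(1-\rho)/(1+\rho)$. A direct computation gives
\[
\frac{1}{T(\rho)}-T(\rho)=\frac{(1+\rho)^{2}-(1-\rho)^{2}}{1-\rho^{2}}=\frac{4\rho}{1-\rho^{2}}=\frac{4}{\sigma^{2}}.
\]
Hence $\sigma(T-t)^{2}\sigma(t)^{2}=4$, and positivity of both factors gives $\sigma(t)\sigma(T-t)=2$.

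The only delicate point is justifying differentiability: $\operatorname{cl}$ must be differentiable at interior points, and $\sigma$ must be nonzero so that we may divide by it. Both hold on the open interval because $\varpi_{T}t/2\in(0,\varpi/2)$, which places $c$ in $(0,1)$ by part~1 of Lemma~\ref{lem:lemniscatic-calculus}. The rest is routine algebra.
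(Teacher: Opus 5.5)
Your proof is correct and follows the paper's argument step for step. Both proofs get $\dot\rho$ from the chain rule with $\operatorname{cl}'=-\sqrt{1-\operatorname{cl}^{4}}$, get $\dot\sigma$ by differentiating $\sigma^{2}=1/\rho-\rho$ and dividing by $2\sigma>0$, differentiate once more for $\ddot\sigma$, and obtain $\sigma(t)\sigma(T-t)=2$ from the symmetry $\rho(T-t)=(1-\rho)/(1+\rho)$ of part~6 of Lemma~\ref{lem:lemniscatic-calculus}, with the same justification that $\varpi_{T}t/2\in(0,\varpi/2)$ keeps everything differentiable and nonzero on $(0,T)$.
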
 \begin{proof} Throughout the proof,
$0<\rho(t)<1$ and hence $\sigma(t)>0$ on $(0,T)$, by part~6 of
Lemma~\ref{lem:lemniscatic-calculus}, so every division below is
legitimate. By the derivative $\mathrm{cl}'(x)=-\sqrt{1-\mathrm{cl}^{4}(x)}$
(part~2 of Lemma~\ref{lem:lemniscatic-calculus},
with $\mathrm{cl}\geq0$ from part~1)
and (\ref{eq:def-sigma}), $\rho(t)$ satisfies the first identity:
$\dot{\rho}(t)=-\varpi_{T}\sqrt{\rho(t)(1-\rho(t)^{2})}=-\varpi_{T}\sigma(t)\rho(t).$
Differentiating $\sigma(t)^{2}=1/\rho(t)-\rho(t)$ and using the first
identity gives 
\begin{align*}
2\sigma(t)\dot{\sigma}(t) & =-\left(1+\frac{1}{\rho(t)^{2}}\right)\dot{\rho}(t)=\varpi_{T}\left(1+\frac{1}{\rho(t)^{2}}\right)\rho(t)\sigma(t),
\end{align*}
and dividing both sides by $2\sigma(t)$ gives the second identity.
Differentiating the second identity and using the first identity gives
the third identity: 
\begin{align*}
	\Ddot{\sigma}(t) & =\frac{\varpi_{T}}{2}\left(1-\frac{1}{\rho(t)^{2}}\right)\dot{\rho}(t)=\frac{\varpi^{2}_{T}}{2}\frac{(1-\rho(t)^{2})^{3/2}}{\rho(t)^{3/2}}=\frac{\varpi^{2}_{T}}{2}\sigma(t)^{3}.
\end{align*}

For the last identity, we use the symmetry
$\rho(T-t)=\left(1-\rho(t)\right)/\left(1+\rho(t)\right)$ from part~6
of Lemma~\ref{lem:lemniscatic-calculus}.
Thus
\begin{align*}
\sigma(T-t)^{2} & =\frac{1-\rho(T-t)^{2}}{\rho(T-t)}=\frac{4\rho(t)}{1-\rho(t)^{2}}=\frac{4}{\sigma(t)^{2}},
\end{align*}
and since $\sigma(t)>0$, this proves the last identity. \end{proof}

\paragraph{The continuous-time ODE.}

We sketch a derivation of the ODE by identifying the leading-order
behavior of~\eqref{eq:lemniscate-acceleration} as $N\rightarrow\infty$ with
$\widehat L_N=N^{2}/T^{2}$. On the iterate grid $t_k^x=k\,T/N$, we identify
$X(t_k^x)\approx x_{k}$ and
$Z(t_k^x)\approx \bigl(N/(\sqrt{\Omega_{N}}\,T)\bigr)\,z_{k}$. The
coefficient $\rho_k$ is naturally placed at
$t_k^\rho=k\,T/(N+1)$; for $0\leq k\leq N$,
$|t_k^x-t_k^\rho|\leq T/(N+1)=O(1/N)$, so we identify
$\rho(t_k^x)\approx\rho_k$ at leading order. The
updates of~(\ref{eq:lemniscate-acceleration}) are

\begin{align*}
  & z_{k+1}=z_{k}-\Omega_{N}\Bigl(\dfrac{1+\rho^{2}_{k+1}}{2\rho_{k+1}}-\dfrac{1+\rho^{2}_{k}}{2\rho_{k}}\Bigr)\frac{1}{\widehat L_N}\nabla f(x_{k}),\\
 & x^{+}_{k}=x_{k}-\frac{1}{\widehat L_N}\nabla f(x_{k}),\\
 & x_{k+1}=x^{+}_{k}+\Bigl(\dfrac{1+\rho^{2}_{k+1}}{1-\rho^{2}_{k+1}}-\dfrac{1+\rho^{2}_{k+2}}{1-\rho^{2}_{k+2}}\Bigr)z_{k+1}.
\end{align*}

The $z$-update coefficient, after division by $\sqrt{\Omega_{N}}$ to account for the rescaling of
$Z$, has the following leading-order form:

\[
\sqrt{\Omega_{N}}\left.\frac{1+x^{2}}{2x}\right|^{\rho_{k+1}}_{\rho_{k}}\approx\sqrt{\Omega_{N}}(\rho_{k}-\rho_{k+1})\frac{1-\rho^{2}_{k}}{2\rho^{2}_{k}}\approx\sqrt{\rho(1-\rho^{2})}\frac{1-\rho^{2}}{2\rho^{2}}=\frac{1}{2}\left(\frac{1-\rho^{2}}{\rho}\right)^{3/2}=\frac{1}{2}\sigma^{3},
\]
which follows from the mean value theorem, the continuity of $\rho(t)$, and the recurrence~\eqref{eq:lemniscate-recurrence}.
For the $x$-update, define $\phi_k=(1+\rho_k^2)/(2\rho_k)$. The symmetry
$\rho_{N+1-k}=(1-\rho_k)/(1+\rho_k)$ from
Section~\ref{sec:lemniscate} rewrites the coefficient in the $x$-update as
$\phi_{N-k}-\phi_{N-k-1}$. Applying the preceding calculation at $T-t$ gives
$\sqrt{\Omega_N}(\phi_{N-k}-\phi_{N-k-1}) \approx \sigma(T-t)^{3}/2$.
Since $1/\widehat L_N=T^2/N^2$ and $\Delta t=T/N$, the updates give
$\Delta Z\approx-(\sigma^3/2)\nabla f(X)\,\Delta t$ and
$\Delta X\approx(\sigma(T-t)^3/2)Z\,\Delta t$ at leading order. The
direct gradient step $x_{k}^{+}-x_{k}$ carries the smaller factor
$1/\widehat L_N=O(\Delta t^2)$ and hence does not contribute at this order.

This development suggests the coupled ODE for $X(t)$ and $Z(t)$ on $(0,T)$:
\begin{align*}
 & \dot{Z}(t)=-\frac{\sigma(t)^{3}}{2}\nabla f(X(t)),\\
 & \dot{X}(t)=\frac{\sigma(T-t)^{3}}{2}Z(t)=\frac{4}{\sigma(t)^{3}}Z(t).
\end{align*}
Differentiating $Z(t)=\sigma(t)^{3}\dot{X}(t)/4$ and using $\dot{Z}(t)=-\sigma(t)^{3}\nabla f(X(t))/2$
gives the following second-order ODE for $X(t)$:
\begin{equation}
\Ddot{X}(t)+\gamma(t)\dot{X}(t)+2\nabla f\left(X(t)\right)=0,\quad0<t<T,\label{eq:ODE}
\end{equation}
where $\gamma(t)\triangleq3\dot{\sigma}(t)/\sigma(t)$ is the coefficient
of the friction term.

\begin{lem}\label{lem:ODE-expanded} The ODE in (\ref{eq:ODE}) can
be written as 
\[
\Ddot{X}(t)+\frac{3\varpi}{2T}\left[\frac{\mathrm{cl}\!\left(\frac{\varpi t}{2T}\right)}{\mathrm{sl}\!\left(\frac{\varpi t}{2T}\right)}+\frac{\mathrm{sl}\!\left(\frac{\varpi t}{2T}\right)}{\mathrm{cl}\!\left(\frac{\varpi t}{2T}\right)}\right]\dot{X}(t)+2\nabla f\left(X(t)\right)=0,\quad0<t<T.
\]
\end{lem}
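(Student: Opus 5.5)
The ODE \eqref{eq:ODE} differs from the claimed form only in the friction coefficient. So the plan is to show, for every $t\in(0,T)$,
\[
\gamma(t)=\frac{3\dot\sigma(t)}{\sigma(t)}=\frac{3\varpi}{2T}\left[\frac{\operatorname{cl}(u)}{\operatorname{sl}(u)}+\frac{\operatorname{sl}(u)}{\operatorname{cl}(u)}\right],\qquad u\triangleq\frac{\varpi t}{2T}\in\Bigl(0,\frac{\varpi}{2}\Bigr).
\]
Substituting this identity into \eqref{eq:ODE} then gives the lemma directly. Throughout, $\varpi_T=\varpi/T$, so $\varpi_T/2=\varpi/(2T)$.

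First, I would use the second identity of Lemma~\ref{lem:sigma-identities}, $\dot\sigma=\tfrac{\varpi_T}{2}(\rho+1/\rho)$, together with $\sigma=\sqrt{(1-\rho^2)/\rho}$ (well defined since $0<\rho<1$ by part~6 of Lemma~\ref{lem:lemniscatic-calculus}). Writing $c=\operatorname{cl}(u)$, we have $\rho=c^2$ and
\[
\frac{\dot\sigma}{\sigma}=\frac{\varpi_T}{2}\cdot\frac{(1+\rho^2)/\rho}{\sqrt{(1-\rho^2)/\rho}}=\frac{\varpi_T}{2}\cdot\frac{1+c^4}{c\sqrt{1-c^4}}.
\]

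Next, I would eliminate $\sqrt{1-c^4}$ in favour of $s=\operatorname{sl}(u)$ using part~4 of Lemma~\ref{lem:lemniscatic-calculus}, $c^2=(1-s^2)/(1+s^2)$. This gives $1-c^4=4s^2/(1+s^2)^2$, so $\sqrt{1-c^4}=2s/(1+s^2)$ because $s>0$ on $(0,\varpi/2)$. It also gives $1+c^4=2(1+s^4)/(1+s^2)^2$. Hence
\[
\frac{1+c^4}{c\sqrt{1-c^4}}=\frac{1+s^4}{cs(1+s^2)}.
\]

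It remains to check that this equals $c/s+s/c=(c^2+s^2)/(cs)$, i.e. that $(c^2+s^2)(1+s^2)=1+s^4$. Substituting $c^2(1+s^2)=1-s^2$ gives $1-s^2+s^2+s^4=1+s^4$, as required. Multiplying by $3$ yields the claimed $\gamma$.

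There is no real obstacle here; the only care needed is sign and positivity bookkeeping. Specifically, $s,c\in(0,1)$ on the open interval (part~1 of Lemma~\ref{lem:lemniscatic-calculus}), which justifies taking positive square roots and dividing by $s$ and $c$.
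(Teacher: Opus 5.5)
Your proof is correct and follows the paper's argument. Like the paper, you start from $\dot\sigma=\tfrac{\varpi_T}{2}(\rho+\rho^{-1})$, substitute $\rho=\operatorname{cl}^2$, and simplify $\frac{1+\operatorname{cl}^4}{\operatorname{cl}\sqrt{1-\operatorname{cl}^4}}$ with part~4 of Lemma~\ref{lem:lemniscatic-calculus}; the only cosmetic difference is that you rewrite everything in terms of $\operatorname{sl}$, whereas the paper uses $\sqrt{1-\operatorname{cl}^4}=\operatorname{sl}(1+\operatorname{cl}^2)$ together with the symmetric form $\operatorname{cl}^2+\operatorname{sl}^2+\operatorname{cl}^2\operatorname{sl}^2=1$.
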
 \begin{proof} We only need to calculate the friction term
$\gamma(t)=3\dot{\sigma}(t)/\sigma(t)$ in (\ref{eq:ODE}). By the
second identity of Lemma~\ref{lem:sigma-identities}, we have 
\[
\frac{\dot{\sigma}}{\sigma}=\frac{\varpi_{T}}{2}\frac{\rho+\rho^{-1}}{\sqrt{(1-\rho^{2})/\rho}}=\frac{\varpi_{T}}{2}\frac{1+\rho^{2}}{\sqrt{\rho(1-\rho^{2})}}.
\]
Substituting $\rho(t)=\mathrm{cl}^{2}(\varpi_{T}t/2)$ gives 
\begin{align*}
\gamma(t) & =\frac{3\varpi_{T}}{2}\frac{1+\mathrm{cl}^{4}(\varpi_{T}t/2)}{\mathrm{cl}(\varpi_{T}t/2)\sqrt{1-\mathrm{cl}^{4}(\varpi_{T}t/2)}}\\
 & =\frac{3\varpi_{T}}{2}\frac{1+\mathrm{cl}^{4}(\varpi_{T}t/2)}{\mathrm{cl}(\varpi_{T}t/2)\,\mathrm{sl}(\varpi_{T}t/2)\left(1+\mathrm{cl}^{2}(\varpi_{T}t/2)\right)}\\
 & =\frac{3\varpi_{T}}{2}\frac{\mathrm{cl}^{2}(\varpi_{T}t/2)+\mathrm{sl}^{2}(\varpi_{T}t/2)}{\mathrm{cl}(\varpi_{T}t/2)\,\mathrm{sl}(\varpi_{T}t/2)}\\
 & =\frac{3\varpi_{T}}{2}\left[\frac{\mathrm{cl}(\varpi_{T}t/2)}{\mathrm{sl}(\varpi_{T}t/2)}+\frac{\mathrm{sl}(\varpi_{T}t/2)}{\mathrm{cl}(\varpi_{T}t/2)}\right].
\end{align*}
where we use $\mathrm{cl}^{2}(x)+\mathrm{sl}^{2}(x)+\mathrm{cl}^{2}(x)\mathrm{sl}^{2}(x)=1$
(part~4 of Lemma~\ref{lem:lemniscatic-calculus}, which
also gives $\sqrt{1-\mathrm{cl}^{4}}=\mathrm{sl}\,(1+\mathrm{cl}^{2})$
for the second equality)
to simplify the form. \end{proof}

\begin{proof}[Proof of Theorem~\ref{thm:continuous-time-lyap}]
Assume that $X\in C^{2}([0,T];\mathbb{R}^{d})$. Denote $x_{T}\triangleq X(T)$
and $\varpi_{T}\triangleq\varpi/T$, and set $Z(t)\triangleq\sigma(t)^{3}\dot{X}(t)/4$
for $t\in(0,T)$. Then the second-order ODE and the definition of
$\gamma$ imply $\dot{Z}(t)=-\sigma(t)^{3}\nabla f(X(t))/2$. The
Lyapunov function $\mathcal{V}(t)$ is defined for $t\in(0,T)$ as
\begin{equation}
\begin{aligned}\mathcal{V}(t) & \triangleq\underbrace{\frac{1-\rho^{2}}{2\rho}\left(f(X(t))-f(x_{\star})\right)-\frac{(1-\rho)^{2}}{2\rho}\left(f(x_{T})-f(x_{\star})\right)}_{A(t)}\\
 & \quad+\underbrace{\frac{\varpi^{2}_{T}}{2}\left(\norm{X(t)-x_{\star}+\frac{1+\rho^{2}}{1-\rho^{2}}\frac{Z(t)}{\varpi_{T}}}^{2}-\norm{x_{T}-x_{\star}+\frac{Z(t)}{\varpi_{T}}}^{2}\right)}_{B(t)}.
\end{aligned}
\label{eq:lyapunov}
\end{equation}
Here and below, $\rho$, $\sigma$, $X$, $Z$, and their derivatives
are evaluated at $t$ when their arguments are omitted. We calculate
the derivative of function-value terms $A(t)$ and squared-norm terms
$B(t)$ separately. The function-value terms can be simplified as
follows: 
\begin{align*}
A(t) & =\frac{1-\rho^{2}}{2\rho}\left(f(X)-f(x_{\star})\right)-\frac{(1-\rho)^{2}}{2\rho}\left(f(x_{T})-f(x_{\star})\right)\\
 & =(1-\rho)\left(f(X)-f(x_{\star})\right)+\frac{(1-\rho)^{2}}{2\rho}\left(f(X)-f(x_{T})\right).
\end{align*}
Using the identity $\dot{\rho}=-\varpi_{T}\sigma\rho$ of Lemma~\ref{lem:sigma-identities}
and $\sigma^{2}=\rho^{-1}-\rho$, we have 
\[
\frac{d}{dt}\left(\frac{(1-\rho)^{2}}{2\rho}\right)=\frac{\varpi_{T}\sigma^{3}}{2},
\]
so the derivative of the function-value terms is 
\begin{align*}
\dot{A}(t) & =\varpi_{T}\sigma\rho\left(f(X)-f(x_{\star})\right)+\frac{\varpi_{T}\sigma^{3}}{2}\left(f(X)-f(x_{T})\right)+\frac{2}{\sigma}\left\langle Z,\nabla f(X)\right\rangle .
\end{align*}

For the squared-norm terms $B(t)$, the identities of Lemma~\ref{lem:sigma-identities}
give 
\[
\frac{d}{dt}\left(\frac{1+\rho^{2}}{1-\rho^{2}}\right)=-\frac{4\varpi_{T}}{\sigma^{3}}.
\]
Consequently, 
\[
\frac{d}{dt}\left(X-x_{\star}+\frac{1+\rho^{2}}{1-\rho^{2}}\frac{Z}{\varpi_{T}}\right)=\frac{1+\rho^{2}}{1-\rho^{2}}\frac{\dot{Z}}{\varpi_{T}},
\]
where $\dot{X}=4Z/\sigma^{3}$ cancels $-4Z/\sigma^{3}$. Differentiating
the squared-norm terms $B(t)$, substituting $\dot{Z}=-\sigma^{3}\nabla f(X)/2$,
and using $\sigma^{2}=\rho^{-1}-\rho$ gives 
\[
\dot{B}(t)=-\varpi_{T}\sigma\rho\left\langle X-x_{\star},\nabla f(X)\right\rangle -\frac{\varpi_{T}\sigma^{3}}{2}\left\langle X-x_{T},\nabla f(X)\right\rangle -\frac{2}{\sigma}\left\langle Z,\nabla f(X)\right\rangle .
\]
Adding the two derivatives $\dot{A}(t)$ and $\dot{B}(t)$, the terms
involving $\langle Z,\nabla f(X)\rangle$ cancel, and we obtain $\dot{\mathcal{V}}(t)=-\varpi_{T}\sigma\rho D_{f}(x_{\star},X)-(\varpi_{T}\sigma^{3}/2)D_{f}(x_{T},X)\leq0,$
where $D_{f}(y,x)\triangleq f(y)-f(x)-\langle y-x,\nabla f(x)\rangle$,
and the inequality follows from convexity of $f$, in the form $D_{f}(y,x)\geq0$.
Thus $\mathcal{V}(t)$ is non-increasing on $(0,T)$.

For limits of the endpoints $t\downarrow0$ and $t\uparrow T$, we
substitute $Z=\sigma^{3}\dot{X}/4$ and expand the two squared norms
in (\ref{eq:lyapunov}) to expose the cancellation between
potentially singular terms as $t\uparrow T$.
The expanded form of (\ref{eq:lyapunov}) is: 
\begin{equation}
\begin{aligned}\mathcal{V}(t)={} & (1-\rho)\left(f(X)-f(x_{\star})\right)+\frac{(1-\rho)^{2}}{2\rho}\left(f(X)-f(x_{T})\right)\\
 & +\frac{\varpi^{2}_{T}}{2}\left(\norm{X-x_{\star}}^{2}-\norm{x_{T}-x_{\star}}^{2}\right)+\frac{\sigma^{2}}{8}\norm{\dot{X}}^{2}\\
 & +\frac{\varpi_{T}\sigma\rho}{2}\left\langle X-x_{\star},\dot{X}\right\rangle +\frac{\varpi_{T}\sigma^{3}}{4}\left\langle X-x_{T},\dot{X}\right\rangle .
\end{aligned}
\label{eq:lyapunov-expanded}
\end{equation}

For the limit $t\downarrow0$, $\rho(t)\to1$, $\sigma(t)\to0$, $X(t)\to x_{0}$,
and $\dot{X}(t)\to0$, so $\mathcal{V}(t)$ has the limit: 
\begin{equation}
\lim_{t\downarrow0}\mathcal{V}(t)=\frac{\varpi^{2}_{T}}{2}\left(\norm{x_{0}-x_{\star}}^{2}-\norm{x_{T}-x_{\star}}^{2}\right).\label{eq:lyapunov-initial-limit}
\end{equation}

For the limit $t\uparrow T$, denote $\delta=T-t$ and $g_{T}\triangleq\nabla f(x_{T})$.
The leading order terms of the coefficients $\rho$, $\sigma$, and
$\gamma$ as $\delta\downarrow0$ are 
\begin{equation}
\begin{aligned}\rho(T-\delta) & =\frac{\varpi^{2}_{T}}{4}\delta^{2}+o(\delta^{2}),\\
\sigma(T-\delta) & =\frac{2}{\varpi_{T}\delta}+o(\delta^{-1}),\\
\gamma(T-\delta) & =\frac{3}{\delta}+o(\delta^{-1}).
\end{aligned}
\label{eq:continuous-terminal-coefficients}
\end{equation}
Indeed, the relation $\mathrm{cl}\left(\varpi_{T}(T-\delta)/2\right)=\mathrm{sl}(\varpi_{T}\delta/2)$
and $\mathrm{sl}(x)=x+o(x)$ (parts~3 and~5 of
Lemma~\ref{lem:lemniscatic-calculus}) gives the expression for $\rho$; the
other two follow from the definitions of $\sigma$ and $\gamma$.
Moreover, differentiating the identity
$\sigma(t)\,\sigma(T-t)=2$ of Lemma~\ref{lem:sigma-identities} gives
$\dot{\sigma}(t)/\sigma(t)=\dot{\sigma}(T-t)/\sigma(T-t)$, so
$\gamma(t)=\gamma(T-t)$; hence $\gamma(t)=3/t+o(t^{-1})$ as
$t\downarrow0$ as well.

We next obtain the corresponding expansion of the trajectory. Since
$X\in C^{2}([0,T])$, the right-hand side of $\gamma(t)\dot{X}(t)=-\Ddot X(t)-2\nabla f(X(t))$
is bounded as $t\uparrow T$. Because $\gamma(t)\to\infty$, this
forces $\dot{X}(T)=0$. Taylor expansion of $\dot{X}(t)$ gives $\dot{X}(T-\delta)=-\Ddot X(T)\delta+o(\delta)$.
Hence the left-hand side above converges to $-3\Ddot X(T)$, whereas
its right-hand side converges to $-\Ddot X(T)-2g_{T}$. Thus $\Ddot X(T)=g_{T}$,
and further Taylor expansion gives
\begin{equation}
\begin{aligned} & \dot{X}(T-\delta)=-g_{T}\delta+o(\delta),\\
 & X(T-\delta)=x_{T}+\frac{1}{2}g_{T}\delta^{2}+o(\delta^{2}),\\
 & f\left(X(T-\delta)\right)-f(x_{T})=\frac{1}{2}\norm{g_{T}}^{2}\delta^{2}+o(\delta^{2}).
\end{aligned}
\label{eq:continuous-terminal-trajectory}
\end{equation}
Using (\ref{eq:continuous-terminal-coefficients}) and (\ref{eq:continuous-terminal-trajectory}),
the limit of $\mathcal{V}(t)$ in (\ref{eq:lyapunov-expanded}) is
calculated term by term: 
\begin{equation}
\begin{aligned}\lim_{t\uparrow T}\mathcal{V}(t)={} & f(x_{T})-f(x_{\star})+\frac{\norm{g_{T}}^{2}}{\varpi^{2}_{T}}+0+\frac{\norm{g_{T}}^{2}}{2\varpi^{2}_{T}}+0-\frac{\norm{g_{T}}^{2}}{\varpi^{2}_{T}}\\
={} & \frac{1}{2\varpi^{2}_{T}}\norm{\nabla f(x_{T})}^{2}+f(x_{T})-f(x_{\star}).
\end{aligned}
\label{eq:lyapunov-terminal-limit}
\end{equation}

For any $0<s<t<T$, monotonicity gives $\mathcal{V}(s)\geq\mathcal{V}(t)$.
Letting $s\downarrow0$ and then $t\uparrow T$ in this inequality,
(\ref{eq:lyapunov-initial-limit}) and (\ref{eq:lyapunov-terminal-limit})
gives the stronger estimate 
\[
\norm{\nabla f(x_{T})}^{2}+2\varpi^{2}_{T}\left(f(x_{T})-f(x_{\star})\right)+\varpi^{4}_{T}\norm{x_{T}-x_{\star}}^{2}\leq\varpi^{4}_{T}\norm{x_{0}-x_{\star}}^{2}.
\]
Dropping nonnegative terms and substituting $\varpi_{T}=\varpi/T$
proves both stated bounds. \end{proof}

\section{Deferred proofs and details for ITEM-f}
\label{sec:appendix-itemf}

\subsection{Existence and uniqueness of the ITEM-f construction}
\label{sec:app-itemf-existence}
We first derive the target angle and describe the shooting computation of
the ITEM-f coefficients, then prove that it produces the unique construction
of Lemma~\ref{lem:itemf-construction}.
Throughout, $N\geq1$ and $q\in(0,1)$ are fixed, $\Upsilon>1$ denotes a
candidate value of the constant, and we write
\begin{equation*}
 C(\Upsilon)\triangleq(\Upsilon,0),\quad
 R(\Upsilon)\triangleq\sqrt{\Upsilon^{2}-1},\quad
 p(\Upsilon)\triangleq\frac{q\,R(\Upsilon)}{\Upsilon}
 =q\sqrt{1-\Upsilon^{-2}},
\end{equation*}
abbreviated to $C$, $R$, and $p$ when the argument is clear. The map
$\Upsilon\mapsto1-\Upsilon^{-2}$ is continuous and strictly increasing on
$(1,\infty)$; hence so is $p$, with $0<p(\Upsilon)<q$,
$p(\Upsilon)\to0$ as $\Upsilon\downarrow1$, and
$p(\Upsilon)\to q$ as $\Upsilon\to\infty$.

\paragraph{Target angle.}
For a prospective admissible configuration at a candidate $\Upsilon>1$, write
$P_1-C=R(\cos\theta_1,\sin\theta_1)$ with $\theta_1\in(0,\pi)$. The endpoint formula gives
$P_{0}-C
=\left(-R^{2}/\Upsilon,\;\sqrt{1-q}\,R/\Upsilon\right)$, so the
inner-product condition at $k=0$ evaluates on both sides; equating the
two evaluations and simplifying (tedious but straightforward algebra)
gives
\begin{equation}\label{eq:app-itemf-shooting}
\sin\theta_{1}=\sqrt{1-q}\left(\Upsilon+R\cos\theta_{1}\right)
=\sqrt{1-q}\,a_{1}.
\end{equation}
Since $b_1=R\sin\theta_1$, this says that $P_1$ lies on the line
$b=\sqrt{1-q}\,R\,a$ through the origin and $P_0$. Substitution into
$(a-\Upsilon)^{2}+b^{2}=R^{2}$, using $\Upsilon^2-R^2=1$, gives the quadratic
$\left(1+(1-q)R^{2}\right)a^{2}-2\Upsilon a+1=0$, whose roots are
$a_{\pm}=\left(\Upsilon\pm\sqrt{q}\,R\right)/\left(1+(1-q)R^{2}\right)$.
The quadratic is negative at $a_0=1/\Upsilon$, so
$a_-<a_0<a_+$. Hence the ordering constraint selects the point
$(a_+,\sqrt{1-q}\,R(\Upsilon)a_+)$.
Its angle about $C(\Upsilon)$ is the \emph{target angle}
$\vartheta(\Upsilon)\in(0,\pi)$, where
\begin{equation}\label{eq:app-itemf-target}
\cos\vartheta(\Upsilon)
\triangleq\frac{a_{+}-\Upsilon}{R}
=\frac{\sqrt{q}-(1-q)R\,\Upsilon}{1+(1-q)R^{2}}.
\end{equation}
We also define $\vartheta(\Upsilon)$ for any $\Upsilon>1$ by this formula, even if the corresponding construction does not exist.

\paragraph{Shooting computation.}
For each candidate $\Upsilon>1$, define its backward shooting orbit by
\begin{equation}
\begin{aligned}
 &\theta_N(\Upsilon) \triangleq\arccos(-\sqrt q),\\
&  \theta_k(\Upsilon) \triangleq\theta_{k+1}(\Upsilon)
 +\arccos\!\left(1-q-p(\Upsilon)\cos\theta_{k+1}(\Upsilon)\right),
 \quad k=N-1,\dots,1.
\end{aligned}
\label{eq:itemf-angle-recurrence}
\end{equation}
Every admissible construction obeys this recurrence. Indeed, if a candidate
$\Upsilon$ admits the required points, then every intermediate
point has a unique representation
$P_k-C(\Upsilon)=R(\Upsilon)(\cos\theta_k,\sin\theta_k)$ with
$\theta_k\in(0,\pi)$. The ordering of the abscissas gives
$\theta_1>\cdots>\theta_N$, while the inner-product conditions give $\cos(\theta_k-\theta_{k+1})
 =1-q-p(\Upsilon)\cos\theta_{k+1}$ for $1\leq k \leq N-1$. At $k=N$, the same condition gives
$\sin\theta_N=\sqrt{1-q}$, and $a_N<a_{N+1}$ selects
$\cos\theta_N=-\sqrt q$. Therefore admissible constructions satisfy \eqref{eq:itemf-angle-recurrence}. Define the shooting residual by $\Xi_N(\Upsilon)\triangleq
 \theta_1(\Upsilon)-\vartheta(\Upsilon)$ for $\Upsilon>1$. Lemma~\ref{lem:app-itemf-shooting-equivalence} below
shows that the candidate closes exactly when $\Xi_N(\Upsilon)=0$.

The numerical value of the root $\Upsilon_N$ can be computed by bisection. Initialize
$\Upsilon_l=1$ as an unevaluated sentinel, since $\Xi_N$ is defined only on $(1,\infty)$, and set $\Upsilon_r=2$. Double $\Upsilon_r$ until
$\Xi_N(\Upsilon_r)<0$.
Then bisect the interval $[\Upsilon_l,\Upsilon_r]$ by computing the midpoint
$\Upsilon_m=(\Upsilon_l+\Upsilon_r)/2$, replace $\Upsilon_l$ by
$\Upsilon_m$ when $\Xi_N(\Upsilon_m)>0$, and replace $\Upsilon_r$ by
$\Upsilon_m$ when $\Xi_N(\Upsilon_m)<0$. For a prescribed tolerance
$\epsilon>0$, return $\Upsilon_m$ if $\Xi_N(\Upsilon_m)=0$ or
$|\Upsilon_r-\Upsilon_l|<\epsilon$.
Corollary~\ref{cor:app-upsilon-monotone} proves that the doubling terminates and the bracket always contains
$\Upsilon_N$. If the bisection is continued indefinitely, its midpoints converge to $\Upsilon_N$; if it stops with bracket width below $\epsilon$, the returned value is within $\epsilon$ of $\Upsilon_N$.

\begin{lem}[One-step map]\label{lem:app-itemf-one-step}
Fix $q\in(0,1)$ and $p\in(0,q)$, and define $F_{p}(\theta)\triangleq\theta+\arccos\left(1-q-p\cos\theta\right)$. Then:
\begin{enumerate}
\item $F_{p}$ is well defined on $\mathbb{R}$: the argument of the
$\arccos$ lies in $(1-2q,1)\subset(-1,1)$, so
$F_{p}(\theta)-\theta\in(0,\pi)$;
\item $F_{p}$ is strictly increasing in $\theta$;
\item for fixed $\theta$ with $\cos\theta<0$, the map $p\mapsto
F_{p}(\theta)$ is strictly decreasing.
\end{enumerate}
\end{lem}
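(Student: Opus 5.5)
The plan is direct, elementary calculus on the explicit formula $F_{p}(\theta)=\theta+\arccos(u(\theta))$, where $u(\theta)\triangleq1-q-p\cos\theta$.

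\emph{Part 1.} Since $|\cos\theta|\leq1$ and $0<p<q<1$, we have $u(\theta)\in[1-q-p,\,1-q+p]$. From $p<q$ we get $1-q+p<1$ and $1-q-p>1-2q$. Also $1-2q>-1$ because $q<1$. Hence $u(\theta)\in(1-2q,1)\subset(-1,1)$ for every $\theta\in\mathbb{R}$. Because $\arccos$ maps $(-1,1)$ onto $(0,\pi)$, it follows that $F_{p}(\theta)-\theta=\arccos(u(\theta))\in(0,\pi)$.

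\emph{Part 2.} The plan is to differentiate, which is legitimate since $u$ stays strictly inside $(-1,1)$. We get
\[
F_{p}'(\theta)=1-\frac{p\sin\theta}{\sqrt{1-u(\theta)^{2}}}.
\]
It suffices to show $p^{2}\sin^{2}\theta<1-u^{2}=(1-u)(1+u)$. Write $c=\cos\theta$, so that $1-u=q+pc$ and $1+u=2-q-pc$. The target inequality is then
\[
p^{2}(1-c^{2})<(q+pc)(2-q-pc).
\]
Expanding the right side gives $q(2-q)+2pc(1-q)-p^{2}c^{2}$. After cancelling $-p^{2}c^{2}$ from both sides, the claim reduces to
\[
h(c)\triangleq q(2-q)+2p(1-q)c-p^{2}>0\quad\text{for } c\in[-1,1].
\]
The function $h$ is affine in $c$, so it is enough to check the endpoints. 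At $c=-1$ we get $h(-1)=(q-p)(2-q-p)>0$, since $p<q$ and $p+q<2$. At $c=1$ we get $h(1)=(q+p)(2-q+p)>0$. Hence $F_{p}'>0$ everywhere, and $F_{p}$ is strictly increasing.

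\emph{Part 3.} Fix $\theta$ with $\cos\theta<0$. The map $p\mapsto u=1-q-p\cos\theta$ is strictly increasing, because its slope is $-\cos\theta>0$. Composing with $\arccos$, which is strictly decreasing on $(-1,1)$, shows that $p\mapsto F_{p}(\theta)$ is strictly decreasing. Part 1 guarantees the argument stays in $(-1,1)$ for every $p\in(0,q)$, so this composition is valid on the whole range.

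The only nontrivial step is the derivative bound in Part 2; the reduction to the affine function $h(c)$ is what I expect to carry it.
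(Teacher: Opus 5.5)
Your proposal is correct and follows the paper's proof: Part 1 is identical, and in Part 2 you make the same reduction of $F_p'>0$ to $p^2\sin^2\theta<1-u^2$. Your affine function satisfies $h(\cos\theta)=1-\bigl(p^2\sin^2\theta+u^2\bigr)$, so $h>0$ is exactly the paper's bound $p^2\sin^2\theta+u^2\le(1-q+p)^2<1$. Since $h$ has slope $2p(1-q)>0$, only the check at $c=-1$ is actually needed. For Part 3 you argue by composing monotone maps where the paper differentiates in $p$; the two are equivalent.

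There is one algebra error to fix: the second factors in your two endpoint factorizations are swapped. In fact $h(\pm1)=w(2-w)$ with $w=q\pm p$, so
\[
h(-1)=(q-p)(2-q+p),\qquad h(1)=(q+p)(2-q-p).
\]
For example, $q=0.5$, $p=0.2$ gives $h(-1)=0.51$, not your $0.39$. Both values are still positive because $0<q\pm p<2$, so your conclusion is unaffected.
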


\begin{proof}
Write $u\triangleq1-q-p\cos\theta$. Claim 1 follows from
$\left|p\cos\theta\right|\leq p<q$ leading to $-1<1-2q<1-q-p\leq u\leq1-q+p<1$.

For claim 2, differentiating in $\theta$ gives $F_{p}'(\theta)=1-(p\sin\theta/\sqrt{1-u^{2}})$, so it suffices to prove $p^{2}\sin^{2}\theta<1-u^{2}$ expanding
$u=1-q-p\cos\theta$, tedious but straightforward algebra gives
$p^{2}\sin^{2}\theta+u^{2}\leq\left(p+1-q\right)^{2}<1$, the last
inequality because $0<p+1-q<1$ by $p<q$. This
proves claim 2. For claim 3, differentiating in $p$ gives for $\cos\theta<0$, $\partial F_p(\theta)/\partial p = \cos\theta/\sqrt{1-u^2} < 0$.
\end{proof}

By Lemma~\ref{lem:app-itemf-one-step}, the orbit in
\eqref{eq:itemf-angle-recurrence} which is written as $\theta_k(\Upsilon)=F_{p(\Upsilon)}\left(\theta_{k+1}(\Upsilon)\right)$ is defined for every
$\Upsilon>1$, each $\theta_{k}(\Upsilon)$ is a
continuous function of $\Upsilon$, and
$\theta_{1}(\Upsilon)>\theta_{2}(\Upsilon)>\cdots>\theta_{N}(\Upsilon)
=\arccos\left(-\sqrt{q}\right)>\pi/2$.

\begin{lem}[Orbit comparison]\label{lem:app-itemf-comparison}
Let $N\geq2$ and $1<\Upsilon<\Upsilon'$. If $\theta_{1}(\Upsilon)\leq\pi$,
then $\theta_{k}(\Upsilon')<\theta_{k}(\Upsilon)$ for $1\leq k\leq N-1$.
\end{lem}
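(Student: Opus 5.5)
The proof is a backward induction on $k$, from $k=N-1$ down to $k=1$. It uses the one-step map $F_{p}$ of Lemma~\ref{lem:app-itemf-one-step}, through $\theta_k(\Upsilon)=F_{p(\Upsilon)}(\theta_{k+1}(\Upsilon))$. Write $p\triangleq p(\Upsilon)$ and $p'\triangleq p(\Upsilon')$. Since $\Upsilon\mapsto p(\Upsilon)$ is strictly increasing on $(1,\infty)$ with values in $(0,q)$, we have $0<p<p'<q$. Hence both orbits are defined, and both parameters lie in the range where Lemma~\ref{lem:app-itemf-one-step} applies.

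\emph{Base case $k=N-1$.} Both orbits start from the same value $\theta_N=\arccos(-\sqrt q)$, and $\cos\theta_N=-\sqrt q<0$. By claim~3 of Lemma~\ref{lem:app-itemf-one-step},
\[
\theta_{N-1}(\Upsilon')=F_{p'}(\theta_N)<F_{p}(\theta_N)=\theta_{N-1}(\Upsilon).
\]

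\emph{Induction step.} Fix $1\leq k\leq N-2$ and assume $\theta_{k+1}(\Upsilon')<\theta_{k+1}(\Upsilon)$. The step is the same two-inequality chain used in Step~2 of the lemniscate uniqueness proof:
\[
\theta_k(\Upsilon')=F_{p'}\bigl(\theta_{k+1}(\Upsilon')\bigr)<F_{p'}\bigl(\theta_{k+1}(\Upsilon)\bigr)<F_{p}\bigl(\theta_{k+1}(\Upsilon)\bigr)=\theta_k(\Upsilon).
\]
The first inequality is the strict monotonicity in $\theta$ (claim~2) combined with the induction hypothesis. The second is claim~3 evaluated at $\theta=\theta_{k+1}(\Upsilon)$, and it requires $\cos\theta_{k+1}(\Upsilon)<0$.

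\emph{The sign condition.} Checking $\cos\theta_{k+1}(\Upsilon)<0$ is the only point needing care, and it is exactly where the hypothesis $\theta_1(\Upsilon)\leq\pi$ is used. After Lemma~\ref{lem:app-itemf-one-step} the paper records that the orbit is strictly decreasing in the index and ends at a value exceeding $\pi/2$:
\[
\theta_1(\Upsilon)>\theta_2(\Upsilon)>\cdots>\theta_N(\Upsilon)=\arccos(-\sqrt q)>\pi/2 .
\]
For $2\leq k+1\leq N$ this gives
\[
\pi/2<\theta_N(\Upsilon)\leq\theta_{k+1}(\Upsilon)<\theta_1(\Upsilon)\leq\pi ,
\]
so $\theta_{k+1}(\Upsilon)\in(\pi/2,\pi)$ and its cosine is negative. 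Even without the strict inequality, $\theta_{k+1}(\Upsilon)=\pi$ would still give $\cos\theta_{k+1}(\Upsilon)=-1<0$.

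Only the orbit at the smaller parameter $\Upsilon$ needs this angle control; the orbit at $\Upsilon'$ enters solely through claim~2, which holds on all of $\mathbb{R}$. Without the hypothesis $\theta_1(\Upsilon)\leq\pi$, some intermediate angle could exceed $3\pi/2$. There its cosine becomes positive, and the comparison in claim~3 reverses.
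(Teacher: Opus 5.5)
Your proof is correct and follows essentially the same route as the paper. Both arguments run a downward induction from $k=N-1$: the base case uses claim~3 of Lemma~\ref{lem:app-itemf-one-step}, and the step uses the chain $F_{p'}(\theta_{k+1}(\Upsilon'))<F_{p'}(\theta_{k+1}(\Upsilon))<F_{p}(\theta_{k+1}(\Upsilon))$, with the hypothesis $\theta_1(\Upsilon)\leq\pi$ serving only to place the $\Upsilon$-orbit's angles in $(\pi/2,\pi]$ so their cosines are negative.
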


\begin{proof}
The hypothesis gives $\theta_{k}(\Upsilon)\in(\pi/2,\pi]$, hence
$\cos\theta_{k}(\Upsilon)<0$, for every $1\leq k\leq N$. We induct
downward on $k$. For the base case $k=N-1$, since
$\cos\theta_{N}=-\sqrt{q}<0$ and $p(\Upsilon')>p(\Upsilon)$, claim~3 of
Lemma~\ref{lem:app-itemf-one-step} gives $\theta_{N-1}(\Upsilon')=F_{p(\Upsilon')}\left(\theta_{N}\right)
<F_{p(\Upsilon)}\left(\theta_{N}\right)=\theta_{N-1}(\Upsilon)$.
For the step from $k+1$ to $k\leq N-2$, claims 2 and 3 of
Lemma~\ref{lem:app-itemf-one-step} give $\theta_{k}(\Upsilon')
=F_{p(\Upsilon')}\left(\theta_{k+1}(\Upsilon')\right)
<F_{p(\Upsilon')}\left(\theta_{k+1}(\Upsilon)\right)
<F_{p(\Upsilon)}\left(\theta_{k+1}(\Upsilon)\right)
=\theta_{k}(\Upsilon)$, where the first inequality uses the induction hypothesis and the second
uses $\cos\theta_{k+1}(\Upsilon)<0$.
\end{proof}

\begin{lem}[Target angle]\label{lem:app-itemf-target-angle}
The target angle \eqref{eq:app-itemf-target} is well defined,
continuous, and strictly increasing on $(1,\infty)$, with
$\vartheta(\Upsilon)\in(0,\pi)$ for every $\Upsilon>1$,
$\vartheta(\Upsilon)\to\arccos\left(\sqrt{q}\right)$ as
$\Upsilon\downarrow1$, and $\vartheta(\Upsilon)\to\pi$ as
$\Upsilon\to\infty$.
\end{lem}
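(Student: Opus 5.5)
The plan is to work directly from the closed form \eqref{eq:app-itemf-target}. It is convenient to reparametrize by $s\triangleq R(\Upsilon)=\sqrt{\Upsilon^{2}-1}$, which is a continuous, strictly increasing bijection of $(1,\infty)$ onto $(0,\infty)$, with $\Upsilon=\sqrt{1+s^{2}}$. Define
\[
c(s)\triangleq\frac{\sqrt q-(1-q)s\sqrt{1+s^{2}}}{1+(1-q)s^{2}},
\]
so that $\cos\vartheta=c(s)$. Continuity of $c$ is immediate because the denominator is positive. Consequently $\vartheta=\arccos\circ c$ is continuous as soon as $c$ takes values in $(-1,1)$.

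\emph{Range.} I would use the geometric origin of the formula instead of a direct inequality. The point $(a_{+},\sqrt{1-q}\,R\,a_{+})$ lies on the circle of radius $R$ centered at $C$, and this was checked in the derivation preceding \eqref{eq:app-itemf-target}. Moreover $a_{+}=(\Upsilon+\sqrt q R)/(1+(1-q)R^{2})>0$, so its ordinate $b$ is strictly positive. The circle equation then gives $(a_{+}-\Upsilon)^{2}=R^{2}-b^{2}<R^{2}$, hence $|c|<1$ and $\vartheta\in(0,\pi)$. This holds for every $\Upsilon>1$, whether or not a construction exists, since only the quadratic root is used. As a fallback, one can verify $c(s)^{2}<1$ algebraically.

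\emph{Monotonicity.} This is the only step that needs computation, and I expect it to be the main (mild) obstacle. Write $n(s)$ and $D(s)$ for the numerator and denominator of $c$. Then
\[
n'(s)=-(1-q)\frac{1+2s^{2}}{\sqrt{1+s^{2}}},\qquad D'(s)=2(1-q)s.
\]
It follows that
\[
n'D-nD'=-(1-q)\Big[\tfrac{(1+2s^{2})(1+(1-q)s^{2})-2(1-q)s^{2}(1+s^{2})}{\sqrt{1+s^{2}}}+2\sqrt q\,s\Big].
\]
The numerator inside the bracket expands and cancels to $1+(1+q)s^{2}>0$. Hence $c'(s)<0$ on $(0,\infty)$. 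Since $\arccos$ is strictly decreasing and $s$ is strictly increasing in $\Upsilon$, the angle $\vartheta$ is strictly increasing in $\Upsilon$.

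\emph{Limits.} As $\Upsilon\downarrow1$ we have $s\downarrow0$, so $c\to\sqrt q$ and, by continuity of $\arccos$, $\vartheta\to\arccos\sqrt q$. As $\Upsilon\to\infty$, divide the numerator and denominator of $c$ by $s^{2}$. This gives $c\to-(1-q)/(1-q)=-1$, so $\vartheta\to\pi$.
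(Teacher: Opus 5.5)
Your proof is correct and follows the paper's route. You analyze the closed form $c$, show $c'<0$, and take the limits by substitution; your $s$-derivative, multiplied by $ds/d\Upsilon=\Upsilon/R$, is exactly the paper's formula for $c'(\Upsilon)$. The only real difference is the range step. The paper shows $c<1$ by a simple bound and $c>-1$ by ``tedious'' algebra, whereas you note that $(a_{+},\sqrt{1-q}\,R\,a_{+})$ lies on the circle with strictly positive ordinate, so $|a_{+}-\Upsilon|<R$; this holds for every $\Upsilon>1$ because it uses only $\Upsilon^{2}-R^{2}=1$, and it is a cleaner way to get $|c|<1$.
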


\begin{proof}
Write $c(\Upsilon)$ for the fraction in \eqref{eq:app-itemf-target}, and
note that its denominator is $1+(1-q)R^{2}=q+(1-q)\Upsilon^{2}$. Then
$c(\Upsilon)<1$, since the numerator is at most
$\sqrt{q}<1<q+(1-q)\Upsilon^{2}$. Moreover, using
$\Upsilon\left(\Upsilon-R\right)=\Upsilon/(\Upsilon+R)$ (from
$\Upsilon^{2}-R^{2}=1$), tedious but straightforward algebra writes
$c(\Upsilon)+1$ as a ratio of positive quantities, so $c(\Upsilon)>-1$:
hence $\vartheta(\Upsilon)=\arccos c(\Upsilon)$ is well defined in
$(0,\pi)$, and it is continuous. Differentiating the quotient and simplifying (tedious
but straightforward algebra) gives, for $\Upsilon>1$,
\[
c'(\Upsilon)
=-\frac{(1-q)\left((1+q)\Upsilon^{2}-q+2\sqrt{q}\,\Upsilon R\right)}{R\left(q+(1-q)\Upsilon^{2}\right)^{2}},
\]
whose bracketed factor equals
$(1+q)\left(\Upsilon^{2}-1\right)+2\sqrt{q}\,\Upsilon R+1>1$. Every
remaining factor is positive, so $c'(\Upsilon)<0$ and $c$ is strictly
decreasing on $(1,\infty)$, and composing with the strictly decreasing
$\arccos$ shows that $\vartheta$ is strictly increasing. The limits
follow by substitution: as $\Upsilon\downarrow1$, $R\to0$ and
$c\to\sqrt{q}$; as $\Upsilon\to\infty$, dividing the numerator and
denominator of $c$ by $(1-q)\Upsilon^{2}$ gives $c\to-1$.
\end{proof}

For $\Upsilon>1$, call a collection of points an \emph{admissible
configuration at $\Upsilon$} if it satisfies the ordering, positivity,
and conditions of Lemma~\ref{lem:itemf-construction} with
$\Upsilon_{N}$ replaced by $\Upsilon$ (and hence with
$C=C(\Upsilon)$ and $R=R(\Upsilon)$).

\begin{lem}[Shooting equivalence]
\label{lem:app-itemf-shooting-equivalence}
An admissible configuration at $\Upsilon>1$ exists if and only if
$\Xi_{N}(\Upsilon)=0$. When it exists, it is unique.
\end{lem}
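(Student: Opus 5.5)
Both directions go through the angle parametrization about $C(\Upsilon)$: an admissible configuration is encoded by angles $\theta_1,\dots,\theta_N\in(0,\pi)$, and the conditions of Lemma~\ref{lem:itemf-construction} become the recurrence \eqref{eq:itemf-angle-recurrence} plus the closing condition $\theta_1=\vartheta(\Upsilon)$. Uniqueness then comes for free, because the recurrence determines the orbit from $\theta_N$.

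\emph{Necessity and uniqueness.} Suppose an admissible configuration exists at $\Upsilon$.
\begin{enumerate}
\item The discussion preceding \eqref{eq:itemf-angle-recurrence} already shows that its angles satisfy the recurrence, with $\theta_N=\arccos(-\sqrt q)$ and $\theta_k=\theta_{k+1}+\arccos(1-q-p\cos\theta_{k+1})$. The $\arccos$ branch is the right one because $\theta_k-\theta_{k+1}\in(0,\pi)$: the angles lie in $(0,\pi)$ and decrease, since the abscissas $a_k=\Upsilon+R\cos\theta_k$ increase.
\item By \eqref{eq:app-itemf-shooting}, $P_1$ lies on the line $b=\sqrt{1-q}\,R\,a$ through $O$ and $P_0$.
\item Since $a_1>a_0$, $P_1$ must be the intersection point with abscissa $a_+$, so $\theta_1=\vartheta(\Upsilon)$ and $\Xi_N(\Upsilon)=0$.
\item The points $P_1,\dots,P_N$ are then determined by the orbit, and $P_0,P_{N+1}$ are fixed by formula, which gives uniqueness at fixed $\Upsilon$.
\end{enumerate}

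\emph{Sufficiency.} Suppose $\Xi_N(\Upsilon)=0$. Define $P_k\triangleq C+R(\cos\theta_k(\Upsilon),\sin\theta_k(\Upsilon))$ for $1\le k\le N$, and take $P_0,P_{N+1}$ from the endpoint formulas. The conditions are checked as follows.
\begin{enumerate}
\item \emph{Angles in $(0,\pi)$.} We have $\theta_N>\pi/2$, the orbit increases strictly in the backward direction (claim~1 of Lemma~\ref{lem:app-itemf-one-step}), and $\theta_1=\vartheta(\Upsilon)<\pi$ by Lemma~\ref{lem:app-itemf-target-angle}. Hence all $\theta_k\in(\pi/2,\pi)$, so $b_k=R\sin\theta_k>0$ and the circle condition holds.
\item \emph{Interior inner-product conditions.} For $1\le k\le N-1$, the recurrence gives $\cos(\theta_k-\theta_{k+1})=1-q-p\cos\theta_{k+1}$. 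Multiplying by $R^2$ and using $p=qR/\Upsilon$ and $a_{k+1}=\Upsilon+R\cos\theta_{k+1}$ yields $R^2(1-qa_{k+1}/\Upsilon)$, as required.
\item \emph{Condition at $k=N$.} Here $P_{N+1}-C=(0,\sqrt{1-q}R)$, and $\sin\theta_N=\sqrt{1-q}$ gives inner product $(1-q)R^2=R^2(1-q\Upsilon/\Upsilon)$, using $a_{N+1}=\Upsilon$.
\item \emph{Condition at $k=0$.} I will reverse the target-angle derivation. Since $\theta_1=\vartheta$, the point $P_1=(a_+,\sqrt{1-q}Ra_+)$ satisfies \eqref{eq:app-itemf-shooting}. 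The algebra leading to \eqref{eq:app-itemf-shooting} is an equivalence: with $\theta_1\in(0,\pi)$ and $\sin\theta_1>0$, the sign was fixed. So the $k=0$ inner-product condition holds.
\item \emph{Ordering.} Decreasing $\theta_k$ in $(0,\pi)$ gives $a_1<\dots<a_N$. Since $\cos\theta_N<0$, $a_N<\Upsilon=a_{N+1}$. Finally $a_0=1/\Upsilon<a_+=a_1$, from the sign of the quadratic noted earlier.
\item \emph{Positivity.} $b_0,b_{N+1}>0$ is immediate from their formulas.
\end{enumerate}

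The main obstacle I expect is the $k=0$ step: confirming that the ``tedious algebra'' behind \eqref{eq:app-itemf-shooting} is genuinely reversible. I would first redo that computation explicitly, $(P_0-C)\cdot(P_1-C)=R^2(-R\cos\theta_1/\Upsilon+\sqrt{1-q}\sin\theta_1/\Upsilon)$ against $R^2(1-q(\Upsilon+R\cos\theta_1)/\Upsilon)$, and check that it reduces linearly to \eqref{eq:app-itemf-shooting} with no squaring step. A secondary point is that the orbit must stay below $\pi$ so that all points have $b_k>0$; this is exactly supplied by $\theta_1=\vartheta<\pi$ together with monotonicity along the orbit.
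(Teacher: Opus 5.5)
Your proposal is correct and follows the paper's proof essentially step for step. In the necessity direction you match the angles to the backward orbit, select the far intersection $a_+$ via $a_1>a_0$ to get $\theta_1=\vartheta(\Upsilon)$, and read off uniqueness from the forced coordinates; in the sufficiency direction you build the points from the orbit and check each condition. Your planned explicit check of the $k=0$ condition reduces linearly, with no squaring, to $\sin\theta_1=\sqrt{1-q}\,a_1$, so it confirms the reversibility that the paper simply asserts.
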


\begin{proof}
Suppose an admissible configuration exists at $\Upsilon$. By the
angle parametrization, its angles satisfy
$\theta_{N}=\arccos\left(-\sqrt{q}\right)$ and the backward recurrence
\eqref{eq:itemf-angle-recurrence}, so they coincide with the orbit
$\theta_{k}(\Upsilon)$; in particular $\theta_{1}(\Upsilon)<\pi$.
Moreover, the shooting equation \eqref{eq:app-itemf-shooting} together
with $a_{1}>a_{0}$ places $P_{1}$ at the farther intersection, so
$\cos\theta_{1}(\Upsilon)=\cos\vartheta(\Upsilon)$; both angles lie in
$(0,\pi)$, hence $\theta_{1}(\Upsilon)=\vartheta(\Upsilon)$. Every
coordinate is forced ($P_{0}$ and $P_{N+1}$ by their formulas, $P_{k}$ by
$\theta_{k}(\Upsilon)$), which is the uniqueness claim.

Conversely, suppose $\theta_{1}(\Upsilon)=\vartheta(\Upsilon)$. Define
$P_{k}\triangleq C+R\left(\cos\theta_{k}(\Upsilon),
\sin\theta_{k}(\Upsilon)\right)$ for $1\leq k\leq N$ and the endpoints by
their formulas. Then all $\theta_{k}(\Upsilon)$ lie in
$\left(0,\pi\right)$ because
$0<\theta_{N}\leq\theta_{k}(\Upsilon)\leq\theta_{1}(\Upsilon)
=\vartheta(\Upsilon)<\pi$, so $b_{k}=R\sin\theta_{k}(\Upsilon)>0$; the
angles decrease strictly in $k$, so $a_{1}<\cdots<a_{N}$; and
$a_{N}=\Upsilon-\sqrt{q}\,R<\Upsilon=a_{N+1}$. The norm condition holds
by construction. For the inner-product condition at $1\leq k\leq N-1$,
taking cosines in \eqref{eq:itemf-angle-recurrence} gives
$\cos\left(\theta_{k}-\theta_{k+1}\right)
=1-q-p\cos\theta_{k+1}
=1-q\,a_{k+1}/\Upsilon$, and
$(P_{k}-C)\cdot(P_{k+1}-C)
=R^{2}\cos\left(\theta_{k}-\theta_{k+1}\right)$ since both points lie on
the circle; at $k=N$ the condition holds because
$\sin\theta_{N}=\sqrt{1-q}$. At $k=0$: the point
$\left(a_{+},\sqrt{1-q}\,R\,a_{+}\right)$ lies on the circle with
positive ordinate, and $P_{1}$ is the point of the circle with the same
abscissa $a_{1}=\Upsilon+R\cos\vartheta(\Upsilon)=a_{+}$ and positive
ordinate, so the two coincide; hence
$\sin\theta_{1}(\Upsilon)=\sqrt{1-q}\,a_{1}$, which is
\eqref{eq:app-itemf-shooting} and, reversing the algebra that produced
it, the inner-product condition at $k=0$. Finally
$a_{0}=1/\Upsilon<a_{+}=a_{1}$.
\end{proof}

\begin{proof}[Proof of Lemma~\ref{lem:itemf-construction}]

\emph{Existence.}
We first show $\Xi_{N}(\Upsilon)<0$ for all large $\Upsilon$. Consider the
boundary parameter $p=q$: the map
$F_{q}(\theta)=\theta+\arccos\left(1-q(1+\cos\theta)\right)$ is still
well defined, since $1-q(1+\cos\theta)\in[1-2q,1]$. For
$\theta\in(0,\pi)$,
\begin{equation*}
\begin{aligned}
F_{q}(\theta)<\pi
&\Leftrightarrow \arccos\left(1-q(1+\cos\theta)\right)<\pi-\theta\\
&\Leftrightarrow 1-q(1+\cos\theta)>-\cos\theta\\
&\Leftrightarrow (1-q)(1+\cos\theta)>0,
\end{aligned}
\end{equation*}
which holds; so the orbit
$\overline{\theta}_{N}\triangleq\theta_{N}$,
$\overline{\theta}_{k}\triangleq F_{q}(\overline{\theta}_{k+1})$
satisfies $\overline{\theta}_{1}<\pi$ by induction. Each backward step
$(p,\theta)\mapsto F_{p}(\theta)$ is jointly continuous for
$p\in[0,q]$ because the $\arccos$ argument stays in $[1-2q,1]$; at the
endpoint $p=0$ it reads $F_{0}(\theta)=\theta+\arccos\left(1-q\right)$.
Hence
$\theta_{1}(\Upsilon)$, which depends on $\Upsilon$ only through
$p(\Upsilon)$, converges to $\overline{\theta}_{1}$ as
$p(\Upsilon)\to q$, that is, as $\Upsilon\to\infty$. Combining with
Lemma~\ref{lem:app-itemf-target-angle}, we have
$\lim_{\Upsilon\to\infty}\Xi_{N}(\Upsilon)=\overline{\theta}_{1}-\pi<0$, so $\Xi_{N}(b)<0$ for some $b>1$.

Next we produce a point where $\Xi_{N}>0$. As $\Upsilon\downarrow1$ we have
$p(\Upsilon)\to0$, so by the same joint continuity, now at the endpoint
$p=0$,
$\theta_{1}(\Upsilon)\to\theta_{N}+(N-1)\arccos\left(1-q\right)$, while
$\vartheta(\Upsilon)\to\arccos\left(\sqrt{q}\right)$ by
Lemma~\ref{lem:app-itemf-target-angle}. Therefore
\begin{equation*}
\lim_{\Upsilon\downarrow1}\Xi_{N}(\Upsilon)
=\theta_{N}+(N-1)\arccos\left(1-q\right)
-\arccos\left(\sqrt{q}\right)
\geq\pi-2\arccos\left(\sqrt{q}\right)
=\arccos\left(1-2q\right)>0,
\end{equation*}
where the middle inequality drops the nonnegative term
$(N-1)\arccos\left(1-q\right)$ and uses
$\theta_{N}=\arccos\left(-\sqrt{q}\right)=\pi-\arccos\left(\sqrt{q}\right)$,
and the last equality holds because
$\cos\left(\pi-2\arccos\sqrt{q}\right)=-\left(2q-1\right)=1-2q$ and
$\pi-2\arccos\sqrt{q}\in(0,\pi)$. By continuity, $\Xi_{N}(a)>0$ for some
$a>1$. Hence $\Xi_{N}(a)>0>\Xi_{N}(b)$ with $a<b$ (enlarging $b$ if necessary),
so the intermediate value theorem yields $\Upsilon_{N}\in(a,b)$ with
$\Xi_{N}\left(\Upsilon_{N}\right)=0$. By
Lemma~\ref{lem:app-itemf-shooting-equivalence}, an admissible
configuration exists at $\Upsilon_{N}$.

\emph{Uniqueness.}
Suppose $\Upsilon<\Upsilon'$ both admit admissible points, so that
$\theta_{1}(\Upsilon)=\vartheta(\Upsilon)<\pi$ and
$\theta_{1}(\Upsilon')=\vartheta(\Upsilon')$ by
Lemma~\ref{lem:app-itemf-shooting-equivalence}. For $N=1$ the
orbit is the constant $\arccos\left(-\sqrt{q}\right)$, and the strict
monotonicity of $\vartheta$ makes the two equalities incompatible. For
$N\geq2$, Lemma~\ref{lem:app-itemf-comparison} applies at $\Upsilon$
(its hypothesis $\theta_{1}(\Upsilon)\leq\pi$ holds) and gives, with
Lemma~\ref{lem:app-itemf-target-angle}, $\theta_{1}(\Upsilon')<\theta_{1}(\Upsilon)
=\vartheta(\Upsilon)<\vartheta(\Upsilon')=\theta_{1}(\Upsilon')$, a contradiction. Hence the admissible $\Upsilon$ is unique, and
Lemma~\ref{lem:app-itemf-shooting-equivalence} shows that the points are
uniquely determined by it.
\end{proof}

The same objects also give a sign characterization of the residual that
makes the numerical evaluation of $\Upsilon_{N}$ straightforward.

\begin{cor}[Bisection correctness]
\label{cor:app-upsilon-monotone}
Let $\Upsilon_{N}$ be as in Lemma~\ref{lem:itemf-construction}. Then the
residual $\Xi_{N}$ defined above satisfies
$\Xi_{N}(\Upsilon)>0$ for $1<\Upsilon<\Upsilon_{N}$ and
$\Xi_{N}(\Upsilon)<0$ for $\Upsilon>\Upsilon_{N}$. Consequently both
update rules of the bisection described above are correct, the
doubling of the right endpoint terminates and produces a bracket
containing $\Upsilon_{N}$, the invariant
$\Upsilon_{l}<\Upsilon_{N}<\Upsilon_{r}$ is maintained, and, if the
bisection is continued indefinitely, the midpoints converge to
$\Upsilon_{N}$. For any $\epsilon>0$, if it stops with bracket width below $\epsilon$, the
returned value is within $\epsilon$ of $\Upsilon_N$.
\end{cor}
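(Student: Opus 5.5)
The plan is to prove the sign characterization first and then read off the bisection claims from it; both parts use only the lemmas already established in this appendix.

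\emph{Sign of $\Xi_{N}$ for $\Upsilon>\Upsilon_{N}$.} Since $\Xi_{N}(\Upsilon_{N})=0$ by Lemma~\ref{lem:app-itemf-shooting-equivalence}, we have $\theta_{1}(\Upsilon_{N})=\vartheta(\Upsilon_{N})<\pi$.
\begin{itemize}
\item For $N=1$, $\theta_{1}$ is the constant $\arccos(-\sqrt q)$. By Lemma~\ref{lem:app-itemf-target-angle}, $\vartheta$ is strictly increasing, so $\Xi_{1}$ is strictly decreasing and the sign claims on both sides of $\Upsilon_{N}$ are immediate.
\item For $N\geq2$, apply Lemma~\ref{lem:app-itemf-comparison} at the pair $\Upsilon_{N}<\Upsilon$; its hypothesis $\theta_{1}(\Upsilon_{N})\leq\pi$ holds. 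This gives $\theta_{1}(\Upsilon)<\theta_{1}(\Upsilon_{N})=\vartheta(\Upsilon_{N})<\vartheta(\Upsilon)$, hence $\Xi_{N}(\Upsilon)<0$.
\end{itemize}

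\emph{Sign of $\Xi_{N}$ for $1<\Upsilon<\Upsilon_{N}$.} Here the comparison lemma needs $\theta_{1}(\Upsilon)\leq\pi$, which is not known a priori; this is the main obstacle. I would split into two cases.
\begin{itemize}
\item If $\theta_{1}(\Upsilon)\geq\pi$, then $\Xi_{N}(\Upsilon)\geq\pi-\vartheta(\Upsilon)>0$, because $\vartheta<\pi$ by Lemma~\ref{lem:app-itemf-target-angle}.
\item If $\theta_{1}(\Upsilon)<\pi$, apply Lemma~\ref{lem:app-itemf-comparison} to the pair $\Upsilon<\Upsilon_{N}$. This gives $\theta_{1}(\Upsilon_{N})<\theta_{1}(\Upsilon)$, so $\theta_{1}(\Upsilon)>\vartheta(\Upsilon_{N})>\vartheta(\Upsilon)$ and again $\Xi_{N}(\Upsilon)>0$.
\end{itemize}
An alternative route is to note that $\Xi_{N}$ is continuous on $(1,\infty)$, has the unique zero $\Upsilon_{N}$ by Lemma~\ref{lem:itemf-construction}, and is positive near $1$ and negative near $\infty$ by the limits computed in that proof. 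The intermediate value theorem then fixes the sign on each side of $\Upsilon_{N}$, which avoids the case split entirely.

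\emph{Bisection consequences.}
\begin{itemize}
\item The doubling step terminates. Since $\Xi_{N}<0$ exactly on $(\Upsilon_{N},\infty)$, the first power of two exceeding $\Upsilon_{N}$ stops it. At that point $\Upsilon_{r}>\Upsilon_{N}>1=\Upsilon_{l}$.
\item Each update preserves the invariant $\Upsilon_{l}<\Upsilon_{N}<\Upsilon_{r}$. A midpoint with $\Xi_{N}>0$ lies below $\Upsilon_{N}$ and replaces $\Upsilon_{l}$. A midpoint with $\Xi_{N}<0$ lies above $\Upsilon_{N}$ and replaces $\Upsilon_{r}$. A zero midpoint equals $\Upsilon_{N}$ and is returned.
\item The bracket length halves at each step. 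Hence the midpoints converge to $\Upsilon_{N}$.
\item If the procedure stops because the width is below $\epsilon$, the returned midpoint and $\Upsilon_{N}$ both lie in a bracket of width below $\epsilon$. So the returned value is within $\epsilon$ of $\Upsilon_{N}$.

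The only care needed is that every evaluated midpoint exceeds $1$, so that $\Xi_{N}$ is defined there. This holds because $\Upsilon_{l}\geq1$ and $\Upsilon_{r}>1$ throughout.
\end{itemize}
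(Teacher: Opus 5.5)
Your proof is correct and follows the paper's argument essentially step for step. You use the same ingredients: Lemma~\ref{lem:app-itemf-comparison} anchored at $\Upsilon_{N}$ for $\Upsilon>\Upsilon_{N}$, the same case split on $\theta_{1}(\Upsilon)\geq\pi$ versus $\theta_{1}(\Upsilon)<\pi$ below $\Upsilon_{N}$, the separate $N=1$ case through the strictly increasing target angle, and the same bracket-invariant bookkeeping for the bisection. Your alternative route (continuity of $\Xi_{N}$, its unique zero, and the endpoint limits from the existence proof, combined with the intermediate value theorem) is also valid. It does not really bypass the comparison lemma, however, since the uniqueness part of Lemma~\ref{lem:itemf-construction} is itself proved with that lemma.
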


\begin{proof}
By Lemma~\ref{lem:app-itemf-shooting-equivalence},
$\theta_{1}\left(\Upsilon_{N}\right)=\vartheta\left(\Upsilon_{N}\right)<\pi$,
so $\Xi_{N}\left(\Upsilon_{N}\right)=0$. For $\Upsilon>\Upsilon_{N}$,
Lemma~\ref{lem:app-itemf-comparison} applies at $\Upsilon_{N}$ (its
hypothesis $\theta_{1}\left(\Upsilon_{N}\right)\leq\pi$ holds) and
gives, with the strictly increasing target angle of
Lemma~\ref{lem:app-itemf-target-angle}, $\Xi_{N}(\Upsilon)=\theta_{1}(\Upsilon)-\vartheta(\Upsilon)<\theta_{1}\left(\Upsilon_{N}\right)-\vartheta\left(\Upsilon_{N}\right)=0.$ For $1<\Upsilon<\Upsilon_{N}$, either
$\theta_{1}(\Upsilon)\geq\pi>\vartheta(\Upsilon)$ and
$\Xi_{N}(\Upsilon)>0$ directly, or $\theta_{1}(\Upsilon)<\pi$ and
Lemma~\ref{lem:app-itemf-comparison}, now applied at $\Upsilon$, gives in
the same way $\Xi_{N}(\Upsilon)>\Xi_{N}\left(\Upsilon_{N}\right)=0$. (For
$N=1$ the orbit is the constant $\arccos\left(-\sqrt{q}\right)$ and both
inequalities hold through the target angle alone.)
For the bisection, the doubling terminates, since $\Xi_{N}<0$ once
$\Upsilon_{r}$ exceeds $\Upsilon_{N}$, producing a bracket
$\Upsilon_{l}=1<\Upsilon_{N}<\Upsilon_{r}$; by the sign characterization
each update rule fires exactly on its side of $\Upsilon_{N}$, and the
bisection bookkeeping is exactly parallel to the argument of
Appendix~\ref{sec:app-recurrence-existence}, so the invariant
$\Upsilon_{l}<\Upsilon_{N}<\Upsilon_{r}$ is preserved. If the bisection
is continued indefinitely, its bracket width tends to zero, so the
midpoints converge to $\Upsilon_{N}$. At finite termination with
$|\Upsilon_r-\Upsilon_l|<\epsilon$, the bracket invariant puts the
returned value within $\epsilon$ of $\Upsilon_N$.
\end{proof}

\begin{lem}[Symmetry of the construction]
\label{lem:itemf-involution-symmetry}
For $0\leq k\leq N+1$,
\begin{equation}\label{eq:itemf-coord-symmetry}
\begin{split}
 a_{k}a_{N+1-k}&=1,\\
 a_{k}b_{N+1-k}&=b_{k}.
\end{split}
\end{equation}
\end{lem}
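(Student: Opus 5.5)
The plan is to use the uniqueness part of Lemma~\ref{lem:itemf-construction}: apply $T(a,b)=(1/a,b/a)$ to the admissible configuration, reverse the order of the points, and show that the result is again an admissible configuration at the same $\Upsilon_N$. Concretely, set $P'_k\triangleq T(P_{N+1-k})$ for $0\le k\le N+1$, i.e.\ $a'_k=1/a_{N+1-k}$ and $b'_k=b_{N+1-k}/a_{N+1-k}$. Uniqueness then forces $P'_k=P_k$, which is exactly $a_ka_{N+1-k}=1$ and $b_k=b_{N+1-k}/a_{N+1-k}$. Applying the latter at index $N+1-k$ gives $a_kb_{N+1-k}=b_k$ after using the first identity.

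The verification of admissibility splits into four parts.

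\emph{Endpoints.} We have $T(P_{N+1})=(1/\Upsilon_N,\sqrt{1-q}\,R_N/\Upsilon_N)=P_0$ and $T(P_0)=(\Upsilon_N,\sqrt{1-q}\,R_N)=P_{N+1}$. So $P'_0=P_0$ and $P'_{N+1}=P_{N+1}$.

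\emph{Ordering and positivity.} Since all $a_k>0$ and $a\mapsto 1/a$ is decreasing, the reversed sequence $a'_k$ is strictly increasing, running from $1/\Upsilon_N$ to $\Upsilon_N$. The $b'_k$ are positive.

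\emph{Circle condition.} Write $\norm{P-C}^2-R_N^2=a^2+b^2-2\Upsilon_Na+1$. Then
\[
\norm{T(P)-C}^2-R_N^2=(1+b^2-2\Upsilon_N a+a^2)/a^2 .
\]
Hence $T$ preserves the circle, and $P'_1,\dots,P'_N$ lie on it.

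\emph{Inner-product condition.} This is the main obstacle. The idea is to rewrite the condition in a $T$-covariant form. Using $\Upsilon_N^2-R_N^2=1$ and expanding,
\[
(P_k-C)\cdot(P_{k+1}-C)=a_ka_{k+1}+b_kb_{k+1}-\Upsilon_N(a_k+a_{k+1})+\Upsilon_N^2 .
\]
The required right-hand side is $R_N^2-qR_N^2a_{k+1}/\Upsilon_N$. So the condition becomes
\[
E(P_k,P_{k+1})\triangleq a_ka_{k+1}+b_kb_{k+1}+1-\Upsilon_N(a_k+a_{k+1})+\frac{qR_N^2}{\Upsilon_N}a_{k+1}=0 .
\]
Apply $T$ to both points and multiply by $a_ka_{k+1}$:
\[
a_ka_{k+1}E(T P_k,T P_{k+1})=1+b_kb_{k+1}+a_ka_{k+1}-\Upsilon_N(a_k+a_{k+1})+\frac{qR_N^2}{\Upsilon_N}a_k .
\]
This is $E(P_{k+1},P_k)$, i.e.\ the same form with the roles of $k$ and $k+1$ exchanged in the asymmetric last term. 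Therefore the condition for the pair $(P'_{j},P'_{j+1})=(TP_{N+1-j},TP_{N-j})$ reduces to $E(P_{N-j},P_{N+1-j})=0$, which is the original condition at index $N-j$. This holds for every $0\le j\le N$, including the endpoint pairs.

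I expect the only delicate point to be this asymmetric term: checking that the $T$-image of the constraint is the index-reversed constraint, not the same one. The expansion above indicates it works out. Once all four parts hold, the uniqueness clause of Lemma~\ref{lem:itemf-construction} closes the proof.
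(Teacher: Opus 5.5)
Your proof is correct and follows essentially the same route as the paper. You form the reversed image $T(P_{N+1-k})$, check that $T$ swaps $P_0$ and $P_{N+1}$, preserves the circle $a^{2}+b^{2}-2\Upsilon_{N}a+1=0$ and the ordering, and turns each inner-product condition into the index-reversed one, then conclude by the uniqueness in Lemma~\ref{lem:itemf-construction}. Your identity $a_{k}a_{k+1}E(TP_{k},TP_{k+1})=E(P_{k+1},P_{k})$ is the paper's expansion $(T(P')-C)\cdot(T(P)-C)=\bigl[(P-C)\cdot(P'-C)+R_{N}^{2}(aa'-1)\bigr]/(aa')$ in different packaging, and your last step can skip the reindexing, since $b_{k}=b_{N+1-k}/a_{N+1-k}=a_{k}b_{N+1-k}$ directly.
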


\begin{proof}
The circle equation is equivalent to
$a^{2}+b^{2}-2\Upsilon_{N}a+1=0$, and the map, defined for $a>0$,
$T(a,b)=(1/a,b/a)$, which satisfies $T\circ T=\mathrm{id}$, maps this circle to itself, with
$T(P_{0})=P_{N+1}$. Moreover, if $P=(a,b)$ and $P'=(a',b')$ satisfy
\begin{equation*}
 (P-C)\cdot(P'-C)=R_{N}^{2}\left(1-\frac{qa'}{\Upsilon_{N}}\right),
\end{equation*}
then direct expansion gives
\begin{equation*}
 (T(P')-C)\cdot(T(P)-C)
 =\frac{(P-C)\cdot(P'-C)+R_{N}^{2}(aa'-1)}{aa'}
 =R_{N}^{2}\left(1-\frac{q}{\Upsilon_{N}a}\right).
\end{equation*}
This is the recurrence for the reversed pair, since the first coordinate
of $T(P)$ is $1/a$. Hence
$\widetilde P_k\triangleq T(P_{N+1-k})$ satisfies the same endpoint
conditions and recurrence as $P_k$. Its first coordinates are strictly
increasing and its second coordinates are positive, so uniqueness in
Lemma~\ref{lem:itemf-construction} gives
$T(P_k)=P_{N+1-k}$. Comparing coordinates proves
\eqref{eq:itemf-coord-symmetry}.
\end{proof}

The construction also gives the explicit relaxation used in
Theorem~\ref{thm:itemf-convergence}.

\begin{lem}
\label{lem:itemf-explicit-rate}
Let $\Upsilon_N$ be as in Lemma~\ref{lem:itemf-construction}. Then, for
every $N\geq1$, we have $1/\Upsilon_N^2
 <4 (1-\sqrt q)^{2N}$.
\end{lem}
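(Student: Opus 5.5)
The claim is equivalent to $\Upsilon_N^2>\tfrac14(1-\sqrt q)^{-2N}$. I would prove it by telescoping the abscissas of Lemma~\ref{lem:itemf-construction}. Since $a_0=1/\Upsilon_N$ and $a_{N+1}=\Upsilon_N$,
\[
\Upsilon_N^{2}=\frac{a_{N+1}}{a_0}=\prod_{k=0}^{N}\frac{a_{k+1}}{a_k},
\]
so it suffices to bound each ratio from below. First, a trivial case: if $4(1-\sqrt q)^{2N}\geq1$, the claim already follows from $\Upsilon_N>1$. So I may assume $(1-\sqrt q)^{N}<\tfrac12$, which forces $\Upsilon_N$ to be at least moderately large. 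That lower bound on $\Upsilon_N$ is what I plan to feed into the boundary estimate.

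\emph{Boundary ratios.} By the symmetry $a_ka_{N+1-k}=1$ (Lemma~\ref{lem:itemf-involution-symmetry}), $a_1/a_0=a_{N+1}/a_N$. From the shooting description, $\cos\theta_N=-\sqrt q$, so $a_N=\Upsilon_N-\sqrt q\,R_N$, and therefore
\[
\frac{a_1}{a_0}\cdot\frac{a_{N+1}}{a_N}=\Bigl(\frac{\Upsilon_N}{\Upsilon_N-\sqrt q R_N}\Bigr)^{2}.
\]
I want this product to be at least $\tfrac14(1-\sqrt q)^{-2}$, which reduces to
\[
\Upsilon_N(1-2\sqrt q)+\sqrt q\,R_N\geq0.
\]
This holds outright when $q\leq1/4$. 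When $q>1/4$, I would use $R_N=\sqrt{\Upsilon_N^2-1}$ together with the largeness of $\Upsilon_N$ from the non-trivial case. If that is not sufficient, I would instead check $\Xi_N>0$ at the candidate value via Corollary~\ref{cor:app-upsilon-monotone}.

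\emph{Interior ratios (main obstacle).} It remains to show $a_{k+1}/a_k\geq(1-\sqrt q)^{-2}$ for $1\leq k\leq N-1$. Multiplying these $N-1$ factors with the boundary factor then gives $\Upsilon_N^2\geq\tfrac14(1-\sqrt q)^{-2N}$, and strictness comes from any one strict inequality.

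My first attempt would work in the angle parametrization $a_k=\Upsilon_N+R_N\cos\theta_k$, using the step relation
\[
\cos(\theta_k-\theta_{k+1})=1-q\,a_{k+1}/\Upsilon_N.
\]
I would show that the function $\theta\mapsto(\Upsilon+R\cos F_p(\theta))/(\Upsilon+R\cos\theta)$ is minimized at an extreme admissible angle, reusing the monotonicity in Lemma~\ref{lem:app-itemf-one-step}.

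If that fails, I would pass to the hyperbolic picture suggested by the involution $T$. The circle $a^2+b^2-2\Upsilon_N a+1=0$ passes through the point $(1,\sqrt{\Upsilon_N^2-1})$ and is invariant under $T$. This suggests that $\log a_k$ is approximately arc length along it, so each step of the recurrence advances $\log a$ by at least $-2\log(1-\sqrt q)$. I would verify this per step by direct algebra on the quadratic relation between $a_k$ and $a_{k+1}$ obtained from the two circle conditions.
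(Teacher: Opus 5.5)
There is a genuine gap. Your telescoping rests on the interior inequality $a_{k+1}/a_k\ge(1-\sqrt q)^{-2}$ for $1\le k\le N-1$, and that inequality is false.

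The paper's own Figure~\ref{fig:itemf-points} gives a counterexample. For $N=3$ and $q=0.1$ one has $\Upsilon_3\approx2.4267$, $a_1\approx0.5789$, $a_2=1$ and $a_3\approx1.7275$. Both interior ratios therefore equal $a_3\approx1.727$, whereas $(1-\sqrt{0.1})^{-2}\approx2.139$.

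This is not a small-$N$ artifact. As $N\to\infty$ (so $\Upsilon_N\to\infty$ and $p\to q$), one gets $a_N/a_{N-1}\to(1-\sqrt q)/(1+\cos\bar\theta)$ with $\bar\theta=\arccos(-\sqrt q)+\arccos\bigl(1-q(1-\sqrt q)\bigr)$. For $q=0.1$ this limit is about $1.90$, still below $2.139$.

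The lemma itself is true, but the slack sits in the boundary factor. In the example, $(a_1/a_0)(a_4/a_3)\approx1.97$, against your required $\tfrac14(1-\sqrt q)^{-2}\approx0.53$. So no per-step split of this kind works in the $a$-coordinate.

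There is also a secondary gap in the boundary step for $q>1/4$. It uses a lower bound on $\Upsilon_N$ that you never produce. The case hypothesis $(1-\sqrt q)^{N}<\tfrac12$ constrains only $q$ and $N$, and turning it into a bound on $\Upsilon_N$ is essentially the statement being proved.

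Your hyperbolic fallback is the right instinct applied to the wrong coordinate. The circle is a geodesic of the upper half-plane, and along it $ds=d\theta/\sin\theta$. The arc-length coordinate is therefore $\log\cot(\theta/2)$ (up to sign), not $\log a$. Indeed $\log a$ stays in the bounded interval $(\log(\Upsilon_N-R_N),\log(\Upsilon_N+R_N))$. With $t=\cot(\theta/2)$, the relation $a=\bigl((\Upsilon_N+R_N)t^2+\Upsilon_N-R_N\bigr)/(1+t^2)$ compresses the ratios near both ends of the arc $P_1P_2\cdots P_N$, which is exactly where your bound fails.

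The paper telescopes in $t_k=\cot(\theta_k/2)=b_k/(\Upsilon_N+R_N-a_k)$ instead:
\begin{itemize}
\item The step relation, together with $R_N<\Upsilon_N$ and $\cos\theta_{k+1}<0$, gives $\cos(\theta_k-\theta_{k+1})<1-2q\cos^2(\theta_{k+1}/2)$.
\item Hence $t_k<(1-\sqrt q)\,t_{k+1}$: one factor of $1-\sqrt q$ per step, not its square.
\item The symmetry $a_1a_N=1$, $b_1=a_1b_N$ yields the endpoint values $t_N=\sqrt{1-q}/(1+\sqrt q)$ and $t_1t_N=1/(\Upsilon_N+R_N)$.
\end{itemize}
Then $1/(2\Upsilon_N)<t_1t_N\le t_N^2(1-\sqrt q)^{N-1}=(1-\sqrt q)^N/(1+\sqrt q)$, which gives the claim with no case distinction.
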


\begin{proof}
For the construction angles $\theta_k$, define $t_k\triangleq\cot\left(\theta_k/2\right)$ for $1\leq k\leq N$. The half-angle formula gives
\begin{equation*}
 t_k=\frac{\sin\theta_k}{1-\cos\theta_k}
 =\frac{b_k}{\Upsilon_N+R_N-a_k}.
\end{equation*}

By the symmetry
of Lemma~\ref{lem:itemf-involution-symmetry}, $a_1a_N=1$ and
$b_1=a_1b_N$; substituting these together with
$a_N=\Upsilon_N-\sqrt q\,R_N$ and $b_N=\sqrt{1-q}\,R_N$ into the
half-angle formula, tedious but straightforward algebra gives $t_{1}=\left(\sqrt{1-q}\right)/\left((1-\sqrt{q})(\Upsilon_{N}+R_{N})\right)$
and $t_{N}=\left(\sqrt{1-q}\right)/\left(1+\sqrt{q}\right)$. For $1\leq k\leq N-1$, the inner-product condition,
$\cos\theta_{k+1}<0$, and $R_N<\Upsilon_N$ give
\begin{align*}
 \cos(\theta_k-\theta_{k+1})
 &=1-q\left(1+\frac{R_N}{\Upsilon_N}\cos\theta_{k+1}\right)\\
 &<1-q(1+\cos\theta_{k+1})
 =1-2q\cos^2\left(\frac{\theta_{k+1}}2\right),
\end{align*}
and hence, taking half-angle sines, expanding
$\sin((\theta_k-\theta_{k+1})/2)$, and dividing by
$\sin(\theta_k/2)\sin(\theta_{k+1}/2)$ (tedious but straightforward
algebra), $t_k<(1-\sqrt q)t_{k+1}$
for $1\leq k\leq N-1$, which gives $t_1\leq(1-\sqrt q)^{N-1}t_N$.

Combining the contraction with these boundary values,
\begin{equation*}
 \frac{1}{2\Upsilon_N}
 <\frac{1}{\Upsilon_N+R_N}
 =t_1t_N
 \leq t_N^2(1-\sqrt q)^{N-1}
 =\frac{(1-\sqrt q)^N}{1+\sqrt q}.
\end{equation*}
It follows that
$1/\Upsilon_N^2<\left(2/(1+\sqrt q)\right)^2(1-\sqrt q)^{2N}<4(1-\sqrt q)^{2N}$,
which completes the proof.
\end{proof}

\subsection{Norm identities for the ITEM-f Lyapunov sequence}
\label{sec:app-itemf-lyap-norms}
This appendix proves Lemma~\ref{lem:itemf-lyap-norms}
and shows how the update equation of \eqref{eq:item-f-momentum} enters
the analysis.
\begin{proof}[Proof of Lemma~\ref{lem:itemf-lyap-norms}]
We use the coordinate relations
\begin{equation}
\begin{aligned} & c_{k}a_{k}+s_{k}b_{k}=(1-q)a_{k+1},\quad1\leq k\leq N,\\
 & a_{k}=(c_{k}+q)a_{k+1}-s_{k}b_{k+1},\quad1\leq k\leq N-1.
\end{aligned}
\label{eq:itemf-lyap-coordinate-relations}
\end{equation}
For $1\leq k\leq N-1$, these are the coordinate relations for the rotation
from $P_k-C$ to $P_{k+1}-C$ and its inverse. At $k=N$, the first
identity follows from $c_N=1-q$, $s_N=\sqrt{q(1-q)}$,
$a_{N+1}=\Upsilon_N$, and the boundary facts $b_N=\sqrt{1-q}\,R_N$ and
$\Upsilon_N-a_N=\sqrt q\,R_N$ of the construction
(Appendix~\ref{sec:app-itemf-existence}).

\noindent\textbf{First identity.}
Let $g_0\triangleq\nabla f(x_0)$.  Since $x_{-1}^+=x_0$, the first
update~\eqref{eq:item-f-momentum} and the definition of $\phi_N$ with $a_{1}a_{N}=1$, $a_{0}=1/\Upsilon_N$ give
\begin{align*}
	x_{1}-x_{\star} & =x^{+}_{0}-x_{\star}-\frac{\phi_{N-1}}{\phi_{N}}\left(1+\frac{a_{0}}{(1-q)a_{1}}\right)\frac{g_{0}}{L}=x^{+}_{0}-x_{\star}-\frac{\phi_{N-1}}{\Upsilon_{N}\sqrt{1-q}}\frac{g_{0}}{L}.
\end{align*}
The definition of $s_1$ gives
\begin{equation*}
 \frac{(1-q)a_2}{s_1}
 \frac{\phi_{N-1}}{\Upsilon_N\sqrt{1-q}}
 =\sqrt{\frac{1-q}{q}}.
\end{equation*}
Substituting this update into the definition \eqref{eq:itemf-lyap-W-def} of $W_1$ and using the first
coordinate relation \eqref{eq:itemf-lyap-coordinate-relations}, we have
\begin{equation*}
 W_1
 =\begin{bmatrix}
  a_1(x_0^+-x_\star)\\[1mm]
  b_1(x_0^+-x_\star)
  -\sqrt{\dfrac{1-q}{q}}\dfrac{g_0}{L}
 \end{bmatrix}.
\end{equation*}
The boundary relations in the construction give
$b_1=\sqrt{1-q}\,R_Na_1$ (the shooting relation
\eqref{eq:app-itemf-shooting}) and $\Upsilon_Na_1-1=\sqrt q\,R_Na_1$
(equivalent to $a_{1}a_{N}=1$ from Lemma~\ref{lem:itemf-involution-symmetry}).  Together with the circle equation
for $P_1$, these imply
\begin{align*}
 & b_1=\sqrt{\frac{1-q}{q}}(\Upsilon_Na_1-1),\\
& a_1^2+b_1^2=2\Upsilon_Na_1-1.
\end{align*}
Expanding the squared norm and using $\mu=qL$, $\Upsilon_Na_0=1$, and
$(1-q)g_0=\widetilde g_0+\mu(x_0^+-x_\star)$, we obtain
\begin{equation*}
\begin{aligned}
 \norm{W_1}^{2}
 ={}&(2\Upsilon_N a_1-1)\norm{x_0^+-x_\star}^{2}
 +\frac{1-q}{qL^2}\norm{g_0}^{2}
 -\frac{2(\Upsilon_Na_1-1)}{\mu}
  \left\langle x_0^+-x_\star,(1-q)g_0\right\rangle\\
 ={}&\norm{x_0^+-x_\star}^{2}+\frac{1-q}{qL^2}\norm{g_0}^{2}
 -\frac{2\Upsilon_N}{\mu}(a_1-a_0)\left\langle x_0^+-x_\star,\widetilde g_0\right\rangle.
\end{aligned}
\end{equation*}
Finally, direct expansion of
$x_0^+-x_\star=x_0-x_\star-L^{-1}g_0$ and
$\widetilde g_0=g_0-qL(x_0-x_\star)$ gives
\begin{equation*}
 \norm{x_0^+-x_\star}^{2}
 +\frac{1-q}{qL^2}\norm{g_0}^{2}
 =(1-q)\norm{x_0-x_\star}^{2}
 +\frac{1}{qL^2}\norm{\widetilde g_0}^{2}.
\end{equation*}
Since $1/(qL^2)=\Upsilon_Na_0/(\mu L)$, substitution proves the first
identity.

\medskip
\noindent\textbf{Second identity.}
For $1\leq k\leq N-1$, the definition $s_k=\sqrt q\,a_{k+1}\phi_{N-k}/\Upsilon_N$ gives $\phi_{N-k-1}/\phi_{N-k}=(a_{k+1}s_{k+1})/(a_{k+2}s_{k})$.
Thus the update of~\eqref{eq:item-f-momentum} becomes
\begin{equation*}
 x_{k+1}=x_k^+
 +\frac{a_ks_{k+1}}{(1-q)a_{k+2}s_k}
 (x_k^+-x_{k-1}^+)
 +\frac{a_{k+1}s_{k+1}}{a_{k+2}s_k}(x_k^+-x_k).
\end{equation*}
Substituting this into the definition \eqref{eq:itemf-lyap-W-def} of $W_{k+1}$ and using the first
coordinate relation \eqref{eq:itemf-lyap-coordinate-relations} gives
\begin{equation*}
 W_{k+1}=
 \begin{bmatrix}
  a_{k+1}(x_k^+-x_\star)\\[1mm]
  b_{k+1}(x_k^+-x_\star)
  +\dfrac{a_k}{s_k}(x_k^+-x_{k-1}^+)
  +\dfrac{(1-q)a_{k+1}}{s_k}(x_k^+-x_k)
 \end{bmatrix}.
\end{equation*}

Subtracting $\mathcal O_kW_k$ of \eqref{eq:itemf-lyap-W-rotation}, the first
component of $W_{k+1}-\mathcal O_kW_k$ is:
\begin{equation*}
 a_{k+1}(x_k^+-x_\star)
 -(1-q)a_{k+1}(x_k-x_\star)
 =-\frac{a_{k+1}}{L}\widetilde g_k.
\end{equation*}
The second component is:
\begin{align*}
 &b_{k+1}(x_k^+-x_\star)
 +\frac{a_k}{s_k}(x_k^+-x_{k-1}^+)
 +\frac{(1-q)a_{k+1}}{s_k}(x_k^+-x_k)\\
 &\quad-\frac{(1-q)c_ka_{k+1}(x_k-x_\star)
 -a_k(x_{k-1}^+-x_\star)}{s_k}\\
 &=\left(b_{k+1}+\frac{a_k}{s_k}+\frac{(1-q)a_{k+1}}{s_k}\right)(x_k^+-x_\star)\\
 &\quad-\frac{(1-q)a_{k+1}(1+c_k)}{s_k}(x_k-x_\star)\\
 &=\frac{a_{k+1}(1+c_k)}{s_k}
 \left(x_k^+-x_\star-(1-q)(x_k-x_\star)\right)\\
 &=-\frac{\Upsilon_Ns_k}{\mu}\widetilde g_k,
\end{align*}
where the second equality uses the second coordinate relation \eqref{eq:itemf-lyap-coordinate-relations} and the last equality uses
$a_{k+1}(1+c_k)/s_k=a_{k+1}s_k/(1-c_k)=\Upsilon_N s_k/q$
and
$x_k^+-x_\star=(1-q)(x_k-x_\star)-L^{-1}\widetilde g_k$.
Since $1-c_k=qa_{k+1}/\Upsilon_N$, the two components give
\begin{align*}
	W_{k+1}-\mathcal{O}_{k}W_{k} & =-\frac{\Upsilon_{N}}{\mu}\begin{bmatrix}(1-c_{k})\widetilde{g}_{k}\\
		s_{k}\widetilde{g}_{k}
	\end{bmatrix}=-\frac{\Upsilon_{N}}{\mu}\mathcal{O}_{k}\begin{bmatrix}(c_{k}-1)\widetilde{g}_{k}\\
		s_{k}\widetilde{g}_{k}
	\end{bmatrix},
\end{align*}
where the last equality uses $c_k^2+s_k^2=1$. Rearrangement gives
\begin{equation*}
 W_{k+1}=\mathcal O_k\left(W_k-\frac{\Upsilon_N}{\mu}
 \begin{bmatrix}
  (c_k-1)\widetilde g_k\\s_k\widetilde g_k
 \end{bmatrix}\right).
\end{equation*}
Taking norms and using the orthogonality of $\mathcal O_k$ proves the
second identity.

\medskip
\noindent\textbf{Third identity.}
At $k=N$, the definition \eqref{eq:itemf-lyap-W-def} of $W_N$ and
$c_N=1-q$, $s_N=\sqrt{q(1-q)}$, and $a_{N+1}=\Upsilon_N$ give
\begin{equation*}
 W_N=
 \begin{bmatrix}
  a_N(x_{N-1}^+-x_\star)\\[1mm]
  \sqrt{\dfrac{1-q}{q}}
  \bigl(\Upsilon_N(x_N-x_\star)-a_N(x_{N-1}^+-x_\star)\bigr)
 \end{bmatrix}.
\end{equation*}
Expanding and regrouping its squared norm yields
\begin{align*}
 \norm{W_N}^2
 ={}&a_N^2\norm{x_{N-1}^+-x_\star}^2
 +\frac{1-q}{q}
  \norm{\Upsilon_N(x_N-x_\star)-a_N(x_{N-1}^+-x_\star)}^2\\
 ={}&(1-q)\Upsilon_N^2\norm{x_N-x_\star}^2
 +\frac1q\norm{(1-q)\Upsilon_N(x_N-x_\star)
 -a_N(x_{N-1}^+-x_\star)}^2,
\end{align*}
which is the third identity.
\end{proof}

\subsection{Computer-assisted algorithm design by PEP}
\label{sec:itemf-convergence-proof}\label{sec:itemf-pep-design}
\paragraph{PEP formulation of the problem.}

This subsection presents the Stage-1 derivation that
\textsf{bnb-pep-skill} wrote for the ITEM-f design problem, again
lightly edited for presentation. The derivation follows the PEP
methodology of Section~\ref{subsec:autoopt-stage-numerical}
\cite{drori2014performance,taylor2017smooth,dasgupta2022BnBPEP},
developed in detail for lemniscate acceleration in
Appendix~\ref{sec:lemniscate-pep-design}, whose notation
(Section~\ref{subsec:notation}) it reuses. Below we spell out the
case-specific ingredients, namely the function class, the function-value
performance measure with its normalized initial condition, and the
reduction to a transformed function, and refer to
Appendix~\ref{sec:lemniscate-pep-design} for the steps that carry over
unchanged.

The design problem is instantiated with the function class
$\mathcal{F}=\mathcal{F}_{\mu,L}$ and the method class $\mathcal{M}_{N}$ of
$N$-step FSFOMs. Applying a method in $\mathcal M_N$ to
$f\in\mathcal{F}_{\mu,L}$ from a starting point $x_{0}$, with the
stepsizes normalized by $L$ as in
Appendix~\ref{sec:lemniscate-pep-design}, produces the iterates
\begin{equation}\label{eq:itemf-fsfom}
x_{i}=x_{i-1}-\frac{1}{L}\sum_{j=0}^{i-1}h_{i,j}\nabla f(x_{j}),
\quad 1\leq i\leq N,
\end{equation}
with stepsizes $\{h_{i,j}\}_{0\leq j<i\leq N}$ fixed in advance, independently of $f$. We rate a method by its worst-case
function-value suboptimality after $N$ steps, relative to the initial suboptimality.
Since $\mathcal{F}_{\mu,L}$ is invariant under translations of the variable and shifts of
the function value, we set $x_{\star}=0$ and $f_{\star}=0$. Optimality conditions give $\nabla f(x_{\star})=0$, and we
normalize the initial gap by $f(x_{0})-f_{\star}\leq1$. The worst-case performance of
$M\in\mathcal{M}_{N}$ is
\begin{align}
\mathcal{R}(M)=&\left(\begin{array}{ll}
\textrm{maximize} & f(x_{N})-f_{\star}\\
\textrm{subject to} & f\in\mathcal{F}_{\mu,L},\\
& \{x_{i}\}_{1\leq i\leq N}\textrm{ generated from }x_{0}\textrm{ by }M,\\
& f(x_{0})-f_{\star}\leq1,\\
& x_{\star}=0,\ f_{\star}=0,\ \nabla f(x_{\star})=0
\end{array}\right),\tag{\ensuremath{\mathcal{O}^{\mathrm{in}}_{f}}}\label{eq:itemf-inner}
\end{align}
where the decision variables are the function $f$ and the iterates
$x_{0},\dots,x_{N}$. The optimal FSFOM $M^{\star}_{N}\in\mathcal{M}_{N}$ for
this setup solves the outer design problem
$\mathcal{R}^{\star}(\mathcal{M}_{N})=\min_{M\in\mathcal{M}_{N}}\mathcal{R}(M)$
of \eqref{eq:autoopt-outer-design}.

Following~\cite{taylor2021optimal}, the derivation passes to a transformed function. By scale invariance
\cite[$\mathsection$3.5]{taylor2017smooth} fix $L=1$, so
$\mu=q$. For $f\in\mathcal{F}_{q,1}$ with $x_{\star}=0$, $f_{\star}=0$, the shifted function
$\tilde f\triangleq f-(q/2)\norm{\cdot}^{2}$ belongs to $\mathcal{F}_{0,1-q}$, with gradients
$\tilde g_{i}=\nabla f(x_{i})-q\,x_{i}$ and $\tilde g_{\star}=0$; the original
suboptimality is recovered from the transformed values $\tilde f_{i}=\tilde f(x_{i})$
through
\begin{equation}\label{eq:itemf-orig-vs-transf}
f(x_{i})-f_{\star}=\tilde f_{i}+\tfrac q2\norm{x_{i}}^{2}.
\end{equation}
Re-expressing \eqref{eq:itemf-fsfom} through $\tilde g_{i}$, the iterates take the
form
\begin{equation}\label{eq:itemf-alpha-iterate}
x_{i}=\Big(1-q\sum_{j=0}^{i-1}\alpha_{i,j}\Big)x_{0}-\sum_{j=0}^{i-1}\alpha_{i,j}\,\tilde g_{j},
\quad 1\leq i\leq N,
\end{equation}
where the transformed stepsizes $\{\alpha_{i,j}\}_{0\leq j<i\leq N}$ are defined from the original stepsizes
$\{h_{i,j}\}_{0\leq j<i\leq N}$ through the triangular (hence invertible) system of
equations \cite[$\mathsection$3.2]{taylor2021optimal} $\alpha_{i,i-1}=h_{i,i-1}$ and $\alpha_{i,j}=h_{i,j}+\alpha_{i-1,j}-q{\displaystyle \sum^{i-1}_{k=j+1}h_{i,k}\alpha_{k,j}}$,
for $0\leq j\leq i-2$. The transformed function obeys
\eqref{eq:F_0L_formula}. We impose these smooth-convex interpolation
inequalities at the sampled iterates, with $1-q$ in place of $L$.

Collect the data into $P=[\,x_{0}\mid\tilde g_{0}\mid\cdots\mid\tilde g_{N}\,]\in\mathbb{R}^{d\times(N+2)}$, $G=P^{\top}P\in\mathbb{S}^{N+2}_{+}$, and
$F=[\,\tilde f_{0}\mid\cdots\mid\tilde f_{N}\,]$. With the selectors $\mathbf{x}_{0}=e_{1}$, $\mathbf{g}_{i}=e_{i+2}$ (so $\tilde g_{i}=P\mathbf{g}_{i}$), $\mathbf{f}_{i}=e_{i+1}$, $\mathbf{x}_{\star}=\mathbf{g}_{\star}=0$, and $\mathbf{f}_{\star}=0$,
define the iterate selectors by mirroring the coefficients of
\eqref{eq:itemf-alpha-iterate},
\begin{equation}\label{eq:itemf-x-selector}
\mathbf{x}_{i}\triangleq\Big(1-q\sum_{j=0}^{i-1}\alpha_{i,j}\Big)\mathbf{x}_{0}-\sum_{j=0}^{i-1}\alpha_{i,j}\,\mathbf{g}_{j},
\quad 1\leq i\leq N,
\end{equation}
so that $x_{i}=P\mathbf{x}_{i}$ by \eqref{eq:itemf-alpha-iterate}, and define
$A_{i,j}(\alpha)$, $B_{i,j}(\alpha)$, and $C_{i,j}$ for $i,j\in I^{\star}_{N}$
exactly as in \eqref{eq:ABCa-mat-vec}.

By \eqref{eq:itemf-x-selector}
$\mathbf{x}_{i}$ is affine in $\alpha$, so $A_{i,j}$ is affine and $B_{i,j}$ quadratic
in $\alpha$, while $C_{i,j}$ is constant. By \eqref{eq:itemf-orig-vs-transf} the
objective is $f(x_{N})-f_{\star}=F\mathbf{f}_{N}+(q/2)\mathbf{tr}(GB_{N,\star})$
and the initial gap is $f(x_{0})-f_{\star}=F\mathbf{f}_{0}+(q/2)\mathbf{tr}(GB_{0,\star})$. Under the large-scale assumption $d\geq N+2$ (Assumption~\ref{large-scale-assumption}) the rank constraint on $G$
is vacuous, and \eqref{eq:itemf-inner} relaxes to the convex SDP
\begin{align*}
\mathcal{R}(M)=&\left(\begin{array}{l}
\textrm{maximize}\quad F\mathbf{f}_{N}+\tfrac q2\mathbf{tr}GB_{N,\star}\\
\textrm{subject to}\\
F(\mathbf{f}_{j}-\mathbf{f}_{i})+\mathbf{tr}G\big[A_{i,j}(\alpha)+\tfrac{1}{2(1-q)}C_{i,j}\big]\leq0,\ \ i,j\in I^{\star}_{N}:i\neq j,\ \ {\color{annotgray}\rhd\,\textsf{dual var.}\ \lambda_{i,j}\geq0}\\
F\mathbf{f}_{0}+\tfrac q2\mathbf{tr}GB_{0,\star}\leq1,\ \ {\color{annotgray}\rhd\,\textsf{dual var.}\ \nu\geq0}\\
G\succeq0,\ \ {\color{annotgray}\rhd\,\textsf{dual var.}\ Z\succeq0}
\end{array}\right),
\end{align*}
with decision variables $F\in\mathbb{R}^{1\times(N+1)}$ and $G\in\mathbb{S}^{N+2}$, and
the indicated dual variables. Taking the dual gives
\begin{align}
\overline{\mathcal{R}}(M)=&\left(\begin{array}{l}
\textrm{minimize}\quad\nu\\
\textrm{subject to}\\
\mathbf{f}_{N}-\nu\,\mathbf{f}_{0}+\sum_{i,j\in I^{\star}_{N}:i\neq j}\lambda_{i,j}\,(\mathbf{f}_{i}-\mathbf{f}_{j})=0,\\
\tfrac q2\big(\nu B_{0,\star}-B_{N,\star}(\alpha)\big)+\sum_{i,j\in I^{\star}_{N}:i\neq j}\lambda_{i,j}\big[A_{i,j}(\alpha)+\tfrac{1}{2(1-q)}C_{i,j}\big]=Z,\\
Z\succeq0,\quad \nu\geq0,\quad \lambda_{i,j}\geq0\ \ (i,j\in I^{\star}_{N}:i\neq j)
\end{array}\right),\label{eq:itemf-dual}
\end{align}
with decision variables $\nu\in\mathbb{R}$, $\lambda=\{\lambda_{i,j}\}$, and
$Z\in\mathbb{S}^{N+2}$. By weak duality, we have $\mathcal{R}(M)\leq\overline{\mathcal{R}}(M)$.

The derivation then recasts the outer problem, in the
design variable $\alpha$, as a QCQP in exact parallel with
\eqref{eq:BnB-PEP-Preli}: it substitutes \eqref{eq:itemf-dual} for the
inner problem, adjoins $\{\alpha_{i,j}\}_{0\leq j<i\leq N}$ to the
decision variables $\nu$, $\lambda$, and $Z$, and replaces the
constraint $Z\succeq0$ with the Cholesky factorization $Z=VV^{\top}$,
where $V$ is lower triangular with nonnegative diagonal again
\cite[Corollary 7.2.9]{horn2012matrix}. Every constraint of the
resulting problem is at most quadratic in its decision variables; upon
user approval of the derivation, \textsf{bnb-pep-skill} solves the
problem numerically. The reformulation presumes strong duality between
the inner SDP and its dual \eqref{eq:itemf-dual}, the analogue of
Assumption~\ref{strong-duality-assumption}; weak duality alone already
makes its optimal value an upper bound on
$\mathcal{R}^{\star}(\mathcal{M}_{N})$.

In Stage 2 of the pipeline, the
\textsf{frontier-llm-consult} skill fitted symbolic expressions to the
numerical solutions of this QCQP, yielding analytic formulas for the
transformed stepsizes and an analytic feasible point of the inner dual
\eqref{eq:itemf-dual}, expressed in terms of the geometric construction
of Lemma~\ref{lem:itemf-construction} (at $L=1$, hence $\mu=q$); dual
feasibility alone already certifies the contraction bound.
The consultation then recast the method from the
transformed-stepsize representation into the momentum form
\eqref{eq:item-f-momentum} in which Section~\ref{sec:itemf} presents
it. From the
multipliers of this feasible point, continued consultation produced the
Lyapunov proof of Theorem~\ref{thm:itemf-convergence} presented in
Section~\ref{sec:itemf-lyapunov}, stated on the
momentum form, and Section~\ref{sec:itemf-lean}
describes the Lean coverage of the presented results.

\end{document}